\newcommand{\bbGamma}{{\mathpalette\makebbGamma\relax}}
\newcommand{\makebbGamma}[2]{%
	\raisebox{\depth}{\scalebox{1}[-1]{$\mathsurround=0pt#1\mathbb{L}$}}%
}
\renewcommand{\tilde}{\widetilde}
\theoremstyle{plain}
\newtheorem{theorem}{Theorem}[section]
\newtheorem{proposition}[theorem]{Proposition}
\newtheorem{corollary}[theorem]{Corollary}
\newtheorem{lemma}[theorem]{Lemma}
\theoremstyle{definition}
\newtheorem{remark}[theorem]{Remark}
\newtheorem{example}[theorem]{Example}   
\newtheorem{definition}[theorem]{Definition}
\newtheorem{notation}[theorem]{Notation}
\DeclareMathOperator*{\esssup}{ess\,sup}
\newcommand{\rp}{\ensuremath{\boldsymbol\beta}}
\newcommand{\rpp}{\ensuremath{\beta\!\!\!\beta}}
\newcommand{\T}{\ensuremath{{\mathbb T}}} 
\newcommand{\R}{\ensuremath{\mathbb R}} 
\renewcommand{\tt}{\ensuremath{{\mathcal T}}}
\newcommand{\D}{\ensuremath{\mathcal{D}}}
\newcommand{\V}{\ensuremath{\mathcal V}}
\newcommand{\GG}{\ensuremath{{\mathbb{G}}}}
\newcommand{\G}{\ensuremath{{\mathbf{G}}}}
\newcommand{\A}{\ensuremath{{\mathbf{A}}}}
\renewcommand{\AA}{\ensuremath{{\mathbb{A}}}}
\newcommand{\B}{\ensuremath{{\mathbf{B}}}}
\newcommand{\BB}{\ensuremath{{\mathbb{B}}}}
\newcommand{\ZZ}{\ensuremath{{\mathbb{Z}}}}
\newcommand{\JJ}{\ensuremath{{\mathbb{J}}}}
\newcommand{\II}{\ensuremath{{\mathbb{I}}}}
\newcommand{\LL}{\ensuremath{{\mathbb{L}}}}
\newcommand{\RD}{\ensuremath{{\mathcal{RD}}}}
\newcommand{\dd}{\ensuremath{\partial_x}}
\begin{document}
	
	\title{A pathwise stochastic Landau-Lifshitz-Gilbert equation with application to large deviations}
	
	
	\author{Emanuela Gussetti}
	\affil{\small Bielefeld University, Germany}
	
	\author{Antoine Hocquet}	
	\affil{\small Technische Universit\"at Berlin, Berlin, Germany}

	\maketitle
	
	\unmarkedfntext{\textit{Mathematics Subject Classification (2020) ---} 
		60H10, 
		60H15, 
		60L50, 
		60L90. 
	}
	
	\unmarkedfntext{\textit{Keywords and phrases ---} Stochastic PDEs, Rough paths, Large deviations, Landau-Lifshitz-Gilbert equation}
	
	\unmarkedfntext{\textit{Mail}: \textbullet$\,$ egussetti@math.uni-bielefeld.de $\,$\textbullet$\,$ antoine.hocquet@tu-berlin.de}
	
	\begin{abstract}
		Using a rough path formulation, we investigate existence, uniqueness and regularity for the stochastic Landau-Lifshitz-Gilbert equation
		with Stratonovich noise on the one dimensional torus. As a main result we show the continuity of the so-called It\^o-Lyons map in the energy spaces $L^\infty(0,T;H^k)\cap L^2(0,T;H^{k+1})$ for any \( k\ge1 \).  The proof proceeds in two steps. First, based on an energy estimate in the aforementioned space together with a compactness argument we prove existence of a unique solution, implying the continuous dependence in a weaker norm. This is then strengthened in the second step where the continuity in the optimal norm is established through an application of the rough Gronwall lemma. Our approach is direct and does not rely on any transformation formula, which permits to treat multidimensional noise.
		As an easy consequence we then deduce a  Wong-Zakai type result, a large deviation principle for the solution and a support theorem. 
		\end{abstract}
	
	\tableofcontents
	
	\section{Introduction}
	
	\subsection{Motivation}
	Out of equilibrium, the magnetization of a ferromagnetic material occupying a domain $\mathcal O\subset \R^d$ ($d=1,2,3$) is subject to a time-evolution whose first mathematical description goes back to a short note from Landau and Lifshitz in 1935 \cite{landau1935theory}. In 1954, Gilbert \cite{gilbert1954} used the Lagrangian formalism to clarify the form of the so-called damping term, which accounts for the fact that energy is dissipated along the flow of solutions. 
	In a simplified setting, the latter energy takes the form of a Dirichlet functional
	\[
	\mathcal E(m(t,\cdot)):= \int_{\mathcal O}\frac{A}{m_0^2}|\nabla m(t,x)|^2dx\,,\quad t\geq0\,,
	\]
	where we denote the magnetization by $m\colon \R_+\times\mathcal{O}\to \mathbb R^3$, $m_0>0$ is the constant magnitude and $A>0$ is an exchange constant.
	
	Let $u(t,x):= m_0^{-1} m(t,x) \in\mathbb S^2\subset \R^3$
	and denote by $\times$ the vector product in $\R^3$. Given a sphere-valued initial datum $u(0,\cdot)$, the dynamics is described at a formal level by the geometric partial differential equation
	\begin{equation}\label{llg_original}
	\left \{\begin{aligned}
	\partial_t u &=
	 \mu  u\times \Delta u
	 -\lambda u\times (u\times \Delta u)
	\quad \text{on}\enskip [0,T]\times \mathcal O
	\\
	& |u_t(x)|=1\quad \forall (t,x)\in [0,T]\times \mathcal O
	\end{aligned}\right .
	\end{equation}
	where, ignoring physical units, $\lambda,\mu\geq0$ are given parameters.
	Using the well-known tri-linear formula  $a\times (b\times c)= (a\cdot c) b - (a\cdot b)c,$ $\forall a,b,c\in\R^3,$ and then observing that
	\begin{equation}\label{id:laplace_1}
	0=\Delta (\tfrac12|u|_{\R^3}^2)= u\cdot\Delta u + |\nabla u|^2
	\end{equation}
	(which is a consequence of the fact that $u$ is sphere-valued), one ends up with the formal identity
	\begin{equation}\label{id:main_tt}
	-u\times (u\times \Delta u)=\Delta u + u|\nabla u|_{\R^{3\times d}}^2 = :\tt_u\,.
	\end{equation}
 A useful observation is then that $\tt_u\colon \R_+\times \mathcal O\to \R^3$ is also the vector field that results from differentiation of minus the energy under the unit sphere constraint.
 Namely, if $\langle\cdot,\cdot\rangle$ is the $L^2(\mathcal O;\R^3)$ inner product, we have for every $\varphi \in L^2(\mathcal O;\R^3)$
	\[
	\langle\tt_u,\varphi\rangle := -\lim_{\epsilon\to 0}\frac1\epsilon\left (\mathcal E\Big(\frac{u(\cdot)+\epsilon\varphi(\cdot)}{|u(\cdot)+\varphi(\cdot )|_{\R^3}}\Big)-\mathcal E(u(\cdot))\right )\,.
	\]
	When $\mu =0$, the sytem \eqref{llg_original} therefore reduces to the gradient-type equation $\frac{\partial u}{\partial t}=-\lambda\frac{\partial \mathcal E}{\partial u}$, also known as the \textit{heat flow} of harmonic maps from $\mathcal O$ into the unit sphere $\mathbb S^2$, and was independently investigated by geometers in the 60's to determine which map from a manifold $\mathcal O$ to another (here the sphere) can be continuously deformed to a harmonic map, i.e.\ a minimizer of the Dirichlet energy (\cite{eells1964harmonic}).
	For analysts, this is also a ``toy model'' for \eqref{llg_original}, simplifying the analysis of \eqref{llg_original} but yet capturing some of its key features  such as regularity properties, blow-up phenomenon or large time behaviour (see, e.g., \cite{alouges1992global}).
	For an overview of \eqref{llg_original}, see \cite{lakshmanan2011} and the references therein.\\

	The introduction of a stochastic forcing term to model thermal fluctuations in \eqref{llg_original} is due to Brown \cite{brown1963thermal, brown1978}, and its importance in applications regarding e.g.\ information storage is now widely acknowledged (see for instance \cite{hubert2008}). 
	Roughly speaking, the idea is to add space-time Gaussian white noise $\xi$ in $\R ^{3}$ to the laplacian $\Delta u$. More precisely, the term
	\begin{equation}\label{bilinear_noise}
	\mu u\times \xi - \lambda u\times (u\times \xi)
	\end{equation}
	shall be added to the right hand side of \eqref{llg_original}.
	That being said, there is formal evidence -- at least for dimensions $d\geq2$ -- that the resulting equation is ill-posed (this is a well-known general phenomenon concerning SPDEs, see \cite{hairer2012triviality} for an example). 
	To the best of our knowledge, it remains unclear whether the renormalization techniques recently developed by Hairer and co-authors (\cite{hairer2014theory,bruned2019algebraic}) could give a mathematically consistent meaning to the corresponding evolution, see however \cite{gerenczer2019solution} for the treatment of a quasilinear ansatz akin to \eqref{llg_original}.

	On the other hand, the rigorous mathematical treatment of the corresponding stochastic partial differential equation started only about a decade ago, where aiming at using standard infinite-dimensional It\^o calculus techniques, the authors considered the case of colored noise \cite{brzezniak2013weak,hocquet2015landau}. 
	The corresponding analysis is however  non-trivial since the deterministic PDE arguments for a  nonlinear system such as \eqref{llg_original} do not always blend well with tools from stochastic analysis. A  basic technical obstacle is for instance the anticipative nature of the mild formulation, which in the case of existence and uniqueness of local solutions renders the standard fixed-point argument inoperative, see nonetheless \cite{kuehn2018pathwise}.
	
	As in the deterministic context, a possibility is to work at the level of the corresponding \textit{stochastic} heat flow (i.e.\ with $\mu =0$). 
	This path was followed by one of the present authors in \cite{hocquet2018struwe,hocquet2019finite}, leading to a two-dimensional theory which carries some similarities with the deterministic model studied by Struwe in the 80's (\cite{struwe1985}). 
	In \cite{hocquet2019finite}, it was shown for $d=2$ that equivariant solutions can blow-up in finite time, and that the probability of such an event is positive provided the noise has a certain symmetry properties. 
	This scenario happens no matter how the initial datum is chosen, in contrast with the deterministic scenario where the initial datum determines which solution will explode. Though the generalization to the case $\mu \neq0$ is yet unclear, this particular fact shows that the noisy Landau-Lifshitz-Gilbert equation may give rise to complex phenomena that cannot be reproduced without noise. 
	In particular, it is an open problem to determine whether a suitable non-symmetric noise could lead to the opposite behaviour, namely to \textit{regularize} the solutions.
	
	These questions amount in a way to asking whether such singularities are stable under singular perturbations of the right hand side of \eqref{llg_original}. While some results in that direction are available in a deterministic setting \cite{raphael2013stable,merle2013blowup}, the stochastic counterpart fails to be amenable to the techniques used there to show (in)stability of blow-up for \eqref{llg_original}. This is partially due to the basic fact that the solution map $\xi\mapsto u$ fails to be continuous in general. 
	This well-known drawback of It\^o's calculus was one of Lyons's motivations to introduce the theory of \textit{rough paths} \cite{lyons1998differential,lyons2002system}, where continuity of the solution map is recovered by adding some extra components to the noise, namely,  iterated integrals of the underlying Wiener process with itself. 
	Lyons' theory is particularly versatile in finite dimensions were it was extensively used in the context of stochastic differential equations and stochastic analysis (see \cite{FrizVictoire,FrizHairer} and the references therein). 
	Extending such developments to stochastic \textit{partial} differential equations is a very active direction of research, whether the noise is white (see e.g. \cite{gubinelli2018panorama} for an overview), colored or finite-dimensional (\cite{gess2017long,hofmanova2020navier}).

	In the following, we fix the dimension $d=1$ and let $\mathcal O=\T:=\R/\mathbb Z$. 
	We will let $\lambda =\mu =1$ and fix an
	arbitrary finite time horizon $T>0.$
	We consider a noise term which is white in time and colored in space, given as the time-derivative of a $Q$-Wiener process $w\colon \Omega\times [0,T]\to L^2(\T ,\R^3)$.
	Fixing an orthonormal basis $(e_k)_{k\geq 1}\subset L^2(\T ,\R^3)$, we assume that the latter can be written as the infinite sum
	\begin{equation}
	\label{sum_w}
	w(\omega,t,x):=\sum_{k\in\mathbb{N}}g _k(x)\beta_k(\omega,t),\quad (\beta_k)_{k\in\mathbb{N}}\enskip =\text{an i.i.d.\ family of Brownian motions in }\R\,,
	\end{equation}
	where $g _k=\sqrt{Q} e_k,$ for some non-negative, trace-class operator 
	$Q\in \mathcal L(H^s(\mathcal O;\R^3)),$ $s>\frac 12.$ 
	As alluded to earlier, we want to do the singular substitution $\Delta u \leftarrow \Delta u+\frac{dw}{dt}$ in the right hand side of \eqref{llg_original}.
	For simplicity we only do it for the gyromagnetic term (i.e.\ the first term in the right hand side). The version of the Stochastic Landau-Lifshitz-Gilbert equation that we are concerned with then writes as the \textit{Stratonovich} SPDE
	\begin{equation}
	\label{LLG}
	\tag{LLG}
	\left\{\begin{aligned}
	& d u-(\partial_x^2 u+u|\partial_x u|^2 + u\times\partial_x^2 u) d t=u\times\circ d w \,,
	\quad \text{on}\enskip [0,T]\times\T 
	\\
	&|u_t(x)|=1\,,\quad \forall (t,x)\in [0,T]\times\T 
	\\
	&u_0=u^0 
	\end{aligned}\right.
	\end{equation} 
	where $d\in\{1,2,3\}.$ We note that the last term in the drift may alternatively be written as the vector product of $u$ with $\tt_u$, which is sometimes useful in the computations. Indeed, by orthogonality we have formally $u\times \partial_x^2 u = u\times (\partial_x^2 u + u|\partial_x u|^2) =u\times\tt_u $.
	As for our choice to keep noise only in the gyromagnetic term, we can loosely justify it as follows. 
	First, one can simply argue that in physically relevant contexts, the damping constant $\lambda $ is typically small in comparison with $\mu .$
	If we leave aside mathematical rigor, the second reason why this omission is not completely absurd stems from a geometric argument: replacing the noise in \eqref{LLG} by the full ansatz $\sqrt{2}(u\times \frac{dw}{dt} -u\times(u\times \frac{dw}{dt}))$ leaves invariant the infinitesimal generator of the corresponding (infinite-dimensional) diffusion, meaning in principle that both solutions should have identical laws.
	(For spatially-discretized versions of LLG, this was rigorously established in \cite{neklyudov2013role}, the generator in question being a multi-dimensional variant of the Laplace-Beltrami operator).
	See also the introduction in \cite{hocquet2019finite}, for a related discussion.\\

	In line with the previous discussion on the lack of continuity for the solution map,  another motivation to introduce a \textit{rough path formulation} for \eqref{LLG} is the study of rare events and small probabilities for noises with vanishing amplitude.
	This direction of research is particularly active in the context of micromagnetism where the small noise asymptotics for \eqref{LLG} is needed to study phase transition between different equilibrium states of the magnetization (see for instance \cite{kohn2005}).
	Using Budhiraja and Dupuis' weak convergence method \cite{budhiraja2000variational}, a large deviation principle in 1D was established by Brze\'zniak, Goldys and Jegaraj (see \cite{brzezniak_LDP}), for a noise term of the form \eqref{bilinear_noise}, where $w$ is a Brownian motion in $\R^3$.
	Based on a Doss-Sussmann type transformation introduced by Goldys, Le and Tran \cite{goldys2016finite} for a real valued Wiener process to reduce \eqref{LLG} to a random PDE, Brze\'zniak, Manna and Mukherjee \cite{brzezniak2019wong} showed existence and uniqueness of a pathwise solution in 1D.

	Using this technique, they also derived Wong-Zakai convergence results and showed that the corresponding solution enjoys the maximal regularity property, namely, it belongs to the space $L^\infty(0,T;H^{1})\cap L^2(0,T;H^2)$, for initial conditions in $H^{1}$.
	This transformation technique was further extended in \cite{goldys2020weak} to include space-dependent noise of the form \eqref{sum_w}, where \eqref{sum_w} takes the form of a finite sum.
	The trick is based on solving for each $x\in \mathcal O$ the auxiliary Stratonovich SDE (here $q$ is the largest integer for which $g_k\neq0$)
	\begin{equation}\label{auxiliary_ODE}
	Z(t,x)= \mathbf 1_{3\times3} + \sum_{k=1}^q\int_0^t(\cdot\times g_k(x))Z(s,x)\circ  d\beta_k(s),
	\end{equation}
	where the unknown $Z(t,x)\in \mathcal L(\R^3)$ belongs to a certain Lie group.
	This observation in turn permits to define the new unknown $v(t,x):=Z(t,x)^{-1}u(t,x)$, which satisfies a deterministic PDE with random coefficients. 
	We note that there is, in principle, no essential difficulty to extend this procedure to the case $q=\infty$ as considered in the present paper. However, to the best of our knowledge, it remains unclear how to use the previous transform in order to extend the Wong-Zakai results of \cite{brzezniak2019wong} to our setting.\smallskip
	
	The objective of this paper is twofold. At first, we derive a \textit{robust} pathwise formulation of \eqref{LLG} and obtain energy estimates for it, using tools from the variational approach to rough PDEs (\cite{deya2016priori,hocquet2017energy,hofmanova2019navier,hofmanova2020navier,hocquet2018ito}). The combination of some of the techniques developed in these prior works with a ``rough boostrap'' argument (see Section \ref{subsec:bootstrap}) permit to show that energy estimates of the same kind are true at any level of spatial regularity, as permitted by the regularity of the data. Namely, if the initial datum belongs to $H^k$ with $k\geq1$ and provided the noise is regular enough, we obtain global a priori estimates in the spaces 
	$ L^\infty(0,T;H^k)\cap L^2(0,T;H^{k+1}).$
	We then use these estimates to show Wong-Zakai type convergence statements in these spaces, hence extending the results of \cite{brzezniak2019wong}.
	The proof of our main continuity results (Theorem \ref{thm:wong-zakai} and Corollary \ref{cor:WZ}) consists of two steps: first the proof of existence and uniqueness of a solution by compactness methods, second the continuity of the map that to a noise associates the solution (which implies a Wong-Zakai type convergence). The latter step presents technical difficulties: we need to estimate the remainder of the equation and control the drift in a negative Sobolev norm. 
	In our way to handle this particular step, we introduce a certain pattern of proof referred to as ``rough standard machinery'', which seems to be of independent interest. It is a tool to obtain a priori estimates for rough PDEs, extending to the case of \textit{systems} (and arbitrary high order of spatial regularity) some techniques developed in \cite{deya2016priori,hocquet2017energy}.

	Second, we derive a large deviation principle based on a contraction argument, hence completing the results of \cite{brzezniak_LDP}  (at least when the contribution of the noise in the damping term is neglected). The method we rely on comes as a standard by-product of the rough path formulation and the previous a priori bounds, even though such results for stochastic \textit{partial} differential equations are not so common in the literature (see nevertheless \cite{hairer2015large,fehrman2019large}).
	By following the same approach, we will also derive a support theorem (in the sense of \cite{Strook_Varadhan,Millet_Sanz_Sole}) for the solution to \eqref{rLLG}. To the best of our knowledge, this is the first the time that a support Theorem for a quasi-linear system of SPDEs such as \eqref{LLG} using rough path techniques is derived.  We note that a version of Stroock and Varadhan celebrated theorem has been formulated in the rough path context in \cite{Ledoux_Qian_Zhang_p_var} with respect to the $p$-variation topology and in \cite{FrizVictoire} with respect to the $\alpha$-H\"older topology. 
	Finally, let us point out that our approach based on pathwise energy estimates could lead further to a numerical method, even though it remains unclear at the moment how to simulate the iterated integral needed in our formulation (we leave this question for later investigations).

	\paragraph{Organization of the paper}
	In Section \ref{sec:main} we introduce our notations and present our main results.
	Section \ref{sec:apriori} is devoted to a priori estimates and the preparation of the proof of the main theorem.
	In Section \ref{sec:sewing} we state a multi-dimensional product formula which is at the core of our argument.
	The remainder of the section shows how to obtain the main energy estimates from the product formula. Since the argument seems to generalize well to other contexts, we present a general methodology (refered to as ``rough standard machinery'') which later permits to bootstrap. This method is built upon the formalism of ``rough driver'' developed in \cite{bailleul2017unbounded,deya2016priori} and subsequently analyzed in \cite{hocquet2017energy,hocquet2018ito}.
	In Section \ref{sec:wong-zakai} we use these estimates to prove the continuity of the It\^o-Lyons map, thereby finishing the proof of Theorem \ref{thm:wong-zakai}.
	Finally, Section~\ref{sec:LD} presents the proof of the large deviation principle.
	The appendices are devoted to technical results. Appendix \ref{app:combinatorial} proves some combinatorial lemmas which are in particular needed in the proof of Wong-Zakai convergence. Finally, Appendix \ref{app:perturbed} deals with the well-posedness of LLG for an input with non-rough time regularity.
	
	\subsection*{Acknowledgements}
	Both authors warmly thank M.~Hofmanov\'a for her many advices and constant help to run this project.
	AH was partially funded by Deutsche Forschungsgemeinschaft (DFG) through grant CRC 910 ``Control of self-organizing nonlinear systems: Theoretical methods and concepts of application'', Project (A10) ``Control of stochastic mean-field equations with applications to brain networks.''
	EG gratefully acknowledges the financial support by the German Science Foundation DFG via the Collaborative Research Center SFB1283.

	\subsection{Frequently used notations}
	We denote by $\mathbb{N}$ the space of natural numbers $1,2,\dots$, by $\mathbb{N}_0:=\mathbb{N}\cup\{0\}$.
	Throughout the paper, a finite but arbitrary time horizon $T>0$ is fixed. 
	We denote by $\mathbb{S}^2$ the unit sphere in $\R ^3$ and by $\T=\R/\mathbb Z$ the unit torus.
	By $a\cdot b$, we denote the inner product in $\R^n$ for any $n\in\mathbb N$ and $a,b\in \R^n$, and by $|\cdot|$ the norm inherited from it (we will not distinguish between the different dimensions, it will be clear from the context). If $n=3$ we recall the definition
	$a\times b:=(a_2b_3-a_3b_2,a_3b_1-a_1b_3,a_1b_2-a_2b_1)$. The space of linear operators from a Banach space $E$ to itself is denoted by $\mathcal L(E)$. When $E=\R^n,$ we denote by $\mathbf 1_{n\times n}$ the identity map.

	\paragraph{Paths and controls}
	Let $I$ be a subinterval of $[0,T]$. The shorthand `$\forall s \leq t \in I$' will be used instead of `$\forall (s,t)\in I^2$ such that $s \leq t.$'
	Given a one-index map $g$ defined on $I$, we denote by $\delta g_{s,t}:=g_t-g_s$ for $s \leq t \in I$.
	If $G$ is defined on $I^2$, we denote by $\delta G_{s,u,t}:=G_{s,t}-G_{s,u}-G_{u,t}$ for $s \leq u\leq t\in I$. 
	We call \textit{increment} any two-index map which is given by $\delta g_{s,t}$ for some $g=g_t.$
	As is easy to observe, increments are exactly those 2-parameter elements $G_{s,t}$ for which $\delta G_{s,u,t}\equiv 0$.
	
	We say that a continuous map $\omega\colon \{(s,t)\in I :s\leq t\}\rightarrow [0,+\infty)$ is a \textit{control on $I$} if $\omega$ is continuous, $\omega(t,t)=0$ for any $t\in I$ and if it is superadditive, i.e.\ for all $s \leq u \leq t$
	\begin{align*}
	\omega(s,u)+\omega(u,t)\leq \omega (s,t)\, .
	\end{align*}
	Given a control $\omega$ on $I:=[a,b]$, we also denote $\omega(I):=\omega(a,b)$.
	
	Let $(E,\|\cdot\|_E)$ be a Banach space and $p>0$, we denote by $\mathcal{V}^p_{2}(I;E)$ the set of two-index maps $G\colon\{(s,t)\in I\times I :s\leq t\}\rightarrow E$ that are continuous in both the components such that $G_{t,t}=0$ for all $t\in I$ and there exists a control $\omega$ on $I$ so that
	\begin{equation} \label{condition_control_1}
	\|G_{s,t}\|_E\leq \omega(s,t)^{1/p}
	\end{equation}
	for all $s\leq t \in I$. 
	The space $\mathcal{V}^p_{2}(I;E)$ is equivalently defined as the (usual) space of continuous paths of finite $p$-variation, namely $G_{s,t}$ belongs to $\mathcal{V}^p_{2}(I;E)$ if and only if 
\[
	\|G\|_{\mathcal{V}_{2}^{p}(I;E)}:=\sup_{\pi}\Big (\sum\nolimits_{[u,v]\in\pi}\|G_{u,v}\|_E^p\Big)^\frac{1}{p}<+\infty\, ,
\]
	where the supremum is taken over the set of partitions $\pi=\{[t_0,t_1],\dots [t_{n-1},t_{n}]\}$ of $I$.
	Moreover, the above semi-norm $\|\cdot\|_{\mathcal{V}^p_{2}(I;E)}$ is in fact equal to the infimum of $\omega(I)^p$ over the set of controls $\omega$ such that \eqref{condition_control_1} holds (see \cite{hocquet2017energy} or \cite[Paragraph 8.1.1]{FrizVictoire}).	
	In a similar fashion we define the space $\mathcal{V}^p(I;E)$ of all continuous paths $g\colon I\to E$ such that $\delta g\in\mathcal{V}^p_{2}(I;E)$. It is endowed with the norm $\|g\|_{\V^p(I;E)}=\sup_{t\in I}\|g_t\|_{E}+\|\delta g\|_{\mathcal{V}_{2}^{p}(I;E)}$.

	We will sometimes need to work with local versions of the previous spaces. For that purpose, we define $\mathcal{V}_{2,\mathrm{loc}}^{p}(I;E)$ as the space of two-index maps $G\colon\{(s,t)\in I\times I :s\leq t\}\rightarrow E$ such that there exists a finite covering $(I_k)_{k\in K}$ of $I$, $K\subset \mathbb{N}$, so that $G\in \mathcal V_{2}^{p}(I_k;E)$ for all $k\in K$. 
	We define the linear space 
	\begin{align*}
	\mathcal{V}^{1-}_{2,\mathrm{loc}}(I;E):=\bigcup_{0<p<1} \mathcal{V}_{2,\mathrm{loc}}^{p}(I;E)\, .
	\end{align*}
	We denote by $\mathcal{V}^p(E)=\mathcal{V}^p([0,T];E),$ and for $I=[s,t]\subset[0,T]$ we use the abbreviations $\V^p(s,t;E)=\V^p([s,t];E)$. Likewise \( \mathcal{V}_2^p(E)=\mathcal{V}_2^p([0,T];E) \) and \( \mathcal{V}_2^p(s,t;E)=\mathcal{V}_2^p([s,t];E) \). We denote by $C([0,T];E)$ the space of continuous functions defined on $[0,T]$ with values in $E$.
	\paragraph{Sobolev spaces}
	For $n\ge1 $ we consider the usual Lebesgue spaces $L^p:=L^p(\T ,\R^n)$, for $p\in[1,+\infty]$ endowed with the norm $\|\cdot\|_{L^p}$ and the classical Sobolev spaces $W^{k,q}:=W^{k,q}(\T ,\R ^n)$ for integer $q\in [1,+\infty]$ and $k\in\mathbb{N}$ endowed with the norm $\|\cdot\|_{W^{k,q}}$.
	We also denote by $H^k:=W^{k,2}(\T ,\R ^n)$. 
	Again, our abbreviations do not distinguish between different (finite) dimensions for the target space.
	Sometimes it will be necessary to consider functions taking values in the unit sphere $\mathbb{S}^2\subset\R^3$: for that purpose, we adopt the notation
	\begin{equation}
	H^k(\T ;\mathbb{S}^2):=H^k(\T;\R^3)\cap \{f:\T \rightarrow \R^3 \, \textrm{s.t.}\, |f(x)|=1\text{ a.e.}\}\, ,
	\end{equation}
	for $k\geq 1$. 
	Finally, we will denote by $L^p(W^{k,q}):=L^p([0,T];W^{k,q}(\T ;\R ^n))$ and by $L^p(s,t;W^{k,q}):=L^p([s,t];W^{k,q}(\T ;\R ^n))$.

	\section{Settings and main results}
	\label{sec:main}
	
	We  first need to specify the type of data needed as inputs to our main results, which are not exactly rough paths nor controlled paths, at least in the usual sense \cite{lyons1998differential,gubinelli2004controlling}.
	In the context of a system with linear multiplicative noise such as \eqref{LLG}, it is indeed more convenient to think of the rough input as being the multiplication operator $(\times \dot w_t(x))$ instead of the noise $\dot w_t(x)$ itself.	
	This leads to a slighlty different formalism as what can be encountered in the usual rough path literature (see \cite{lyons2002system,FrizVictoire,FrizVictoire}).
	
	\subsection{Rough drivers}
	\label{sec:RD}

	For the reasons alluded above, we build our setting upon the formalism of (unbounded) rough driver.
	These objects were introduced in \cite{bailleul2017unbounded} and further investigated in \cite{deya2016priori,hocquet2017energy,hocquet2018ito,hofmanova2019navier,hocquet2018quasilinear}. 
	
	\begin{definition}[Rough driver]
		\label{defi-RD}
		Given $n\in\mathbb{N}$ and $p\in[2,3)$
		we say that $\G$ is a $n$-dimensional \textit{rough driver} if it consists in a pair
		\begin{equation}\label{p-var-rp}
		\G :=(G,\GG ) \in \mathcal V^p_2 \left ([0,T];L^2(\T ;\mathcal L(\R^n))\right ) \times \mathcal V^{p/2}_2 \left ([0,T]; L^2(\T ;\mathcal L(\R^n))\right ) 
		\end{equation}
		such that $\delta G_{s,u,t}=0$ (i.e.\ the first component is an increment) while the following Chen type relation is fulfilled
		 in the sense of composition of linear maps in $\R^n$:
		\begin{equation}\label{chen-rela}
		\delta \GG _{s,u,t}(x)=G_{u,t}(x)G_{s,u}(x)\,,
		\quad \text{for all }s<u<t\in [0,T],
		\quad \text{Lebesgue-a.e.\ }x\in\R^d\,.
		\end{equation}
		In addition, we say that
		\begin{enumerate}[label=(\alph*)]
		\item
		$\G $ \textit{geometric} if it can be obtained as the limit, for the inherited $(p,\frac p2)$-variation topology, of a sequence of smooth rough drivers $\G ^\epsilon=(G^{\epsilon},\GG ^{\epsilon}),$ $\epsilon>0$, explicitly defined for $s\le t \in [0,T]$ as
		\begin{equation}\label{relazioni_path}
		G^{\epsilon}_{s,t} := \Gamma^{\epsilon}_{t}-\Gamma^\epsilon_s \ , 
		\qquad \GG ^{\epsilon}_{s,t}:=\int_{s}^td\Gamma^\epsilon_r (\Gamma^\epsilon_{r}-\Gamma^\epsilon_s)\, ,
		\end{equation}
		for some smooth path $\Gamma^\epsilon\colon[0,T] \to L^2(\T;\mathcal L(\R^n))$. 
		\item \label{itm:anti}
		$\G $ is \textit{anti-symmetric} if it is geometric and such that the approximating sequence $(\Gamma_t^\epsilon(x))$ in \eqref{relazioni_path} can be taken with values
		in the space $\mathcal L_a(\R^n)\subset\mathcal L(\R^n)$ of linear maps $\Gamma\colon \R^n\to \R^n$ such that $\Gamma v\cdot w = -v\cdot(\Gamma w)$ $\forall v,w\in\R^n$
		(this implies in particular that the first component $G_{s,t}(x)$ takes values in \( \mathcal L_a(\R^n) \)). 
		\end{enumerate}
		\end{definition}
	
	Since the solution to \eqref{LLG} has to lie on $\mathbb{S}^2$, the rough term in \eqref{LLG} must be (formally) orthogonal to $u_t(x)$, almost everywhere. This certainly imposes that the first component of $\G$ be \( \mathcal L_a (\R^n)\)-valued. However, this property alone does not guarantee that a solution will stay in the sphere for all times (as can be seen by formally applying It\^o's formula to $|u_t(x)|_{\R^3}^2$ for $x$ fixed). 
	For that purpose, we really need $\G$ to be anti-symmetric in the sense of \ref{itm:anti}, which imposes in partivular geometricity. In the stochastic case, this essentially means that our rough enhancement $\G$ of $G$ is consistent with Stratonovich-type integration.

	For $k\in \mathbb{N}$, we denote by $\RD ^p_a(H^k)$ the space of such $3$-dimensional, geometric and anti-symmetric rough drivers
	\begin{align*}
	\G  \in \mathcal V^p_2 \left (0,T;H^k(\T;\mathcal L_a(\R^3))\right ) \times \mathcal V^{p/2}_2 \left (0,T; H^k(\T;\mathcal L(\R^3))\right ) \, ,
	\end{align*}
	whose coordinates belong to the $k$-th order Sobolev space $H^k.$
	We introduce also the following controls for $s\leq t$ and $0\leq \gamma\leq k$
	\begin{align*}
	&\omega_{G,H^\gamma}(s,t):=\|G\|^p_{V^p_2 \left (s,t;H^\gamma(\T;\mathcal L_a(\R^3))\right ) }, \quad \omega_{\GG ,H^\gamma}(s,t):=\|\GG \|^{p/2}_{V^{p/2}_2 \left (s,t;H^\gamma(\T;\mathcal L(\R^3))\right ) }\, ,\\
	&\omega_{\G ,H^\gamma}(s,t):= \|G\|^p_{V^p_2 \left (s,t;H^\gamma(\T;\mathcal L_a(\R^3))\right ) }+\|\GG \|^{p/2}_{V^{p/2}_2 \left (s,t;H^\gamma(\T;\mathcal L(\R^3))\right ) }+\|G\|^{2p}_{V^p_2 \left (s,t;H^\gamma(\T;\mathcal L_a(\R^3))\right ) }\, .
	\end{align*}

	\begin{notation}
		\label{nota:F}
	For $n=3$, we let $\mathcal F\colon \R^3\to \mathcal L_a(\R^3)$ be the map which to each vector $\xi\in \R^3$ assigns the element $(\cdot)\times \xi\in \mathcal L_a(\R^3)$. 
	In the canonical basis of $\R^3$, this means that: 
	\[
	\mathcal F(\xi) 
=	\begin{pmatrix}
		0 & \xi^{3} & -\xi^{2}\\
		-\xi^{3} & 0 & \xi^{1}\\
		\xi^{2} & -\xi^{1} & 0
	\end{pmatrix}\,.
	\]
	\end{notation}
	
	A basic example of a geometric continuous rough path is given by the Stratonovich enhancement of a (three-dimensional)	Brownian motion.
	\begin{example}[Spatially constant rough drivers]
		\label{exa:B}
		Consider a fractional Brownian motion $\beta\colon \Omega\times [0,T]\to\R^3$ with Hurst parameter $H>\frac13$.
		According to \cite{FrizVictoire} (see also \cite{FrizHairer}) there exists a random 
		geometric rough path  $\rp=(\delta\beta_{s,t},\rpp_{s,t})$, which is built over $\beta$ in 
		the sense that the first entry coincides, $\mathbb P$-almost surely, with the increment 
		$\delta\beta^{i}_{s,t}=\beta^i_t-\beta^i_s,\forall s,t\in [0,T],\,i=1,2,3$. 
		This also means that $\rpp_{s,t}$ takes values in $\R^3\otimes \R^3\simeq \R^{3\times3}$ and is subject to the usual Chen's relation
		\begin{equation}\label{chen-rela_classic}
		\delta \rpp^{i,j}_{s,u,t} = \delta\beta^{i}_{s,u}\delta\beta_{u,t}^{j}\,,\quad 1\le i,j\le 3\quad (\mathbb P\text{-a.s.})\,.
		\end{equation}
		In fact, in the case when $H=\frac12$, one has that $\rpp$ is the Stratonovich iterated integral of $\beta$ with itself, i.e.\ $\rpp^{i,j}_{s,t}=\int _s^t(\beta^i_r-\beta^i_s)\circ \beta^j_r$ .
		
	Using the notation \ref{nota:F}, for all $s\leq t\in [0,T]$ we define $G_{s,t}\in \mathcal L_a(\R^3)$ as the vector product against $\delta \beta_{s,t}$, namely
	\setlength{\arraycolsep}{1.5pt}
	\begin{equation}
		\label{G_first}
	G_{s,t}:= \mathcal F(\delta\beta_{s,t}) 
	=
	\begin{pmatrix}
	0 & \delta\beta^{3}_{s,t} & -\delta\beta^{2}_{s,t}\\
	-\delta\beta^{3}_{s,t} & 0 & \delta\beta^{1}_{s,t}\\
	\delta\beta^{2}_{s,t} & -\delta\beta^{1}_{s,t} & 0
	\end{pmatrix}
	\in \V_2^{p}(0,T;\mathcal L_a(\R^3))
	\end{equation}
	which indeed has finite $p$-variation as soon as $p\ge3 >\frac1H$.
	Furthermore, we define $\GG $ as follows:
	for each $s\leq t\in[0,T],$
	\begin{equation}
	\label{murky_formula}
	\GG_{s,t}:=
	\begin{pmatrix}
	-\rpp^{3,3}_{s,t} -\rpp^{2,2}_{s,t} &
	\rpp^{1,2}_{s,t} &
	\rpp^{1,3}_{s,t}\\
	\rpp^{2,1}_{s,t} &
	-\rpp^{3,3}_{s,t} -\rpp^{1,1}_{s,t} &
	\rpp^{2,3}_{s,t}\\
	\rpp^{3,1}_{s,t} &
	\rpp^{3,2}_{s,t} &
	-\rpp^{2,2}_{s,t} -\rpp^{1,1}_{s,t}\\
	\end{pmatrix}
	\,.
	\end{equation}
	With this definition, we find using Chen's relation \eqref{chen-rela_classic} that for each $s\leq u\leq t\in[0,T]$ 
	\begin{equation}
	\label{pre_chen}
	\begin{aligned}
	\delta \GG _{s,u,t}
	&:=
	\begin{pmatrix}
	-\delta\beta^{3}_{s,u}\delta\beta^{3}_{u,t} -\delta\beta^{2}_{s,u}\delta\beta^{2}_{u,t} &
	\delta\beta^{1}_{s,u}\delta\beta^{2}_{u,t} &
	\delta\beta^{1}_{s,u}\delta\beta^{3}_{u,t}
	\\
	\delta\beta^{2}_{s,u}\delta\beta^{1}_{u,t} &
	-\delta\beta^{3}_{s,u}\delta\beta^{3}_{u,t} -\delta\beta^{1}_{s,u}\delta\beta^{1}_{u,t} &
	\delta\beta^{2}_{s,u}\delta\beta^{3}_{u,t}\\
	\delta\beta^{3}_{s,u}\delta\beta^{1}_{u,t} &
	\delta\beta^{3}_{s,u}\delta\beta^{2}_{u,t} &
	-\delta\beta^{2}_{s,u}\delta\beta^{2}_{u,t} -\delta\beta^{1}_{s,u}\delta\beta^{1}_{u,t}\\
	\end{pmatrix}
	\\
	&:=
	\begin{pmatrix}
	0 & \delta\beta^{3}_{u,t} & -\delta\beta^{2}_{u,t}\\
	-\delta\beta^{3}_{u,t} & 0 & \delta\beta^{1}_{u,t}\\
	\delta\beta^{2}_{u,t} & -\delta\beta^{1}_{u,t} & 0
	\end{pmatrix}
	\begin{pmatrix}
	0 & \delta\beta^{3}_{s,u} & -\delta\beta^{2}_{s,u}\\
	-\delta\beta^{3}_{s,u} & 0 & \delta\beta^{1}_{s,u}\\
	\delta\beta^{2}_{s,u} & -\delta\beta^{1}_{s,u} & 0
	\end{pmatrix}
	.
	\end{aligned}
	\end{equation}
	It follows that \eqref{chen-rela} holds and so $\G :=(G,\GG )$ is a 3-dimensional rough driver.
	That $\G$ is geometric follows by the geometricity of the rough path $\rp$. It is also clear 
	that any approximating sequence $\rp^n\to \rp$ for the rough path metric (inherited from 
	$\V^p(0,T;\R^3)\times \V^{p/2}(0,T;\R^{3\times3})$) yields a corresponding anti-symmetric 
	$\G^n\to \G$ for the rough driver metric \eqref{p-var-rp}. Hence the anti-symmetry of $\G$.
	\end{example}

	The formula \eqref{murky_formula} might seem a bit `murky' at first sight. 
	Recalling that we identified $\R^{3\times 3}$ with $\R^3\otimes \R^3$, \eqref{murky_formula} should in fact be understood as the statement that \begin{equation}
		\label{GG_tensor}
		\GG_{s,t}^{i,j}:=(\mathcal F\otimes \mathcal F)[\rpp_{s,t}]^{i,j}=\sum_{k,l}\mathcal F^{i,k}\mathcal F^{j,l}\rpp^{k,l}_{s,t}\,,\quad \forall s\leq t\in [0,T],\quad i,j=1,2,3\, .
	\end{equation}
	 When $H=\frac12$, this guarantees that $\GG_{s,t}$ is indeed the iterated integral of $G$ against itself, namely it holds $\mathbb P$-a.s.\
	\begin{equation}
	\label{second_level_F}
	\GG_{s,t}:=\int_s^t d G \circ (G _r-G_s)
	\end{equation}
	(Stratonovich sense). The relation \eqref{second_level_F} for the second level of $\G$ permits to identify pathwise solutions (in the sense of Definition \ref{def:solution} below) with actual solutions in the usual Stratonovich sense. This fact will be made clear in Proposition \ref{pro:consistency} below.
	
	\begin{notation}
	\label{nota:FF}
	We denote by $\mathcal F(\rp)$ the rough driver defined by the formulas \eqref{G_first}, \eqref{murky_formula}, namely $\mathcal F(\rp)= (\mathcal F(\delta\beta), \mathcal F\otimes \mathcal F(\rpp))$.
	\end{notation}
	
	A way to add spatial dependency to the previous example is to let, for some $g\in H^k(\T;\R)$,
	\begin{equation}
		\label{simple_RD}
		(G,\GG)(x):= \left(g(x)\mathcal F(\delta\beta), g(x)^2 \mathcal F\otimes \mathcal F(\rpp)\right).
	\end{equation}
Equivalently $\G(x):=\Lambda_{g(x)}\mathcal F(\rp)$ where $\Lambda$ is the natural \textit{dilation operator} 
\begin{equation}
	\label{nota:dilation}
	\Lambda_\lambda \G=(\lambda G,\lambda^2\GG)\quad \text{ for } \lambda\ge0\, .
\end{equation}
With this definition the relation \eqref{pre_chen} remains trivially true and it is easy to check that the resulting rough driver is anti-symmetric. 
	When $H=\frac12$, this construction generalises as follows.

	\begin{example}[$Q$-Wiener process]
		\label{exa:WP}
		Fix a stochastic basis $(\Omega,\mathcal F,\mathcal F_t,\mathbb P;(\beta^j))$ where $(\beta^j)_{j\ge0}$ is and i.i.d.\ family of real-valued Brownian motions.
	For any non-negative, trace-class operator $Q\in \mathcal L\left(H^1(\T ;\R^3)\right)$, introduce the  $Q$-Wiener process on $L^2(\T;\R^3)$:
		\[
		w_t(\cdot)=\sum_{j\ge 0}\sqrt Q g_j(\cdot )\beta^j_t\,,
		\]
		where we are assuming that $(g_j(x))_j$ is an orthonormal basis of $L^2(\T ;\R^3)$ (see \cite[p.\ 96]{DPZ} for further details).
		
		As in Example \eqref{exa:B}, we define for a.e.\ $x\in \T$:
		\setlength{\arraycolsep}{1.5pt}
		\[
		G_{s,t}(x):= 
		\mathcal F(\delta w_{s,t}(x))
		=\begin{pmatrix}
		0 & \delta w_{s,t}^3 & -\delta w_{s,t}^2\\
		-\delta w_{s,t}^3 & 0 & \delta w_{s,t}^1\\
		\delta w_{s,t}^2 & -\delta w_{s,t}^1 & 0
		\end{pmatrix}\!\!(x)
		\in \V^{p}(0,T;L^2(\T ;\mathcal L_a(\R^3)))\, .
		\]
		
		As is well-known and easily verified, $H^1(\T ;\R)$ is an algebra under pointwise multiplication and moreover $\|fg\|_{H^1}\lesssim \|f\|_{H^1}\|g\|_{H^{1}}$. In addition $H^1(\T)\hookrightarrow C^0(\T)$ with compact embedding and in particular for each $x$ the evaluation map $g\to g(x)$ is continuous from $H^1\to \R$.
		It follows that one can define the following Stratonovich integrals on a common set of full probability measure
		\[
		\begin{aligned}
		\GG _{s,t}(x)
		&=
		\int\limits_s^t
		\begin{pmatrix}
		-  \delta w_{s,r}^3\circ d w_{r}^3 -  \delta w_{s,r}^2\circ d w_{r}^2 &
		  \delta w_{s,r}^1\circ d w_{r}^2 &
		  \delta w_{s,r}^1\circ d w_{r}^3\\
		  \delta w_{s,r}^2\circ d w_{r}^1 &
		-  \delta w_{s,r}^3\circ d w_{r}^3 -  \delta w_{s,r}^1 \circ d w_{r}^1&
		  \delta w_{s,r}^2\circ d w_{r}^3\\
		  \delta w_{s,r}^3\circ d w_{r}^1 &
		  \delta w_{s,r}^3\circ d w_{r}^2 &
		-  \delta w_{s,r}^2\circ d w_{r}^2 -  \delta w_{s,r}^1\circ d w_{r}^1\\
		\end{pmatrix}\!\!(x)
		\end{aligned}
		\]
		for each $0\le s\le t\le T$ and a.e.\ $x\in \T.$
		With this definition, it is plain to verify that $\G$ is a random rough driver (adapt the computations of Example \ref{exa:B}).
	\end{example}
	
	One might wonder how the second component of an anti-symmetric rough driver is affected by the existence of an anti-symmetric approximating sequence $\Gamma^\epsilon$ as in \eqref{relazioni_path}. The next remark addresses this particular question.
	
		\begin{remark}
		\label{rem:levy}
		The second component $\GG_{s,t}$ of an anti-symmetric rough driver is to be thought of the (generally ill-defined) iterated integral $\int_{s<r_1<r<t} d\Gamma_r d\Gamma_{r_1}$, where $\Gamma_t=G_{0,t}$ is a path taking values in $\mathcal L_a(\R^n).$ As such, it is not anti-symmetric, nor symmetric in general.
		Assuming that $\Gamma$ has finite variation,
		an easy integration by parts argument over $r\in[s,t]$, using the antisymmetry of $\Gamma_t$, implies
		(here $\star$ denotes transposition)
		\[
		\GG^{\star}_{s,t}=\left (\int_s^t d\Gamma_{r}\delta \Gamma_{s,r}\right )^{\star}=
		\int_s^t \delta \Gamma_{s,r}d\Gamma_{r} = \int _s ^td\Gamma_r\delta \Gamma_{r,t}\,.
		\]
		Summing, we obtain that the symmetric part of $\GG$ is determined by the first level $G$ through
		\[
		\frac12\left (\GG _{s,t}+ \GG^\star_{s,t}\right ) = \frac12(G_{s,t})^2\,.
		\]
		Since geometric rough drivers are obtained as limits of such finite-variation lifts, this entails a similar identity for any such $\G\in \mathcal {RD}_a^{p}(H^\sigma)$.
		Namely, there exists an anti-symmetric family $\LL\in\V^{\frac p2}_{2} (H^{1}(\T;\mathcal L_a(\R^n)))$ such that
		\begin{equation}\label{levy}
		\GG_{s,t}(x) = \frac12(G_{s,t}(x))^2 +\LL_{s,t}(x)\,.
		\end{equation}
		
		In the particular case dealt in Example \ref{exa:B}, one sees that $\LL_{s,t}$ is in fact the L\'evy area of $\B$, formally:
		\begin{equation}
		\label{bracket}
		\begin{aligned}
		\LL_{s,t} =
		\iint\limits_{0<r_1<r<t}
		\frac12
		\begin{pmatrix}
		0 &
		d \beta_{r}^2 d \beta_{r_1}^1 -  d \beta_{r}^1 d \beta_{r_1}^2 &
		d \beta_{r}^3 d \beta_{r_1}^1- d \beta_{r}^1 d \beta_{r_1}^3
		\\
		d \beta_{r}^1 d \beta_{r_1}^2 - d \beta_{r}^2 d \beta_{r_1}^1 &
		0 &
		d \beta_{r}^3 d \beta_{r_1}^2- d \beta_{r}^3 d \beta_{r_1}^2
		\\
		d \beta_{r}^1 d \beta_{r_1}^3 - d \beta_{r}^3 d \beta_{r_1}^1&
		d \beta_{r}^2 d \beta_{r_1}^3 - d \beta_{r}^3 d \beta_{r_1}^2&
		0
		\end{pmatrix}.
		\end{aligned}
		\end{equation}
	\end{remark}
	
	\subsection{Notion of solution and main results }
	
	We want to formulate the original problem \eqref{LLG} by giving a meaning to the integral against the noise $ \times d w$ in our rough driver setting. As already underlined in the introduction, two main advantages of this approach are the fact that it allows for a deterministic treatment and, more importantly perhaps, pathwise continuity results are available. The following classical result is at the core of our approach.
	\begin{lemma}[Sewing lemma \cite{gubinelli2004controlling}]
		\label{lemma_sewing}
		Fix an interval $I\subset[0,T]$, a Banach space $(E,|\cdot |)$ and a parameter
		$\zeta > 1$. Consider a map $H \colon I^2 \rightarrow E$ and $C>0$ such that 
		\begin{equation}
		\label{a_gamma}
		\left|\delta H_{s,u, t}\right|\leq C\omega (s,t)^{\zeta }\,
		, \quad s \leq u \leq t \in I,
		\end{equation}
		for some control function $\omega ,$ and denote by $[\delta H]_{\zeta,\omega }$ the smallest possible constant $C$ in the above bound. 
		There exists a unique pair $\mathcal I\colon I \rightarrow E$ and $\mathcal I^{\natural}\colon I^2 \rightarrow E$ such that $\mathcal I_0=0$, $\delta \mathcal I^{\natural}_{s,u,t}=-\delta H_{s,u,t}$, \( \forall s\le u\le t\in [0,T] \)  and
\begin{equation}
	\label{integral_map}
\mathcal I_{t}-\mathcal I_s = H_{s,t} + \mathcal I_{s,t}^{\natural}
\end{equation}
where for $s\le t \in [0,T]$,
\begin{equation}\label{remainder_est_SL}
|\mathcal I_{s,t}^{\natural}| \leq C_\zeta [\delta H]_{\zeta,\omega } \omega (s,t)^{\zeta}\,,
\end{equation}
for some universal constant $C_\zeta>0$.
\end{lemma}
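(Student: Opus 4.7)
The plan is to construct $\mathcal{I}$ as the limit of Riemann-type sums $\mathcal{I}^{\pi}_{s,t}:=\sum_{[u,v]\in\pi} H_{u,v}$ over partitions $\pi$ of $[s,t]$ as the mesh goes to zero, and then define $\mathcal{I}^{\natural}$ via \eqref{integral_map}. The key observation is purely algebraic: if $\pi'$ is obtained from $\pi$ by deleting a single interior point $u$ with neighbours $a<u<b$, then telescoping gives
\[
\mathcal{I}^{\pi}_{s,t}-\mathcal{I}^{\pi'}_{s,t}= H_{a,u}+H_{u,b}-H_{a,b}= -\delta H_{a,u,b},
\]
which by hypothesis is controlled by $[\delta H]_{\zeta,\omega}\,\omega(a,b)^{\zeta}$.

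The main quantitative step is a \emph{greedy coarsening} argument. If $\pi$ has $N\ge 2$ subintervals with knots $s=t_0<\dots<t_N=t$, superadditivity of $\omega$ gives $\sum_{i=1}^{N-1}\omega(t_{i-1},t_{i+1})\le 2\omega(s,t)$, so there exists at least one interior $t_{i^\star}$ with $\omega(t_{i^\star-1},t_{i^\star+1})\le \tfrac{2}{N-1}\omega(s,t)$. Iterating the deletion of such minimal points from any starting partition all the way down to the trivial partition $\{s,t\}$ and summing the defects, we get
\[
\bigl|\mathcal{I}^{\pi}_{s,t}-H_{s,t}\bigr|\;\le\; [\delta H]_{\zeta,\omega}\sum_{k\ge 1}\Bigl(\frac{2}{k}\Bigr)^{\zeta}\,\omega(s,t)^{\zeta}\;=\;C_\zeta\,[\delta H]_{\zeta,\omega}\,\omega(s,t)^{\zeta},
\]
where the series converges because $\zeta>1$. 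Applied to the common refinement of two partitions, the same reasoning shows that $\mathcal{I}^{\pi^n}_{s,t}$ is Cauchy in $E$ along any sequence with vanishing mesh. Denoting the limit by $\mathcal{J}_{s,t}$, concatenation of Riemann sums yields $\mathcal{J}_{s,u}+\mathcal{J}_{u,t}=\mathcal{J}_{s,t}$, so $\mathcal{J}$ is an increment: $\mathcal{J}_{s,t}=\mathcal{I}_t-\mathcal{I}_s$ for some path $\mathcal{I}$ normalized by $\mathcal{I}_0=0$.

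Setting $\mathcal{I}^{\natural}_{s,t}:=\mathcal{I}_t-\mathcal{I}_s-H_{s,t}$, the bound \eqref{remainder_est_SL} is precisely the estimate above, while $\delta \mathcal{I}^{\natural}_{s,u,t}=-\delta H_{s,u,t}$ follows from the trivial fact that the $\delta$ of any increment vanishes. For uniqueness, suppose $(\mathcal{I}',\mathcal{I}'^{\natural})$ is another such pair; then $K_t:=\mathcal{I}_t-\mathcal{I}'_t$ satisfies $K_0=0$ and $\delta K_{s,t}=\mathcal{I}^{\natural}_{s,t}-\mathcal{I}'^{\natural}_{s,t}=O(\omega(s,t)^{\zeta})$. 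Subdividing $[s,t]$ into arbitrarily many pieces and using superadditivity,
\[
|\delta K_{s,t}|\le \sum_i |\delta K_{t_i,t_{i+1}}|\lesssim \omega(s,t)\,\max_i\omega(t_i,t_{i+1})^{\zeta-1}\xrightarrow[\text{mesh}\to 0]{}0,
\]
so $K\equiv 0$ by the strict inequality $\zeta>1$.

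The only genuinely delicate step is the coarsening in the second paragraph: one must carefully track that the bound $\tfrac{2}{N-1}$ on the "smallest two-cell" is compatible with the summability provided by $\zeta>1$, and that the argument passes uniformly to the common refinement of two partitions in order to get a Cauchy property. Every other step is either a telescoping identity, a straightforward passage to the limit, or a subdivision argument of the same flavor as the uniqueness one above.
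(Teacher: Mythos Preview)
The paper does not prove this lemma; it is simply stated with a citation to Gubinelli's original article \cite{gubinelli2004controlling}, so there is no ``paper's own proof'' to compare against. Your argument is the classical sewing/Young-type proof and is essentially correct: the telescoping identity for point removal, the pigeonhole bound $\omega(t_{i^\star-1},t_{i^\star+1})\le \tfrac{2}{N-1}\omega(s,t)$ via the two disjoint families of overlapping two-cells, the resulting maximal inequality $|\mathcal I^\pi_{s,t}-H_{s,t}|\le 2^\zeta\zeta(\zeta)\,[\delta H]_{\zeta,\omega}\,\omega(s,t)^\zeta$, and the uniqueness by subdivision are all standard and correctly sketched.

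One point worth tightening is the Cauchy step. The maximal inequality alone bounds $|\mathcal I^\pi_{s,t}-H_{s,t}|$ uniformly in $\pi$, but does not by itself yield convergence. What you want is: if $\pi\subset\pi'$, then coarsening $\pi'$ down to $\pi$ \emph{cell by cell} (i.e.\ applying the maximal inequality separately on each interval $[t_i,t_{i+1}]$ of $\pi$) gives
\[
\bigl|\mathcal I^{\pi'}_{s,t}-\mathcal I^{\pi}_{s,t}\bigr|\le C_\zeta[\delta H]_{\zeta,\omega}\sum_i\omega(t_i,t_{i+1})^\zeta\le C_\zeta[\delta H]_{\zeta,\omega}\,\omega(s,t)\,\max_i\omega(t_i,t_{i+1})^{\zeta-1},
\]
which tends to $0$ with the mesh of $\pi$. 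This, together with passing to common refinements, is what makes $\mathcal I^{\pi_n}_{s,t}$ Cauchy along any sequence with vanishing mesh and shows the limit is independent of the sequence. You allude to this (``applied to the common refinement\dots'') but the cell-by-cell localization is the actual mechanism, and it is worth stating explicitly.
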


In Example \ref{exa:WP} we defined a random rough driver  $\G (\omega)=(G,\GG )(\omega)$ and our 
	purpose is to look at the solvability of  \eqref{LLG} 
	in the sense of a suitable Taylor-Euler expansion driven by $\G (\omega)$ for $\omega$ from the set of full probability where $\GG $ was constructed. In the sequel, unless otherwise stated,  we keep such an $\omega$ fixed and for notational simplicity we do not write it explicitly. 
	Next, we iterate the equation into itself to obtain, at least formally,
	\begin{align*}
	\delta u_{s,t}
	-\int_s^t(\dd^2 u _r+ u_r|\partial_x u_r|^2 + u_r\times\dd^2 u_r ) d r
	&=\int_s^tu_r\times d w_r\\
	&=u_s\times \delta w_{s,t} + \left (\int _s^t(\cdot \times \delta w_{s,r})\times  d w_r\right )u_s + o(t-s)\,.
	\end{align*}
	We have already discussed in Examples \ref{exa:B}-\ref{exa:WP} how to interpret the stochastic integral, and therefore we look at the equation
	\begin{align}
	\label{rLLG}
	\tag{rLLG}
	\delta u_{s,t}
	-\int_s^t(\partial_x^2 u_r  + u_r|\partial_x u_r|^2 + u_r\times\partial_x^2 u_r ) d r=G_{s,t}u_s + \mathbb G_{s,t}u_s +u^{\natural}_{s,t}\,,
	\end{align}
	where the remainder  $u^{\natural}_{s,t}$ is defined through \eqref{rLLG} and it is required to be small in a certain  sense. 
	Specifically, one needs this term to satisfy an estimate of the form \eqref{remainder_est_SL} in $E:=L^2(\T;\R^3)$, and for this purpose it is enough to check that \(H_{s,t}=G_{s,t}u_s + \mathbb G_{s,t}u_s\) satisfies the assumptions of Lemma \ref{lemma_sewing}.
	As we shall see later on, the following definition of a pathwise solution will be enough to guarantee these properties.	
	\begin{definition}
		\label{def:solution}
		Let $\omega\rightarrow \mathbf{G}(\omega)\equiv(G(\omega),\mathbb{G}(\omega))$ be a pathwise random rough driver with values in \( \RD_a^p(H^2) \) for some \( p\in [2,3) \).
		We say that a stochastic process $u\colon \Omega\times [0,T]\to L^2(\T;\R^3)$  is a \textit{pathwise solution} of the \eqref{rLLG} if it is
		such that
			\begin{enumerate}[label=(\roman*)]
			\item \label{sol_i}
			$u_t(\omega,x)\in\mathbb{S}^2$ for a.e.\ $(\omega,t,x)\in\Omega\times[0,T]\times\T$;
			\item \label{sol_ii} with probability one
			$u\in L^\infty (0,T;H^1)\cap L^2(0,T;H^2)$; 
			\item \label{sol_iii}
			there exists \( q<1 \) and a random variable $u^{\natural}\in L^0(\Omega;\V^{q}_{2,\mathrm{loc}}(0,T;L^2))$ such that
			\begin{equation}
			\label{rLLG_def}
			\delta u_{s,t}-\int_s^t(\dd^2u_r +u_r|\partial_x u_r|^2 +u_r\times\partial_x^2 u_r)  d r
			=G_{s,t}u_s + \mathbb G_{s,t}u_s + u^\natural_{s,t}\,,
			\end{equation}
			as an equality in $L^2(\T ;\R^3)$, for every $s\leq t\in[0,T] $ and $\mathbb P$-a.s.
		\end{enumerate}
	
	Additionally, we say that \textit{$u$ starts at \( u^0\in H^1(\mathbb{T},\mathbb{S}^2) \)} if
	\begin{enumerate}[label=(\roman*)]\setcounter{enumi}{3}
	\item\label{itm:ci} with probability one, \( \lim _{t\downarrow0}u_t \) exists in \( L^2 \)-strong and equals \( u^0 \) .
	\end{enumerate}
	\end{definition}
	
	\begin{remark}
	It will be seen thanks to the a priori estimates of Section \ref{sec:apriori} and a compactness argument that pathwise solutions are in fact supported in \( \V^p([0,T];L^2)\subset C([0,T];L^2) \). Consequently the first part in condition \ref{itm:ci} above is always satisfied.
	\end{remark}
	
	\begin{remark}
	The fact that we require no $\mathcal F_t$-measurability for the unknown $u_t(\omega,x)$ is not a typo.
	Indeed, our treatment of \eqref{LLG} relies on rough paths techniques and hence integration makes sense independently of $\omega\in\Omega$, provided $\G(\omega)\in \mathcal {RD}^{p}_a(H^{1})$.
	Nevertheless, it will be seen that the solutions obtained from the rough driver of Example \ref{exa:WP} (resp.\ Example \ref{exa:B} with $H=\frac12$) are indeed $\mathcal F_t$-adapted, and in both cases these coincide with the usual Stratonovich solutions.
	A counterpart of this fact should be true as well for the fractional Brownian motion case (i.e.\ example \ref{exa:B} with $H\in(\frac13,\frac12)$), but this would require first to make more precise what we mean by a ``solution'' to \eqref{LLG} (for instance via Malliavin calculus techniques). Since this question goes much beyond our main motivation, we chose to leave it for future investigations. 
	\end{remark}

	Our first main result concerns existence and uniqueness  and we prove this statement in Section \ref{sec:apriori}.
	
	\begin{theorem}
		\label{thm:existence}
		Let  $u^0\in H^1(\T ;\mathbb{S}^2)$ and consider a random variable $\G\colon \Omega\to \RD _a^p(H^2)$.
		There exists a  unique pathwise solution $u$ to \eqref{rLLG} in the sense of Definition \ref{def:solution}.
		Moreover, there is a random variable $\ell\colon \Omega\to (0,T]$ such that the remainder $u^{\natural}$ is estimated above as
		\begin{equation}
			\label{remainder_intro}
		\|u^{\natural}_{s,t}\|_{L^2} \lesssim
		\omega_{\G,H^2}(s,t)^{\frac3p}+\omega_{\G,H^2}(s,t)^{\frac1p}\int_s^t\|u_r\times \dd^2u_r\|_{L^2}dr \,,
		\quad \forall 0\leq t-s\leq \ell\,.
		\end{equation}
	\end{theorem}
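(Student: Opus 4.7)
The plan is to work pathwise: fix \( \omega \) so that \( \G(\omega)\in\RD_a^p(H^2) \). By the geometricity and anti-symmetry of \( \G \) (Definition~\ref{defi-RD}\ref{itm:anti}), pick a smooth approximating sequence \( \Gamma^\epsilon\colon[0,T]\to L^2(\T;\mathcal L_a(\R^3)) \) with \( \G^\epsilon\to\G \) in the \( (p,p/2) \)-variation topology. For each \( \epsilon \) one solves the classical quasilinear PDE
\[
\partial_t u^\epsilon = \partial_x^2 u^\epsilon + u^\epsilon|\partial_x u^\epsilon|^2 + u^\epsilon\times\partial_x^2 u^\epsilon + \dot\Gamma^\epsilon_t u^\epsilon,\qquad u^\epsilon_0=u^0,
\]
for instance by a Faedo--Galerkin scheme or by conjugating with the ODE \eqref{auxiliary_ODE} to reduce to a deterministic PDE with random coefficients (as in \cite{brzezniak2019wong}). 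The \( \mathcal L_a(\R^3) \)-valuedness of \( \dot\Gamma^\epsilon_t(x) \) together with the structure of the drift preserves the pointwise constraint \( |u^\epsilon|=1 \).

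\textbf{Uniform energy estimates.} The core of the proof is to bound \( u^\epsilon \) in \( L^\infty(0,T;H^1)\cap L^2(0,T;H^2) \) uniformly in \( \epsilon \), with constants depending only on \( \|u^0\|_{H^1} \) and \( \omega_{\G,H^2}(0,T) \). Testing \eqref{rLLG} formally against \( -\partial_x^2 u^\epsilon \), combining \eqref{id:main_tt} with the identity \( u\times\partial_x^2 u=u\times\tt_u \), one obtains an identity of the schematic form
\[
\tfrac12\,d\|\partial_x u^\epsilon\|_{L^2}^2 + \|\tt_{u^\epsilon}\|_{L^2}^2\,dt = -\langle\partial_x^2 u^\epsilon,\,d\Gamma^\epsilon_t u^\epsilon\rangle,
\]
whose right-hand side one rewrites as a rough integral and controls via the product/sewing argument of Section~\ref{sec:sewing} (the ``rough standard machinery''). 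This step is what I expect to be the main obstacle: the quasilinear term \( u|\partial_x u|^2 \) has only \( L^1L^\infty \) natural control and must be coupled to a rough remainder in a negative Sobolev norm, with constants kept uniform in \( \epsilon \).

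\textbf{Compactness and remainder estimate.} The uniform bounds together with Aubin--Lions compactness produce a subsequence \( u^{\epsilon_n}\to u \) strongly in \( C([0,T];L^2) \) and weakly in \( L^2(0,T;H^2) \), with \( |u|=1 \) a.e., verifying \ref{sol_i}--\ref{sol_ii}. To secure \ref{sol_iii}, define
\[
u^\natural_{s,t}:=\delta u_{s,t}-\int_s^t(\partial_x^2 u_r+u_r|\partial_x u_r|^2+u_r\times\partial_x^2 u_r)\,dr-G_{s,t}u_s-\GG_{s,t}u_s,
\]
and compute \( \delta u^\natural_{s,u,t} \): Chen's relation \eqref{chen-rela} eliminates the purely algebraic level-two terms, and substituting the very definition of \( u^\natural_{s,u} \) for \( \delta u_{s,u} \) leaves a residue majorised by \( \omega_{\G,H^2}(s,t)^{3/p}+\omega_{\G,H^2}(s,t)^{1/p}\int_s^t\|u_r\times\partial_x^2 u_r\|_{L^2}dr \). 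Since \( 3/p>1 \), the sewing Lemma~\ref{lemma_sewing} delivers \eqref{remainder_intro} on any subinterval \( [s,t]\subset[0,\ell] \) where \( \omega_{\G,H^2} \) is small enough for the drift correction not to destroy superadditivity; this fixes the random length \( \ell(\omega)>0 \).

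\textbf{Uniqueness.} For two pathwise solutions \( u^1,u^2 \) sharing the initial datum, the difference \( v:=u^1-u^2 \) satisfies a linear rough equation of the form \( dv = d\G_t v + L_t v\,dt \), where \( L_t \) is affine in \( \partial_x u^i \) and \( \partial_x^2 u^i \). Testing against \( v \) in \( L^2 \) (using the sphere constraint to rewrite \( u^i|\partial_x u^i|^2 = -u^i(u^i\cdot\partial_x^2 u^i) \)) and applying a rough Gronwall lemma with the already-established \( L^2(0,T;H^2) \) bounds on \( u^i \) as an integrable majorant yields \( v\equiv 0 \), first on a small interval and then by iteration on the whole of \( [0,T] \).
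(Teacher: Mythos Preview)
Your proposal is correct and follows essentially the same route as the paper: smooth approximations of the driver (the paper solves the approximate PDE via an implicit function theorem argument in Appendix~\ref{app:perturbed} rather than Galerkin or conjugation, but this is inessential), uniform \(L^\infty(H^1)\cap L^2(H^2)\) bounds via the product formula and rough standard machinery of Section~\ref{Section_remainder_existence}, Aubin--Lions compactness (Lemma~\ref{lem:aubin}), identification of the limit, and the sewing computation you describe for \eqref{remainder_intro}. The only notable simplification the paper makes in the uniqueness step is that, by anti-symmetry of \(\G\) and the L\'evy-area decomposition of Remark~\ref{rem:levy}, testing \(z^{\otimes 2}\) against \(\mathbf 1\) kills \emph{all} rough contributions (exactly as in Lemma~\ref{lem:constraint}), so the remainder \(\langle (z^{\otimes2})^\natural,\mathbf 1\rangle\) is an increment in \(\V^{1-}\) and hence vanishes; this allows the paper to close with the \emph{classical} Gronwall lemma rather than the rough one you invoke.
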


\begin{remark}[Dynamical description]
	\label{rem:global}
	Let \( u,\G\) be as in Theorem \ref{thm:existence} and for simplicity, assume that the control in the right hand side of \eqref{remainder_intro} is linear, namely \( |u^{\natural}_{s,t}|_{L^2}\lesssim (t-s)^{3/p}\).
	Note that \eqref{remainder_intro} and the uniqueness part of the Sewing Lemma
	 imply that $\mathcal I^{\natural }_{s,t}=u^{\natural}_{s,t}$ in \eqref{integral_map}.
	Letting \( H_{s,t}=(G_{s,t}+\GG_{s,t})u_s \), the identity \( -\delta H_{s,\theta,t} \equiv\delta u^{\natural}_{s,\theta,t}\) that results from \eqref{rLLG_def} ensures that the quantity \( \int_s^t d\G u_r:=H_{s,t}+u^{\natural}_{s,t}\) satisfies the additivity property \( \int_{s}^\theta d\G_r u_r + \int _\theta ^t d\G_r u_r = \int_s^t d\G_r u_r ,\) for each \( s\le \theta\le t\in [0,T] .\)
	
	Consequently, the estimate \eqref{remainder_intro} allows to make sense of \eqref{rLLG} as an \textit{integral} equation in the following way.
	If \( \pi=\{[0=t_0<t_1],\dots, [t_k,t_{l+1}=t]\} \) is any finite partition of \( [0,t] \), summing \eqref{rLLG_def} over \( \pi \) shows that
	\begin{equation}
		\label{integral_description}
		\begin{aligned}
		u_t-u_0- \int_0^t(\partial_x^2 u_r  + u_r|\partial_x u_r|^2 + u_r\times\partial_x^2 u_r ) d r
		&=\int_0^td\G_r u_r.
		\end{aligned}
	\end{equation}
For \(|\pi|:=\max(t_{i+1}-t_i) \le \ell \), we have that
\[
\begin{aligned}
\int_0^td\G_r u_r
&\equiv\sum_{[t_i,t_{i+1}]\in \pi} (H_{t_i,t_{i+1}} + u^{\natural}_{t_i,t_{i+1}})
=\sum_{[t_i,t_{i+1}]\in \pi} (H_{t_i,t_{i+1}}) + O(|\pi|^{3/p-1})
\end{aligned}
\]
showing that \( \int_0^td\G_r u_r \) can be defined as the limit as \( |\pi|\to 0\) of the compensated Riemann sums \(\sum_{\pi}H_{t_i,t_{i+1}} \), \( \mathbb P \)-a.s.\ in \( L^2(\T) \).
A rigorous notion of rough integral requires however to introduce the corresponding space of \( G \)\textit{-controlled paths}, which is omitted here for simplicity of the presentation. 
In the context of unbounded rough drivers, we point out that related questions were addressed in \cite[Sec.~3]{hocquet2018ito}.
\end{remark}

	Similar to what is encountered in (S)PDE theory, it is reasonable to hope that our solutions are arbitrarily regular (in the sense of a suitable scale), as permitted however by the regularity of the data. 
	In our case, provided the initial condition and the noise have finite higher order Sobolev norms, then the solution lives in a stronger space, as illustrated in the next result.
	\begin{theorem}
		Let $u^0\in H^k(\T ;\mathbb{S}^2)$  and $\G \colon \Omega\to \RD _a^p(H^{k+1})$ be as in Example \ref{exa:WP} for some $k\geq 2$ or as in Example \ref{exa:B} with \( H=\frac12 \) and \( g\in H^{k+1} \).
		The unique pathwise solution \( u \) constructed in the previous theorem belongs to $\mathcal X^k:=L^\infty(H^k)\cap L^2(H^{k+1})\cap\mathcal{V}^p(H^{k-1})$, $\mathbb P$-a.s. 
		
		Moreover, \( u \) is \( \mathcal F_t \)-adapted and defines a semi-martingale in $L^2(\T)$.
		It has finite moments of arbitrary order in \( \mathcal X^k\) and satisfies the Stratonovich equation:
		\[
		\begin{aligned}
		du_t 
		=(\dd^2 u_t +u_t|\dd u_t|^2 + u_t\times\dd^2 u_t)dt + u _t\times \circ dw \quad \text{ on }\,\Omega\times[0,T]\times\T,\qquad u_0=u^0.
		\end{aligned}
		\]
	\end{theorem}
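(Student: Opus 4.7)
The proof plan is to combine a spatial regularity bootstrap of the energy estimates from Theorem \ref{thm:existence}, with the identification of the pathwise solution with a classical Stratonovich solution.

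\textbf{Bootstrap to $H^k$.} Starting from the $k=1$ bounds of Theorem \ref{thm:existence}, one iteratively differentiates \eqref{rLLG_def} in $x$ and applies the rough standard machinery alluded to in the introduction. Testing the $j$-th spatial derivative of \eqref{rLLG_def} against $\partial_x^j u$ for $j = 1, \dots, k$ yields a rough equation for the energies $E_j(t) := \tfrac{1}{2}\|\partial_x^j u_t\|_{L^2}^2$. The key structural fact is that anti-symmetry of $\G$ makes the principal rough contribution $\langle G_{s,t}\partial_x^j u_s, \partial_x^j u_s\rangle_{L^2}$ vanish, so that only commutator-type terms (involving derivatives of $G$ and $\GG$ up to order $k+1$) remain --- this is precisely why $\G \in \RD_a^p(H^{k+1})$ is assumed. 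These commutators are controlled using that $H^k(\T)$ is an algebra for $k\ge1$, together with Sobolev interpolation applied to the nonlinearity $u|\partial_x u|^2 + u\times\partial_x^2 u$. One then closes the energy inequality via the rough Gronwall lemma. The resulting pathwise bound in $L^\infty(H^k)\cap L^2(H^{k+1})$ upgrades to $\V^p(H^{k-1})$ by reading \eqref{rLLG_def}: the drift is bounded in $L^1(0,T;H^{k-1})$, while the rough contribution $(G+\GG)u + u^\natural$ has $p$-variation in $H^{k-1}$ by the $H^{k-1}$-analogue of \eqref{remainder_intro}.

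\textbf{Adaptedness and moments.} Adaptedness follows from a Wong--Zakai argument: by Theorem \ref{thm:wong-zakai} and Corollary \ref{cor:WZ}, the solution depends continuously on $\G$ in the rough-driver topology; taking a piecewise-linear-in-time approximation $w^\epsilon$ of $w$, the associated classical solutions $u^\epsilon$ are $\mathcal F_t$-adapted by standard PDE theory, and uniqueness transfers this property to the limit $u$. For moments, the a priori bounds above are polynomial in $\omega_{\G,H^{k+1}}(0,T)$; for Stratonovich lifts of Brownian/$Q$-Wiener processes this norm has Gaussian tails (via Fernique/BDG-type estimates on the iterated integrals), which yields finite moments of arbitrary order for $u$ in $\mathcal X^k$.

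\textbf{Identification with Stratonovich.} By Proposition \ref{pro:consistency} and the second-level identity \eqref{second_level_F}, the rough integral $\int_0^t d\G_r u_r$ introduced in \eqref{integral_description} coincides almost surely with the classical Stratonovich integral $\int_0^t u_r\times \circ dw_r$. Substituting into \eqref{integral_description} yields the stated Stratonovich SPDE, and the semi-martingale property follows because the drift belongs to $L^1(0,T;L^2)$ while the Stratonovich integral decomposes as an $L^2$-valued It\^o martingale plus a finite-variation correction. The main obstacle is closing the higher-order energy estimate: the term $u\times\partial_x^2 u$ creates an apparent derivative loss that only cancels through the joint use of anti-symmetry of $\G$ at the top rough level and the unit-length constraint \eqref{id:laplace_1} within the nonlinear drift, and tracking these cancellations uniformly in $k$ is the technical crux.
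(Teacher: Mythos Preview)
Your overall strategy matches the paper's: the $H^k$ bootstrap via the product formula and rough Gronwall (Theorem~\ref{teo:higher_order_regularity}), adaptedness via Wong--Zakai and uniqueness (Section~\ref{sec:existence_uniqueness}), and identification with the Stratonovich integral via Proposition~\ref{pro:consistency}. The structural cancellations you isolate---anti-symmetry of $\G$ killing the top-order rough term and, for the drift, the orthogonality $(u\times\partial_x^{k+1}u)\cdot\partial_x^{k+1}u=0$---are exactly the ones the paper exploits.

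There is, however, a genuine gap in your moment argument. The a priori bound coming from the rough Gronwall lemma (Lemma~\ref{lem:gronwall}) is \emph{not} polynomial in $\omega_{\G,H^{k+1}}(0,T)$: the conclusion~\eqref{concl:gron} produces a factor $\exp(\omega(0,T)/\tau_{\kappa,\ell})$, and this is precisely the form of the estimate~\eqref{eq:a_priori_k}. Hence to get $\mathbb E[\|u\|_{\mathcal X^k}^m]<\infty$ for all $m$ you need $\mathbb E[\exp(m\,\omega_{\G,H^{k+1}}(0,T))]<\infty$ for every $m>0$, i.e.\ all \emph{exponential} moments of the rough driver norm, not merely Gaussian tails feeding a polynomial bound. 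The paper handles this (for the driver built from the Brownian rough path $\rp$) by invoking the Fernique-type result \cite[Theorem~A.12]{FrizVictoire}, which gives $\mathbb E[\exp(m\|\rp\|_{\mathscr C^{1/p}})]<\infty$ for every $m$; this then transfers to $\|\G\|_{\RD^p_a(H^{k+1})}$ via the continuity of $\mathcal F$ and the boundedness of $g$ in $H^{k+1}$. So your invocation of Fernique is the right tool, but the logical chain ``polynomial bound $+$ Gaussian tails $\Rightarrow$ all moments'' is incorrect as stated and should be replaced by ``exponential bound $+$ exponential integrability of the Brownian rough path norm $\Rightarrow$ all moments''.
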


	Second, we address the so called Wong-Zakai convergence. Let $(G^n, \GG ^n)$ be an approximation of  $(G, \GG )$ which converges in the corresponding $p$-variation norm. Let $u^n$ be the solution to \eqref{rLLG} driven by $(G^n, \GG ^n)$  and let $u$ be the solution to \eqref{rLLG} driven by $(G, \GG )$. We investigate, whether and  in which sense $u^n$ converges to $u$. This Wong-Zakai convergence is a consequence of the continuity of the It\^o-Lyons map, that we prove in Section \ref{sec:wong-zakai}.
	
	\begin{theorem}[Continuity of the It\^o-Lyons map]
		\label{thm:wong-zakai}
		Let \( \G\colon \Omega\to \RD_a^p(H^2) \) be a random variable and denote by \(u^0\mapsto u(u^0) \) the pathwise solution of \ref{rLLG} supplied by Theorem \ref{thm:existence}.
		There is a continuous, deterministic map
		\begin{align*}
		\pi:H^1(\T ;\mathbb{S}^2)\times\RD _a^p(H^2)  &\longrightarrow L^\infty(H^1)\cap L^2(H^2)\cap \V^p(L^2)\\
		\end{align*}
	 	such that for any \( \omega \) with \( \G(\omega)\in \RD^p_a(H^2)\) and every \( u^0\in H^1 \),
	 	\[
	 	 u(u^0;\omega)= \pi(u^0,\G(\omega))\,.
	 	\]
	\end{theorem}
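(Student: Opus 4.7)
The plan is to prove the theorem by combining the a priori estimates from Section \ref{sec:apriori} with a compactness/uniqueness argument followed by a rough-Gronwall-type stability bound on the difference of two solutions. Fix a sequence $(u^{0,n},\G^n)\to (u^0,\G)$ in $H^1(\T;\mathbb{S}^2)\times\RD_a^p(H^2)$ and let $u^n=u(u^{0,n};\G^n)$ be the associated pathwise solutions. We must show $u^n\to u:=u(u^0;\G)$ in $\mathcal Y:=L^\infty(H^1)\cap L^2(H^2)\cap\V^p(L^2)$.

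First, I would invoke the a priori bounds of Section \ref{sec:apriori}: since $(\G^n)$ is convergent, the controls $\omega_{\G^n,H^2}$ are uniformly bounded by some $\omega_\infty$. The energy estimates then yield a uniform bound
\[
\sup_n\Big(\|u^n\|_{L^\infty(H^1)} + \|u^n\|_{L^2(H^2)} + \|u^{n,\natural}\|_{\V^{p/3}_{2,\loc}(L^2)}\Big) \leq C(u^0,\omega_\infty)\, ,
\]
together with the sphere constraint $|u^n_t(x)|=1$ and, via the equation, a uniform $\V^p(L^2)$-bound on $u^n$. By Aubin--Lions compactness (applied up to subdividing $[0,T]$ into intervals of uniform control), a subsequence $u^{n_k}$ converges to some $u^\star$, weakly-$\ast$ in $L^\infty(H^1)$, weakly in $L^2(H^2)$, strongly in $L^2(H^{2-\varepsilon})\cap C([0,T];L^2)$, and pointwise a.e. In particular, $u^\star$ is sphere-valued and the nonlinear drift $u^n|\dd u^n|^2+u^n\times\dd^2 u^n$ converges to the corresponding drift for $u^\star$ in the sense of distributions.

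Next, I would pass to the limit in the rough formulation \eqref{rLLG_def}: the Riemann terms $G^n_{s,t}u^n_s+\GG^n_{s,t}u^n_s$ converge in $L^2$ because $(G^n,\GG^n)\to(G,\GG)$ in the driver topology and $u^{n_k}_s\to u^\star_s$ strongly in $L^2$ for every $s$. The remainders $u^{n,\natural}$ are uniformly controlled by the uniform $\omega_\infty$ and the right-hand side of \eqref{remainder_intro}, which is also uniformly bounded by the previous a priori estimates; hence their limit is a remainder in the sense of Definition \ref{def:solution}\ref{sol_iii}. Thus $u^\star$ is a pathwise solution of \eqref{rLLG} with driver $\G$ and initial datum $u^0$, so by the uniqueness part of Theorem \ref{thm:existence} we must have $u^\star=u$, and a standard Urysohn-subsequence argument shows that the \emph{whole} sequence $u^n$ converges to $u$ in the compactness topology above. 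This already establishes the continuity of $\pi$ as a map into $L^2(H^{2-\varepsilon})\cap C([0,T];L^2)$.

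The remaining step, which is also the main obstacle, is to upgrade this weak convergence to convergence in the optimal norm $\mathcal Y$. The plan is to write down the rough PDE satisfied by the difference $v^n:=u^n-u$. By subtracting the two equations \eqref{rLLG_def}, $v^n$ satisfies
\[
\delta v^n_{s,t} - \int_s^t \!\!\big(\dd^2 v^n_r + f(u^n_r,u_r)\big)\,dr = G^n_{s,t}u^n_s - G_{s,t}u_s + \GG^n_{s,t}u^n_s - \GG_{s,t}u_s + v^{n,\natural}_{s,t}\, ,
\]
where $f$ collects the nonlinear drift differences and is multilinear in its arguments, so it is controlled in terms of $\|v^n\|_{H^1}$ and the uniform $H^2$-bounds of $u^n,u$. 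After rearranging, this is a rough PDE for $v^n$ driven by $\G$ with an additional linear forcing of the form $(G^n-G)u^n_s+(\GG^n-\GG)u^n_s$, whose $(p,p/2)$-variation tends to zero. Applying the rough standard machinery of Section \ref{sec:sewing} (culminating in a rough Gronwall inequality on $\|v^n_t\|_{H^1}^2+\int_0^t\|v^n\|_{H^2}^2$) and using the already established $L^2$-convergence to treat the quadratic remainder terms via interpolation, I would obtain
\[
\|v^n\|_{\mathcal Y}^2 \;\lesssim\; \|u^{0,n}-u^0\|_{H^1}^2 + \rho_{p,\frac{p}{2}}(\G^n,\G)^2\cdot\exp\!\big(C\omega_\infty\big)\, ,
\]
which yields the claim. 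The hardest point will be to close the Gronwall argument in the strongest norm: it requires estimating the drift difference $f(u^n,u)$ in $H^{-1}$ and the new remainder term (now depending bilinearly on $v^n$ and on differences of drivers) in a scale where the sewing lemma can still be applied uniformly in $n$, mimicking the bootstrap of Section \ref{subsec:bootstrap} on the difference equation rather than on a single solution.
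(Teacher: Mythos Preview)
Your proposal is correct and follows essentially the same two-step strategy as the paper: first establish convergence of $u^n$ to $u$ in a weaker topology, then upgrade to the optimal norm $\mathcal Y$ via a rough Gronwall argument on the difference $v^n=u^n-u$. The key technical obstacle you identify at the end --- that the remainder for the difference equation depends bilinearly on $v^n$ and on the driver differences $(G^n-G,\GG^n-\GG)$, and that $(G^n-G,\GG^n-\GG)$ is \emph{not} itself a rough driver --- is precisely the content of the paper's Theorem \ref{th:wz_remainder}, which provides the required bilinear remainder estimate.

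There is one structural difference worth noting. For the first step you invoke compactness (Aubin--Lions plus uniqueness) to obtain convergence in $L^2(H^{2-\varepsilon})\cap C([0,T];L^2)$. The paper instead bypasses compactness entirely and proves a direct quantitative estimate already at this level (Proposition \ref{prop:cont_wz_1}): the same product-formula/rough-Gronwall machinery applied to $z^{\otimes 2}$ rather than $(\partial_x z)^{\otimes 2}$ yields continuity in $L^\infty(L^2)\cap L^2(H^1)\cap\V^p(H^{-1})$ with an explicit Lipschitz-type bound in terms of $\|z_0\|_{L^2}$ and $\|\mathbf A-\mathbf B\|_{\RD^p_a(H^1)}$. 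This is then fed into the $H^1$-level Gronwall (Theorem \ref{th:cont_wz_2}) in exactly the way you describe. Your compactness step is not wrong, but it is redundant and loses the quantitative rate; the paper's direct approach yields the Lipschitz constant (and hence the speed of convergence $1$ in Corollary \ref{cor:WZ}) at every level.
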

	As a corollary, we conclude with a Wong-Zakai convergence result.
	\begin{corollary}[Wong-Zakai convergence]\label{cor:WZ}
		Let $u^0\in H^1(\mathbb{T};\mathbb{S}^2)$ and $(G^n,\GG ^n)_n$ be an approximation of  $(G,\GG )$ with respect to the $\RD ^p_a(H^2)$-norm, for $p\in [2,3)$.  Consider $u^n:=\pi(u^0;\G ^n)$ and $u:=\pi(u^0;\G )$. We obtain $\omega$-wise the following Wong-Zakai type convergence result 
		\begin{equation}
		\lim_{n\rightarrow +\infty}\|u^n-u\|_{L^\infty(H^1)\cap L^2(H^2)}^2=0\, ,
		\end{equation}
		with speed of convergence $1$, i.e.  
		\begin{align}
		\|\pi(u^0,\mathbf{G}^n)-\pi(u^0,\mathbf{G})\|_{L^\infty(H^1)\cap L^2(H^2)}\lesssim \|\mathbf{G}^n-\mathbf{G}\|_{\mathcal{RD}^p_a(H^2)}\, .
		\end{align}
	\end{corollary}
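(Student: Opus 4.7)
The qualitative convergence $\|u^n-u\|_{L^\infty(H^1)\cap L^2(H^2)}\to 0$ is actually immediate from Theorem \ref{thm:wong-zakai}: pathwise in $\omega$, the hypothesis $\G^n(\omega)\to \G(\omega)$ in $\RD_a^p(H^2)$ combined with the continuity of $\pi$ gives $\pi(u^0,\G^n(\omega))\to \pi(u^0,\G(\omega))$ in the stronger space $L^\infty(H^1)\cap L^2(H^2)\cap \V^p(L^2)$. Hence only the quantitative Lipschitz bound needs actual work, and my plan is to extract it from the very same a priori argument that establishes Theorem \ref{thm:wong-zakai}, applied now to the difference $v^n:=u^n-u$ rather than to the individual solutions.

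First I would subtract the two instances of \eqref{rLLG_def} to obtain, for all $s\le t$,
\[
\delta v^n_{s,t}-\int_s^t\!\mathcal N(u^n,u)_r\,dr = G_{s,t}v^n_s+\GG_{s,t}v^n_s +(G^n_{s,t}-G_{s,t})u^n_s+(\GG^n_{s,t}-\GG_{s,t})u^n_s+v^{n,\natural}_{s,t},
\]
where $\mathcal N(u^n,u)$ bundles the drift increments $\partial_x^2 v^n+(u^n|\partial_x u^n|^2-u|\partial_x u|^2)+(u^n\times\partial_x^2 u^n-u\times\partial_x^2 u)$. Since the assumption $\G^n\to \G$ entails the uniform bound $\sup_n\|\G^n\|_{\RD_a^p(H^2)}<\infty$, Theorem \ref{thm:existence} furnishes a uniform-in-$n$ estimate on $(u^n)$ in $L^\infty(H^1)\cap L^2(H^2)\cap \V^p(L^2)$; this permits to linearize $\mathcal N$ as an operator acting on $v^n$ with coefficients controlled uniformly in $n$ by $u^n$ and $u$. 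Crucially, the source term $(G^n-G)u^n+(\GG^n-\GG)u^n$ now appears as an additive inhomogeneity whose rough-driver norm is bounded by $\|\G^n-\G\|_{\RD_a^p(H^2)}$ times the uniform $H^2$-bound on $u^n$.

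The second step is to run the rough standard machinery from Section \ref{sec:apriori} on the energy $\|v^n\|_{H^1}^2$: after the appropriate tensorization producing the symmetric bilinear form and an application of the product formula, the rough Gronwall lemma should yield an inequality of the form
\[
\|v^n\|_{L^\infty(H^1)\cap L^2(H^2)}^2\;\lesssim\;\|\G^n-\G\|_{\RD_a^p(H^2)}^2,
\]
with implicit constant depending only on $\|u^0\|_{H^1}$ and $\sup_n\|\G^n\|_{\RD_a^p(H^2)}$; taking square roots yields the asserted speed of convergence $1$. The hard part, exactly as in the proof of Theorem \ref{thm:wong-zakai}, will be the control of the remainder $v^{n,\natural}$ in a negative-order Sobolev norm: the quasilinear term $u\times\partial_x^2 u$ does not enjoy obvious smoothing, so one must exploit the antisymmetry of $\G,\G^n$ and the sphere constraints $|u^n|=|u|=1$ (which deliver cancellations such as $u\cdot\partial_x u=0$ and $u\cdot(u\times \partial_x^2 u)=0$) in order to make the rough Gronwall argument close. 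Once this negative-norm bound on $v^{n,\natural}$ is in place, the remainder of the argument is identical to the continuity proof and the corollary follows directly.
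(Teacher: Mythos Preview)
Your proposal is correct and follows essentially the same route as the paper. The only difference is organizational: in the paper the quantitative Lipschitz-type estimate is already obtained as a byproduct of the proof of Theorem~\ref{thm:wong-zakai} (specifically the inequality \eqref{eq:speed_conv_wz_2} arising from the rough Gronwall step), so the corollary is stated as an immediate consequence rather than requiring the argument to be re-run; your outline of subtracting the equations, tensorizing, applying the product formula and the rough Gronwall lemma is precisely that proof.
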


\begin{remark}
	In the case of higher order regularity (i.e.\ for $u_0\in H^k$ and $\G\in \RD^p_a(H^{k+1})$ with $k\ge2$),
	similar conclusions as that of Theorem \ref{thm:wong-zakai} and Corollary \ref{cor:WZ} will be shown to hold in the spaces \( L^\infty(0,T;H^k)\cap L^2(0,T;H^{k+1}) \). See Theorem \ref{th:cont_wz_k} and Corollary \ref{cor:wz_k} below.
	\end{remark}
		
	As already discussed in the introduction, the properties described in Theorem \ref{thm:wong-zakai} and Corollary \ref{cor:WZ} constitute a direct motivation for studying a rough paths formulation of \eqref{rLLGn}. 
	We note that similar considerations were made in \cite[Chap.11]{FrizVictoire}, but for a somewhat different class of rough partial differential equations (viscosity solutions). 
	As underlined in \cite[Chap 19]{FrizVictoire}, once the continuity of the It\^o-Lyons map is established, one can simply hinge on the ``contraction principle'' to obtain a large deviation principle for vanishing noise. 
	The following result will indeed be shown in this manner (see Section \ref{sec:LD} for a proof).

	\begin{theorem} (Freidlin-Wentzell large deviations)
		Fix $k\in \mathbb{N}$, $p\in (2,3)$ and $u^0\in H^k(\T ;\mathbb{S}^2)$. Let $\G=\Lambda_{g(\cdot)}\mathcal F(\rp) \colon \Omega\to \RD ^p_a(H^{k+1})$ be a Brownian rough driver of the form \eqref{simple_RD} where $\rp$ is as in Example \ref{exa:B} with \( H=\frac12 \) and $g\in H^{k+1}(\T;\R)$.
		Let $\epsilon>0$ and $u^\epsilon=\pi(u^0; \Lambda_{\sqrt\epsilon}\G )=\pi(u^0; (\sqrt\epsilon G,\epsilon\GG))$ be the pathwise solution to \ref{rLLG} obtained from the deterministic solution map $\pi$ on $\T\times [0,T]$. Let moreover $u=\pi(u^0; 0)$.
		Denote by $u=\pi(u^0, h)$ the solution driven by the Cameron Martin path $h\in\mathcal{H}$ enhanced to a rough driver.
		Then $(\mathbb{P}\circ(u^\epsilon)^{-1})_{\epsilon>0}$ satisfies a large deviations principle in $L^\infty(H^k)\cap L^2(H^{k+1})\cap \V^p(H^{k-1})$ with good rate function
		\begin{align}
		\mathcal{J}(y):= \inf _{h\in\mathcal{H}}\big(\mathcal{I}(h) : \pi(u^0,\Lambda_{g(\cdot)}\mathcal{F}(h))=y\big)\, ,
		\end{align}
		where $\mathcal{I}$ is defined in \eqref{good_rate_fc_BM}.
	\end{theorem}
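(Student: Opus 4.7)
The strategy is a textbook application of the contraction principle, made possible by the continuity of the It\^o-Lyons map established earlier. The three ingredients I would combine are: (i) a Schilder-type large deviation principle for the rescaled enhanced Brownian motion $\Lambda_{\sqrt\epsilon}\rp=(\sqrt\epsilon\delta\beta,\epsilon\rpp)$; (ii) the continuity of the ``lifting'' map $\rp\mapsto \Lambda_{g(\cdot)}\mathcal F(\rp)$ from the rough-path space into $\RD_a^p(H^{k+1})$; and (iii) the higher-regularity version of Theorem \ref{thm:wong-zakai} (that is, Theorem \ref{th:cont_wz_k}) giving continuity of $\G\mapsto \pi(u^0,\G)$ from $\RD_a^p(H^{k+1})$ into $L^\infty(H^k)\cap L^2(H^{k+1})\cap \V^p(H^{k-1})$.

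First I would record the identity $\Lambda_{\sqrt\epsilon}\G=\Lambda_{g(\cdot)}\mathcal F(\Lambda_{\sqrt\epsilon}\rp)$, which follows directly from the definitions of the dilation $\Lambda_\lambda$ in \eqref{nota:dilation} and of $\mathcal F$ in Notations \ref{nota:F}--\ref{nota:FF}. Consequently the family of laws $(\mathbb P\circ(\Lambda_{\sqrt\epsilon}\G)^{-1})_{\epsilon>0}$ is the push-forward of $(\mathbb P\circ(\Lambda_{\sqrt\epsilon}\rp)^{-1})_{\epsilon>0}$ under the deterministic, continuous map $\Phi\colon\rp\mapsto\Lambda_{g(\cdot)}\mathcal F(\rp)$. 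The continuity of $\Phi$ with respect to the $(p,p/2)$-variation topology is a routine check: since $g\in H^{k+1}(\T;\R)$ and $H^{k+1}$ is a Banach algebra, the pointwise multiplication by $g$ (resp.\ $g^2$) sends $\R$-valued (resp.\ $\R^{3\times3}$-valued) $p$-variation (resp.\ $p/2$-variation) paths to $H^{k+1}$-valued ones, with a norm bounded by powers of $\|g\|_{H^{k+1}}$; the embedding $\mathcal F,\mathcal F\otimes \mathcal F$ is linear and hence continuous.

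Next I would invoke the classical Schilder theorem for enhanced Brownian motion (see \cite[Chap.~13]{FrizVictoire}), which asserts that $(\Lambda_{\sqrt\epsilon}\rp)_{\epsilon>0}$ satisfies an LDP in $\V^p(0,T;\R^3)\times \V_2^{p/2}(0,T;\R^{3\times 3})$ for any $p\in(2,3)$, with good rate function $\mathcal I(h)=\tfrac12\int_0^T|\dot h_r|^2dr$ if $h$ is absolutely continuous and $+\infty$ otherwise, the second level being the Young integral of $h$ against itself. By the contraction principle, the push-forward under the continuous map $\Phi$ yields an LDP for $(\Lambda_{\sqrt\epsilon}\G)_{\epsilon>0}$ in $\RD_a^p(H^{k+1})$ with good rate function $\tilde{\mathcal J}(\mathbf H)=\inf\{\mathcal I(h):\Phi(h)=\mathbf H\}$.

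Finally, since $u^\epsilon=\pi(u^0;\Lambda_{\sqrt\epsilon}\G)$ and the solution map $\pi(u^0,\cdot)$ is continuous from $\RD_a^p(H^{k+1})$ into $L^\infty(H^k)\cap L^2(H^{k+1})\cap\V^p(H^{k-1})$ by Theorem \ref{th:cont_wz_k}, a second application of the contraction principle transports the LDP to $(u^\epsilon)_{\epsilon>0}$, producing the good rate function
\[
\mathcal J(y)=\inf\bigl\{\mathcal I(h):\pi(u^0,\Lambda_{g(\cdot)}\mathcal F(h))=y\bigr\}
\]
as announced. The only genuine work lies in steps (ii) and (iii) above; once Theorem \ref{th:cont_wz_k} is available, the large deviation statement is essentially automatic. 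The main subtlety would be checking that the continuity of $\pi$ really holds in the claimed strong topology at arbitrary regularity level $k$, but this is precisely what the bootstrap argument of Section \ref{sec:wong-zakai} is designed to deliver.
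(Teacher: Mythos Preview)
Your proposal is correct and follows essentially the same route as the paper: invoke Schilder's theorem for the enhanced Brownian motion, note that $\rp\mapsto\Lambda_{g(\cdot)}\mathcal F(\rp)$ and $\G\mapsto\pi(u^0,\G)$ are continuous (the latter by Theorem~\ref{th:cont_wz_k}), and apply the contraction principle. The only cosmetic difference is that the paper composes the two continuous maps into a single $f(\rp)=\Psi^k(u^0,\Lambda_{g(\cdot)}\mathcal F(\rp))$ and applies the contraction principle once, whereas you factor it into two successive applications; this is immaterial.
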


Another application of our continuity results for the It\^o-Lyons map is the following Stroock-Varadhan like support theorem, the proof of which will be presented in Section \ref{sec:LD}.
	Again, it will be an easy further consequence of Corollary \ref{cor:WZ} and the existence of a ``nice description'' of the support of the enhanced rough Brownian motion in the $p$-variation topology for $p\in (2,3)$ (see \cite[Chap.~15]{FrizVictoire}).

	\begin{theorem}
	Let $p\in(2,3)$, $k\in \mathbb{N}$, $u^0\in H^k(\T ;\mathbb{S}^2)$ and $u=\pi(u^0,\G )\in L^\infty(H^k)\cap L^2(H^{k+1})=:\mathcal{Y}$ be the solution to \eqref{rLLG}, where in the notations of Section \ref{sec:RD}, 
	\[
	\G(\omega;x) =(g(x)\mathcal F(\delta\beta(\omega)),g(x)^2\mathcal F^{\otimes 2}\rpp(\omega))\in \RD ^p_a(H^{k+1})
	\]
	is the Brownian rough driver as defined in \eqref{simple_RD} with $H=\frac12$.
	Then the support of its distribution is the closure $\overline{\pi(u^0;\Lambda_{g(\cdot)}\circ\mathcal F(\mathcal D))}^{p\mathrm{-var};\mathcal{Y}}$ of the image through the solution map $\pi(u^0;\Lambda_{g(\cdot)}\circ\mathcal F(\cdot))  $ of the space of dyadic rough paths with respect to the $\mathcal{Y}$-norm, for $p\in (2,3)$. 
	\end{theorem}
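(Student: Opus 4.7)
The plan is to deduce the theorem from the continuity of the It\^o-Lyons map (Theorem \ref{thm:wong-zakai}, respectively its higher regularity version Theorem \ref{th:cont_wz_k}) in combination with the classical \emph{support mapping principle} and the known description of the support of the enhanced Brownian rough path in $p$-variation. Recall that if $\Psi\colon X\to Y$ is a Borel continuous map between separable metric spaces and $Z$ is an $X$-valued random variable, then $\mathrm{supp}(\mathrm{law}(\Psi(Z)))=\overline{\Psi(\mathrm{supp}(\mathrm{law}(Z)))}$.

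First I would set $X:=\V^p(0,T;\R^3)\times \V^{p/2}_2(0,T;\R^{3\times 3})$ equipped with the inherited $p$-variation rough path metric, let $Y:=\mathcal{Y}$, and define
\[
\Psi\colon X\to Y,\qquad \Psi(\rp):=\pi\bigl(u^0;\,\Lambda_{g(\cdot)}\mathcal{F}(\rp)\bigr).
\]
The key step is to verify that $\Psi$ is continuous by decomposing it as the composition of three continuous maps: (i) $\rp\mapsto \mathcal{F}(\rp)=(\mathcal F(\delta\beta),\mathcal F^{\otimes 2}\rpp)$, continuous from $X$ into the space of spatially constant rough drivers, by linearity and boundedness of $\mathcal{F}$ and $\mathcal{F}^{\otimes 2}$ (Notations \ref{nota:F}--\ref{nota:FF}); (ii) the dilation/multiplier $(H,\mathbb H)\mapsto \Lambda_{g(\cdot)}(H,\mathbb H)=(g(\cdot)H,\,g(\cdot)^2\mathbb H)$, continuous into $\RD_a^p(H^{k+1})$ thanks to $g\in H^{k+1}$ and the Sobolev-algebra property $\|gf\|_{H^{k+1}}\lesssim \|g\|_{H^{k+1}}\|f\|_{H^{k+1}}$ applied coordinate-wise in the $(p,p/2)$-variation norms; (iii) the solution map $\G\mapsto \pi(u^0;\G)$, continuous from $\RD_a^p(H^{k+1})$ into $\mathcal{Y}$ by Theorem \ref{th:cont_wz_k} (or Theorem \ref{thm:wong-zakai} when $k=1$).

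Second, I would invoke the classical characterization (see \cite[Chap.~15]{FrizVictoire}) that for $p\in(2,3)$, the support of the law of the enhanced Brownian rough path $\rp$ with respect to the $p$-variation rough path metric on $X$ is exactly $\overline{\mathcal D}$, where $\mathcal{D}$ denotes the space of dyadic piecewise-linear rough paths (canonically lifted). Applying the support mapping principle to $\Psi$ and using its continuity to pull the closure inside then gives
\[
\mathrm{supp}(\mathrm{law}(u))=\overline{\Psi(\overline{\mathcal D})}^{\mathcal{Y}}=\overline{\Psi(\mathcal D)}^{\mathcal{Y}}=\overline{\pi\bigl(u^0;\Lambda_{g(\cdot)}\circ\mathcal{F}(\mathcal D)\bigr)}^{p\mathrm{-var};\mathcal{Y}},
\]
which is the desired identity.

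The only genuinely technical point is step (ii) above: one must confirm that pointwise multiplication by $g$ (respectively $g^2$) sends the spatially constant rough driver $\mathcal F(\rp)$ to an element of $\RD_a^p(H^{k+1})$ in a Lipschitz manner for the $(p,p/2)$-variation topology. This reduces to bounding each partition sum through the Sobolev-algebra inequality and then passing to the supremum over partitions, which is routine. Once this, together with the continuity result Theorem \ref{th:cont_wz_k}, is at hand, the proof is a direct application of the support mapping principle plus the Friz--Victoir characterization, with no further delicate estimate needed.
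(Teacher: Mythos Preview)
Your proposal is correct and follows essentially the same approach as the paper: define the composite map $\phi=\Psi=\pi(u^0;\Lambda_{g(\cdot)}\circ\mathcal F(\cdot))$, argue its continuity via the continuity of $\mathcal F$, of the dilation $\Lambda_{g(\cdot)}$, and of the It\^o--Lyons map (Theorem~\ref{th:cont_wz_k}), then combine with the $p$-variation support description of the enhanced Brownian motion (Proposition~\ref{pro:support_p_var}). The only stylistic difference is that you invoke the abstract support mapping principle $\mathrm{supp}(\mathrm{law}(\Psi(Z)))=\overline{\Psi(\mathrm{supp}(\mathrm{law}(Z)))}$ as a black box, whereas the paper verifies the two inclusions by hand; the underlying argument is identical.
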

	
	\section{A priori estimates and more}
	\label{sec:apriori}
The purpose of this section is to collect a priori estimates for solutions of a \textit{deterministic} rough LLG. Namely, the sample parameter is considered as fixed and one fixes all throughout a realization $\G:=\G(\omega)$ of the random rough driver considered in Theorem \ref{thm:existence}. 
We will also ignore for the moment the dependency of solutions with respect to that sample.
Later on, these results will be applied in order to obtain existence and uniqueness of pathwise solutions.

	\subsection{Deterministic solutions and product formula}
	\label{sec:sewing}

	Our purpose here is to state an equation on a product of the form $a_t^i(x)b^j_t(x)$ for two unknowns 
	which are solving a system:
	\begin{align}
	\label{deterministic_generic}
	&d a^{i}= f^id t + (d\A a)^i ,\quad 
	\quad \text{on}\enskip  [0,T]\times \T\,,
	\\
	\label{deterministic_generic_b}
	&d b^{i}= g^id t + (d\B b)^i 
	\quad 
	\quad i=1,\dots n,
	\end{align}
	where $\A=(A,\AA)\in \RD^p(H^1)$ is $n$-dimensional (i.e.\ it the first component defines an element of \( \mathcal L(\R^n) \) for almost every \( (t,x)\in [0,T]\times\T \)) and similarly for $\B=(B,\BB).$
We assume for convenience that both $a$ and $b$ are bounded from $[0,T]\to L^2(\T;\R^n)$ and moreover that
	the drift terms $f,g$ are square-integrable in time and space (note that they may depend on the unknowns).	
	
	At first, we need a notion of \textit{deterministic} weak solution for a generic rough sytem of PDEs of the form \eqref{deterministic_generic} with some given initial datum.
	\begin{definition}
		\label{def:sol_deter}
	Let $q\ge1$ and fix $a^0\in L^q(\T;\R^n)$.
	We say that $a\colon [0,T]\times \T\to \R^n$ solves \eqref{deterministic_generic} in $L^q$, with initial condition $a^0$, if 
	\begin{enumerate}[label=(\roman*)]
		\item $t\mapsto a_t(\cdot)$ belongs to $L^\infty(L^q(\T;\R^n))$,
		$f$ belongs to $L^q(L^q(\T;\R^n))$;
		\item 
there exists $a^{\natural}$ in $\V^{1-}_{2,\mathrm{loc}}(L^q)$ such that
		\begin{align}
		\label{rough_generic}
		\delta a_{s,t}
		-\int_s^tfd r=A_{s,t}a_s + \AA_{s,t}a_s +a^{\natural}_{s,t}\,,\quad \forall s\le t \in [0,T]\, .
		\end{align}
		\item $\lim_{t\downarrow 0}\langle a_t,\phi\rangle=\langle a^0,\phi\rangle$ for each $\phi\in L^2 (\T;\R^n).$
	\end{enumerate}
	\end{definition}

	We need a small modification of the product formula in \cite{hocquet2018ito}. Here we focus on the case when the two unknowns $a,b$ take values in $\R^n$ for some $n\ge1$, since it will only be needed in this form throughout the paper. 
We also restrict to the torus $\T$ for the sake of readability, even though generalizations to other domains are available.
Since we shall work with systems all throughout, it is convenient at this stage to introduce some notation on finite-dimensional tensor products. It should be noted that the following is consistent with the notations \eqref{GG_tensor}-\eqref{simple_RD}. 
\begin{notation}
	\label{nota:tensor}
	\begin{itemize}
			\item 
	For two vectors $a,b\in \R^n$, we let $a\otimes b\in\R^{n\times n}$ be the element 
	\begin{equation}\label{tensor_vector}
	(a\otimes b)^{i,j}=a^ib^j\,, \quad 1\le i,j\le n\,.
	\end{equation}
			\item
	For two linear maps $M,N\in\mathcal L(\R^n;\R^n),$ we let $M\otimes N\in \mathcal L(\R^{n\times n};\R^{n\times n})$ be the linear map
	\begin{equation}\label{tensor_matrix}
	M\otimes N\colon \R^{n\times n} \to \R^{n\times n},\quad \quad 
	\Phi \mapsto \left[\sum\nolimits_{1\leq k,l\leq n}M^{ik}N^{jl}\Phi ^{kl}\right]_{1\leq i,j\leq n}\,.
	\end{equation}
	\end{itemize}
\end{notation}
	
With Notation \ref{nota:tensor} at hand, the product formula reads as follows.

	\begin{proposition}[Product formula]
		\label{pro:product}
		Fix an integer $n\ge1$
		and let $a=(a^{i})_{i=1}^n\colon [0,T]\to L^2(\T;\R^n)$ (resp.\ $b=(b^i)_{i=1}^n\colon [0,T]\to L^2(\T;\R^n)$) be a bounded path, given as a weak solution of the system
		\eqref{deterministic_generic} (resp.\ \eqref{deterministic_generic_b}) on $[0,T]\times \T,$
		for some $f\in L^2(L^2)$ (resp.\ $g\in L^2(L^2)$).
		We assume that both  
		\[
		\A=\left (A^{i,j}_{s,t}(x),\AA^{i,j}_{s,t}(x)\right )_{\substack{1\leq i,j\leq n;\\ s\le t \in [0,T];x\in\T }}
		\quad 
		\B=\left (B^{i,j}_{s,t}(x),\BB^{i,j}_{s,t}(x)\right )_{\substack{1\leq i,j\leq n;\\ s\le t \in [0,T];x\in\T }}
		\]
		are $n$-dimensional geometric rough drivers
		of finite $(p,\frac p2)$-variation with $p\in[2,3)$ and with coefficients in $H^{1}(\T)$.
		Then the following holds:
		\begin{enumerate}[label=(\roman*)]
			\item \label{Q_shift}
			The two parameter mapping $\boldsymbol\Gamma^{\A,\B}\equiv(\Gamma^{A,B},\bbGamma^{\A,\B})$ defined for $s\le t \in [0,T] $ as
			\begin{equation}
			\label{tensorized_driver}
			\begin{aligned}
			&\Gamma_{s,t}^{A,B}:= A_{s,t}\otimes \mathbf 1 + \mathbf 1\otimes B_{s,t} \, ,\quad 
			\\
			&\bbGamma_{s,t}^{\A,\B}:=\AA_{s,t}\otimes \mathbf 1+ A_{s,t}\otimes B_{s,t}+ \mathbf 1\otimes \BB_{s,t}\, ,
			\end{aligned}
			\end{equation}
			where $\mathbf 1\equiv \mathbf 1_{n\times n}\in \mathcal L(\R^n)$ is the identity, is a $n^2$-dimensional rough driver (in the sense of Definition \ref{defi-RD}),
			\item \label{prod_uv}
			The product $v^{\otimes 2}_t(x)=(a^i_t(x)b^j_t(x))_{1\leq i, j\leq n}$ is bounded as a path in $L^1(\T;\R^{n\times n}).$ Moreover, it is a weak solution, in $L^1,$ of the system
			\begin{equation}
			\label{concl:prod}
			d (a\otimes b)= (a\otimes g +f \otimes b )d t + d \boldsymbol\Gamma^{\A,\B} [a\otimes b]\,.
			\end{equation}
		\end{enumerate}
	\end{proposition}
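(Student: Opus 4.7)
The plan has two essentially independent parts: verifying that $\boldsymbol\Gamma^{\A,\B}$ satisfies the axioms of Definition \ref{defi-RD}, and then deriving the rough PDE satisfied by the tensor product $a\otimes b$.

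For part \ref{Q_shift}, the analytic bounds on $(\Gamma^{A,B}, \bbGamma^{\A,\B})$ with values in $H^1(\T;\mathcal L(\R^{n^2}))$ are straightforward from bilinearity: the first component $\Gamma^{A,B}=A\otimes\mathbf 1+\mathbf 1\otimes B$ has finite $p$-variation since each summand does, while the diagonal pieces $\AA\otimes\mathbf 1$ and $\mathbf 1\otimes \BB$ inherit their $p/2$-variation from $\AA$ and $\BB$; the cross term $A\otimes B$ is of finite $p/2$-variation by a Cauchy-Schwarz type product estimate $\|A\otimes B\|_{(p/2)\text{-var}}\le\|A\|_{p\text{-var}}\|B\|_{p\text{-var}}$. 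The Chen relation reduces to a purely algebraic identity. Using the tensor composition law $(M_1\otimes N_1)(M_2\otimes N_2)=M_1M_2\otimes N_1N_2$, the Chen relations $\delta\AA_{s,u,t}=A_{u,t}A_{s,u}$ and $\delta\BB_{s,u,t}=B_{u,t}B_{s,u}$, and the telescoping identity $\delta(A\otimes B)_{s,u,t}=A_{s,u}\otimes B_{u,t}+A_{u,t}\otimes B_{s,u}$ (valid because $A,B$ are increments), one checks that both sides of \eqref{chen-rela} equal
\begin{equation*}
A_{u,t}A_{s,u}\otimes\mathbf 1 + A_{s,u}\otimes B_{u,t} + A_{u,t}\otimes B_{s,u} + \mathbf 1\otimes B_{u,t}B_{s,u}.
\end{equation*}

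For part \ref{prod_uv}, the conditions $a\otimes b\in L^\infty(L^1)$ and $f\otimes b+a\otimes g\in L^1(L^1)$ follow directly from Cauchy-Schwarz, so only the rough Taylor expansion remains. Substituting the equations \eqref{rough_generic} for $\delta a$ and its analogue for $\delta b$ into the identity
\begin{equation*}
\delta(a\otimes b)_{s,t}=\delta a_{s,t}\otimes b_s + a_s\otimes\delta b_{s,t}+\delta a_{s,t}\otimes\delta b_{s,t},
\end{equation*}
one identifies the drift $\int_s^t(f\otimes b+a\otimes g)dr$, the level-$1$ contribution $\Gamma^{A,B}_{s,t}(a_s\otimes b_s)$, and the level-$2$ contribution $\bbGamma^{\A,\B}_{s,t}(a_s\otimes b_s)$: the key point is that the leading cross term $(A_{s,t}a_s)\otimes(B_{s,t}b_s)$ extracted from $\delta a_{s,t}\otimes\delta b_{s,t}$ is exactly the piece needed to complete the tensor structure of $\bbGamma^{\A,\B}$. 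Setting
\begin{equation*}
H_{s,t}:=\int_s^t(f\otimes b+a\otimes g)dr+\Gamma^{A,B}_{s,t}(a_s\otimes b_s)+\bbGamma^{\A,\B}_{s,t}(a_s\otimes b_s),
\end{equation*}
a direct calculation using the Chen identity from Step 1 yields
\begin{equation*}
\delta H_{s,u,t}=\Gamma^{A,B}_{u,t}\bigl[a_s\otimes b_s-a_u\otimes b_u+\Gamma^{A,B}_{s,u}(a_s\otimes b_s)\bigr]+\bbGamma^{\A,\B}_{u,t}\bigl(a_s\otimes b_s-a_u\otimes b_u\bigr).
\end{equation*}
Substituting for $a_u\otimes b_u-a_s\otimes b_s$ its leading rough expansion $\Gamma^{A,B}_{s,u}(a_s\otimes b_s)+O(\omega^{2/p})$ produces a cancellation in the first bracket, and all surviving terms have combined exponent at least $3/p>1$ for $p\in[2,3)$. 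The Sewing Lemma \ref{lemma_sewing} then constructs the unique remainder $(a\otimes b)^\natural\in\V^{1-}_{2,\loc}(L^1)$ realising \eqref{concl:prod}.

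The main technical obstacle lies in the bookkeeping of the residual terms in $\delta H_{s,u,t}$: one must verify that each combination of increments (from the drifts, the remainders $a^\natural, b^\natural$, and the two levels of $\A, \B$) carries an effective regularity strictly better than linear, which is where the tailor-made structure of $\bbGamma^{\A,\B}$---in particular the presence of the $A\otimes B$ cross term---and the restriction $p<3$ play a crucial role. A symmetry argument exchanging the roles of $a$ and $b$ halves the number of cases to be treated.
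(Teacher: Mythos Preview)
Your argument for part \ref{Q_shift} is correct and more explicit than the paper, which subsumes this verification into the citation. For part \ref{prod_uv}, however, your route genuinely differs from the paper's. You work directly with the pointwise identity $\delta(a\otimes b)_{s,t}=\delta a_{s,t}\otimes b_s+a_s\otimes\delta b_{s,t}+\delta a_{s,t}\otimes\delta b_{s,t}$, substitute the rough expansions of $\delta a$ and $\delta b$, and check term by term that every contribution not absorbed into $H_{s,t}$ carries exponent at least $3/p$ in $L^1$. The paper instead invokes the \emph{variable-doubling} argument of \cite{hocquet2018ito}: one first derives an equation for $v_t(x,y):=a_t(x)\otimes b_t(y)$ on $\T\times\T$ (where the two factors depend on independent spatial variables, so the tensor structure is trivial), and then restricts to the diagonal by testing against the mollifier $\Phi_\epsilon(x,y)=(2\epsilon)^{-1}\psi(\tfrac{x-y}{2\epsilon})\phi(\tfrac{x+y}{2})$, proving that the doubled remainder $\langle v^{\natural}_{s,t},\Phi_\epsilon\rangle$ stays uniformly bounded in $p/3$-variation as $\epsilon\to 0$ and concluding via weak-$*$ compactness.

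Your direct approach succeeds here precisely because the rough drivers act by pointwise multiplication with $H^1\hookrightarrow L^\infty$ coefficients, so that every product appearing (e.g.\ $a^\natural_{s,t}\otimes b_s$, $(\AA_{s,t}a_s)\otimes(B_{s,t}b_s)$) lands in $L^1$ with the expected regularity. The doubling technique is designed for the general unbounded-driver framework of \cite{bailleul2017unbounded,deya2016priori}, where the drivers may be differential operators and such pointwise products are ill-defined; it is the more robust method but is overkill in the present multiplicative setting. One imprecision worth flagging: the Sewing Lemma does not ``construct'' $(a\otimes b)^\natural$ here---that object is already defined as $\delta(a\otimes b)-H$, and the task is to verify it lies in $\V^{1-}_{2,\loc}(L^1)$. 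Your final paragraph does exactly this (direct substitution and bookkeeping of exponents), which is the correct mechanism; the appeal to Lemma~\ref{lemma_sewing} is redundant once that bookkeeping is done.
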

	
	\begin{proof}
		Proceeding component-wise the proof is carried out by the same arguments as that of \cite{hocquet2018ito},
		hence we only sketch it (see also \cite{hofmanova2020navier}).
		At first we observe that the new unknown
		$v_t(x,y)=a_t(x)\otimes b_t(y)$ satisfies an equation similar to \eqref{concl:prod}, but where this time the variables are doubled. 
		The aim is then to show that the corresponding system can be restricted to the diagonal 
		$\{(x,x)\in\T\times\T\}$, which amounts to integrating it against the singular test 
		function $\delta_x (y)\phi(x),$ for any smooth $\phi\colon \T\to \R$. 
		This is done by considering the family $\Phi_\epsilon(x,y)=(2\epsilon)^{-1}\psi(\frac{x-y}{2\epsilon})\phi(\frac{x+y}{2})$ for some $\psi\geq0$ regular enough which integrates to one, and any $\phi$ as above. One needs then to justify that the remainder $\langle a^{\otimes2,\natural}_{s,t},\Phi_\epsilon\rangle$ is uniformly bounded in $\frac{p}{3}$-variation norm, which allows to conclude by a variant of Banach-Alaoglu Theorem. 
		We omit the details.
	\end{proof}
	
	\begin{remark}[symmetric tensor product in $\R^n$]
	Let us introduce the following notation for two vectors $a,b\in \R^n$:
	\[
	a\odot b= \frac12(a\otimes b + b\otimes a)\, .
	\]
	Then for $a=b$, formula \eqref{concl:prod} can be rephrased in a convenient way as
	\begin{equation}\label{product_square}
	\delta a_{s,t}^{\otimes 2}
	=
	2\int_s^ta\odot fd r
	+ 2a_s\odot[(A_{s,t} +\AA_{s,t})a_s] + (A_{s,t}a)^{\otimes2} + (a^{\otimes 2})^{\natural}_{s,t}
	\end{equation}
	for every $ s\le t \in [0,T].$
	\end{remark}
	
	\subsection{The $L^\infty(0,T;H^1)\cap L^2(0,T;H^2)\cap 
	\mathcal{V}^p(0,T;L^2)$-estimate }
	\label{subsec:bootstrap}
In this section we prove that deterministic solutions are bounded in $L^\infty(H^1)\cap L^2(H^2)\cap \mathcal{V}^p(L^2)$ and that this bound only depends on the size of the initial datum and the rough driver $\G\in \RD^p_a(H^2)$.

	The first paragraph is devoted to the proof that any solution starting in the sphere $\mathbb S^2$ will remain sphere-valued for all times. This will result from the standard Gronwall Lemma and the simple observation that, loosely speaking, because of geometricity and Remark \ref{rem:levy}, the contributions of the rough input vanish almost everywhere when the solution is tested against itself (in the sense of a product formula on $u$ similar to \eqref{product_square}). 
	This is however a very specific case of product formula, in that it cannot be used to evaluate other types of test functions in general.
	For that purpose, the second paragraph introduces a fundamental tool in the rough integration, that is how to estimate the remainder in a sytem of the generic form \eqref{rough_generic}: the presented construction will be useful in other parts of the paper and could also be applied to other equations. 
	In the third paragraph we apply these ideas to prove the desired a priori estimate.

\subsubsection{The norm constraint}
	We need to prove that any solution $u$ which is smooth enough satisfies the spherical constraint, that is the first requirement in Definition \ref{def:solution}. This is a consequence of the following result.
\begin{lemma}[Norm constraint]
	\label{lem:constraint}
	Let $u^0\in H^1(\T ;\mathbb{S}^2)$ and $\G \in \RD ^p_a(H^1)$. Let $u\in L^\infty(L^2)\cap L^2(H^1)$ be so that it satisfies \eqref{rLLG} and $\int_{0}^{T}\|\mathcal{T}_{u_r}\|^2_{L^2}dr<+\infty$ holds. Assume further that
	$u_0(x)\in\mathbb{S}^2$ while 
	\[
	\|\partial_x u\|_{L^1(0,T;L^\infty)}<\infty\,.
	\]
	Then, we have $u_t(x)\in \mathbb{S}^2$ a.e.
\end{lemma}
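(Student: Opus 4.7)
The plan is to derive a closed scalar PDE for $|u|^2$ by applying Proposition \ref{pro:product} to $a=b=u$, exploit the geometricity and anti-symmetry of $\G$ together with the Lévy identity of Remark \ref{rem:levy} to annihilate the rough contribution, and then close by a Grönwall argument for a linear parabolic equation with zero initial datum.

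Set $\phi := |u|^2 - 1$, so $\phi|_{t=0}\equiv 0$ a.e.\ on $\T$. The drift in \eqref{rLLG} is $f := \partial_x^2 u + u|\partial_x u|^2 + u\times \partial_x^2 u$, which lies in $L^2(L^2)$ by the hypothesis $\mathcal{T}_u \in L^2(L^2)$ (note $u\times \partial_x^2 u = u\times \mathcal{T}_u$). Proposition \ref{pro:product} then gives
\begin{equation*}
\delta(u\otimes u)_{s,t} \;=\; 2\!\int_s^t\! u\odot f\,dr + \bigl(\Gamma^{\G,\G}_{s,t} + \bbGamma^{\G,\G}_{s,t}\bigr)(u_s\otimes u_s) + (u\otimes u)^\natural_{s,t},
\end{equation*}
with $(u\otimes u)^\natural$ of finite $(p/3)$-variation in $L^1$. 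Contracting both sides with the identity matrix $\mathbf{1}_{3\times 3}$ (i.e.\ tracing in $\R^{3\times 3}$) produces a scalar equation for $|u|^2 = 1+\phi$ in which three cancellations occur. First, $u\cdot(u\times\partial_x^2 u)=0$ and $2u\cdot\partial_x^2 u = \partial_x^2|u|^2 - 2|\partial_x u|^2$, so $\mathrm{tr}(2u\odot f) = \partial_x^2\phi + 2\phi|\partial_x u|^2$. Second, $\mathrm{tr}(\Gamma^{\G,\G}_{s,t}[u\otimes u]) = 2u\cdot G_{s,t}u = 0$ by anti-symmetry of $G_{s,t}$. Third and crucially, from Notation \ref{nota:tensor} one computes
\[
\mathrm{tr}(\bbGamma^{\G,\G}_{s,t}[u\otimes u]) \;=\; 2u\cdot \GG_{s,t}u + |G_{s,t}u|^2,
\]
while the Lévy identity of Remark \ref{rem:levy} (which yields $\GG_{s,t}+\GG_{s,t}^* = G_{s,t}^2$ by antisymmetry of $\LL$) combined with $G_{s,t}^* = -G_{s,t}$ gives $2u\cdot \GG_{s,t}u = u\cdot G_{s,t}^2 u = -|G_{s,t}u|^2$, annihilating the rough contribution. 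Thus $\phi$ weakly satisfies
\[
\partial_t\phi - \partial_x^2\phi \;=\; 2\phi\,|\partial_x u|^2\quad \text{on }(0,T)\times \T,\qquad \phi(0,\cdot)=0.
\]

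The final step is a Grönwall argument. Testing the above against $\phi$ and integrating by parts yields
\[
\tfrac12\tfrac{d}{dt}\|\phi\|_{L^2}^2 + \|\partial_x \phi\|_{L^2}^2 \;\leq\; 2\|\partial_x u\|_{L^\infty}^2\,\|\phi\|_{L^2}^2,
\]
so that $\phi\equiv 0$ follows from $\phi(0)\equiv 0$ as soon as $t\mapsto \|\partial_x u(t)\|_{L^\infty}^2$ is in $L^1(0,T)$. This last integrability is guaranteed by the hypothesis $\partial_x u\in L^1(0,T;L^\infty)$ together with $\partial_x u \in L^2(0,T;L^2)$ — via a standard interpolation argument, or equivalently by replacing the energy method by a variation-of-constants formula and using the $L^1\!\to\!L^\infty$ smoothing of the heat semigroup together with $|\partial_x u|^2\in L^1(L^1)$. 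Since $\phi\equiv 0$, one concludes $|u_t(x)|=1$ for a.e.\ $(t,x)$.

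The main obstacle is the algebraic cancellation at the second level of the rough driver, which crucially relies on the geometric/anti-symmetric structure imposed in Definition \ref{defi-RD}\ref{itm:anti}: without the Lévy identity, the symmetric part of $\GG$ would fail to compensate the Stratonovich-type correction $|Gu|^2$, leaving a residual rough drift incompatible with pointwise preservation of the spherical constraint. Once this observation is in place, the proof reduces to standard linear parabolic theory.
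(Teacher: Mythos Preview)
Your argument is essentially the paper's: apply the product formula to $u\otimes u$, trace, annihilate the rough contribution via anti-symmetry and the L\'evy identity of Remark~\ref{rem:levy}, derive the linear heat equation $\partial_t\rho-\partial_x^2\rho=c\,|\partial_x u|^2\rho$ for $\rho=\tfrac12(|u|^2-1)$, and close by Gr\"onwall. One caveat: your claim that $\partial_x u\in L^1(0,T;L^\infty)\cap L^2(0,T;L^2)$ yields $\|\partial_x u(\cdot)\|_{L^\infty}^2\in L^1(0,T)$ ``via a standard interpolation argument'' is not correct in general (no such embedding holds), but the paper's own proof invokes Gr\"onwall with exactly the same unjustified integrability, so you match it modulo this shared wrinkle; in the paper's actual applications one always has $\partial_x u\in L^2(0,T;H^1)\hookrightarrow L^2(0,T;L^\infty)$, which does suffice.
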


\begin{proof}[Proof of Lemma \ref{lem:constraint}]
	From the product formula, we can consider
	\begin{align}
	\label{delta_u2}
	\delta( \|u\|_{L^2}^2)_{s,t}- 2\int_{s}^t \int_{\T}u_r\cdot \tt_{u_r} d x d r
	- (\|u^{\natural,2}\|_{L^2}^2)_{s,t}
	= \int_{\T} [2G_{s,t} u_s\cdot u_s+
	2\GG _{s,t}u_s\cdot u_s
	+|G_{s,t}u_s|^2]  d x\,  .
	\end{align}
	Because of anti-symmetry, one has $G_{s,t}u_s\cdot u_s = -u_s\cdot G_{s,t}u_s=0$ a.e., moreover Remark \ref{rem:levy} implies
	the identity
		\begin{align*}
		2\GG _{s,t} u_s\cdot u_s+G_{s,t}u_s\cdot G_{s,t}u_s
		&=2(\tfrac12G_{s,t}^2 + \LL_{s,t} u_s)\cdot u_s+G_{s,t}u_s\cdot G_{s,t}u_s
		\\&
		=-G_{s,t}u_s\cdot G_{s,t}u_s+G_{s,t}u_s\cdot G_{s,t}u_s
		=0\,.
		\end{align*}
	Hence the rough term in \eqref{delta_u2} vanishes.
	Going back to \eqref{delta_u2}, this leads to
	\[
	\delta (\|u\|_{L^2}^2)_{s,t} - 2\int_{s}^t \int_{\T}u_r\cdot \tt_{u_r} d x d r
	=
	 (\|u^{\natural,2}\|_{L^2}^2)_{s,t}\,.
	\]
	This means in particular that the term $(\|u^{\natural,2}\|_{L^2}^2)_{s,t}$ is an increment. Being also an element of $\V^{1-}(0,T;\R)$ by definition, this implies by a classical result that
	\[
	(\|u^{\natural,2}\|_{L^2}^2)_{s,t}= 0,\quad \text{for each}\enskip s\leq t\in[0, T] \, .
	\]
	Consequently, we only need to evaluate the drift term.
	Observing that $\Delta |u|^2=2u\cdot \Delta  u + 2|\nabla u|^2$, we can rewrite the latter as
	\[
	-2\int_{s}^t \int_{\T}u_r\cdot \tt_{u_r} d x d r
	=-\int_{s}^t \int_{\T}\partial_x ^{2} (|u_r|^2) d x d r+ 2\int_{s}^t \int_{\T}|\partial_x 
	u_r|^2(1-|u_r|^2) d x d r\,,
	\]
	which leads to the following equation on $\rho _t :=1/2(|u_t|_{\R^3}^2-1):$
	\begin{equation}
	\label{eq:rho_1}
	\begin{aligned}
	\partial _t\rho -\partial_x ^{2} \rho -|\partial_x u|^2\rho =0\,,
	\quad \text{on}\enskip [0,T]\times\T\, ,
	\\
	\rho _0=0\,.
	\end{aligned}
	\end{equation} 
	Testing \eqref{eq:rho_1} against $\rho $, we end up with the inequality
	\[
	\|\rho _t\|^2_{L^2} + \int_0^t\|\partial _x \rho _r\|_{L^2}^2 d r
	\leq \int_0^t\|\partial_x u_r\|^2_{L^\infty}\|\rho _r\|_{L^2}^2 d r\,,
	\]
	and the fact that $\rho =0$ a.e.\ follows by Gronwall.
\end{proof}

	\subsubsection{Remainder estimate and rough standard machinery}
	 \label{Section_remainder_existence}
	The computation that we perform here can be generalised to different settings. For that reason we prefer a more general exposition by working at the level of a generic equation. Up to some index change and different definition of the coefficient of the noise, the computations below hold also for classical rough paths (instead of rough drivers).

	Let $f\in L^2(H^k) $,  $a_0\in H^k$ and $\mathbf{A}=(A,\AA )\in \mathcal{RD}^p_a(H^{k+1})$ for $p\in [2,3)$ and $k\geq 1$. Assume that there exists a solution $a\in L^\infty(H^1)\cap L^2(H^2)$ to the abstract equation\footnote{Note that the integrand $f$ may depend on the solution $a$.}
\begin{equation}\label{eq;abstract_eq_existence}
\delta a_{s,t}=\int_{s}^{t}f_rdr+A_{s,t}a_s+\AA _{s,t}a_s+a^\natural_{s,t}\,,
\end{equation}
by which we mean that $a^\natural$ (defined implicitly by the above equation) belongs to $\mathcal{V}^{1-}_{2,\mathrm{loc}}([0,T];L^2)$. 
By applying $\partial_x$ to both sides, one sees that a similar equation is satisfied by $\partial_x a$.
In particular, we can think of
\eqref{eq;abstract_eq_existence} and the equation on its derivative as an $n+n$-dimensional system of the form \eqref{deterministic_generic}, with unknown $v=(a,\partial_x a)$. 
In that case, Proposition \ref{pro:product} implies the system
\begin{equation}
\label{eq;square_dx_a}
\left \{
\begin{aligned}
&\delta (a^{\otimes 2})_{s,t}
=\int_{s}^{t}2a\odot f_rdr
+ (J+ \JJ+ \tilde\JJ)(0,0)_{s,t}
+(a^{\otimes 2})^\natural_{s,t}
\\
&\delta (a\otimes \partial_xa)_{s,t}
=\int_{s}^{t}(a\otimes \partial_x f + f\otimes \partial_x a)dr
+ (J+ \JJ+ \tilde\JJ)(0,1)_{s,t}
+(a\otimes \partial _x a)^\natural_{s,t}
\\
&\delta (\partial_xa\otimes a)_{s,t}
=\int_{s}^{t}(\partial_xa\otimes f + \partial_xf\otimes a)dr
+ (J+ \JJ+ \tilde\JJ)(1,0)_{s,t}
+(\partial_xa\otimes a)^\natural_{s,t}
\\
&\delta (\partial_xa^{\otimes 2})_{s,t}
=\int_{s}^{t}2\partial_x a\odot \partial_xf_rdr
+ (J+ \JJ+ \tilde\JJ)(1,1)_{s,t}
+(\partial_xa^{\otimes 2})^\natural_{s,t}\,.
\end{aligned}\right .
\end{equation}
The terms $J,\JJ,\tilde\JJ$ are explicitly determined by the formula \eqref{product_square}.
For the last line, we have for instance
\begin{equation}
\label{nota:HJK_1}
\begin{aligned}
J^{i,j}_{s,t}(1,1)&=(A_{s,t}\partial_x a_s + \partial_x A_{s,t}a_s)^i \partial_xa_s^j + (A_{s,t}\partial_x a_s +\partial_x A_{s,t}a_s)^j \partial_xa_s^i,
\\
\JJ^{i,j}_{s,t}(1,1)&=(\AA _{s,t}\partial_x a_s + \partial_x\AA _{s,t} a_s)^i \partial_x a_s^j + (\AA _{s,t}\partial_x a_s+\partial_x\AA _{s,t} a_s)^j \partial_x a_s^i,
\\
\tilde\JJ^{i,j}_{s,t}(1,1)&=(A_{s,t}\partial_x a_s + \partial_xA _{s,t} a_s)^i(A_{s,t}\partial_x a_s+\partial_xA _{s,t} a_s)^j\,,
\end{aligned}
\end{equation}
for each $1\leq i,j\leq 3$.
The precise computations will be exposed in the next part of this paragraph, 
here we illustrate the strategy.
Most of the time in the following, our goal will be to integrate in space the last line of \eqref{eq;abstract_eq_existence}, with the purpose to derive some Gronwall type estimate.
 It is useful to think of integration as `testing the equation against the constant map $(\mathbf 1^{i,j})=(1\text{ if }i=j;\enskip0\text{ otherwise})\in \R^{n\times n}$'. In particular, we aim to find a $p$-variation estimate on $(s,t)\mapsto\langle (\partial_xa)_{s,t}^{\otimes 2;\natural},\mathbf 1\rangle$.
 Since $\mathbf 1$ certainly belongs to $H^1(\T;\R^{n\times n})$, we can use the (obvious) inequality $\langle (\partial_xa)_{s,t}^{\otimes 2;\natural},\mathbf 1\rangle\lesssim \|(\partial_xa)_{s,t}^{\otimes 2;\natural}\|_{H^{-1}}$ and, as was done in \cite{deya2016priori}, try to show an upper bound on the latter quantity. For that purpose we aim to expand 
\[
\delta [(\partial_xa ^i\partial_xa^j)^{\natural}]_{s,u,t}\equiv(\partial_xa ^i\partial_xa^j)^{\natural}_{s,t}-(\partial_xa ^i\partial_xa^j)^{\natural}_{s,u}-(\partial_xa ^i\partial_xa^j)^{\natural}_{u,t},\qquad s<u<t \in[0,T],
\]
and use the Sewing Lemma (Lemma \ref{lemma_sewing}).
A caveat lies in the fact that while computing $\delta [\cdot]^\natural_{s,u,t}$, the remainder term $(\partial_x a ^{\otimes2})_{s,u}^\natural$ appears again in the corresponding expansion. The estimate on the former depends, through \eqref{remainder_est_SL}, on an estimate on the latter, hence leading to a circular spot.
We deal with this issue in pretty much the same way as what was done in \cite{deya2016priori}, noticing that the remainder appearing on the right hand side is multiplied a `small' factor, hence it can be absorbed to the left (in a certain sense).
An additional hurdle that we have to overcome here is the appearance of mixed terms like $(\dd^n a 
\otimes  \dd^m a)^{\natural}$ with $0\leq n,m\leq N$ but such that $n+m<2N$. 
When $N=1$ for instance, one sees that the $H^{-1}$ bound on $(\partial_x a^{\otimes 2})^{\natural}$ needs an estimate on the two other terms $(a\otimes \partial_x a)^\natural,$ $(a\otimes \partial_x a)^{\natural}$, which themselves require an estimate on $(a^{\otimes2})^{\natural}$. 
Fortunately, the equation on $a^{\otimes2}$ has now a closed form, and so its remainder can be estimated independently of other quantities, based on the main argument of \cite{deya2016priori}.

Obviously, some computational difficulties will arise in the case of arbitrary $N\in\mathbb N$, and this justifies the introduction of a certain proof pattern which will be frequently used in the sequel.  
We call  it \textit{rough standard machinery}.
It consists of some standard steps that we need in order to estimate the remainder: 
\begin{enumerate}[label=RM\arabic*]
	\item\label{RM1}\!\!: \textbf{(the a priori estimate)} we find a suitable estimate for $\delta 
	(\partial_x ^N a^i \partial_x ^M a^j)^{\natural}_{s,u,t}$ that depends on the $p$-variation 
	norm of $(\partial_x ^N a^i \partial_x ^M a^j)^{\natural}$ by means of combinatorial formulas 
	based on Chen's relations (these are discussed in full generality in Appendix 
	\ref{app:combinatorial}).
	In the proof of the continuity of the It\^o-Lyons map in Section~\ref{sec:wong-zakai}, this 
	step consists of obtaining a special form of $\delta (\partial_x ^N (a-b)^i \partial_x ^M 
	(a-b)^j)^{\natural}_{s,u,t}$ as a consequence Theorem \ref{teo:rem_derivatives} and then 
	substituting the equation or the equation for the derivatives, where needed.
	\item\label{RM2}\!\!: \textbf{(the sewing Lemma)} from \ref{RM1} the hypothesis of Lemma 
	\ref{lemma_sewing} are fulfilled and we apply it. This leads to an estimate that presents the 
	same norm of  $(\partial_x ^N a^i \partial_x ^M a^j)^{\natural}$ both on the left and on the 
	right hand side of the inequality.
	\item\label{RM3}\!\!: \textbf{(absorb and conclude)} since the element in the right hand side 
	of the inequality in \ref{RM2} is multiplied by a control, it can be absorbed to the left hand 
	side up to the choice of a suitably fine partition. This leads to an estimate of  $(\partial_x 
	^N a^i \partial_x ^M a^j)^{\natural}$.
\end{enumerate}

It is useful to also introduce some notation for the drift terms in \eqref{eq;abstract_eq_existence}.
\begin{notation}[Drift]
	 We define for each $s\le t\in [0,T]$
	 \[
	 \D(1,1)_{s,t}:= \int _s^t (\partial_xa\otimes \partial_xf + \partial_xf\otimes \partial_x a)dr
	 \]
in the sense of a Bochner integral, in $H^{-1}$.
	 It is obvious that the map $(s,t)\mapsto\D_{s,t}(1,1)\in H^{-1}$ has finite variation
	 since 
	 \[
	 \|\D(1,1)_{s,t}\|_{H^{-1}}\leq \int _s^t \|\partial_xa\otimes \partial_xf + \partial_xf\otimes \partial_x a\|_{H^{-1}}dr =: \omega_{\D(1,1)}(s,t)\,
	 \]
	 and the right hand side is finite according to our hypotheses on $a,f$ and H\"older Inequality.
	 Similarly, we define $\D(1,0)_{s,t},$ $\D(0,1)_{s,t}$, $\D(0,0)_{s,t}$ as the drift terms appearing in each of the lines of \eqref{eq;abstract_eq_existence} and we denote their respective $1$-variation in $H^{-1}$ on any interval $[s,t]$ by $\omega_{\D(1,0)}(s,t),$ $\omega_{\D(0,1)}(s,t),$ $\omega_{\D(0,0)}(s,t)$.
\end{notation}

We now apply in detail the rough standard machinery in the first step of the proof of Theorem \ref{th:remainder_interate}.
\begin{theorem}\label{th:remainder_interate}
	In the notations of this section, we can estimate $(\partial_x  a^i \partial_x  a^j)^{\natural}$ locally  in time by
	\begin{multline}\label{eq:estimate_rem_existence}
	\|(\partial_x  a^{\otimes2})^{\natural}_{s,t}\|_{H^{-1}}
	\lesssim 
	\omega^{1/p}(s,t) \big[\omega_{\D (1,1)}+\omega_{\D (1,0)}+\omega_{\D (0,1)}+\omega_{\D (0,0)}\big](s,t)
	+\omega^{3/p}(s,t)\|a\|^2_{L^{\infty}(H^1)}\,,
	\end{multline}
	where $\omega:=\omega_{\mathbf{A}, H^2}$. 
	
	In addition, if $\A$ is anti-symmetric, then one has the simplified estimate
		\begin{equation}\label{eq:estimate_rem_simplified}
	\langle(\partial_x  a^{\otimes2})^{\natural}_{s,t},\mathbf 1\rangle
	\lesssim 
	\omega^{1/p}(s,t) \big[\omega_{\D (1,0)}+\omega_{\D (0,1)}+\omega_{\D (0,0)}\big](s,t)
	+\omega^{3/p}(s,t)\|a\|^2_{L^{\infty}(H^1)}\,,
	\end{equation}
	where we recall that $\mathbf 1^{i,j}:=1$ if $i=j$ and $0$ otherwise. 
\end{theorem}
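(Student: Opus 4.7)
The plan is to apply the three-step rough standard machinery RM1--RM3 described above to the fourth line of the system \eqref{eq;square_dx_a}, together with a parallel bootstrap on the three lower-order products $(a^{\otimes 2})^\natural$, $(a\otimes \partial_x a)^\natural$ and $(\partial_x a\otimes a)^\natural$.

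For RM1 I apply the trilinear operator $\delta_{s,u,t}$ to both sides of the fourth equation of \eqref{eq;square_dx_a}. The left-hand increment and the Bochner integral are both $\delta$-closed, so the identity reduces to $\delta(\partial_x a^{\otimes 2})^\natural_{s,u,t}=-\delta[(J+\JJ+\tilde\JJ)(1,1)]_{s,u,t}$. Using the Chen relations $\delta A_{s,u,t}=0$ and $\delta\AA_{s,u,t}=A_{u,t}A_{s,u}$, and substituting the expansions of $\delta(\partial_x a)_{s,u}$ and $\delta a_{s,u}$ provided by \eqref{eq;abstract_eq_existence} and its spatial derivative, the combinatorial lemmas of Appendix \ref{app:combinatorial} will organise the right-hand side into three families of terms: pure second-order rough products of the form $\AA A$, $A\AA$, $A\otimes A$ (and analogous mixed spatial derivatives of $A,\AA$) acting on $a_s,\partial_x a_s$; drift--rough couplings pairing a rough factor with one of the controls $\omega_{\D(m,n)}$, $0\le m,n\le 1$; and remainder--rough couplings pairing a rough factor with one of the lower-order remainders $(\partial_x^m a\otimes \partial_x^n a)^\natural_{s,u}$ for $m+n\le 2$. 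Using the algebra structure of $H^1(\T)$ to estimate each family in $H^{-1}$, and writing $\omega:=\omega_{\A,H^2}$ together with $\mathcal N_{s,u}:=\sum_{m+n\le 2}\|(\partial_x^m a\otimes \partial_x^n a)^\natural_{s,u}\|_{H^{-1}}$, I expect to obtain
\[
\|\delta(\partial_x a^{\otimes 2})^\natural_{s,u,t}\|_{H^{-1}} \lesssim \omega(s,t)^{3/p}\|a\|_{L^\infty(H^1)}^2 + \omega(s,t)^{1/p}\sum_{m,n\in\{0,1\}}\omega_{\D(m,n)}(s,t) + \omega(s,t)^{1/p}\mathcal N_{s,u}.
\]

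For RM2 note that $p\in[2,3)$ gives $3/p>1$, so the previous bound matches the hypothesis of Lemma \ref{lemma_sewing} in the Banach space $E=H^{-1}$; an application of the Sewing Lemma will transfer the same right-hand side to a bound on $\|(\partial_x a^{\otimes 2})^\natural_{s,t}\|_{H^{-1}}$ itself. For RM3, on any subinterval on which $\omega^{1/p}$ lies below a small absolute threshold I will absorb $\omega^{1/p}\mathcal N$ into the left-hand side; this is precisely the mechanism producing the ``local in time'' form of the estimate. To close $\mathcal N$ I will run the same RM1--RM2--RM3 argument in parallel for each of the first three lines of \eqref{eq;square_dx_a}; each of these is strictly easier because the products $J,\JJ,\tilde\JJ$ involve lower-order spatial derivatives of $\A$ and of $a$, and the relevant drift controls all have $m+n<2$. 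Plugging the resulting closed bounds back into the $H^{-1}$ estimate for $(\partial_x a^{\otimes 2})^\natural$ yields \eqref{eq:estimate_rem_existence}.

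For the anti-symmetric refinement I test the fourth line of \eqref{eq;square_dx_a} directly against the constant tensor $\mathbf 1^{ij}=\delta^{ij}\in H^1(\T;\R^{n\times n})$, so that the relevant scalar quantity becomes $\langle\partial_x a^{\otimes 2}_t,\mathbf 1\rangle=\|\partial_x a_t\|_{L^2}^2$. Repeating RM1 in this scalar form, the trace of $\D(1,1)$ against $\mathbf 1$ equals $2\int_s^t\int_\T \partial_x a\cdot \partial_x f\,dx\,dr$, which upon integration by parts on the torus becomes $-2\int_s^t\int_\T \partial_x^2 a\cdot f\,dx\,dr$ and is therefore controlled by $\omega_{\D(0,0)}$; the $\omega_{\D(1,1)}$ contribution consequently disappears, yielding \eqref{eq:estimate_rem_simplified}. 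The hardest part will be the combinatorial bookkeeping in RM1, where every term must come with the right exponent $3/p$ or pair with a drift/remainder factor to produce the correct scaling; the absorption in RM3 is then a simultaneous closure for the four products in \eqref{eq;square_dx_a}, each feeding remainders into the estimates for the others.
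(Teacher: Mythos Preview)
Your plan for the first estimate \eqref{eq:estimate_rem_existence} is essentially the same as the paper's: a bootstrap through the lower-order products $(a^{\otimes 2})^\natural$, $(a\otimes\partial_x a)^\natural$, $(\partial_x a\otimes a)^\natural$ via RM1--RM3, feeding each closed bound into the next, and finally into $(\partial_x a^{\otimes 2})^\natural$. That part is fine.

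The anti-symmetric refinement, however, has a genuine gap. Your argument for why $\omega_{\D(1,1)}$ disappears is misplaced: you integrate by parts the \emph{bare} drift $\langle \D(1,1),\mathbf 1\rangle$ in the equation for $(\partial_x a)^{\otimes 2}$. But the occurrence of $\omega_{\D(1,1)}$ that you need to eliminate is not this bare drift; it is the term $\langle \Gamma_{u,t}\,\D(1,1)_{s,u},\mathbf 1\rangle$ (and its $\bbGamma$-analogue) that appears in the RM1 expansion of $\delta(\partial_x a^{\otimes 2})^\natural_{s,u,t}$ after substituting the equation. Integration by parts in $x$ there would hit the space-dependent factor $\Gamma_{u,t}(x)$ and produce new terms, not a reduction to $\omega_{\D(0,0)}$; moreover, $-2\int\partial_x^2 a\cdot f\,dx$ is not controlled by $\|a\odot f\|_{H^{-1}}$ anyway.

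The correct mechanism is purely algebraic and relies on anti-symmetry. Writing $\Gamma=\Gamma^{A,A}=A\otimes\mathbf 1+\mathbf 1\otimes A$, one checks from $A^\star=-A$ that $\Gamma^\star\mathbf 1=0$ pointwise. Using the geometric decomposition $\AA=\tfrac12 A^2+\LL$ with $\LL$ anti-symmetric (Remark \ref{rem:levy}), one also gets $\bbGamma^\star\mathbf 1=0$. Hence in the RM1 expansion of $\langle\delta(\partial_x a^{\otimes 2})^\natural_{s,u,t},\mathbf 1\rangle$ the entire block
\[
\big\langle \Gamma_{u,t}\big[\delta(\partial_x a)^{\otimes 2}_{s,u}-J(1,1)_{s,u}\big]+\bbGamma_{u,t}\,\delta(\partial_x a)^{\otimes 2}_{s,u},\mathbf 1\big\rangle
\]
vanishes identically. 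This block is precisely the one that, after substituting the equation, would have produced the factor $\omega^{1/p}\omega_{\D(1,1)}$. The surviving terms in the expansion involve $\partial_x A\otimes\mathbf 1$, $\mathbf 1\otimes\partial_x A$, $\partial_x\AA\otimes\mathbf 1$, etc., acting on the \emph{mixed} increments $\delta(a\otimes\partial_x a)_{s,u}$ and $\delta(\partial_x a\otimes a)_{s,u}$, whose drifts are $\D(0,1)$ and $\D(1,0)$; for those you use the $H^{-1}$ bounds already obtained in the bootstrap. This is how \eqref{eq:estimate_rem_simplified} follows without any appeal to integration by parts on the drift.
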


\begin{proof}
	We note that in view of the previous discussion,
	in order to estimate $\partial_x  a ^{\otimes 2;\natural}$, we need to estimate also $(\partial_x  a \otimes a)^{\natural}$,  $( a \otimes \partial_x a)^{\natural}$, $a ^{\otimes 2;\natural}$. We start therefore from $N=0,M=0$.
	
	Recall that each remainder term is defined locally on a covering of $[0,T]$, that we denote by $(P_k)_k$. What follows holds for every $s\leq t \in P_k$ for every fixed $k$. Each control appearing in this proof is evaluated in such interval $(s,t)$ (we do not specify it for simplicity of notations), unless otherwise stated. In order to simplify the notations, we introduce a general control $\omega=\omega_{\mathbf{A};H^2}$ on $[s,t]$ as above. We also use the following notation: $\omega_A:=\omega_{A;H^1}$, $\omega_{ \AA }:=\omega_{ \AA ; H^1}$.
	
	\item{\textbf{\indent Step 1: explicit computations for $a^{\otimes 2;\natural}_{s,t}$.}}\\
	Assume $N=0, M=0$. 
	Recalling the notation \eqref{tensorized_driver} for the rough driver $\boldsymbol\Gamma^{\A,\A}$, then the equation looks like
	\begin{equation}
	\label{eq;square_a}
	\begin{aligned}
	\delta (a^ia^j)_{s,t}
	&=\D ^{i,j}(0,0)+\Gamma^{A,A}[a^{\otimes 2}_s]^{i,j}+\bbGamma^{\A,\A}[a^{\otimes 2}_s]^{i,j}+(a^ia^j)^{\natural}_{s,t}.
	\\
	&=\D ^{i,j}(0,0)+(A_{s,t}a_s)^ia^j_s+a^i_s(A_{s,t}a_s)^j+(\AA _{s,t}a_s)^ia^j_s\\
	&\quad\quad\quad\quad\quad\;+a^i_s(\AA _{s,t}a_s)^j+(A_{s,t}a_s)^i(A_{s,t}a_s)^j+(a^ia^j)^{\natural}_{s,t}\,.
	\end{aligned}
	\end{equation}
	Here we start to implement the steps described previously. 
	Applying $\delta$ to both sides of \eqref{eq;square_a} and using Chen's relation \eqref{chen-rela}, we see that the quantity $\delta (a^{\otimes 2,\natural})_{s,u,t}$ has the form 
	\[
	\delta (a^{\otimes 2,\natural})_{s,u,t}=
	\Gamma^{A,A}_{u,t}[\delta a^{\otimes 2}_{s,u}- J_{s,u}(0,0)] + \bbGamma^{\A,\A}_{u,t}\delta a^{\otimes 2}_{s,u}\, .
	\]
	Substituting the equation on $\delta a^{\otimes 2}_{s,u}$
	(see also Proposition \ref{pro:remainder_existence} with $N=M=0$),
	we thus find for $i,j=1,\dots n$
	\begin{equation}\label{first_delta_a}
	\begin{aligned}
	\delta (a^ia^j)^{\natural}_{s,u,t}
	&=\sum\nolimits_{1\leq l\leq 3} A^{i,l}_{u,t}[(\D + \JJ + \tilde\JJ)^{l,j}_{s, u}(0,0)+a^{2,\natural,l,j}_{s,u}] \\
	&\quad \sum\nolimits_{1\leq l\leq3} A^{j,l}_{u,t}[(\D + \JJ + \tilde\JJ)^{l,i}_{s, u}(0,0)+a^{2,\natural,l,i}_{s,u}]\\
	&\quad \sum\nolimits_{1\leq l\leq3} \AA ^{i,l}[(\D +J+ \JJ + \tilde\JJ)^{l,j}_{s, u}(0,0) +a^{2,\natural,l,j}_{s,u}]\\
	&\quad \sum\nolimits_{1\leq l\leq3} \AA ^{j,l}[(\D +J+ \JJ + \tilde\JJ)^{l,i}_{s,u}(0,0)+a^{2,\natural,l,i}_{s,u}]\\
	&\quad{  \sum\nolimits_{1\leq l,m\leq3} A^{i,l}_{u,t} A^{j,m}_{u,t}[(\D +J+ \JJ + \tilde\JJ)^{l,m}_{s,u}(0,0)+a^{2,\natural,l,m}_{s,u}]}\, ,
	\end{aligned}
	\end{equation}
	where
	\[
	\begin{aligned}
	J_{s,u}^{i,j}(0,0)&
	\equiv(\Gamma_{s,u}^{A,A}a_s^{\otimes 2})^{i,j}
	= (A_{s,u}a_s )^i a_s^j + a_s^i(A_{s,t}a_s)^j\, ,
	\\
	\JJ_{s,u}^{i,j}(0,0)&= (\AA_{s,u}a_s )^ia_s^j + a_s^i(\AA_{s,t}a_s)^j \, ,
	\\
	\tilde\JJ_{s,u}(0,0)&= (A_{s,u}a_s )^i(A_{s,u}a_s )^j\,.
	\end{aligned}
	\]
	Using several times the inequality
	 \[
	\|ab\|_{H^{-1}}\lesssim \|a\|_{H^{-1}}\|b\|_{H^1},\quad \forall (a,b)\in H^{-1}\times H^1
	\]
	(which is an easy consequence of the embedding $H^1\hookrightarrow L^{\infty}$),
	we observe that
	\[ \| (\D + \JJ + \tilde \JJ)(0,0)_{s, u} \|_{H^{-1}} \lesssim \| \D (0, 0)_{s, u} \|_{H^{-1}} + \| \AA _{s, u}\|_{L^{\infty}} \| a_s\|_{L^2}^2 + \| A_{s, u} \|_{L^{\infty}}^2 \| a_s \|_{L^2}^2, 
	\]
	\[\| J(0,0)_{s,u} \|_{H^{-1}} \lesssim \| A_{s, u} \|_{H^1} \|a_s\|_{L^2}^2, \]
	hence taking the $H^{-1}$-norm in \eqref{first_delta_a} yields
	\begin{multline*}
	\|\delta (a^{2, \natural})_{s, u, t} \|_{H^{-1}} 
	\lesssim 
	\| A_{u, t}\|_{H^1}\left [\|\D  (0, 0)_{s, u}\|_{H^{-1}} + \|\AA _{s,u}\|_{H^1}\|a_s\|_{L^2}^2 
	+ \| A_{s, u} \|_{H^1}^2 \| a_s\|_{L^2}^2 
	+ \| a^{\otimes 2, \natural}_{s, u} \|_{H^{-1}}\right ]
	\\
	+ (\| \AA _{u, t} \|_{H^1} + \| A_{u, t} \|_{H^1}^2) 
	\\
	\cdot \left[\| \D  (0, 0)_{s, u} \|_{H^{-1}} + \| A_{s, u}\|_{H^1}\| a_s \|_{L^2}^2 
	+ \| \AA _{s, u} \|_{H^1}
	\left\|a_s \right\|_{L^2}^2 + \| A_{s, u} \|_{H^1}^2 \|a_s\|_{L^2}^2 
	+ \| a^{\otimes 2, \natural}_{s, u} \|_{H^{-1}} \right] \,.
	\end{multline*}
	Then,
	\begin{multline}\label{eq:M1}
	\| \delta (a^{2, \natural})_{s, u, t} \|_{H^{-1}} 
	\lesssim \omega_A^{1 / p}(u,t) \left [\omega_{\D (0,0)}(s,u) + (\omega_{\AA }^{2/ p} +
	\omega_A^{2 / p})(s,u) \| a \|_{L^{\infty}(L^2)}^2 + \omega_{a^{2,\natural};H^{-1}}^{3/p}(s,u)\right ]
	\\
	 + (\omega_{\AA }^{2/ p} + \omega_A^{2 / p})(u,t) \left [\omega_{\D (0,0)}(s,u) +
	(\omega_A^{1 / p} + \omega_{\AA }^{2/p} + \omega_A^{2 / p})(s,u) \|a\|_{L^{\infty} (L^2)}^2 
	+ \omega_{a^{2, \natural};H^{-1}}^{3/p}(s,u)\right ]\,.
	\end{multline}
	
	The estimate \eqref{eq:M1} concludes the first part \ref{RM1} of the rough standard machinery. As noted before, the estimate of the $H^{-1}$-norm of $\delta a^{2,\natural}_{s,t}$ depends itself on the control $\omega^{3/p}_{a^{2,\natural}}$, which eventually has to be absorbed into the left hand side.
	To this end, in \ref{RM2}, we  want to apply the Sewing Lemma (Lemma \ref{lemma_sewing}): we can apply it because in \eqref{eq:M1} the right hand side is a sum of controls of  $p/3$ bounded variation, and therefore a control itself of  $p/3$ bounded variation. Hence the application of Lemma \ref{lemma_sewing} leads to 
	\begin{align}\label{M2}
	\begin{aligned}
	\|a^{2,\natural}_{s,t}\|_{L^1}
	&\lesssim\omega_A^{1 / p}\left[\omega_{\D (0,0)} + \omega_{\AA ;H^{-1}}^{3/p}\right](s,t) \\
	&\qquad + (\omega_{\AA }^{2/ p} + \omega_A^{2 / p}) \left[\omega_{\D (0,0)} +
	(\omega_A^{1 / p} + \omega_{\AA }^{2 / p} + \omega_A^{2 / p}) \|a\|_{L^{\infty} (L^2)}^2 
	+ \omega_{a^{2, \natural};H^{-1}}^{3 / p}\right](s,t)\,,
	\end{aligned}
	\end{align}
	and this concludes the step  \ref{RM2}. Now, take the power $p/3$, use the super-additivity property of the controls, pass to the supremum on the partitions and take the power $3/p$. 
	In this way the $p$-variation norm $\omega^{3/p}_{a^{2,\natural};H^{-1}}$ appears both on the left and on the right hand side in \eqref{M2}, i.e.\ there is a constant $C_p>0$ such that
	\begin{align}\label{eq:M111}
	\begin{aligned}
	\omega^{3/p}_{a^{2,\natural};H^{-1}}(s,t)
	&\leq 
	C_p\bigg[\omega_A^{1 / p}  \left[\omega_{\D (0,0)} + (\omega_{\AA }^{2/ p} +
	\omega_A^{2 / p}) \| a \|_{L^{\infty} (L^2)}^2 + \omega_{a^{2, \natural};H^{-1}}^{3/p} \right](s,t) \\
	&\qquad + (\omega_{\AA }^{2/ p} + \omega_A^{2 / p}) \left [\omega_{\D (0,0)} +
	(\omega_A^{1 / p} + \omega_{\AA }^{2 / p} + \omega_A^{2 / p}) \|a\|_{L^{\infty} (L^2)}^2 + \omega_{a^{2, \natural};H^{-1}}^{3/p}\right ](s,t)\bigg]\,.
	\end{aligned}
	\end{align}
	By choosing a covering $(\tilde{P}_k)_k$ of $[0,T]$ which is finer than $(P_k)$, we see that for all $s\leq t\in \tilde{P}_k$ we have
	\[
	\omega_A^{1/p}(\tilde{P}_k),\omega_{\mathbb A}^{2/p}(\tilde{P}_k)<\min\{1/(3C_p),1\}\,,
	\]
	we can then absorb the terms $\omega_{A;H^1}^{1/p}\omega^{3/p}_{a^{2,\natural};H^{-1}}$ and $\omega_{A;H^1}^{1/p}\omega^{3/p}_{a^{2,\natural};H^{-1}}$ in the left hand side,
	leading to an estimate on $\omega^{3/p}_{a^{2,\natural};H^{-1}}.$
	Since $\|a^{2,\natural}_{s,t}\|_{L^1}$ can be estimated above by its $p/3$-variation, we conclude that for $s\leq t\in \tilde{P}_k$
	\begin{align}\label{eq:N=0,M=0}
	\begin{aligned}
	\|a^{2,\natural}_{s,t}\|_{L^1}&
	\lesssim\omega_A^{1 / p}(s,t) \left [\omega_{\D (0,0)}(s,t) + (\omega_{\AA }^{2/ p} +
	\omega_A^{2 / p})(s,t) \| a \|_{L^{\infty} (L^2)}^2 \right ]
	\\
	&\qquad + (\omega_{\AA }^{2/ p} + \omega_A^{2 / p})(s,t) \left [\omega_{\D (0,0)} +
	(\omega_A^{1 / p} + \omega_{\AA }^{2 / p} + \omega_A^{2 / p}) \|a\|_{L^{\infty} (L^2)}^2 \right ](s,t)\, ,
	\end{aligned}
	\end{align}
	which closes the step \ref{RM3} and the estimate of Step 1.
	
	\item{\textbf{\indent Step 2: estimates on $(\partial_x  a \otimes  a)^{\natural}$ and $( a \otimes \partial_x  a)^{\natural}$.}} 
	We want to proceed as in Step 1.
	According to Proposition~\ref{pro:remainder_existence}, the new expression of the remainder $(\partial_x  a \otimes  a)^{\natural}$ depends on both $\delta (\partial_x  a  \otimes a)$  and $\delta ( a \otimes  a)$.
	From the same argument as before, the remainder is therefore estimated by 
	\begin{align*}
	\|(\partial_x  a\otimes a)^{\natural}_{s,t}\|_{L^1}
	&\lesssim\omega_{\mathbf{A},H^1}^{1/p}\omega^{2/p}(s,t)\|a\|_{L^\infty (H^1)}^2+\omega^{1/p}[\omega_{\D (1,0)}+\omega_{\D (0,0)}](s,t)+\omega_{\partial_x  A;H^1}^{1/p}\omega^{3/p}_{a^{2,\natural}}(s,t)\,.
	\end{align*}
	The above estimate depends on $\omega^{3/p}_{a^{\natural,2}}$ which we estimated  in \eqref{eq:N=0,M=0}. This leads to the final estimate of this step
	\begin{align}\label{N=1,M=0}
	\|(\partial_x  a \otimes a)^{\natural}_{s,t}\|_{L^1}&
	\lesssim\omega_{\mathbf{A},H^1}^{1/p}\omega^{2/p}(s,t)\|a\|_{L^\infty(H^1)}^2+\omega^{1/p}[\omega_{\D (1,0)}+\omega_{\D (0,0)}](s,t)\,.
	\end{align}

	\item{\textbf{\indent Step 3: \textbf{estimate on $(\partial_x  a \otimes 
	\partial_x  a)^{\natural}$} in Sobolev norm.}}  From the rough standard 
	machinery, the estimate becomes
	\begin{align*}
	\|(\partial_x  a \otimes \partial_x  a)^{\natural}_{s,t}\|_{L^1}&\lesssim 
	\omega^{1/p} \big[\omega_{\D (1,1)}+\omega_{\D (0,1)}+\omega_{\D 
	(1,0)}]+\omega^{2/p}\omega_{\partial_x 
	\mathbf{A},H^1}^{1/p}\|a\|^2_{L^\infty(H^1)}+\omega^{1/p}[\omega^{3/p}_{(\partial_x
	  a a)^{\natural}}+\omega^{3/p}_{a^{2,\natural}}]\\
	&\quad +\omega_{\mathbf{A},H^1}^{1/p} [\omega_{\D (1,1)}+\|\partial_x a\|^2_{L^\infty(L^2)}]+\omega_{\partial_x\mathbf{A},H^1}^{1/p}\|a\|^2_{L^\infty(L^2) }.
	\end{align*}
	Since we can deal with $\omega^{3/p}_{(\partial_x  a a)^{\natural}}+\omega^{3/p}_{a^{2,\natural}}$ by means of \eqref{eq:N=0,M=0} and \eqref{N=1,M=0},  we infer the first estimate \eqref{eq:estimate_rem_existence}.

\item{\textbf{\indent Step 4: refined estimate on $\langle\partial_x  a^{\otimes2 ;\natural},\mathbf 1\rangle$}.}
 We now address \eqref{eq:estimate_rem_simplified}, assuming that $\A$ is anti-symmetric.
 Applying $\delta$ to both sides of \eqref{eq;square_dx_a}, using Chen's relation \eqref{chen-rela}, then testing against $\mathbf 1\equiv\mathbf 1_{n\times n}$, we find with $\boldsymbol\Gamma=\boldsymbol\Gamma^{\A,\A}$:
 \[
 \begin{aligned}
 \langle\delta (\partial_xa^{\otimes 2})^{\natural}_{s,u,t},\mathbf1\rangle
 &=\big\langle \Gamma_{u,t}[\delta \partial_xa_{s,u}^{\otimes 2} - J_{s,u}(1,1)] 
 + \bbGamma_{u,t}\delta \partial_xa_{s,u}^{\otimes 2},\mathbf 1\big\rangle
 \\
 &\quad 
 +\Big\langle (\partial_x A_{u,t}\otimes \mathbf 1 + \mathbf 1\otimes A_{u,t})[\delta (a\otimes \partial_xa)_{s,u} - J_{s,u}(0,1)] 
 \\&\quad \quad \quad 
 +(A_{u,t}\otimes \mathbf 1 + \mathbf 1\otimes \partial_x A_{u,t})[\delta (\partial_x a\otimes a)_{s,u} - J_{s,u}(1,0)] 
 \\&\quad \quad \quad 
 + (\partial_x\AA_{u,t}\otimes \mathbf 1 + \mathbf 1\otimes \AA_{u,t}+\partial_xA_{u,t}\otimes A_{u,t})\delta (a\otimes \partial_x a)_{s,u}
 \\&\quad \quad \quad 
 + (\AA_{u,t}\otimes \mathbf 1 + \mathbf 1\otimes \partial_x\AA_{u,t}+A_{u,t}\otimes \partial_xA_{u,t})\delta (\partial_xa\otimes a)_{s,u}\enskip,\enskip\mathbf 1\Big\rangle
 \,.
 \end{aligned}
 \]
 It will be enough to show that the term $\langle \Gamma_{u,t}[\delta \partial_xa_{s,u}^{\otimes 2} - J_{s,u}(1,1)] 
 + \bbGamma_{u,t}\delta \partial_xa_{s,u}^{\otimes 2},\mathbf 1\big\rangle$ vanishes, since for the remaining terms we can simply rely on the $H^{-1}$ estimates obtained in the previous steps, together with the fact that $\|\mathbf 1\|_{H^1}\lesssim1$.
To this aim we note that since $\A$ is anti-symmetric, we have $A_{u,t}(x)\in\mathcal L_a(\R^n)$, and hence for $i,j=1,\dots,n$
 \[
 [\Gamma_{u,t}^\star\mathbf 1]^{i,j} = -[(A_{u,t}\otimes \mathbf 1 + \mathbf 1 \otimes A_{u,t})\mathbf 1]^{i,j}
 = - \sum_{k,l}(A^{i,k}_{u,t}1_{j=l}+ 1_{i=k}A^{j,l}_{u,t})1_{k=l} = -A_{u,t}^{i,j}-A_{u,t}^{j,i}=0\,.
 \]
 Moreover, thanks to Remark \ref{rem:levy} we can write $\AA_{s,t}=\frac12(A_{s,t})^2+\LL_{s,t}$ where $\LL_{s,t}(x)$ is again some anti-symmetric matrix. 
 But then it is also true that
 $(\LL_{u,t}\otimes \mathbf 1 + \mathbf 1 \otimes \LL_{u,t})^\star\mathbf 1 = 0$
 and one finds 
 \[
 \bbGamma^{\star}_{u,t}\mathbf 1=(\tfrac12A_{u,t}^2\otimes \mathbf 1 + \mathbf 1 \otimes \tfrac12A_{u,t}^2+A_{u,t}\otimes A_{u,t})^\star\mathbf 1 = 
 \frac12(A_{u,t}\otimes \mathbf 1 + \mathbf 1 \otimes A_{u,t})^2\mathbf 1 =0\,.
 \]
Finally
\[
\langle \Gamma_{u,t}[\delta \partial_xa_{s,u}^{\otimes 2} - J_{s,u}(1,1)] 
+ \bbGamma_{u,t}\delta \partial_xa_{s,u}^{\otimes 2},\mathbf 1\rangle
=\langle \delta \partial_xa_{s,u}^{\otimes 2} - J_{s,u}(1,1) ,\Gamma_{u,t}^{\star}\mathbf 1\rangle 
+ \langle \delta \partial_xa_{s,u}^{\otimes 2},\bbGamma_{u,t}^{\star}\mathbf 1\rangle
=0\,,
\]
hence our claim.
\end{proof}

	\subsubsection{A priori estimate}
\label{ss:apriori}

This paragraph is devoted to the derivation of an a priori bound: by means of it, we will prove that any solution to the perturbed equation \eqref{rLLGxi} is bounded in terms of a constant depending only on the rough path norm of the perturbation.
From now to the end of this paragraph, we thus fix a deterministic rough driver $\G\in\RD^p_a(H^2)$ and assume that $u\in L^\infty(H^1)\cap L^2(H^2)\cap \V^{p}(L^2)$ is a solution of \eqref{LLG}, in the sense of Definition \ref{def:sol_deter} for $f=u\times \partial_xu- u\times (u\times\partial_x^2u).$
For the reader's convenience, recall that what this means is that for any $s<t\in[0,T]$
\begin{equation}\label{LLG_apriori}
\delta u_{s,t}
= \int _s^t (u\times \partial_x^2 u-u\times(u\times \partial_x^2 u))dr
+G_{s,t}u_s+\GG_{s,t}u_s+ u_{s,t}^{\natural}\, ,
\end{equation}
for some $u^{\natural}\in \V_{\mathrm{loc}}^{p/3}(L^2)$.

We recall the statement of the rough Gronwall lemma (\cite{deya2016priori}), which can be considered as counterpart of the classical statement in the $p$-variation setting and will be needed in the sequel.
\begin{lemma}[Rough Gronwall lemma]
	\label{lem:gronwall}
	Let $E\colon[0,T]\to \R_+$ be a path such that there exist constants $\kappa,\ell>0,$ a super-additive map $\varphi $ and a control $\omega $ such that:
	\begin{equation}\label{rel:gron}
	\delta E_{s,t}\leq \left(\sup_{s\leq r\leq t} E_r\right)\omega (s,t)^{\kappa }+\varphi (s,t)\, ,
	\end{equation}
	for every $s\le t \in [0,T]$ under the smallness condition $\omega (s,t)\leq \ell$.
	
	Then, there exists a constant $\tau _{\kappa ,\ell}>0$ such that
	\begin{equation}
	\label{concl:gron}
	\sup_{0\leq t\leq T}E_t\leq \exp\left(\frac{\omega (0,T)}{\tau _{\kappa ,\ell}}\right)\left[E_0+\sup_{0\leq t\leq T}\left|\varphi (0,t)\right|\right].
	\end{equation}
\end{lemma}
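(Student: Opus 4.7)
The plan is to use a deterministic partition strategy: subdivide $[0,T]$ into finitely many intervals on which the coefficient $\omega(s,t)^\kappa$ in \eqref{rel:gron} is small enough to be absorbed into the supremum term on the left, then iterate the resulting linear recursion. This is the standard proof from \cite{deya2016priori} and similar works in the variational rough PDE literature.

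First I would fix $\tau_{\kappa,\ell}\in(0,\ell]$ small enough that $\tau_{\kappa,\ell}^{\kappa}\le 1/2$. By the continuity of $\omega$ and the property $\omega(t,t)=0$, I can then choose a partition $0=t_0<t_1<\cdots<t_N=T$ such that $\omega(t_i,t_{i+1})\le \tau_{\kappa,\ell}$ for every $i$. The super-additivity of $\omega$ gives
\[
N\le \frac{\omega(0,T)}{\tau_{\kappa,\ell}}+1.
\]
On each subinterval $[t_i,t_{i+1}]$, the smallness condition is satisfied, so applying \eqref{rel:gron} with $s=t_i$ and arbitrary $t\in[t_i,t_{i+1}]$ yields $E_t\le E_{t_i}+\tfrac12\sup_{t_i\le r\le t_{i+1}}E_r+\varphi(t_i,t_{i+1})$. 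Taking the supremum over $t\in[t_i,t_{i+1}]$ and absorbing the half on the left produces the clean recursive bound
\[
\sup_{t_i\le r\le t_{i+1}}E_r\le 2E_{t_i}+2\varphi(t_i,t_{i+1}).
\]

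Next I would iterate this recursion. A direct induction on $i$ gives $E_{t_i}\le 2^i E_0 + \sum_{j=0}^{i-1}2^{i-j}\varphi(t_j,t_{j+1})$. The super-additivity of $\varphi$ bounds the telescoping sum by $\varphi(0,t_i)$, which is in turn dominated by $\sup_{0\le t\le T}|\varphi(0,t)|$, yielding $E_{t_i}\le 2^i(E_0+\sup_t|\varphi(0,t)|)$. Combining with the intra-interval bound above,
\[
\sup_{0\le t\le T}E_t\le 2^{N+1}\Bigl(E_0+\sup_{0\le t\le T}|\varphi(0,t)|\Bigr).
\]
Finally, substituting the bound on $N$ and rewriting $2^{N+1}=\exp\bigl((N+1)\log 2\bigr)$ gives \eqref{concl:gron} after adjusting the constant $\tau_{\kappa,\ell}$ by the factor $\log 2$ (and redefining it to absorb the $+1$).

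The proof is essentially a deterministic bookkeeping exercise, so I do not anticipate a conceptual obstacle. The only point that requires a small amount of care is the construction of the partition satisfying $\omega(t_i,t_{i+1})\le \tau_{\kappa,\ell}$ together with the explicit upper bound on $N$: the argument relies on the continuity of $(s,t)\mapsto \omega(s,t)$ (to choose the $t_i$ greedily by, e.g., setting $t_{i+1}:=\sup\{t>t_i : \omega(t_i,t)\le \tau_{\kappa,\ell}\}\wedge T$) and then on super-additivity of $\omega$ to bound $N$ linearly in $\omega(0,T)$. Once this is in place the rest is a one-line induction, and the exponential growth in the final estimate is the unavoidable price for iterating the constant $2$ over the partition.
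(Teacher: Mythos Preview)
Your proposal is correct and is precisely the standard partition-and-iterate argument from \cite{deya2016priori}, which is also what the paper does: it states the lemma without proof and simply cites that reference. One small point of care: when you replace $\varphi(t_i,t)$ by $\varphi(t_i,t_{i+1})$ and later bound $\sum_j 2^{i-j}\varphi(t_j,t_{j+1})$ by $2^i\varphi(0,t_i)$, you are implicitly using $\varphi\ge 0$ (which is how the lemma is applied throughout the paper); if you want to cover signed $\varphi$ as literally stated, keep $\varphi(t_i,t)$ and bound it directly by $\sup_{r}|\varphi(0,r)|+|\varphi(0,t_i)|\le 2\sup_r|\varphi(0,r)|$ via super-additivity before iterating.
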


Our objective now is to obtain a relation of the form \eqref{rel:gron} for the map $E_t= \|\partial_xu_t\|^2_{L^2}+ \int_0^t\|\tt_{u_r}\|^2_{L^2}dr$ and then conclude that the latter is bounded in terms of controls depending only on the quantities $\|u_0\|_{H^1}$, $\|G\|_{\V^p(H^2)}$ and $\|\GG\|_{\V^{p/2}(H^2)}$.
We use the equation on $\partial_x u$, which thanks to \eqref{eq;square_dx_a} writes as
\begin{equation}
\label{eq:energy_equation_complete}
\begin{aligned}
\delta (\partial_x  u^{\otimes 2})_{s,t}
&=
2\int _s^t\partial_x u\odot (u\times\partial_x^2 u -u\times(u\times \partial_x^2u))dr
\\&\quad \quad \quad \quad \quad \quad 
+ 2\partial_x  u_s\odot ( G_{s,t} \partial_x  u_s + \partial _xG_{s,t} u_s)+ 2(\GG _{s,t} 
\partial_x  u_s +\partial _x\GG _{s,t} u_s)\odot\partial_x  u_s 
\\&\quad\quad \quad \quad \quad \quad \quad 
+ (G_{s,t}\partial_x  u_s + \partial_x  G_{s,t}u_s)^{\otimes 2}+ (\partial_x  u^{\otimes 
2})_{s,t}^{\natural}\,,
\end{aligned}
\end{equation}
where we recall that $a\odot b=\frac12(a\otimes b+b\otimes a).$
The strategy we use relies on the remainder estimate, Theorem \ref{th:remainder_interate}, which provides an a priori bound on $\langle (\partial_xu)^{\otimes 2;\natural},\mathbf 1\rangle=\sum_{i}\int_{\T}(\partial_x u ^i\partial_x u^i)^{\natural}_{s,t}(x)dx.$ 
We have the following result.

\begin{proposition}
	\label{pro:energy}
	Let $u^0\in H^1(\T ;\mathbb{S}^2)$, $\G \in \RD_a ^p(H^2)$ and let $u\in L^\infty(H^1)\cap L^2(H^2)$ be a solution to \eqref{LLG_apriori} such that $|u_t(x)|_{\R^3}=1$ a.e.
	Then there exists a universal constant $C_p>0$ 
	\[
	\sup_{t\in[0,T]}\|\partial_x u_t\|_{L^2}^2 + \int_0^T \|\tt _{u_t}\|_{L^2}^2dt\leq 
	C_p\|\G \|_{\V^{p}\times \V^{p/2}(H^2)}\|u_0\|_{H^1}^2 
	\,,
	\]
	where $\tt_u:=\partial_x ^{2} u+ u|\partial_x  u|^2=-u\times(u\times \partial_x^2u)$.
\end{proposition}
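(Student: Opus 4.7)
My plan is to derive an estimate of the form
\[
\delta E_{s,t} \le \big(\sup_{r\in[s,t]} E_r\big)\,\omega_{\G,H^2}(s,t)^{1/p} + \varphi(s,t)
\]
for the energy $E_t := \|\partial_x u_t\|_{L^2}^2 + \int_0^t \|\tt_{u_r}\|_{L^2}^2\,dr$, where $\varphi$ is a superadditive control depending only on $\|\G\|_{\RD^p_a(H^2)}$ and $\|u^0\|_{H^1}$, and then conclude via the rough Gronwall Lemma \ref{lem:gronwall}. The starting point is the tensorial identity \eqref{eq:energy_equation_complete} for $\partial_x u^{\otimes 2}$, tested against the constant $\mathbf 1 \in H^1(\T;\R^{3\times 3})$ so as to reduce to the scalar quantity $\|\partial_x u_t\|_{L^2}^2 = \langle \partial_x u_t^{\otimes 2},\mathbf 1\rangle$.

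For the drift contribution, writing $f = \tt_u + u\times\partial_x^2 u$ (so that $u\times\partial_x^2 u = u\times\tt_u$ thanks to $u\times u=0$), an integration by parts combined with the sphere constraint $u\cdot\partial_x u = 0$ yields $2\int_\T \partial_x u\cdot \partial_x f\,dx = -2\int_\T \partial_x^2 u\cdot \tt_u\,dx = -2\|\tt_u\|_{L^2}^2$, after checking that the triple-product pieces coming from $\partial_x(u\times\partial_x^2 u) = \partial_x u\times\partial_x^2 u + u\times\partial_x^3 u$ vanish via two further integrations by parts and the identity $a\cdot(a\times b)=0$. For the rough pieces tested against $\mathbf 1$, the crucial cancellations are: (i) $\int\partial_x u_s\cdot G_{s,t}\partial_x u_s\,dx=0$ by anti-symmetry of $G_{s,t}(x)$; and (ii) using the L\'evy decomposition $\GG_{s,t}=\tfrac12 G_{s,t}^2 + \LL_{s,t}$ of Remark \ref{rem:levy} together with $G_{s,t}^2\partial_x u_s\cdot \partial_x u_s = -|G_{s,t}\partial_x u_s|^2$ and the anti-symmetry of $\LL_{s,t}$, one has $2\int \GG_{s,t}\partial_x u_s\cdot\partial_x u_s\,dx + \int|G_{s,t}\partial_x u_s|^2\,dx=0$. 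The surviving ``mixed'' terms featuring $\partial_x G_{s,t}$ or $\partial_x\GG_{s,t}$ are then controlled via the embedding $H^2\hookrightarrow L^\infty$ together with $|u_s(x)|=1$, producing a bound of order $(\omega_{\G,H^2}^{1/p}+\omega_{\G,H^2}^{2/p})(s,t)\,(1+\|\partial_x u_s\|_{L^2}^2)$.

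The remainder $\langle(\partial_x u^{\otimes 2})^\natural_{s,t},\mathbf 1\rangle$ is then bounded by the \emph{refined} estimate \eqref{eq:estimate_rem_simplified} of Theorem \ref{th:remainder_interate}, available precisely because $\G$ is anti-symmetric. The key gain is that this refined bound suppresses the drift control $\omega_{\D(1,1)}$---which would have required $\partial_x f$ and hence $\partial_x^3 u$---leaving only $\omega_{\D(0,0)},\omega_{\D(1,0)},\omega_{\D(0,1)}$, each estimated by $\int_s^t\|\tt_{u_r}\|_{L^2}\,dr$ up to a constant depending on $\|u\|_{L^\infty(H^1)}$ (using $\|f\|_{L^2}\lesssim \|\tt_u\|_{L^2}$ and $|u|=1$). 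The main technical obstacle, I expect, will be to absorb the resulting cross term $\omega^{1/p}(s,t)\int_s^t\|\tt_{u_r}\|_{L^2}\,dr$ into the dissipation; this is handled via Young's inequality
\[
\omega^{1/p}(s,t)\int_s^t\|\tt_{u_r}\|_{L^2}\,dr \le \epsilon\int_s^t\|\tt_{u_r}\|_{L^2}^2\,dr + C_\epsilon\,\omega^{2/p}(s,t)(t-s)\,,
\]
with $\epsilon$ chosen small enough so that the first piece is absorbed on the left. Applying Lemma \ref{lem:gronwall} with $\kappa = 1/p$ and $E_0 = \|\partial_x u^0\|_{L^2}^2 \le \|u^0\|_{H^1}^2$ then yields the desired bound.
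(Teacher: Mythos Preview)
Your proposal is correct and follows essentially the same route as the paper: test \eqref{eq:energy_equation_complete} against $\mathbf 1$, exploit the anti-symmetry cancellations (including the L\'evy-area identity from Remark~\ref{rem:levy}) to kill the leading rough terms, invoke the refined remainder bound \eqref{eq:estimate_rem_simplified} to avoid $\omega_{\D(1,1)}$, and close with the rough Gronwall Lemma~\ref{lem:gronwall}. The only minor difference is that the paper estimates the drift controls $\omega_{\D(n,m)}$ directly in the form $(t-s)\big[(1+\sup_r\|\partial_x u_r\|_{L^2}^2)+\int_s^t\|\tt_{u_r}\|_{L^2}^2\,dr\big]$ (via Cauchy--Schwarz in time), which feeds straight into the Gronwall structure, whereas you carry a constant ``depending on $\|u\|_{L^\infty(H^1)}$'' and then absorb via Young; just be sure to track that this constant is itself $\lesssim 1+\sup_r E_r$ so that the resulting term sits on the Gronwall side rather than in $\varphi$.
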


\begin{proof}
Testing \eqref{eq:energy_equation_complete} against $\mathbf 1=(1_{i=j})_{1\le i,j\le3}\in\R^{3\times3}$ yields by anti-symmetry of $\G$
\[
\begin{aligned}
\delta (\|\partial_x  u\|_{L^2}^2)_{s,t}
&=
2\int _s^t\int_{\T}\partial_x u\cdot (u\times\partial_x^2 u -u\times(u\times \partial_x^2u))dxdr
\\&\quad \quad \quad \quad 
+ 2\int_{\T}[\partial_x  u_s\cdot \partial _xG_{s,t} u_s
+ 2\partial_x  u_s\cdot \partial _x\GG _{s,t} u_s
\\&\quad \quad \quad \quad \quad \quad 
+ 2G_{s,t}u_s\cdot \partial_x G u_s + |\partial_x  G_{s,t}u_s|^{2}]dx+ \int _{\T}(|\partial_x  
u|^2)^{\natural}_{s,t}dx\,.
\end{aligned}
\]
	To estimate the remainder term, we need to evaluate the following drift terms
	in $H^{-1}$
	\[
	\begin{aligned}
	&\D_{s,t}(1,0)= 2\int _s^t\partial_xu\odot (\tt_u + u\times \tt_u)dr\, ,
	\\&
	\D_{s,t}(0,1)= 2\int _s^tu\odot \partial_x(\tt_u + u\times \tt_u)dr\, ,
	\\&
	\D_{s,t}(0,0)= 2\int _s^tu\odot (\tt_u + u\times \tt_u)dr\,.
	\end{aligned}
	\]
	 We start with $\D(0,0)$. Observing that $L^1(\T)\hookrightarrow H^{-1}(\T),$ we can estimate the $H^{-1}$-norm by the $L^1$-norm, yielding 
	\begin{equation}
	\label{est:D00}
	\begin{aligned}
	\|\D_{s,t}(0,0)\|_{H^{-1}} 
	&\lesssim \int_{s}^{t}\|u_r\odot(\tt _{u_r} + u_r\times\tt_{u_r})\|_{L^1} d r
	\lesssim \int_s^t \|\tt_{u_r}\|^2_{L^2} d r\,,
	\end{aligned}
	\end{equation}
since $\|u\|_{L^\infty}=1$ and $\tt_u$, $u\times \tt_u$ are orthogonal.
	As for $\D(0,1),$ we can write for any test function $\Phi\in H^{1}(\T;\R^{3\times 3})$:
	\begin{equation}
	\label{est:D01}
	\begin{aligned}
	\langle \D(0,1),\Phi\rangle 
	&= 2\sum_{i,j}\int_s^t\int _{\T} u^i\partial_x(\tt_u + u\times\tt_u)^j\Phi^{i,j}(x)dx\, dr
	\\
	&= -2\sum_{i,j}\int_s^t\int _{\T} \big[\partial_xu^i(\tt_u + u\times\tt_u)^j\Phi^{i,j}
	+u^i(\tt_u + u\times\tt_u)^j\partial_x\Phi^{i,j}\big]dx\,dr
	\\
	&\lesssim
	\int_s^t\big[\|\partial_xu\|_{L^2}\|\tt_u + u\times\tt_u\|_{L^2} \|\Phi\|_{L^\infty}
	+\|(\tt_u + u\times\tt_u)\|_{L^2}\|\partial_x\Phi\|_{L^2}\big]dr
	\\
	&\lesssim
	\|\Phi\|_{H^1}\int_s^t\big[(1+\|\partial_xu\|_{L^2})\|\tt_u + u\times\tt_u\|_{L^2}\big]dr\,.
	\end{aligned}
	\end{equation}
	But $\|\tt_u + u\times\tt_u\|_{L^2}=\sqrt{2}\|\tt_u \|_{L^2}$ and therefore
	\[
	\begin{aligned}
	\|\D(0,1)\|_{H^{-1}}
	&=\sup_{\|\Phi\|_{H^1}\leq 1}\langle \D(0,1),\Phi\rangle 
	\\&
	\lesssim
	[1+\sup_{r\in[s,t]}\|\partial_xu_r\|_{L^2}](t-s)^{\frac12}
	\Big(\int_s^t \|\tt_{u_r} \|^2_{L^2}dr\Big)^{\frac12}
	\\&
	\lesssim
	(t-s)\Big[(1+\sup_{r\in[s,t]}\|\partial_xu_r\|_{L^2}^2)
	+
	\int_s^t \|\tt_{u_r} \|^2_{L^2}dr\Big]\,.
	\end{aligned}
	\]
	The estimate on $\D(1,0)$ is hardly different and therefore omitted.

	From these computations, we can conclude thanks to the remainder estimate \eqref{eq:estimate_rem_simplified} that
	\begin{align*}
	&\delta \left(\|\partial_x u\|_{L^2}^2\right)_{s,t}
	+ \int_s^t\|\tt _{u_r}\|^2_{L^2} d r
	\leq C_\epsilon \left (\omega_{\G ;H^2}^{1/p}(s,t)+(t-s)\right )
	\Big[\sup_{r\in[0,t]}\|\partial_x u\|_{L^2}^2 + \int_0^t\|\tt_{u_r}\|_{L^2}^2dr\Big]\, .
	\end{align*}
	From Lemma \ref{lem:gronwall} with $\omega(s,t):=(t-s)+\omega_{\G ;H^2}(s,t),\kappa=1/p$ and $\varphi(s,t)=0$, we get
	\[
	\sup_{t\in[0,T]}\|\partial _x u_t\|^2_{L^2}+\int_{0}^{T}\|\tt _{u_r}\|_{L^2}^2 d r
	\lesssim\exp \left(\omega(0,T)\right)\Big[\|\partial_x u_0\|^2_{L^2}+ \omega_{\G ;H^2}^{1/p}(0,T)\|u_0\|_{L^2}^2\Big] \, ,
	\]
yielding the claimed estimate.	
  \end{proof}
The following result is a consequence of Proposition \ref{pro:energy}. 
It shows that $u$ is also uniformly bounded in $L^2(H^2)$.
\begin{corollary} \label{cor:L^2(L^2)}
	Let $u_0\in H^1(\T ;\mathbb{S}^2)$, $\G \in \RD_a ^p(H^2)$ and let $u\in L^\infty(H^1)\cap L^2(H^2)$ be a solution to \eqref{LLG_apriori}. Then there exists a universal constant $C$ such that
	\begin{equation}
	\label{L2H2}
	\int_{s}^{t}\|\partial_x ^{2}u_r\|^2_{L^2} d r\leq C\| u_0\|_{H^1}^{6} T +C\| u_0\|^2_{H^1}.
	\end{equation}
		Moreover, $u$ belongs to $\mathcal{V}^p(L^2)$ and satisfies the estimate
	\begin{align}
	\label{u_p-var_L2}
	\|\delta u_{s,t}\|_{L^2}
	\lesssim \int_s^t\|\tt_{u_r}\|_{L^2}dr 
	\,.
	\end{align}
\end{corollary}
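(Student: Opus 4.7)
The plan is to derive the two bounds separately, both leveraging the sphere constraint and the energy estimate of Proposition \ref{pro:energy}.

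\emph{Step 1 (the bound \eqref{L2H2}).}  The starting point is the defining relation $\tt_u = \partial_x^2 u + u|\partial_x u|^2$, which rearranges as $\partial_x^2 u = \tt_u - u|\partial_x u|^2$. Using $|u|\equiv 1$ a.e., I get
\[
\|\partial_x^2 u_r\|_{L^2}^2 \leq 2\|\tt_{u_r}\|_{L^2}^2 + 2\|\partial_x u_r\|_{L^4}^4\,.
\]
The quartic term is controlled via the one-dimensional Gagliardo--Nirenberg inequality on $\T$ applied to $w=\partial_x u$, yielding $\|\partial_x u_r\|_{L^4}^4 \lesssim \|\partial_x u_r\|_{L^2}^4 + \|\partial_x u_r\|_{L^2}^3\,\|\partial_x^2 u_r\|_{L^2}$, and a Young inequality then absorbs the $\|\partial_x^2 u\|_{L^2}$-factor back into the left-hand side at the cost of the superlinear term $\|\partial_x u\|_{L^2}^6$. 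One thereby reaches a pointwise-in-time bound
\[
\|\partial_x^2 u_r\|_{L^2}^2 \lesssim \|\tt_{u_r}\|_{L^2}^2 + \|\partial_x u_r\|_{L^2}^4 + \|\partial_x u_r\|_{L^2}^6\,.
\]
Integrating over $[0,T]$ and invoking Proposition \ref{pro:energy} to estimate $\|\partial_x u\|_{L^\infty(L^2)}^2$ and $\int_0^T\|\tt_{u_r}\|_{L^2}^2\,dr$ by a constant multiple of $\|u_0\|_{H^1}^2$ delivers the claimed \eqref{L2H2}.

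\emph{Step 2 (the $p$-variation bound).}  Returning to the equation \eqref{LLG_apriori}, I estimate each contribution to
\[
\delta u_{s,t} = \int_s^t\!\bigl( u\times\partial_x^2 u + \tt_u\bigr)\,dr + G_{s,t}u_s + \GG_{s,t}u_s + u^{\natural}_{s,t}
\]
in $L^2(\T)$. The sphere constraint gives $u\times\partial_x^2 u = u\times\tt_u$, so both integrands are bounded by $\|\tt_{u_r}\|_{L^2}$, which accounts for the drift term and explains the shape of the bound. For the rough contributions, the Sobolev embedding $H^2\hookrightarrow L^\infty$ together with the membership $\G\in\RD^p_a(H^2)$ ensures $\|G_{s,t}u_s\|_{L^2}\leq \|G_{s,t}\|_{L^\infty(\mathcal L(\R^3))}\|u_s\|_{L^2}$ with finite $p$-variation, and likewise for $\GG_{s,t}u_s$ with $p/2$-variation. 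The remainder $u^{\natural}$ already lies in $\V^{p/3}_{2,\mathrm{loc}}(L^2)$ by Definition \ref{def:solution}. Summing these contributions proves $\delta u\in \V^p_2(L^2)$ and yields the stated inequality (the rough and remainder terms being subsumed in the implicit constant since their $p$-variations are controlled by the data).

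The main technical nuisance is the Gagliardo--Nirenberg--Young combination of Step 1, because one has to be careful to isolate $\|\partial_x^2 u\|_{L^2}^2$ with a strict coefficient $<1$ before absorption; once this pointwise bound is secured, both conclusions follow by integration in time and by reading off the respective variations from the equation.
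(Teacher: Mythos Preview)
Your Step~1 is correct and essentially matches the paper's argument: both decompose $\partial_x^2 u$ via $\tt_u$, control $\|\partial_x u\|_{L^4}^4$ by Gagliardo--Nirenberg, and absorb the resulting $\|\partial_x^2 u\|_{L^2}$-factor with Young's inequality. The extra $\|\partial_x u\|_{L^2}^4$ term you produce is harmless since it is dominated by $\|u_0\|_{H^1}^2+\|u_0\|_{H^1}^6$.

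Your Step~2, however, has a genuine gap. The estimate \eqref{u_p-var_L2} asserts $\|\delta u_{s,t}\|_{L^2}\le C\int_s^t\|\tt_{u_r}\|_{L^2}\,dr$ for all $s\le t$; the implicit constant multiplies the right-hand side, it does not allow you to \emph{add} terms to it. Your termwise bound yields
\[
\|\delta u_{s,t}\|_{L^2}\lesssim \int_s^t\|\tt_{u_r}\|_{L^2}\,dr+\omega_G(s,t)^{1/p}+\omega_{\GG}(s,t)^{2/p}+\omega_{u^{\natural}}(s,t)^{3/p}\,,
\]
which proves $u\in\V^p(L^2)$ but not the claimed inequality: as $t\downarrow s$ the rough contribution $\omega_G(s,t)^{1/p}$ is of strictly lower order than the drift integral (which is of $1$-variation), so it cannot be absorbed into a multiplicative constant.

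The paper circumvents this by exploiting the sphere constraint once more. From $|u_t|=|u_s|=1$ one has $\|\delta u_{s,t}\|_{L^2}^2=-2\int_{\T}u_s\cdot\delta u_{s,t}\,dx$, and pairing the equation with $u_s$ kills the leading rough term by anti-symmetry ($G_{s,t}u_s\cdot u_s=0$). After rewriting the drift using $(\tt_{u_r}+u_r\times\tt_{u_r})\cdot u_r=0$, one is left with an expression whose Chen-type defect is controlled by $\|\delta u\|_{\V^p}\int\|\tt_u\|_{L^2}$. A Sewing Lemma argument then yields $\|\delta u\|_{\V^p(s,t;L^2)}^2\lesssim\|\delta u\|_{\V^p(s,t;L^2)}\int_s^t\|\tt_{u_r}\|_{L^2}\,dr$, and dividing through gives \eqref{u_p-var_L2}. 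The key point you are missing is that the rough terms must be made to \emph{vanish} (via the geometry of the problem), not merely estimated.
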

\begin{proof}
	Since the solution $u_t(x)$ takes values in the sphere $\mathbb{S}^2\subset \R ^3$, then a.e.\ 
	$x\in \T $ it holds that $0=\partial_x  |u|^2=2 \partial _x  u\cdot u $ and $\dd^2 u\cdot 
	u=-|\partial_x u|^2 $. Therefore for all fixed $t\in [0,T]$,
	\begin{align}\label{eq:step_L^2L^2_1}
	\int_{\T }\tt_{u_t}\cdot\partial_x ^{2} u_t   d x=
	&\int_{\T }\big(|\partial_x ^{2} u_t|^2+(\partial_x ^{2} u_t\cdot u_t)|\partial_x  u_t|^2 
	\big)  d  x
	=\int_{\T }\big(|\partial_x ^{2} u_t|^2-
	|\partial_x  u_t|^4\big)  d x.
	\end{align}
	Recall also that $\tt_u\cdot u|\partial_x u|^2=0$, hence $\tt_u\cdot \tt_u=\tt_u\cdot \partial_x^2 u$. By substituting \eqref{eq:step_L^2L^2_1} in the a priori estimate in Proposition \ref{pro:energy}, we get
	\begin{align*}
	\int_{0}^{t} \|\partial_x ^{2} u_r \|^2_{L^2}  d r
	&\lesssim \int_{0}^{t} \|\partial_x  u_r \|^4_{L^4}  d r +\| u_0\|^2_{H^1} 
	 \\&
	 \lesssim \int_{0}^{t} \|\partial_x  u_r \|^ {3}_{L^2}\|\partial_x ^{2} u_r\|_{L^2}  d 
	 r+\|u_0\|^2_{H^1},
	\end{align*}
	where we used the following Gagliardo-Nirenberg-Sobolev inequality in one dimension
	\begin{equation}\label{ineq:GN_L4}
	\|\partial_x u \|_{L^4}^2\lesssim \|\partial_x u \|_{L^2}^{3/4}\|\partial_x^2 u\|^{1/4}_{L^2}\,.
	\end{equation}
	From Young Inequality, we have for any $\epsilon>0$
	\begin{align*}
	\int_{0}^{t} \|\partial_x ^{2} u_r \|^2_{L^2}  d r
	 \leq C\Big[\int_{0}^{t}(\tfrac{1}{2\epsilon} \|\partial_x  u_r \|^ 
	 {6}_{L^2}+\tfrac{\epsilon}{2}\|\partial_x ^{2} u_r\|^{2}_{L^2} ) d r+\|u_0\|^2_{H^1}\Big]\,.
	\end{align*}
	From the estimate on $\|\partial_x u\|_{L^\infty(L^2)}$, we see by choosing $\epsilon\in(0,1/(2C))$ that there is another constant $\tilde C>0$ such that 
	\[
	\frac12\int_{0}^{T} \|\partial_x ^{2} u^n_r \|^2_{L^2}  d r \leq \tilde C\|u_0\|_{H^1}^{6} T 
	+\| u_0\|^2_{H^1}\,
	\]
	and this concludes our first estimate \eqref{L2H2}.

To show \eqref{u_p-var_L2}, we first note from orthogonality that
$|u_t- u_s|^2= |u_t|^2- |u_s|^2 - 2 u_s\cdot\delta u_{s,t}=- 2 u_s\cdot\delta u_{s,t}$,
but from the equation satisfied by $u$ we know that:
\begin{equation}
\label{u_delta_u}
\delta u_{s,t}\cdot u_s + \int_s^t(\tt_u + u\times\tt_u)\cdot\delta u_{s,r}dr = u^{\natural}_{s,t}\cdot u_s \,.
\end{equation}
To evaluate the right hand side, we can rely on the Sewing Lemma: we compute
\[
\|\delta(u^{\natural}\cdot u)_{s,\theta,t}\|_{L^1}= 
\|\delta u_{s,\theta}\cdot \int _\theta ^t(\tt_u + u\times \tt_u)dr \|_{L^1}
\leq \|\delta u\|_{\V^p(s,\theta;L^2)}\int_\theta ^t \|\tt_u+u\times\tt_u\|_{L^2}dr
\,,
\]
which has finite $p/3$-variation, in particular Lemma \ref{lemma_sewing} asserts that the right hand side of \eqref{u_delta_u} can be estimated in $L^1$ by the above quantity.
Putting all together, we see by using Cauchy-Schwarz and Young inequalities that
\[
\begin{aligned}
\|\delta u\|_{\V^p(s,t;L^2)}^2
&\lesssim 
\|\delta u\|_{\V^p(s,t;L^2)}\int_s^t\|\tt_u+u\times\tt_u\|_{L^2}dr
+\|u^{\natural }_{s,t}\cdot u_s\|_{L^1}
\\
&\lesssim 
\|\delta u\|_{\V^p(s,t;L^2)}\int_s^t\|\tt_u\|_{L^2}dr
\lesssim 
\frac{\epsilon}{2}\|\delta u\|^2_{\V^p(s,t;L^2)} + \frac{1}{2\epsilon}\int_s^t\|\tt_u\|_{L^2}dr
\end{aligned}
\]
for any $\epsilon>0.$ 
Choosing the latter parameter small enough gives the desired bound.
\end{proof}

\subsection{Generalization to higher orders}
Assume that $a\in L^\infty(H^k)\cap L^2(H^{k+1})$ satisfies $da= fdt+d\A a$ for some $f\in L^2(H^{k-1})$.
It is intuitively clear from the two previous subsections that, in order to obtain an a priori estimate for $\partial_x^ka^{\otimes2}$ in $L^\infty(L^2)\cap L^2(H^1)$ within our rough driver framework, an apparently inescapable step is to derive a uniform estimate on each of the remainders
\begin{align*}
(\partial_x ^N a\otimes \partial_x ^M a)_{s,t}^{\natural}:=\delta (\partial_x ^N a\otimes 
\partial_x ^M a)_{s,t}-(\D +J+\JJ +\tilde\JJ )(N,M)_{s,t}\,  ,
\end{align*}
for $i,j=1,\dots n$ and each $N,M=0,\dots,k$, 
where here $\D(N,M)$ denotes the drift term
\begin{equation}\label{drift_squared_equation}
\D ^{i,j}(N,M)_{s,t}:=\int_s^t \partial_x ^N f_r^i\partial_x ^Ma_r ^jdr+\int_s^t \partial_x 
^Na_r^i\partial_x ^M f_r^jdr\, ,
\end{equation}
and we introduce the following notation for the rough terms
\begin{equation}
J(N,M)_{s,t}
:=\sum_{n=0}^{N} \binom{N}{n} (\partial_x ^{N-n}A_{s,t}\partial_x ^{n} a_s)\otimes \partial_x ^Ma_s
+\sum_{n=0}^{M} \binom{M}{n} \partial_x ^Na_s\otimes (\partial_x ^{M-n}A_{s,t}\partial_x ^{n} 
a_s)\, ,
\end{equation}
\begin{equation}
\JJ(N,M)_{s,t}
:=\sum_{n=0}^{N} \binom{N}{n} (\partial_x ^{N-n}\AA_{s,t}\partial_x ^{n} a_s)\otimes \partial_x 
^Ma_s
+\sum_{n=0}^{M} \binom{M}{n} \partial_x ^Na_s\otimes (\partial_x ^{M-n}\AA_{s,t}\partial_x ^{n} 
a_s)\, ,
\end{equation}
\begin{equation}
\tilde\JJ(N,M)_{s,t}
:=\sum_{n=0}^{N} \binom{N}{n} (\partial_x ^{N-n}A_{s,t}\partial_x ^{n} a_s)\otimes \sum_{m=0}^{M} 
\binom{M}{m}(\partial_x ^{M-m}A_{s,t}\partial_x ^{m} a_s)\,.
\end{equation}
We introduce also the following controls
\begin{align*}
&\omega_{\D ^{i,j}(N,M)}(s,t):=\int_s^t \|\partial_x ^N f_r^i \partial_x ^M a^j_r
+\partial_x ^N f_r^i \partial_x ^M  a^j_r\|_{H^{-1}} dr\, , \\
&\omega_{ \D (N,M)}(s,t):=\sum_{i,j\in \{0,1,\dots,k\}} \omega_{ \D ^{i,j}(N,M)}(s,t)\, .
\end{align*}
The remainder $(\partial_x ^N a \otimes \partial_x ^M a)_{s,t}^{\natural}$ can be estimated using 
the rough standard machinery, as described in Section \ref{Section_remainder_existence}.
Since $N,M$ are arbitrary now, the first step \ref{RM1} leads to somewhat involved algebraic computations.
The following Proposition \ref{pro:remainder_existence} gives an explicit formula for 
$\delta(\partial_x ^Na \otimes \partial_x ^M a)_{s,u,t}^{\natural}$.

\begin{proposition}\label{pro:remainder_existence}
	Let $\mathbf 1=\mathbf 1_{n\times n}$. In the notations of this section, we can rewrite $\delta (\partial_x ^N a \otimes 
	\partial_x ^M a)^{\natural}_{s,u,t}$ as
	\begin{align*}
	\delta (\partial_x ^N a &\otimes \partial_x ^M a)^{\natural}_{s,u,t}
	=\sum_{n=0}^{N} \binom{N}{n} \partial_x ^{N-n}A_{u,t}\otimes \mathbf 1 \big[ (\D +\JJ 
	+\tilde\JJ)(N,M)_{s,u}+(\partial_x ^{n} a\otimes \partial_x ^M v)^{\natural}_{s,u} \big]
	\\
	&\quad
	+\sum_{m=0}^{M} \binom{M}{m} \mathbf 1\otimes \partial_x ^{M-m}A_{u,t} \big[ (\D +\JJ +\tilde\JJ 
	)(N,m)_{s,u}+(\partial_x ^N a\otimes \partial_x ^{m} a)^{\natural}_{s,u} \big]\\
	&\quad
	+\sum_{n=0}^{N} \binom{N}{n}\partial_x ^{N-n}\AA _{u,t}\otimes \mathbf 1 \big[ (\D +J+\JJ 
	+\tilde\JJ)(N,M)_{s,u}+(\partial_x ^{n} a\otimes \partial_x ^M a)^{\natural}_{s,u}\big]\\
	&\quad
	+\sum_{m=0}^{M} \binom{M}{m} \mathbf 1\otimes \partial_x ^{M-m}\AA _{u,t} \big[ (\D +J+\JJ +\tilde\JJ 
	)(N,M)_{s,u}+(\partial_x ^N a\otimes \partial_x ^{m} a)^{\natural}_{s,u}\big]\\
	&
	+\sum_{n=0}^{N} \sum_{m=0}^{M}  \binom{N}{n}  \binom{M}{m}  \partial_x ^{N-n}A_{u,t}\otimes 
	\partial_x ^{M-m}A_{u,t}\big[(\D +J+\JJ +\tilde\JJ )(n,m)+(\partial_x ^{n} a\otimes \partial_x 
	^{m} a)^{\natural}_{s,u}\big].
	\end{align*} 
\end{proposition}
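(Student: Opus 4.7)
Starting from the defining identity
\[
(\partial_x^N a\otimes \partial_x^M a)^{\natural}_{s,t}=\delta(\partial_x^N a\otimes \partial_x^M a)_{s,t}-(\D+J+\JJ+\tilde\JJ)(N,M)_{s,t},
\]
I would apply the operator $\delta_{s,u,t}$ to both sides. The first term on the right is an increment and therefore vanishes under $\delta_{s,u,t}$; likewise $\delta \D(N,M)_{s,u,t}=0$ since $\D(N,M)$ is defined as a Bochner integral in time. Hence the task reduces to computing $-\delta(J+\JJ+\tilde\JJ)(N,M)_{s,u,t}$.

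For $\delta J$, using that the first level of $\A$ is itself an increment, $\partial_x^{N-n}A_{s,t}=\partial_x^{N-n}A_{s,u}+\partial_x^{N-n}A_{u,t}$, elementary rearrangement of a typical summand produces contributions of the form $(\partial_x^{N-n}A_{u,t}\otimes \mathbf{1})\,\delta(\partial_x^n a\otimes \partial_x^M a)_{s,u}$ together with its symmetric counterpart on the right slot. Substituting the product equation at level $(n,M)$ replaces $\delta(\partial_x^n a\otimes \partial_x^M a)_{s,u}$ by $(\D+J+\JJ+\tilde\JJ)(n,M)_{s,u}+(\partial_x^n a\otimes \partial_x^M a)^{\natural}_{s,u}$. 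For $\delta\JJ$, I would first invoke Chen's relation for the second level and Leibniz' rule to obtain
\[
\partial_x^{N-n}\delta\AA_{s,u,t}=\sum_{k=0}^{N-n}\binom{N-n}{k}\partial_x^{N-n-k}A_{u,t}\,\partial_x^{k}A_{s,u},
\]
which splits $\delta\JJ$ into a ``pure'' $\partial_x^{N-n}\AA_{u,t}\otimes \mathbf{1}$ contribution plus a family of cross terms carrying one $A_{u,t}$-factor and one $A_{s,u}$-factor. Finally, for $\delta\tilde\JJ$, bilinearity combined with $A_{s,t}=A_{s,u}+A_{u,t}$ applied in both tensor slots isolates a diagonal $(\partial_x^{N-n}A_{u,t}\otimes \partial_x^{M-m}A_{u,t})\,\delta(\partial_x^n a\otimes \partial_x^m a)_{s,u}$ contribution, to be substituted via the product equation at level $(n,m)$, together with mixed terms in which one of the two slots carries an $A_{s,u}$-factor and the other an $A_{u,t}$-factor.

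The crucial combinatorial step is to recognize that the Leibniz cross terms from $\delta\JJ$ cancel exactly the $J(n,M)_{s,u}$ (resp.\ $J(N,m)_{s,u}$) pieces inherited from substituting the product equation in the $\delta J$ step, and that the remaining mixed terms from $\delta\tilde\JJ$ recombine with the leftover Leibniz cross terms of $\delta\JJ$ after reindexing double sums via a Vandermonde-type identity (Appendix \ref{app:combinatorial}). This asymmetric absorption explains why the bracket in the first two lines of the claimed formula contains only $\D+\JJ+\tilde\JJ$, while that in the next three lines contains the full $\D+J+\JJ+\tilde\JJ$. Once all cancellations and reindexings are carried out, the remaining pieces collect exactly into the five-line expression of the statement.

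The main obstacle is not analytical but combinatorial: keeping track of signs, binomial prefactors and tensorial slots through the threefold expansion (Chen applied to $A$, Chen applied to $\AA$, and bilinearity of the tensor product), and organizing them so that the non-diagonal terms telescope. Each individual step being either a Chen-type identity, a Leibniz expansion, or a substitution of the product equation, the analytical content is minimal; Appendix \ref{app:combinatorial} provides the combinatorial identities that make the final collapse possible.
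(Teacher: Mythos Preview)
Your plan is correct and the combinatorial mechanism you describe (apply $\delta_{s,u,t}$, expand $\delta J$ via the increment property of $A$, expand $\delta\JJ$ via Chen plus Leibniz, expand $\delta\tilde\JJ$ by bilinearity, then substitute the lower-order product equations and let the cross terms telescope through a Vandermonde reindexing) is exactly the computation that underlies the result. However, the paper does not carry this out here: its proof of this proposition is a one-line specialization of Theorem~\ref{teo:rem_derivatives}, the general formula for $\delta(\partial_x^N(a-b)\otimes\partial_x^M(a-b))^{\natural}_{s,u,t}$, by taking $b=0$, $g=0$ and $(B,\BB)=0$. The combinatorics you outline are precisely those performed in the proof of Theorem~\ref{teo:rem_derivatives} in Appendix~\ref{app:combinatorial}, just in the more general two-driver setting. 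Your direct route is self-contained for this statement and arguably cleaner if one only cares about the single-driver case; the paper's route avoids repetition, since Theorem~\ref{teo:rem_derivatives} is needed anyway for the continuity of the It\^o--Lyons map, and the present proposition becomes a free corollary.
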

\begin{proof}
	This is a consequence of Theorem \ref{teo:rem_derivatives}. Using \eqref{eq;abstract_eq} with 
	$b=0$ and $(B,\mathbb{B})=0$, we can indeed represent the remainder $\delta (\partial_x ^N a^i 
	\partial_x ^M a^j)^{\natural}_{s,u,t}$ as above. 
\end{proof}

	The same considerations as in Section \ref{Section_remainder_existence} can be iterated and the outcome is the following corollary.
	\begin{corollary}\label{cor:existence_rem_k}
		In the notation of this section,  we can estimate the remainder term $(\partial_x ^N a^i 
		\partial_x ^N a^j)^{\natural}$ as follows:
		\begin{align*}
		\|\partial_x^N a ^{\otimes 2;\natural}_{s,t}\|_{H^{-1}}
		&\lesssim_N \sum_{m=0}^{N}\sum_{k,\ell=0;(k,l)\neq (N,N)}^{m} \omega^{1/p}\omega_{\D (k,\ell)}(s,t)+\omega^{3/p}(s,t)\|a\|^2_{L^\infty (H^N)}
		\\
		&\quad +\omega_{\A,H^1}^{1/p}[\|\partial_x^N a\|^2_{L^\infty(L^2)}+\omega_{\D (N,N)}(s,t)]\, ,
		\end{align*}
		where $\omega=\omega_{\A,H^{N+1}}$.
	\end{corollary}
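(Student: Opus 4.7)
The plan is to argue by induction on the pair $(N,M)$, say ordered by $N+M$, establishing the estimate simultaneously for every mixed remainder $(\partial_x^n a\otimes \partial_x^m a)^{\natural}$ with $0\le n,m\le N$. The base case $n=m=0$ coincides with Step~1 of the proof of Theorem \ref{th:remainder_interate}, which supplies the bound \eqref{eq:N=0,M=0} on $a^{\otimes 2,\natural}$. Assume the claim holds for all pairs $(k,\ell)$ with $k+\ell<n+m$.

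First, I would apply Proposition \ref{pro:remainder_existence} to obtain the explicit Chen-type decomposition of $\delta(\partial_x^n a\otimes \partial_x^m a)^{\natural}_{s,u,t}$. A careful inspection of that formula reveals that every remainder that re-appears on the right-hand side is either of strictly lower total order $k+\ell<n+m$ (to which the induction hypothesis directly applies), or of the same order $(n,m)$, in which case it is always multiplied by one of the ``small'' factors $\partial_x^{N-n}A_{u,t}\otimes\mathbf 1$, $\mathbf 1\otimes\partial_x^{M-m}A_{u,t}$, $\partial_x^{N-n}\AA_{u,t}\otimes\mathbf 1$, $\mathbf 1\otimes\partial_x^{M-m}\AA_{u,t}$, or the product $\partial_x^{N-n}A_{u,t}\otimes\partial_x^{M-m}A_{u,t}$. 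Using $\|fg\|_{H^{-1}}\lesssim \|f\|_{H^{-1}}\|g\|_{H^1}$ and the definition of $\omega_{\A,H^{N+1}}$, each such factor is controlled in $H^1$ by some fractional power of $\omega$. This concludes step \ref{RM1}.

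Next, I would check that the resulting upper bound on $\|\delta(\partial_x^n a\otimes \partial_x^m a)^{\natural}_{s,u,t}\|_{H^{-1}}$ is dominated by $C\,\eta(s,t)^{3/p}$ for some control $\eta$, so that Lemma \ref{lemma_sewing} (step \ref{RM2}) applies and yields
\[
\|(\partial_x^n a\otimes \partial_x^m a)^{\natural}_{s,t}\|_{H^{-1}}
\lesssim \Phi(s,t)\;+\;\omega_{\A,H^1}(s,t)^{1/p}\,\omega_{(\partial_x^n a\otimes \partial_x^m a)^{\natural};H^{-1}}(s,t)^{3/p},
\]
where $\Phi$ collects the drift controls $\omega_{\D(k,\ell)}$ together with all lower-order contributions given by the induction hypothesis and the $L^\infty(H^N)$-norm of $a$. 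In step \ref{RM3}, taking the $p/3$-th power, using super-additivity, passing to the supremum over partitions of a covering $(\tilde P_k)_k$ of $[0,T]$ chosen fine enough so that $\omega_{\A,H^1}(\tilde P_k)^{1/p}\le \min\{1/(3C_p),1\}$, one absorbs the remainder norm appearing on the right into the left, closing the induction.

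The main obstacle is the combinatorial book-keeping inherent in step \ref{RM1}: Proposition \ref{pro:remainder_existence} generates a considerable number of cross terms of the form $(\D+J+\JJ+\tilde\JJ)(k,\ell)_{s,u}$, and one has to verify that the drift contributions assemble into the double sum $\sum_{m=0}^{N}\sum_{(k,\ell)\ne(N,N)}\omega^{1/p}\omega_{\D(k,\ell)}$ while the top-order piece $\omega_{\D(N,N)}$ only appears with the prefactor $\omega_{\A,H^1}^{1/p}$—this is exactly the reason for the separated form of the final estimate. Once this structure is identified, no further idea is needed beyond the one already used in Theorem \ref{th:remainder_interate}.
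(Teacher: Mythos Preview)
Your proposal is correct and follows essentially the same approach as the paper: both argue by induction, invoke Proposition~\ref{pro:remainder_existence} for the Chen-type decomposition (step \ref{RM1}), and run the rough standard machinery \ref{RM2}--\ref{RM3} exactly as in Theorem~\ref{th:remainder_interate}. The only organizational difference is that you order the induction by the total degree $n+m$ of the mixed remainder $(\partial_x^n a\otimes\partial_x^m a)^{\natural}$, whereas the paper inducts along the diagonal $N$ and, at each step, first handles the off-diagonal term $(\partial_x^N a\otimes\partial_x^{N-1}a)^{\natural}$ separately (in the spirit of Step~2 of Theorem~\ref{th:remainder_interate}) before closing the estimate for $(\partial_x^N a)^{\otimes 2,\natural}$; your ordering is arguably cleaner and makes the recursion structure more transparent, but the content is the same.
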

	
	\begin{proof}
		From Theorem \ref{th:remainder_interate} we know how to estimate $(\partial_x  a^{ \otimes2 })^{\natural}$. We assume as inductive step that the following estimate holds
		\begin{align*}
		\|\partial_x^{N-1} a ^{\otimes 2;\natural}_{s,t}\|_{H^{-1}}
		&\lesssim \sum_{m=0}^{N}\sum_{k,\ell=0}^{m}\big[\omega^{1/p}\omega_{\D 
		(k,\ell)}(s,t)+\omega^{2/p}\omega_{\partial_x^{\ell}A;H^1}^{1/p}(s,t)\|\partial_x^{m-\ell} 
		a\|^2_{L^\infty(L^2)}\big]\, ,
		\end{align*} 
		and we want to prove that the claim holds. As in Theorem \ref{th:remainder_interate} the 
		estimate to $\|(\partial_x^N a )^{\otimes 2;\natural}_{s,t}\|_{H^{-1}}$ depends on 
		$(\partial_x^N a\otimes \partial_x^{N-1} a)^\natural$ and $(\partial_x^k a \otimes 
		\partial_x^k a)^\natural$ for all $k=0,\dots,N-1$. 
		The latter are estimated thanks to the inductive hypothesis. As for $(\partial_x^N a 
		\otimes \partial_x^{N-1} a)^\natural$,
 as in Step 2 of Theorem  \ref{th:remainder_interate}, the mixed term depends on the previous 
 drifts and also on $\|\partial_x^N a \|_{L^\infty(L^2)}$. 
		All the contributions of the estimate of the mixed term are contained in the estimate on 
		$(\partial_x^{N-1} a \otimes \partial_x^{N-1} a)^{\natural}$ plus a term proportional to 
		$\|\partial_x^N a\|_{L^\infty(L^2)}$ and the $1$-variation of the drift $\D (N,N-1)$.
		
		Now we move to the estimate of  $\|(\partial_x^N a\otimes \partial_x^N 
		a)^{\natural}_{s,t}\|_{H^{-1}}$: from the rough standard machinery, the inductive step and 
		the considerations on $(\partial_x^N a\otimes \partial_x^{N-1} a)^{\natural}$ the claim 
		follows.
	\end{proof}

		\begin{remark}\label{example:time_indep_noise}
		It is worth noticing that in the case of space independent noise,
		the computations are considerably simplified, starting from the equality in Proposition \ref{pro:remainder_existence}. In the same spirit, it is also possible to conclude that, again in the case of space independent noise,
		\begin{align}\label{eq:time_indep_noise_N}
		\|(\partial_x^N a \otimes \partial_x^N a)^{\natural}\|_{H^{-1}}\lesssim 
		\omega^{1/p}_{\A;L^\infty}\omega_{\D (N,N)}+\omega_{ \A; L^\infty}^{3/p}\|\partial_x^N 
		a\|_{L^\infty(L^2)}^2\, .
		\end{align}
		\end{remark}

	\section{Application to existence and uniqueness}
	\label{sec:existence_uniqueness}
	
	The aim of this section is to make use of the a priori estimates in order to show existence, higher order regularity of the solution to the stochastic \eqref{rLLG}. Uniqueness is also shown for solutions in $L^\infty(H^1)\cap L^2(H^2)$.
		
	The strategy of the existence proof is to construct a sequence of solutions to \eqref{rLLG} with smooth noise, denoted by $(u^n)_n$, that is uniformly bounded in $L^\infty(H^1)\cap L^2(H^2)\cap\mathcal{V}^p(L^2)$.
	From a compactness argument, there exists a limit $u$ of a subsequence of  $(u^n)_n$, that we identify as a solution to \eqref{rLLG}.  
	Then we show that the solution is unique and, if the initial condition and the noise are more regular,  the solution has a better regularity as well.
	As a consequence of uniqueness, every subsequence of $(u^n)_n$  converges to the same limit $u$ and therefore in the stochastic case the pathwise solution can also be interpreted as a random variable adapted with respect to the natural filtration.

	\begin{theorem}
		Let $u^0\in H^1(\T ;\mathbb{S}^2)$ and $\G \in \RD _a^p(H^2)$. Then there exists a solution to \eqref{rLLG} in the sense of Definition \ref{def:solution} and it is a unique pathwise solution (in the sense that if $u_1, u_2$ are solutions to \eqref{rLLG} with same initial condition $u^0$, then $u_1=u_2$ a.e. in $[0,T]\times \T $).
	\end{theorem}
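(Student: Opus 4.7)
The plan is to treat existence and uniqueness separately, with existence obtained through smooth approximation and a compactness argument, and uniqueness through a product formula combined with the rough Gronwall lemma.

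\textbf{Existence via approximation.} Since $\G$ is geometric, there exists a sequence of smooth rough drivers $\G^n=(G^n,\GG^n)$ of the form \eqref{relazioni_path}, with approximating paths $\Gamma^n\in C^1$, such that $\G^n\to\G$ in the $(p,p/2)$-variation topology of $\RD_a^p(H^2)$. For each $n$, the corresponding equation is a classical random PDE (in the sense that $dw$ is replaced by $\partial_t\Gamma^n dt$), for which existence of a sphere-valued solution $u^n\in L^\infty(H^1)\cap L^2(H^2)$ is standard and can be obtained either by a Galerkin scheme or by the Doss--Sussmann reduction recalled in Appendix \ref{app:perturbed}. Because of geometricity and anti-symmetry, Lemma \ref{lem:constraint} ensures that $u^n_t(x)\in\mathbb{S}^2$ almost everywhere. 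The a priori estimates of Proposition \ref{pro:energy} and Corollary \ref{cor:L^2(L^2)} depend only on $\|u^0\|_{H^1}$ and $\|\G^n\|_{\RD^p_a(H^2)}$, which is uniformly bounded by hypothesis. Hence $(u^n)_n$ is uniformly bounded in $L^\infty(H^1)\cap L^2(H^2)\cap\V^p(L^2)$, and \eqref{u_p-var_L2} together with the Taylor expansion \eqref{rLLG_def} yields a uniform bound on the remainders $u^{n,\natural}$ in $\V^{p/3}_{2,\mathrm{loc}}(L^2)$.

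\textbf{Compactness and passage to the limit.} By Aubin--Lions, up to a subsequence $u^n\to u$ strongly in $L^2(0,T;H^{1-\varepsilon})$ and $C([0,T];L^2)$, weakly in $L^2(H^2)$, and weakly-$*$ in $L^\infty(H^1)$. The strong convergence in $L^2(H^1)$ and the uniform $L^2(H^2)$ bound allow us to pass to the limit in the nonlinear drift terms $u^n|\dd u^n|^2$ and $u^n\times\dd^2u^n$ in the sense of distributions. For the rough contributions, the continuity of $(\xi,u)\mapsto G_{s,t}u+\GG_{s,t}u$ as a map from $\RD_a^p(H^2)\times L^2\to L^2$, combined with the convergence of $\G^n$ and the pointwise-in-time convergence of $u^n_s$, gives convergence of $G^n_{s,t}u^n_s+\GG^n_{s,t}u^n_s$. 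Consequently $u^{n,\natural}_{s,t}\to u^\natural_{s,t}:=\delta u_{s,t}-\int_s^t(\cdots)dr-G_{s,t}u_s-\GG_{s,t}u_s$ in $L^2$, and the uniform $\V^{p/3}_2$-bound passes to the limit by lower semicontinuity. The spherical constraint passes through pointwise convergence, so $u$ satisfies all requirements of Definition \ref{def:solution}.

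\textbf{Uniqueness.} Let $u_1,u_2$ be two solutions starting at $u^0$ and set $v:=u_1-u_2$. Subtracting the equations yields
\begin{equation*}
\delta v_{s,t}-\int_s^t\!\!\bigl[\dd^2v + (u_1|\dd u_1|^2-u_2|\dd u_2|^2) + v\times\dd^2u_1+u_2\times\dd^2v\bigr]dr = G_{s,t}v_s+\GG_{s,t}v_s+v^\natural_{s,t}.
\end{equation*}
Crucially, the rough part is \emph{linear} in $v$ with the same anti-symmetric driver $\G$. I would apply the product formula (Proposition \ref{pro:product}) to $v\otimes v$, test against $\mathbf 1\in\R^{3\times 3}$, and use the same cancellation as in the proof of Lemma \ref{lem:constraint} (namely $G_{s,t}v_s\cdot v_s=0$ and Remark \ref{rem:levy}) to eliminate the rough contributions at leading order. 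What remains is
\begin{equation*}
\delta(\|v\|_{L^2}^2)_{s,t}=2\int_s^t\langle v,F(u_1,u_2)\rangle\,dr+\bigl\langle (v^{\otimes 2})^\natural_{s,t},\mathbf 1\bigr\rangle,
\end{equation*}
where $F(u_1,u_2)$ denotes the drift difference. Standard integration by parts together with $u_1,u_2\in L^2(H^2)$ and the Gagliardo--Nirenberg bound \eqref{ineq:GN_L4} yields
$\int_s^t\langle v,F\rangle dr\lesssim \int_s^t\phi(r)\|v_r\|_{L^2}^2 dr$ for some $\phi\in L^1(0,T)$. The remainder $\langle(v^{\otimes 2})^\natural,\mathbf 1\rangle$ is controlled via the simplified estimate \eqref{eq:estimate_rem_simplified} applied to the equation for $v$, giving an upper bound of the form $\omega_{\G,H^2}^{1/p}(s,t)\,\sup_{r\in[s,t]}\|v_r\|_{L^2}^2$ plus lower-order drift controls. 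Plugging these into the rough Gronwall Lemma \ref{lem:gronwall} with $E_t=\|v_t\|_{L^2}^2$ and $\varphi=0$ (since $v_0=0$) yields $v\equiv 0$.

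\textbf{Main obstacle.} The delicate step is uniqueness: since $v$ does not satisfy any sphere-like constraint, one cannot use algebraic simplifications of the nonlinear drift. The harmonic-map term $u_1|\dd u_1|^2-u_2|\dd u_2|^2$ is quadratic in the first derivatives and only lives in $H^{-1}$ \emph{provided} one absorbs one derivative against $\dd v$ via integration by parts; likewise the gyromagnetic difference $u_2\times\dd^2v$ requires care to test against $v$ in $L^2$, using the identity $\langle u_2\times\dd^2v,v\rangle=-\langle v\times\dd u_2,\dd v\rangle$ after integration by parts. Controlling these drifts in the $H^{-1}$ norm needed for \eqref{eq:estimate_rem_simplified} will require splitting the time integrals and using the $L^2(H^2)$ regularity to produce an integrable weight $\phi$ in the Gronwall input. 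The fact that the rough contribution is still driven by the same anti-symmetric $\G$, so that the cancellations from Lemma \ref{lem:constraint} remain available, is precisely what makes this strategy viable.
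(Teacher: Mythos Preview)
Your existence argument matches the paper's almost exactly: smooth approximation via geometricity, uniform a priori bounds from Proposition~\ref{pro:energy} and Corollary~\ref{cor:L^2(L^2)}, Aubin--Lions compactness, and identification of the limit.

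For uniqueness, your approach is correct in spirit but overcomplicated, and one invocation is misdirected. The paper's argument is simpler and worth noting. Since $u_1$ and $u_2$ are driven by the \emph{same} anti-symmetric $\G$, the product formula \eqref{product_square} applied to $v^{\otimes 2}$ and tested against $\mathbf 1$ makes \emph{all} rough contributions vanish: $G_{s,t}v_s\cdot v_s=0$ by anti-symmetry, and $2\GG_{s,t}v_s\cdot v_s+|G_{s,t}v_s|^2=0$ by Remark~\ref{rem:levy}. Consequently $\langle (v^{\otimes 2})^{\natural}_{s,t},\mathbf 1\rangle$ equals an increment (namely $\delta(\|v\|_{L^2}^2)_{s,t}$ minus a time integral), and being simultaneously in $\V^{1-}_{2,\mathrm{loc}}$ it is identically zero---exactly the argument used in Lemma~\ref{lem:constraint}. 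So there is no rough remainder to estimate at all, and the \emph{classical} Gronwall lemma closes the proof once the drift is handled (the paper's estimates \eqref{est:I}--\eqref{est:III} do this, producing the good term $+\int\|\partial_x v\|_{L^2}^2$ on the left and an integrable weight $1+\|\partial_x u_1\|_{L^2}^4+\|\partial_x u_2\|_{L^2}^4$ on the right).

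Two concrete corrections to your write-up: (i) the estimate \eqref{eq:estimate_rem_simplified} you cite concerns $\langle(\partial_x a^{\otimes 2})^{\natural},\mathbf 1\rangle$, not $\langle(a^{\otimes 2})^{\natural},\mathbf 1\rangle$, so it is not the right reference at the $L^2$ level; (ii) your claimed drift bound $\int_s^t\langle v,F\rangle\,dr\lesssim\int_s^t\phi(r)\|v_r\|_{L^2}^2dr$ omits the $\epsilon\int\|\partial_x v\|_{L^2}^2$ terms that appear after Young's inequality and must be absorbed into the dissipative part $-2\int\|\partial_x v\|_{L^2}^2$ coming from the Laplacian. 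Your rough-Gronwall route would still close once these are fixed, but the paper's observation that the remainder vanishes makes the detour through Theorem~\ref{th:remainder_interate} and Lemma~\ref{lem:gronwall} unnecessary for uniqueness.
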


We recall that \eqref{rLLG} is driven by a rough driver $\G $ which is 
constructed from a  geometric lift of the linear map $(\cdot)\times w$. 
Therefore, there exists a sequence of smooth paths $(\Gamma^n)_n$ so that the 
associated driver $\G ^{n}$ constructed through the same procedure converges to 
$\G $ in the rough driver metric. 
From Appendix \ref{app:perturbed}, for each $n\in\mathbb N$ there is a unique solution $u^n=u^{\dot\Gamma^{n}}$ to the system
\begin{equation} \label{rLLGn}\tag{rLLGn}
\left\{\begin{aligned}
&du^n= [\partial_x ^{2} u^n+ u^n|\partial _x  u^n|^2 + u^n\times \partial_x ^{2} u^n]dt +  \dot 
\Gamma ^nu^n,\quad \text{on}\enskip [0,T]\times \T ,
\\
&u^n(0)=u^{0}\in H^1(\T ;\mathbb{S}^2)\,.
\end{aligned}\right.
\end{equation}

	\subsection{Proof of existence }\label{sub_section:proof_existence}
	From Lemma \ref{lem:constraint}, $\|u^n\|_{L^2}=\|u_0\|_{L^2}$.
	Because of the a priori estimate in Proposition \ref{pro:energy},
	\begin{align}\label{eq:existence1}
	\sup_{r\in[0,T]}\| \partial_x  u^n_r\|_{L^2}^2+\int_{0}^{T} \|u^n_r \times \partial_x ^{2} 
	u^n_r\|^2_{L^2} d r+ \int_{0}^{T} \|\partial_x ^{2} u^n_r \|^2_{L^2}  d r\lesssim\|\partial_x  
	u_0\|_{L^2}^2+\|\partial_x  u_0\|_{L^2}^{6} T \, ,
	\end{align}
	we conclude that the sequence $(u^n)_{n\in\mathbb{N}}$ is uniformly bounded in $L^\infty(0,T;H^1)\cap L^2(0,T;H^2)\cap \mathcal{V}^p(L^2)$.  
	
	The following Lemma is a small modification of a version of the Aubin-Lions lemma adapted to the rough path setting, that was proved in \cite[Appendix A]{hofmanova2019navier}.
	\begin{lemma}
		\label{lem:aubin}
		Let $\omega$ and $\bar{\omega}$ be controls on $[0,T]$ and let $L,\kappa>0$. Define
		\begin{align*}
		X:=L^2(H^2)\cap \{f\in C([0,T];L^2): \|\delta f_{s,t}\|_{L^2}\leq \omega (s,t)^\kappa, \;\forall s\leq t \in[0,T]\; \mathrm{s.t.}\;\bar{\omega}(s,t)\leq L \}
		\end{align*}
		endowed with the norm
		\begin{align*}
		\|f\|_X:= \|f\|_{L^2(H^2)}+\|f\|_{L^\infty(L^2)}+\sup\Big\{\frac{\|\delta g_{s,t}\|_{L^2}}{\omega(s,t)^\kappa}: s\leq t \in [0,T] \; \mathrm{s.t.}\; \bar{\omega}(s,t)\leq L\Big\}\, .
		\end{align*}
		Then $X$ is compactly embedded in $C([0,T];L^2)$ and $L^2(H^1)$.
	\end{lemma}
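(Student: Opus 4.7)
\textbf{Proof plan for Lemma \ref{lem:aubin}.}
The result is an Aubin-Lions-Simon compactness statement in which the standard $L^p$ (or $C^\alpha$)-in-time regularity is replaced by the ``rough'' modulus of continuity $\|\delta f_{s,t}\|_{L^2}\le\omega(s,t)^\kappa$, valid only locally on intervals where $\bar\omega(s,t)\le L$. My approach is to first upgrade this \emph{local} modulus to a \emph{uniform} one and then apply standard arguments (Simon for $L^2(H^1)$ and Arzel\`a--Ascoli for $C([0,T];L^2)$), using the compact Sobolev embeddings $H^2\hookrightarrow\hookrightarrow H^1\hookrightarrow\hookrightarrow L^2$.

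The key preliminary observation is that since $\bar\omega$ is a continuous control on the compact square $[0,T]^2$ with $\bar\omega(t,t)=0$, it is uniformly continuous there, so there exists $h_0>0$ such that $\bar\omega(s,t)\le L$ whenever $0\le t-s\le h_0$. Consequently any bounded sequence $(f^n)_n\subset X$ satisfies
\[
\|f^n_t-f^n_s\|_{L^2}\le\omega(s,t)^\kappa\quad\text{for all }0\le t-s\le h_0,\ \text{uniformly in }n.
\]
By the same uniform continuity applied to $\omega$, the right-hand side tends to $0$ as $t-s\to 0$, uniformly in $(s,t)\in[0,T]^2$ and in $n$. For $t-s>h_0$ a triangle-inequality chaining through a finite partition of $[s,t]$ of mesh $\le h_0$ gives a (possibly cruder but) still uniform-in-$n$ bound. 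This is the substitute for the usual time regularity.

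For the compactness in $L^2(0,T;H^1)$ I would invoke a version of Simon's lemma (cf.\ the appendix of \cite{hofmanova2019navier}). The hypotheses are: $(f^n)$ bounded in $L^2(0,T;H^2)$, the compact embedding $H^2\hookrightarrow\hookrightarrow H^1$, the continuous embedding $H^1\hookrightarrow L^2$, and the equicontinuity in $L^2$ of translations, namely $\int_0^{T-h}\|f^n_{t+h}-f^n_t\|_{L^2}^2\,dt\le \int_0^{T-h}\omega(t,t+h)^{2\kappa}\,dt\to 0$ as $h\to 0$, uniformly in $n$. This is guaranteed by the first paragraph once $h\le h_0$. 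Simon's theorem then yields relative compactness in $L^2(0,T;H^1)$.

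For the compactness in $C([0,T];L^2)$ I would apply Arzel\`a--Ascoli. Equicontinuity of the family $\{t\mapsto f^n_t\}_n$ in $L^2$ is exactly what was established in the first paragraph. The remaining point---pointwise relative compactness in $L^2$ at each fixed $t\in[0,T]$---is the part that does not follow immediately from the $L^\infty(L^2)$ bound alone; here the $L^2(H^2)$ regularity is used via a standard mollification-in-time. For fixed $h\in(0,h_0]$ set $\bar f^{n,h}_t:=h^{-1}\int_t^{t+h}f^n_s\,ds$ (with an obvious modification near $t=T$). By Cauchy--Schwarz $\|\bar f^{n,h}_t\|_{H^2}\le h^{-1/2}\|f^n\|_{L^2(H^2)}$, so $\{\bar f^{n,h}_t\}_n$ is bounded in $H^2$, hence relatively compact in $L^2$ by the compact Sobolev embedding. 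Meanwhile
\[
\|f^n_t-\bar f^{n,h}_t\|_{L^2}\le\frac1h\int_t^{t+h}\omega(t,s)^\kappa\,ds\le\sup_{s\in[t,t+h]}\omega(t,s)^\kappa,
\]
which tends to $0$ as $h\to 0$ uniformly in $n$ and $t$. Thus $\{f^n_t\}_n$ is totally bounded in $L^2$, which together with equicontinuity closes the Arzel\`a--Ascoli argument. The main (mild) technical obstacle is precisely the localisation-in-time of the modulus assumption, and it is handled once and for all by the uniform-continuity remark in the first paragraph; everything else is a routine adaptation of the classical Aubin--Lions--Simon scheme.
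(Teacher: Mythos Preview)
Your proposal is correct and follows essentially the same route as the argument in \cite[Appendix~A]{hofmanova2019navier}, to which the paper simply refers without giving its own proof. The only (harmless) slip is that for a bounded sequence with $\|f^n\|_X\le M$ one gets $\|\delta f^n_{s,t}\|_{L^2}\le M\,\omega(s,t)^\kappa$ rather than $\omega(s,t)^\kappa$, but this constant does not affect any of the subsequent steps.
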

	We can apply this result to our situation and conclude that there exists a 
	subsequence, denoted again by $(u^n)_n$ and $u\in L^\infty(H^1)\cap 
	L^2(H^2)$, such that $u^n\rightarrow u$ strongly in $L^2(0,T;H^1)\cap C([0,T];L^2)$.	
	From \eqref{eq:existence1} and from Banach Alaoglu, there exists $F\in L^2(L^2)$ such that the following converge weakly in $L^2(L^2)$
	\begin{equation}\label{eq:conv_existence_1d}
	u^n \times \partial_x ^{2} u^n \rightharpoonup F, \quad \quad \partial_x ^{2} u^n 
	\rightharpoonup \partial_x^2u\,.
	\end{equation}
To identify $F$, note that
\[
u^n\times \partial_x^2u^n- u \times \partial_x^2u
= (u^n-u)\times \partial_x^2u^n + u\times (\partial_x^2u^n-\partial_x^2u)\,.
\]
The first term converges to $0$ in $L^1(L^2)$ since by H\"older inequality
\[
\int_0^T \|(u^n-u)\times \partial_x^2u^n\|_{L^2} dr 
\leq \left (\int_0^T\|u^n-u\|_{L^\infty}^2 dr\right )^{\frac12}\left (\int_0^T\|\partial_x^2u\|_{L^2}^2 dr\right )^{\frac12}
\lesssim\left (\int_0^T\|u^n-u\|_{L^\infty}^2dr\right )^{\frac12}\to 0
\]
thanks to the fact that $H^1(\T)\hookrightarrow L^\infty(\T)$ and $u^n\to u$ in $L^2(H^1).$
For the second term, we note that since $|u^n(x,t)|=1$ for a.e.\ $(t,x)\in [0,T]\times\T$, for every $\phi \in L^2(L^2)$ the function $-u\times \phi$ belongs to $L^2(L^2)$. By the identity $a\times b\cdot c= - b\cdot a\times c,\forall a,b,c\in \R^3$,
we infer from this and \eqref{eq:conv_existence_1d} that
\[
\int _0^T\langle u\times (\partial_x^2u^n-\partial_x^2 u),\phi\rangle dr
=-\int _0^T\langle \partial_x^2u^n-\partial_x^2 u,u\times\phi\rangle dr \to 0\,.
\]
Thus, $F=u\times \partial_x^2 u.$

	Then, from \eqref{eq:existence1} also 
	$$\int_{s}^{t}\|u^n_r\times (u^n_r\times \partial_x ^{2} u^n_r)\|^2_{L^2}  d r \leq 
	\int_{s}^{t}\| u^n_r\times \partial_x ^{2} u^n_r\|^2_{L^2}  d r \leq C\, . $$ 
	Again from Banach Alaoglu there exist $F^1\in L^2(L^2)$ such that $u^n\times(u^n\times 
	\partial_x ^{2} u^n) \rightharpoonup F^1$ weakly in $L^2(L^2)$ for $n\rightarrow +\infty$. To 
	see why $F^1=u\times(u\times \partial_x ^{2} u)$  let  $\phi \in L^4(L^4)$ be arbitrary and 
	note that
	\begin{align*}
	&\lim_{n\rightarrow +\infty} \int_{s}^{t}\langle F^1_r- u^n_r\times (u^n_r\times \partial_x 
	^{2} u^n_r), \phi \rangle_{L^2}  d r +\int_{s}^{t}\langle u^n_r\times (u^n_r\times \partial_x 
	^{2} u^n_r)-u_r\times (u_r\times \partial_x ^{2} u_r), \phi \rangle_{L^2}  d r\\
	&=  \lim_{n\rightarrow +\infty} \int_{s}^{t}\langle (u^n_r-u_r)\times (u^n_r\times \partial_x 
	^{2} u^n_r)+u_r\times (u^n_r\times \partial_x ^{2} u^n_r-u_r\times \partial_x ^{2} u_r), \phi 
	\rangle_{L^2}  d r=0
	\end{align*}
	with computations analogous to the ones above. 
	
	We now address the convergence of the noise terms. For all $\phi \in L^4(L^4)$
	\begin{align*}
	\lim_{n\rightarrow +\infty}	\langle G^{n}_{s,t} u^n_s-G_{s,t}u_s,\phi \rangle_{L^2}  
	= & \lim_{n\rightarrow +\infty}	\langle (G^{n}_{s,t} -G_{s,t})u^n_s+G_{s,t}(u^n_s-u_s),\phi\rangle_{L^2} =0\, .
	\end{align*}
	From H\"older's inequality 
	\begin{align*}
	| \langle (G^{n}_{s,t} -G_{s,t})u^n_s+G_{s,t}(u^n_s-u_s),\phi\rangle_{L^2}| 	&\leq \|G^{n}_{s,t} -G_{s,t}\|_{L^\infty}\|u^n_s\|_{L^2}\|\phi\|_{L^2}+\|G_{s,t}\|_{L^\infty}\|u^n_s-u_s\|_{L^2}\|\phi\|_{L^2}\, ,
	\end{align*}
	the strong convergence of the solution in $C([0,T];L^2)$ and the convergence of the noise, we identified the limit.
	
	Let $J=(J^i)_{i\in I}$ be a finite partition of $[0,T]$ such that for all $n\in \mathbb{N}$ the remainders $u^{n,\natural}\in \mathcal{V}^p_2(J^i;L^2)$ for all $i\in I$. This partition exists since the shrinking performed on the partitions in the previous proofs are independent on $n$ and in particular they depend only on the path $(G,\GG )$ .
	
	Let us fix $i\in I$. For all $s\leq t \in J^i$ and for all $\psi \in L^4$ we can define the remainder $u^\natural$  as the limit 
	\begin{align*}
	u^\natural_{s,t}:= \lim_{n\rightarrow +\infty}& \langle \delta 
	u^n_{s,t},\psi\rangle_{L^2}-\int_{s}^{t} \langle u^n_r\times (u^n_r\times \partial_x ^{2} 
	u^n_r), \psi \rangle_{L^2} d r -\int_{s}^{t} \langle u^n_r\times \partial_x ^{2} u^n_r, \psi 
	\rangle_{L^2}  d r\\
	&-\langle G^n_{s,t}u^n_s, \psi\rangle_{L^2}-\langle\GG ^n_{s,t}u^n_s, \psi\rangle_{L^2}\\
	=&\langle \delta u_{s,t},\psi\rangle_{L^2}-\int_{s}^{t} \langle u_r\times (u_r\times \partial_x 
	^{2} u_r), \psi \rangle_{L^2} d r -\int_{s}^{t} \langle u_r\times \partial_x ^{2} u_r, \psi 
	\rangle_{L^2} d r\\
	& -\langle G_{s,t}u_s, \psi\rangle_{L^2}-\langle\GG _{s,t}u_s, \psi\rangle_{L^2}\, .
	\end{align*}
	Hence there exists a map $u^\natural\in \mathcal{V}^p_{2,\mathrm{loc}}(J^i;L^2)$ such that the equation holds and the third requirement of Definition \ref{def:solution} is fulfilled. 
	This finishes the proof of existence.\hfill\qed

	\subsection{Proof of uniqueness}
	
	Let $a,b$ be two solutions to \eqref{rLLG} with the same initial condition $a(0)=b(0)=u^0$ .
	We denote by $z:=a-b$ and note that we have the vector identity 
	\begin{equation}\label{eq:div_laplacian}
	u\times \partial_x ^{2} u = \partial  _x (u\times \partial_x  u)\, .
	\end{equation}
	The equation of the difference of the two solutions has the form
	\[
	dz = \big[\partial_x ^{2} z + z|\partial _x  a|^2 + b\partial _x  z\cdot\partial _x  (a+b) + 
	\partial_x(z\times \partial _x  a )+ \partial_x(a\times \partial _x  z)\big]dt +  d \G  z\,.
	\]
	Applying the product formula on $z\otimes z$, and then testing against the constant function $\mathbf 1=(1 _{i=j})_{1,\le i,j\le3},$ we observe as before that the noise vanishes.
	Integrating by parts, this gives
	\[\begin{aligned}
	\delta (\|z\|_{L^2}^2)_{s,t} + 2\int_s^t\|\partial _x  z\|_{L^2}^2 d r 
	&=\int_{s}^t \int_{\T}|z_r|^2|\partial _x  a_r|^2 d x d r +\int_{s}^t \int_{\T} z_r\cdot 
	b_r\partial _x  z_r:\partial _x  (a_r+b_r) d x d r
	\\
	&\quad \quad 
	-\int_{s}^t \int_{\T}\partial _x  z _r\cdot (z_r\times\partial _x  a_r+ a_r\times \partial _x  
	z_r) d x d r=:\mathrm{I}+\mathrm{II} +\mathrm{III}\,.
	\end{aligned}
	\]
	For the first term, we have
	\[
	\mathrm{I}\leq \int_s^t\|z_r\|_{L^\infty}^2\|\partial _x  a_r\|_{L^2}^2 d r\,.
	\]
	Thanks to the well-known 1-dimensional interpolation inequality
	\begin{equation}\label{eq:interpolation_1D}
	\|a\|_{L^\infty}^2 \lesssim\|a\|_{L^2}\|a\|_{H^1}\, ,
	\end{equation} 
	together with Young inequality, we get for any $\epsilon >0:$
	\begin{equation}
	\label{est:I}
	\begin{aligned}
	\mathrm{I}
	&\lesssim \int_s^t \|z_r\|_{L^2}[\|z_r\|_{L^2} + \|\partial _x  z_r\|_{L^2}]\|\partial_x  
	a_r\|_{L^2}^2 d r
	\\
	&\lesssim\int_s^t\|\partial _x  a_r\|_{L^2}^2\|z_r\|^2_{L^2}  d r+ \frac{1}{2\epsilon 
	}\int_s^t\|\partial _x  a_r\|^4_{L^2}\|z_r\|^2_{L^2} d r + \frac{\epsilon 
	}{2}\int_s^t\|\partial _x  z_r\|^2_{L^2} d r\,.
	\end{aligned}
	\end{equation} 
	For the second term we have, denoting $U:=a+b:$
	\begin{equation}
	\label{est:II}
	\begin{aligned}
	\mathrm{II}
	&\leq 
	\int_s^t\|z_r\|_{L^\infty}\|\partial _x  U_r\|_{L^2}\|\partial _x  z_r\|_{L^2} d r
	\leq \frac{1}{2\epsilon }\int_s^t\|\partial _x  U_r\|^2_{L^2}\|z_r\|_{L^\infty}^2 d r+ 
	\frac{\epsilon }{2}\int_s^t\|\partial _x  z_r\|^2_{L^2} d r
	\\
	&\lesssim
	\frac{1}{2\epsilon }\int_s^t\left[\|\partial _x  U_r\|^2_{L^2}\|z_r\|_{L^2}^2  d r+ \|\partial 
	_x  U_r\|^2_{L^2}\|z_r\|_{L^2}\|\partial _x  z_r\|_{L^2} \right] d r + \frac{\epsilon 
	}{2}\int_s^t\|\partial _x  z_r\|^2_{L^2} d r
	\\
	&\lesssim
	\frac{1}{2\epsilon }\int_s^t[\|\partial _x  U_r\|^2_{L^2}\|z_r\|_{L^2}^2 +  \frac{1}{4\epsilon 
	^2}\|\partial _x  U_r\|^4_{L^2}\|z_r\|^2_{L^2} + \epsilon ^2\|\partial _x  z_r\|_{L^2}^2] d r + 
	\frac{\epsilon }{2}\int_s^t\|\partial _x  z_r\|_{L^2}^2 d r
	\\
	&\lesssim  C(\epsilon )\int_s^t[1+ \|\partial _x  a_r\|^4_{L^2} + \|\partial _x  b_r\|^4_{L^2}]
	d r + \epsilon \int_s^t\|\partial _x  z_r\|^2_{L^2} d r\, .
	\end{aligned}
	\end{equation} 
	For the third term, we have by orthogonality
	\[\begin{aligned}
	\mathrm{III} 
	&= 
	-\int_{s}^{t}\int_{\T}\partial _x  z_r \cdot (z_r\times\partial _x  a_r) d x\, d r
	\lesssim \int_s^t \|\partial _x  z_r\|_{L^2}\|z_r\|_{L^\infty}\|\partial _x  a_r\|_{L^2}  d r\, 
	,
	\end{aligned}\]
	and a similar analysis as for $\mathrm{II}$ gives
	\begin{equation}
	\label{est:III}
	\mathrm{III}\lesssim  C(\epsilon )\int_s^t[1+ \|\partial _x  a_r\|^4_{L^2}] d r + \epsilon 
	\int_s^t\|\partial _x  z_r\|^2_{L^2} d r\, .
	\end{equation} 
	Choosing $\epsilon >0$ small enough, we have by summation of \eqref{est:I}, \eqref{est:II} and\eqref{est:III}:
	\begin{align}\label{eq:drift_uniqueness}
	\delta \left(\| z\|_{L^2}^2\right)_{s,t} + \int_s^t\|\partial _x  z_r\|^2_{L^2} d r 
	\lesssim\int_s^t [1+ \|\partial _x  a_r\|_{L^2}^4+ \|\partial _x  b_r\|_{L^2}^4]\|z_r\|^2_{L^2} 
	d r \, ,
	\end{align}
	which implies by the classical Gronwall Lemma that $z=0,$ and hence $a=b$ for a.e. $(t,x)\in [0,T]\times \T $.
	
	This finishes the proof of uniqueness.\hfill\qed

	\subsection{Higher order regularity}
	\label{ssec:higher}
	As will be shown in the present subsection, an improved regularity on the 
	initial condition and on the noise leads to an improved regularity of the 
	solution.
	In the next proposition, uniform bounds on the non-linearities associated to $v=\partial_x^k u\otimes \partial_x^{k}u$ are proved by induction over $k$.
	Note that Proposition \ref{pro:energy} corresponds to the case $k=1$. 
	The drifts of higher order that we are concerned with are $\D (k,k-1)$ and $\D (k-1,k)$.
	\begin{proposition}\label{pro:existence_drift_k}
		Fix $k\geq 2$. Let $u\in L^\infty(H^k)\cap L^2(H^{k+1})$ so that $u(t,x)\in \mathbb{S}^2$ for a.e. $(t,x)\in [0,T]\times\T $. Then the following estimates hold
		\[
		\|\D(k,k-1)\|_{1\mathrm{-var};[s,t];H^{-1}}\enskip,\enskip \|\D(k-1,k)\|_{1\mathrm{-var};[s,t];H^{-1}}
		\lesssim
		\|u\|_{L^\infty(s,t;H^k)}^2+\|u\|_{L^2(s,t;H^{k+1})}^2\,.
		\]
	\end{proposition}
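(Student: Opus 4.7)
The plan is to estimate $\D(k,k-1)$; the case $\D(k-1,k)$ is entirely analogous by symmetry. Recalling that for \eqref{rLLG} the drift reads $f = \partial_x^2 u + u|\partial_x u|^2 + u\times\partial_x^2 u$, by definition
\[
\D(k,k-1)_{s,t} = \int_s^t \left(\partial_x^k f \otimes \partial_x^{k-1} u + \partial_x^k u \otimes \partial_x^{k-1} f\right) dr.
\]
The main obstacle is that $\partial_x^k f$ formally involves $\partial_x^{k+2}u$, a quantity we do not control under the hypothesis $u \in L^2(H^{k+1})$. The whole point of measuring the drift in $H^{-1}$ (rather than in $L^2$) is precisely to afford us one spatial integration by parts that brings the regularity demand on $f$ down to $\partial_x^{k-1}f$.

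Explicitly, for any test function $\Phi\in H^1(\T;\R^{3\times3})$ integration by parts on $\T$ yields
\[
\int_\T \partial_x^k f^i \partial_x^{k-1}u^j \Phi^{i,j} dx = -\int_\T \partial_x^{k-1} f^i \bigl(\partial_x^k u^j \Phi^{i,j} + \partial_x^{k-1}u^j \partial_x\Phi^{i,j}\bigr)dx,
\]
and therefore, using $H^1\hookrightarrow L^\infty$ on the torus,
\[
\|\partial_x^k f\otimes \partial_x^{k-1}u\|_{H^{-1}} + \|\partial_x^k u\otimes \partial_x^{k-1}f\|_{H^{-1}} \lesssim \|\partial_x^{k-1}f\|_{L^2}\|u\|_{H^k}.
\]
The task thus reduces to bounding $\|\partial_x^{k-1}f\|_{L^2}$ pointwise in time by something quadratic in $u$.

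Here is where the hypothesis $k\ge 2$ plays a crucial role, because $H^{k-1}(\T)$ is then a Banach algebra and one disposes of classical Moser-type (tame) estimates. Since $|u|_{\R^3}\equiv1$ gives $\|u\|_{L^\infty}=1$, and since $\|\partial_x u\|_{L^\infty}\lesssim\|u\|_{H^2}\le\|u\|_{H^k}$, these give
\[
\|\partial_x^{k-1}(u\times\partial_x^2 u)\|_{L^2} \lesssim \|u\|_{H^{k-1}}\|u\|_{H^{k+1}}, \qquad \|\partial_x^{k-1}(u|\partial_x u|^2)\|_{L^2} \lesssim \|u\|_{H^k}^{2}\|u\|_{H^{k-1}}\, ,
\]
together with the trivial identity $\|\partial_x^{k-1}(\partial_x^2 u)\|_{L^2}=\|u\|_{H^{k+1}}$. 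Combining,
\[
\|\partial_x^{k-1}f\|_{L^2} \lesssim (1+\|u\|_{H^{k-1}})\|u\|_{H^{k+1}}+\|u\|_{H^k}^3\, .
\]

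It remains to integrate in time and collect. By Cauchy--Schwarz and Young's inequality,
\[
\int_s^t \|u\|_{H^k}\|u\|_{H^{k+1}}dr \le \|u\|_{L^\infty(s,t;H^k)}\,(t-s)^{1/2}\|u\|_{L^2(s,t;H^{k+1})} \lesssim \|u\|_{L^\infty(s,t;H^k)}^2 + \|u\|_{L^2(s,t;H^{k+1})}^2,
\]
and the residual cubic and quartic terms of type $\int_s^t\|u\|_{H^k}^4 dr \le T\|u\|_{L^\infty(s,t;H^k)}^{2}\cdot\|u\|_{L^\infty(s,t;H^k)}^{2}$ are absorbed into the implicit constant (which is allowed to depend on the a priori $L^\infty(H^k)$-bound already at our disposal through the previous order). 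The only genuinely subtle step is the algebraic bookkeeping of the Moser estimate for the cubic term $u|\partial_x u|^2$; once one commits to the integration-by-parts philosophy, everything else is Sobolev multiplication and Cauchy--Schwarz.
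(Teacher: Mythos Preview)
Your proof is correct and follows essentially the same route as the paper: one integration by parts in $H^{-1}$ to reduce $\partial_x^k f$ to $\partial_x^{k-1}f$, followed by Moser/tame product estimates (the paper's inequality \eqref{eq:interp_iterative_1}) on the nonlinear terms $u|\partial_x u|^2$ and $u\times\partial_x^2 u$. The paper organises the computation slightly differently (an index swap producing the antisymmetric $\Psi=\Phi-\Phi^\star$) but the content is the same, and its implicit constant likewise depends on lower-order norms such as $\|u\|_{L^\infty(H^2)}$, so your absorption of the super-quadratic leftovers is legitimate.
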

	\begin{proof}
		By an easy integration by parts argument, if the bound on $\D(k-1,k)$ 
		is proved, the same bound will be true for $\D(k,k-1)$ as well.
		To evaluate $\D(k-1,k)$, we follow the computations in \cite{Melcher}.
		For all $\Phi\in H^1(\mathbb{T},\mathbb{R}^{3\times3})$, consider
		 \[
		 \begin{aligned}
		 \langle \D(k-1,k)_{s,t},\Phi\rangle 
		 &=\sum_{i,j}\int _s^t\int _{\T} \Big[\partial_x^{k-1}(\partial_x^2u_r + 
		 u_r|\partial_x u_r|^2 +u_r\times \partial_x^2 u_r
		 )^i\partial_x^{k}u^j_r
		 \\&\quad \quad \quad 
		 +\partial_x^{k-1}u^i_r\partial_x^{k}(\partial_x^2u_r + 
		 u_r|\partial_x u_r|^2 +u_r\times \partial_x^2 u_r 
		 )^j
		 \Big]\Phi^{i,j}dx\,dr
		 \\&
		 \sum_{i,j}\int _s^t\int _{\T} \Big[\partial_x^{k-1}(\partial_x^2u_r + 
		 u_r|\partial_x u_r|^2 +u_r\times \partial_x^2 u_r 
		 )^i\partial_x^{k}u_r^j\Phi^{i,j}
		 \\&\quad \quad \quad 
		 -\partial_x^{k}u_r^i\partial_x^{k-1}(\partial_x^2u_r + 
		 u_r|\partial_x u_r|^2 +u_r\times \partial_x^2 u_r 
		 )^j\Phi^{i,j}
		\\&\quad \quad \quad 
		-\partial_x^{k-1}u_r^i\partial_x^{k-1}(\partial_x^2u_r + 
		u_r|\partial_x u_r|^2 +u_r\times \partial_x^2 u_r 
		)^j\partial_x\Phi^{i,j}
		\Big]dx\,dr\,.
	 \end{aligned}
		 \]
		 Swapping the indices in the sum $\sum_{i,j}$ then gives with 
		 $\Psi:=\Phi-\Phi^{\star}$
		 \[
		 \begin{aligned}
		 &\langle \D(k-1,k)_{s,t},\Phi\rangle 
		 \\
		 &=
		 \int_s^t \int _{\T}\Big[
		 \dd^{k+1}u^i_r\dd^{k}u^j_r\Psi^{i,j} 
		 + \partial_x ^{k-1}(u_r|\partial u_r|^2)\partial_x^ku_r^j\Psi^{i,j}
		 +\partial_x ^{k-1}(u_r\times\partial_x ^{2}u_r)^i\partial_x ^{k} 
		 u^j_r\Psi^{i,j} 
		 \\&\quad \quad \quad 
		 -\partial_x^{k-1}u_r^i\partial_x^{k-1}(\partial_x^2u_r + 
		 u_r|\partial_x u_r|^2 +u_r\times \partial_x^2 u_r 
		 )^j\partial_x\Phi^{i,j}
		 \Big]dx\,dr
		 \\&
		 =\mathrm I + \mathrm{II} + \mathrm{III} + \mathrm{IV} 
		 \,.
		 \end{aligned}
		 \]
		 For $\|\Phi\|_{H^1}\leq 1$, we evaluate
		 \[
		 |\mathrm{I}|
		 \lesssim \int_{s}^t\|\dd^{k+1}u_r\|_{L^2}\|\dd^k 
		 u_r\|_{L^2}dr
		 \lesssim (t-s)\|u\|_{L^\infty(s,t;H^k)}^2 + 
		 \|u\|_{L^2(s,t;H^{k+1})}^2 \,.
		 \]
		 For the second and third terms, we rely on the following interpolation 
		 inequality for $l=|\beta|+|\gamma|$  (see \cite{Melcher}) 
		 \begin{align}\label{eq:interp_iterative_1}
		 	\|\partial_\beta f\partial_\gamma g\|_{L^2} 
		 	\lesssim\|f\|_{L^\infty}\| g\|_{H^l}+\|g\|_{L^\infty}\| f\|_{H^l}\, .
		 \end{align}
		Applying \eqref{eq:interp_iterative_1} to  
		$f=u\times\cdot$ and $g=\partial_x u$, then using the continuous 
		embedding of $H^1$ into $L^\infty$ leads to
	\begin{equation}
	\label{eq:first_part_drift_k}
	\begin{aligned}
		|\mathrm{III}|
		&\lesssim \int_s^t\|\partial_x ^{k-1}(u_r\times\partial_x 
		u_r)\|_{L^2}\|\partial_x ^{k} u_r^j\|_{L^2}dr
		\\&
		\lesssim\int_s^t\Big[
		\|u_r\|_{L^\infty}\|\partial_x  
		u_r\|_{H^{k-1}}+
		\|\partial_x u_r\|_{L^\infty}\|u_r\|_{H^{k-1}}
		\Big] \|\partial_x ^{k} u_r\|_{L^2}dr
		\\&
		\lesssim\big[\|\partial_x  u\|_{L^\infty (H^1)}\|u\|_{L^\infty 
		(s,t;H^k)}+\|u\|_{L^\infty(H^1)}\|\partial_x  
		u\|_{L^2(s,t;H^k)}\big]\|\partial_x ^{k} u^j\|_{L^2(s,t;L^2)}
		\\&\quad  
		+\big[\|\partial_x  
		u\|_{L^\infty(H^1)}\|u\|_{L^\infty(s,t;H^k)}+\|u\|_{L^\infty(H^1)}\|\partial_x
		  u\|_{L^2(s,t;H^k)}\big]\|\partial_x ^{k-1} u^j\|_{L^\infty(s,t;H^1)}\, .
		\end{aligned}
	\end{equation}
	    Now we turn to the second term:
		applying \eqref{eq:interp_iterative_1} twice, first to $f=u \otimes 
		\partial_x u$ and $g=\cdot \,\partial_xu$, then to $f=u$ and $g=\otimes 
		\partial_x u$, yields:
	\begin{equation}
		\label{eq:drift_2_k_a}
		\begin{aligned}
		|\mathrm{II}|
		&\leq
		\int_s^t\|\partial_x ^{k-1} (u |\partial_x  
		u_r|^2)\|_{L^2}\|\partial_x^ku_r\|_{L^2}dr
		\\
		&\lesssim \int _s^t\big[\|u_r \otimes \partial_x u_r\|_{L^\infty}\| 
		\partial_xu_r\|_{H^{k-1}}+\|\partial_xu_r\|_{L^\infty}\| u_r \otimes 
		\partial_x 
		u_r\|_{H^{k-1}}\big]\|u_r\|_{H^k}dr
		\\&
		\lesssim \int_s^t\Big[\|u_r\|_{H^2}\|u_r\|_{H^k}^2 + 
		\|u_r\|_{H^2}^2\|u_r\|_{H^{k-1}}\|u_r\|_{H^{k-1}}\| u_r\|_{H^k} \Big]dr\, ,
		\end{aligned}	
	\end{equation}
since $\|u\|_{L^\infty}=1$. This shows the corresponding bound for 
$\mathrm{II}.$ 
Similar arguments are applicable for the fourth term, observing that the latter 
has lower order. We leave the details to the reader.
\end{proof}

	\begin{proposition}\label{pro:existence_remainder_k}
		In the notations of Section \ref{Section_remainder_existence}, the following estimate on 
		the remainder $(\partial_x^Nu^{\otimes2})^\natural$ holds
		\begin{align*}
	    \langle (\partial_x^N u^{\otimes 2})^\natural_{s,t},\mathbf 1\rangle
		&\lesssim
		\sum_{m=0}^{N}\sum_{k,\ell=0;(k,l)\neq (N,N)}^m\omega^{1/p}\omega_{\D (k,\ell)}(s,t)+\omega^{1/p}(s,t)\| u\|^2_{L^\infty(s,t;H^N)}\, ,
		\end{align*}
		where $\omega:=\omega_{\mathbf{A};H^{N+1}}$.
	\end{proposition}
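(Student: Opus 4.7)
The plan is to argue by induction on $N\ge 1$, following the four-step rough standard machinery used in the proof of Theorem \ref{th:remainder_interate}. The base case $N=1$ is the refined estimate \eqref{eq:estimate_rem_simplified} already established there (Step 4 of that proof). Fix $N\ge 2$ and suppose the claim holds at all strictly lower orders.

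For step \ref{RM1}, I would apply Proposition \ref{pro:remainder_existence} with $a=u$ and $\A=\B=\G$, which yields an explicit expression for $\delta(\partial_x^N u^{\otimes 2})^\natural_{s,u,t}$ in terms of $J,\JJ,\tilde\JJ$ and the drift $\D$ at various indices, coupled with the increments $A_{u,t}$, $\AA_{u,t}$ and their spatial derivatives. Pairing this identity with the constant tensor $\mathbf 1$, one splits the right-hand side into a \emph{top-order} piece, in which the operators $\Gamma^{A,A}_{u,t}$ and $\bbGamma^{\A,\A}_{u,t}$ act directly on $\partial_x^N u^{\otimes 2}$ (i.e.\ the indices $n=N$ and $m=M$ in the formula of Proposition \ref{pro:remainder_existence}), and a \emph{residual} piece, in which either some spatial derivative $\partial_x^\ell$ with $\ell\ge1$ has landed on $A$ or $\AA$, or a lower-order remainder $(\partial_x^n u \otimes \partial_x^m u)^\natural$ with $(n,m)\ne(N,N)$ appears.

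The essential observation, inherited from Step 4 of Theorem \ref{th:remainder_interate}, is that the anti-symmetry of $\A$ together with Remark \ref{rem:levy} gives $(\Gamma^{A,A}_{u,t})^\star \mathbf 1 = 0$ and $(\bbGamma^{\A,\A}_{u,t})^\star \mathbf 1 = 0$ pointwise. Consequently the top-order piece vanishes when paired with $\mathbf 1$, which is precisely what removes the would-be dependence on the top-order drift $\omega_{\D(N,N)}$ from the bound. The residual terms are then handled as follows: the spatial derivatives of $A$ and $\AA$ at orders up to $N$ are controlled in $H^1$ by $\omega_{\A,H^{N+1}}$ since $\A\in \RD_a^p(H^{N+1})$; the lower-order diagonal remainders $(\partial_x^n u)^{\otimes 2;\natural}$ with $n<N$ are estimated by the inductive hypothesis; and the strictly mixed remainders $(\partial_x^n u \otimes \partial_x^m u)^\natural$ with $n\ne m$ are estimated via Corollary \ref{cor:existence_rem_k} applied at strictly lower order. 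The lower-order drifts $\omega_{\D(k,\ell)}$ with $(k,\ell)\ne(N,N)$ then appear exactly as in the statement.

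In step \ref{RM2}, the resulting pointwise bound on $\langle \delta(\partial_x^N u^{\otimes 2})^\natural_{s,u,t},\mathbf 1\rangle$ is super-additive of exponent $\zeta>1$, so the Sewing Lemma \ref{lemma_sewing} produces an estimate for $\langle (\partial_x^N u^{\otimes 2})^\natural_{s,t},\mathbf 1\rangle$ of the announced form, up to an extra self-referential remainder multiplied by the small factor $\omega_{\A,H^{N+1}}^{1/p}$. Step \ref{RM3} then consists of raising to the power $p/3$, summing over a partition, and refining the covering $(P_k)_k$ to a finer one $(\tilde P_k)_k$ on which $\omega_{\A,H^{N+1}}^{1/p}(\tilde P_k)$ is smaller than a universal constant depending only on $p$; this allows to absorb the self-referential term into the left-hand side, closing the estimate. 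The main obstacle is the combinatorial bookkeeping in step \ref{RM1}: the expansion of Proposition \ref{pro:remainder_existence} produces many terms indexed by binomial coefficients, and one must verify that, after subtracting the top-order contribution killed by anti-symmetry, each surviving term is either covered by the inductive hypothesis, by Corollary \ref{cor:existence_rem_k} at strictly lower order, or by one of the admissible drift controls $\omega_{\D(k,\ell)}$ with $(k,\ell)\ne(N,N)$. The cancellation at top order is the only non-routine ingredient, and it is precisely the same algebraic phenomenon exploited in the base case.
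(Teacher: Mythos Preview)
Your proposal is correct and follows essentially the same approach as the paper: the key ingredient is the anti-symmetry cancellation $(\Gamma^{A,A})^\star\mathbf 1=(\bbGamma^{\A,\A})^\star\mathbf 1=0$ from Step~4 of Theorem~\ref{th:remainder_interate}, which kills the top-order contribution (and hence the dependence on $\omega_{\D(N,N)}$), while the residual lower-order pieces are handled via Corollary~\ref{cor:existence_rem_k}. The paper's proof is simply more terse, citing Corollary~\ref{cor:existence_rem_k} and the Step~4 cancellation directly rather than spelling out the inductive rough standard machinery. One minor point: after the anti-symmetry cancellation, the top-order self-referential remainder $(\partial_x^N u^{\otimes 2})^\natural$ no longer appears on the right-hand side when paired with $\mathbf 1$, so no absorption is needed at that level; the absorption in \ref{RM3} lives inside the proof of Corollary~\ref{cor:existence_rem_k} for the lower-order $H^{-1}$ estimates you invoke.
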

	\begin{proof}
		The claim follows from Proposition \ref{pro:existence_drift_k}, from Corollary \ref{cor:existence_rem_k} and from the anti-symmetry of the noise. Indeed the term $\omega_{ \mathbf{A}; H^{N+1}}^{1/p}\omega_{\D (N,N)}(s,t)$ does not appear, as explained in Step 4 of Theorem \ref{th:remainder_interate}.
	\end{proof}
	\begin{theorem}\label{teo:higher_order_regularity}(Higher order regularity)
		Fix $k\geq 1$. Let $u^0\in H^k(\T ;\mathbb{S}^2)$ and $\G \in \RD ^p_a(H^{k+1})$, then the solution to \eqref{rLLG} belongs to $u\in L^\infty (H^k)\cap L^2(H^{k+1})\cap\mathcal{V}^p(H^{k-1})$.
	\end{theorem}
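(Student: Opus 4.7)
The plan is to argue by induction on $k\ge1$, with the base case $k=1$ being Proposition \ref{pro:energy} together with Corollary \ref{cor:L^2(L^2)}. For the inductive step I would approximate $\G$ by a sequence of smooth drivers $\G^n\to\G$ in $\RD^p_a(H^{k+1})$ and work with the corresponding classical solutions $u^n$ supplied by Appendix \ref{app:perturbed}. The inductive hypothesis furnishes uniform bounds on $u^n$ in the weaker space at level $k-1$, and the task is to upgrade these to uniform $L^\infty(H^k)\cap L^2(H^{k+1})$ bounds; the $\V^p(H^{k-1})$ bound then falls out of the equation itself by inspecting the regularity of each summand, and the convergence to $u$ is obtained by compactness and the uniqueness already proved.

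For the a priori estimate at level $k$ I would iterate the product formula of Proposition \ref{pro:product} to write an equation for $\partial_x^k u^n\otimes\partial_x^k u^n$ and test against the constant tensor $\mathbf 1=(1_{i=j})_{ij}$. The same reasoning as in Step 4 of Theorem \ref{th:remainder_interate} ensures that all rough contributions of order $(k,k)$ whose noise factor carries no spatial derivative vanish by anti-symmetry; the surviving mixed rough terms --- those in which at least one $\partial_x$ falls on $A^n$ or $\AA^n$ --- lie in $H^{-1}$ and are controlled via the $H^{k+1}$-regularity of $\G^n$ together with the inductive bound on $\|u^n\|_{L^\infty(H^{k-1})}$. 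The remainder contribution $\langle(\partial_x^k u^n)^{\otimes 2,\natural}_{s,t},\mathbf 1\rangle$ is precisely the object estimated in Proposition \ref{pro:existence_remainder_k}, which reduces it to lower-order drift controls already available by induction.

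The main nontrivial piece is then the drift $\int_s^t\langle\partial_x^k u^n,\partial_x^k(\tt_{u^n}+u^n\times\partial_x^2 u^n)\rangle\,dr$. Expanding the top-order contribution via integration by parts produces the coercive parabolic term $-\int_s^t\|\partial_x^{k+1}u^n\|_{L^2}^2\,dr$, together with the pointwise cancellation $\partial_x^{k+1}u^n\cdot(u^n\times\partial_x^{k+1}u^n)\equiv 0$; all remaining multilinear products are estimated with the one-dimensional tame Moser inequality \eqref{eq:interp_iterative_1}, exactly as in the proof of Proposition \ref{pro:existence_drift_k} and in \cite{Melcher}. Each such product is split via Young's inequality into a small multiple of $\|\partial_x^{k+1}u^n\|_{L^2}^2$ (absorbed into the coercive part), a quantity of the form $\omega(s,t)^{1/p}\|\partial_x^k u^n\|_{L^2}^2$ (absorbed into the Gronwall iterate), and a perturbation of finite $1$-variation depending only on the inductive data. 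An application of the rough Gronwall Lemma \ref{lem:gronwall} then closes the a priori estimate uniformly in $n$.

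The hard part will be the combinatorial book-keeping of the drift expansion at arbitrary order $k$: one has to verify systematically that every product of derivatives arising from $\partial_x^k(u|\partial_x u|^2+u\times\partial_x^2 u)$ can be routed through one of the three categories above. This is ultimately a consequence of the $1$-dimensional interpolation $\|\partial_x^l u\|_{L^\infty}^2\lesssim\|u\|_{H^l}\|u\|_{H^{l+1}}$ and the Sobolev algebra structure of $H^1(\T)$. Once the uniform a priori bound is in place, weak-$\ast$ compactness of $L^\infty(H^k)\cap L^2(H^{k+1})$ combined with a higher-regularity variant of Lemma \ref{lem:aubin} produces a strong limit in $L^2(H^k)\cap C([0,T];H^{k-1})$, which by uniqueness coincides with the solution $u$ constructed previously, thereby transferring the bound to $u$.
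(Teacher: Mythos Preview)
Your proposal is correct and follows essentially the same route as the paper: induction on $k$, a priori estimates at level $k$ obtained by testing the equation for $\partial_x^k u^n\otimes\partial_x^k u^n$ against $\mathbf 1$ (using the anti-symmetry cancellation of Step~4 in Theorem~\ref{th:remainder_interate}, the remainder bound of Proposition~\ref{pro:existence_remainder_k}, the drift analysis with the coercive term, the cross-product orthogonality, and the Moser inequality~\eqref{eq:interp_iterative_1}), followed by the rough Gronwall Lemma~\ref{lem:gronwall} and a compactness/uniqueness argument at the level of the approximants $u^n$. The only cosmetic difference is that the paper records the drift estimate as an explicit ``Claim'' \eqref{content:claim} and obtains the $\mathcal V^p(H^{k-1})$ bound by pointing to the argument of Corollary~\ref{cor:L^2(L^2)} rather than by direct inspection, but the substance is the same.
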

	\begin{proof}
		The statement holds for $k=1$ from Theorem \ref{thm:existence}. We address $k\geq 2$ by induction: we assume that if $u^0\in H^{k-1}(\mathbb{T},\mathbb{S}^2)$ then the solution lies in $L^\infty(H^{k-1})\cap L^2(H^k)\cap \mathcal{V}^p(H^{k-2})$. 
		Now, let $u^0\in H^k(\mathbb{T},\mathbb{S}^2)$. To show that $u\in L^\infty(H^k)\cap 
		L^2(H^{k+1})\cap \mathcal{V}^p(H^{k-1})$, we consider the equation for $\delta(\dd^k u \otimes \dd^k 
		u)_{s,t}$ tested by $\mathbf 1$. 
		\smallskip
		
		\noindent \textbf{Claim.} 
			The integral $\langle\D(k,k),\mathbf 1\rangle$ satisfies the estimate
			\begin{equation}
				\label{content:claim}
				\langle \D(k,k),\mathbf 1\rangle 
				\leq - \int_s^t \|\dd^{k+1}u_r\|_{L^2}^2dr + C
				\|u\|_{L^\infty(H^{k-1})\cap 
					L^2(H^k)}(t-s)\|u\|_{L^\infty(s,t;H^k)}^2\,.
			\end{equation}
		
		\noindent Towards \eqref{content:claim} we write
		\begin{equation}
			\label{decomp:D_kk}
		\begin{aligned}
			\langle \D(k,k),\mathbf 1\rangle 
			&=2\sum _i\int_s^t\int_{\T}\partial_x^k(\partial_x^2u_r + u_r|\partial_x 
			u_r|^2 + u_r\times \partial_x^2u_r)^i\partial_x ^ku_r^i dx\, dr
			\\&
			=-2\int_s^t\|\partial_x^{k+1} u\|_{L^2}^2 dr + 
			2\int_s^t\int_{\T}\dd^{k}(u|\partial u|^2)\cdot\dd^kudx\,dr
			+2\int_s^t\int _{\T}\dd^k(u\times \dd^2 u)\cdot 
			\partial_x^k u
			dx\,dr\,.
		\end{aligned}
		\end{equation}
		We evaluate
		\[
		\begin{aligned}
			&2\int_s^t\int_{\T}\dd^{k}(u|\partial u|^2)\cdot\dd^kudx\,dr  
			\\
			&=
			-2\int_s^t\int_{\T}\dd^{k-1}(u|\partial u|^2)\cdot \dd^{k+1} u dx\,dr
			\\&
			\lesssim
			\int_s^t\Big[\|u\|_{H^2}\|u\|_{H^{k}} +\|\partial 
			u\|_{L^\infty}^2\|u\|_{H^{k-1}}\Big]\|\dd^{k+1}u\| _{L^2}dr
			\\&
			\leq
			\frac12 \int_s^t\|\dd^{k+1}u\|_{L^2}^2dr + 
			C\|u\|_{L^\infty(s,t;H^k)}^2\int_s^t\|u_r\|_{H^2}^2dr 
			+\int _{s}^t\|\partial_x u\|_{L^2}\|\partial_x u 
			\|_{H^1}\|u\|_{H^{k-1}}\|\dd^{k+1}u\|_{L^2}
		\end{aligned}
		\]
		for some universal constant $C>0$,
		where we have used \eqref{eq:interp_iterative_1} twice between the second 
		and third lines, and the interpolation inequality \eqref{eq:interpolation_1D} to obtain the 
		last term in the fourth line. 
		Distinguishing the cases when $k=2$ and $k>2$, we observe that the latter can be estimated 
		above by
		\[
		C\sup_{r\in [0,T]}\|u_r\|_{H^{k-1}}^2\int _s^t\|u_r\|_{H^k}\|\dd^{k+1}u_r\|_{L^2} dr\, ,
		\]
		which in turn is bounded thanks to Young Inequality by 
		\[
		\frac{C}{2}\sup_{r\in [0,T]}\|u_r\|_{H^{k-1}}^4(t-s)\sup_{r\in[s,t]}\|u_r\|_{H^{k}}^2 + 
		\frac12\int_s^t\|\dd^{k+1}u_r\|_{L^2}^2 dr \,.
		\]
		
		We now proceed to the evaluation of the third term in \eqref{decomp:D_kk} 
		By Proposition 
		\ref{pro:existence_drift_k} and Young's inequality, we estimate
		\begin{align*}
			\int_{s}^{t}\int_{\T } \partial_x^k (u_r\times\partial_x^2 u_r)\cdot \dd^k u_r dx dr
			&=-\int_{s}^{t}\int_{\T } \partial_x^{k-1} (u_r\times\partial_x^2 u_r)\cdot \dd^{k+1} 
			u_r dx \, dr
			\\
			&=-\sum_{j=1}^{k-1}\binom{k-1}{j}\int_{s}^{t}\int_{\T } (\partial_x ^{k-1-j} 
			u_r\times\partial_x ^{j}\partial_x^2 u_r)\cdot \dd^{k+1} u_r dx\, dr
		\end{align*}
		and in particular $ (u_r\times\partial_x^{k+1} u_r)\cdot \dd^{k+1} u_r =0$, therefore from 
		the embedding of $H^1(\T)$ into $L^\infty(\T)$,
		\begin{align}\label{eq:a}
		\|\partial_x^{k-1} (u_r\times\partial_x^2 u_r)\cdot \dd^{k+1} u_r \|_{L^1(s,t;L^1)}\leq 
		\frac{1}{2}\|\dd^{k+1} u_r 
		\|^2_{L^2(s,t;L^2)}+C_k\|\partial_x^{k-1}u\|^2_{L^2(L^\infty)}\|u\|_{L^\infty(s,t;H^{k})}^2
		 \, ,
		\end{align}
		where $C_k>0$ is a constant. Note then that if $u\in L^\infty(H^{k-1})\cap L^2(H^k)$, 
		then from the the embedding of $H^1(\T)$ into $L^\infty(\T)$,
		\begin{align}\label{eq:consider_L_infty}
		\int_{s}^{t}	\|\partial_x^{k-1} u_r\|^2_{L^\infty} dr\leq \|u\|_{L^\infty(s,t; H^{k-1})}\|u\|_{L^1(s,t;H^k)}\leq \|u\|_{L^\infty(s,t; H^{k-1})}\|u\|_{L^2(s,t;H^k)}(t-s)^{1/2}\, ,
		\end{align}
		that gives the time regularity.  This finishes to establish \eqref{content:claim}, hence 
		our claim.\smallskip

Now, testing the equation on $(\dd^ku^{\otimes 2})$ against $\mathbf 1$, then using \eqref{content:claim}
 and our induction hypothesis, we obtain the inequality
		\begin{align*}
		\sup\limits_{r\in[s,t]}\|\partial_x^k u_t\|^2_{L^2}+\int_{s}^{t}\|\dd^{k+1} 
		u_r\|_{L^2}^2 d 
		r&\lesssim_{\|u^0\|_{H^{k}}}[\omega_{G,H^{k+1}}^{1/p}(s,t)+(t-s)^{1/2}]\|u\|^2_{L^\infty(s,t;H^k)}+\langle
		 (\partial_x^N u^{\otimes 2})^\natural_{s,t},\mathbf 1\rangle\, .
		\end{align*}
		 By combining Proposition \ref{pro:existence_drift_k} and Proposition \ref{pro:existence_remainder_k} and techniques similar to Proposition \ref{pro:energy} (Young's inequality, sewing Lemma \ref{lemma_sewing} and Gronwall's Lemma \ref{lem:gronwall}), the following inequality holds
		\begin{align}\label{eq:a_priori_k}
		\sup\limits_{t\in[0,T]}\|\partial_x^k u_t\|^2_{L^2}+\int_{0}^{T}\|\dd^{k+1} u_r\|_{L^2}^2 d 
		r\lesssim\exp \left(\omega(0,T)\right)[\|\dd^k u_0\|^2_{L^2}+ \omega_{\G 
		;H^{k+1}}^{1/p}(0,T)\|u_0\|_{H^k}^2]\, ,
		\end{align}
		where $\omega(s,t):=(t-s)+\omega_{\G ;H^{k+1}}(s,t)$. From the a priori estimate \eqref{eq:a_priori_k},
		 we deduce that the sequence of the approximants $(u^n)_n$ (see Proposition \ref{pro:B1})  is uniformly bounded in $L^\infty(H^k)\cap L^2(H^{k+1})\cap\mathcal{V}^p(H^{k-1})$. The bound in $\mathcal{V}^p(H^{k-1})$ follows from argument similar to the proof of Corollary \ref{L2H2}. The same compactness argument as in Section \ref{sub_section:proof_existence} applies. From \eqref{eq:a_priori_k} and the lower semi-continuity of the norm also the limiting solution $u$ satisfies the inequality \eqref{eq:a_priori_k} and thus the claim follows. 
	\end{proof}

	\subsection{Solution to the LLG as  a stochastic process}
Let $(\Omega,\mathcal{G}, (\mathcal{G}_t)_t,\mathbb{P})$ be a filtered probability space. We are interested in showing that the solution $u$ viewed as 
\begin{equation}\label{eq:u_RV}
	u:(\Omega,\mathcal{G},\mathbb{P})\rightarrow L^\infty(H^1)\cap L^2(H^2)\cap \mathcal{V}^p(L^2)
\end{equation}
is a random variable and it is adapted to the natural filtration. 
Consider a random variable $w$ on the probability space $(\Omega,\mathcal{G},\mathbb{P})$ with values in $ \mathcal{V}^p( L^2)$ and let $\tilde{\Omega}\subset \Omega$ be the set of full measure where the path $w(\omega)$ can be lifted to a rough driver $(G(\omega),\GG (\omega))$ for all $\omega\in \tilde{\Omega}$.
Consider also a random variable $u^0$ defined on $(\Omega,\mathcal{G} ,\mathbb{P})$ and so that $u^0(\omega)\in H^1(\T ;\mathbb{S}^2)$ for all $\omega\in \tilde{\Omega} $.

Let us fix $\omega\in \tilde{\Omega} $. 
In the previous sections we constructed a sequence $(u^n(\omega))_n$, where each $u^n(\omega)$ is a solution to \eqref{rLLGn} with initial condition $u^n(\omega)$ driven by $(G^n(\omega),\GG ^n(\omega))$.  From the compactness argument, there exists a subsequence depending on $\omega$ so that $(u^{n_{j_\omega}}(\omega))_{j_\omega}$ converges in $L^2(H^1)\cap C([0,T];L^2)$ to a limit $u(\omega)\in L^\infty(H^1)\cap L^2(H^2)\cap \mathcal{V}^p(L^2)$. From uniqueness in Theorem \ref{thm:existence} $u(\omega)$ is the only solution to \eqref{rLLG}. 
Since $u(\omega)$ is unique then every converging subsequence of $(u^n(\omega))_n$ converges to $u(\omega)$ and therefore the whole sequence converges to $u(\omega)$ for all $\omega\in \tilde{\Omega}$. Moreover, from the continuity of the It\^o-Lyons map, the convergence happens in $L^\infty(H^1)\cap L^2(H^2)\cap\mathcal{V}^p(L^2)$ for a.e.\ $\omega\in \tilde{\Omega}$.  
In Theorem \ref{th:cont_wz_2}, we have proved that there exists a continuous map independent on $\omega$ such that $u=\Psi^1(u^0,\G )$, hence the solution is a continuous function of the path $w$ and thus it is adapted with respect to the natural filtration, thus with respect to $(\mathcal{G}_t)_t$ as well.

To complete the proof of Theorem \ref{thm:existence}, we need the following result.

\begin{proposition}[consistency]
	\label{pro:consistency}
	Fix $k\ge1$, $u^0\in H^k(\T;\mathbb S^2)$ 
	and let $\G=\Lambda_{g(\cdot)}\mathcal F(\rp)$ be as in \eqref{simple_RD} with $H=\frac12$ and \( g\in H^{k+1} .\)
	Then, the stochastic process $u=\pi(u^0,\G)\colon \Omega \times[0,T]\to H^k(\T)$ is \( (\mathcal G_t)_t \)-adapted and has its trajectories supported in $\mathcal X^k:=L^\infty(H^k)\cap L^2(H^{k+1})\cap \V^p(H^{k-1})$.
	Moreover, the following assertions hold.
	\begin{enumerate}[label=(C\arabic*)]
	  \item \label{C1} Pathwise solutions have finite moments of all order, in \( \mathcal X^k \);
	  
	  \item \label{C2} Pathwise solutions are semi-martingales in \( L^2(\T) \); moreover they are solutions in the usual Stratonovich sense, i.e.\ for every $t\in[0,T]$ and any $\varphi \in C^\infty(\T;\R^3)$
	\begin{multline}
		\label{weak_Cc}
		\int_{\T}(u _t(x)-u _0(x))\cdot \varphi (x)dx + \int_0^t\int_{\T} 
		(\mathbf 1_{3\times3}-u_t(x)\times)\partial_x^2 u _s(x)\cdot \varphi(x) dx dr
		\\
		=\int_0^t\int_{\T}u _r|\partial_x u_r|^2(x)\cdot\varphi (x)dxdr +  \int_{0}^t \int_{\T} 
		g(x)(u _r(x)\times \circ d\beta_r) \cdot \varphi (x)dx\,,\quad \mathbb P\text{-a.s.}
	\end{multline}
		
		\item \label{C3} If \( k\ge2 \), then any Stratonovich solution to \eqref{weak_Cc} is also a pathwise solution. 
	\end{enumerate}
\end{proposition}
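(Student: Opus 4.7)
The plan is to address the three claims (C1)--(C3) in order, leveraging the a priori bounds and the continuity of the It\^o--Lyons map already established. The $\mathcal{G}_t$-adaptedness and $\mathcal X^k$-support of $u$ require nothing new: since the map $\pi(u^0,\cdot)\colon \mathcal{RD}^p_a(H^{k+1})\to \mathcal X^k$ is continuous and deterministic by the higher-regularity extension of Theorem \ref{thm:wong-zakai} (i.e.\ Theorem \ref{teo:higher_order_regularity}), the adaptedness of $u_t$ reduces to the adaptedness of $\G|_{[0,t]}$, which is built from increments of $w$ on $[0,t]$ and their Stratonovich iterated integrals.

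For (C1), the bound \eqref{eq:a_priori_k} reads, schematically, $\|u\|_{\mathcal X^k}^2\lesssim \exp(C\omega_{\G,H^{k+1}}(0,T))(\|u^0\|_{H^k}^2+1)$. Since the Brownian enhancement $\mathcal F(\rp)$ from Example \ref{exa:B} has Gaussian tails in the $p$-variation topology for any $p\in(2,3)$ (Fernique-type estimate, see \cite[Chap.~15]{FrizVictoire}) and $\G=\Lambda_{g(\cdot)}\mathcal F(\rp)$ depends linearly on $\mathcal F(\rp)$ through the deterministic multiplier $g\in H^{k+1}$, the control $\omega_{\G,H^{k+1}}(0,T)$ has exponential moments of every order. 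Composing gives all polynomial moments of $\|u\|_{\mathcal X^k}$.

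For (C2), I would perform a two-step Wong--Zakai identification. Let $w^n$ be a piecewise-linear, dyadic-time interpolation of $w$, and $\G^n$ the smooth rough driver obtained by substituting $w^n$ for $w$ in Example \ref{exa:WP}. Standard results on canonical lifts of Brownian motion (see \cite[Chap.~13--15]{FrizVictoire}) yield $\G^n\to\G$ in $\mathcal{RD}^p_a(H^{k+1})$ almost surely. On one side, the higher-regularity version of Corollary \ref{cor:WZ} gives $u^n:=\pi(u^0,\G^n)\to u$ in $\mathcal X^k$. On the other, $w^n$ being a finite-variation path, $u^n$ is a classical (random) PDE solution of the type treated in Appendix \ref{app:perturbed}, and by the pathwise (deterministic) Wong--Zakai theorem applied to the ODE in the stochastic variable, it also converges to the Stratonovich solution $\tilde u$ of \eqref{weak_Cc}. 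Uniqueness of limits forces $u=\tilde u$. The semi-martingale property in $L^2(\T)$ is then immediate by converting the Stratonovich integral in \eqref{weak_Cc} into It\^o form, reading off an It\^o drift and a genuine $L^2$-martingale.

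For (C3), with $k\ge 2$, I would argue via uniqueness of Stratonovich solutions. Once (C2) gives that the pathwise solution is a Stratonovich solution, any other Stratonovich solution in $L^\infty(H^k)\cap L^2(H^{k+1})$ must coincide with it. Uniqueness itself is established in the same spirit as in Section \ref{sec:apriori}: applying It\^o's formula to the difference $z=a-b$ of two Stratonovich solutions and using that the noise coefficient $v\mapsto v\times g$ is antisymmetric, the stochastic term in $d\|z\|_{L^2}^2$ becomes a pure martingale whose quadratic variation is again controlled by $\|z\|_{L^2}^2$, leading to a Gronwall estimate (the deterministic part being identical to \eqref{eq:drift_uniqueness}). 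I expect the main technical obstacle to lie in (C3) via the \emph{direct} route, where one would expand $\int_s^t u_r\times\circ dw_r$ to second order in $r-s$, match the resulting second-order term with $\GG_{s,t}u_s$ from Example \ref{exa:WP}, and control the residual in $L^2$ with local variation better than $1$ --- this last step requires BDG-type bounds on double Stratonovich integrals combined with the $H^2$-regularity of $u$. Bypassing it through Stratonovich uniqueness is cleaner and is the approach I would pursue.
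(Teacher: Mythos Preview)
Your treatment of (C1) matches the paper's: both combine the a priori bound \eqref{eq:a_priori_k} with the Fernique-type exponential integrability of the Brownian rough-path lift from \cite{FrizVictoire}.

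For (C2) your route is genuinely different from the paper's, and as written it contains a gap. The paper performs a \emph{direct} identification of the rough integral with the Stratonovich integral: it introduces the controlled pair $(Y_t,Y'_t)=(g\,u_t\times\cdot,\;g^2(u_t\times\cdot)\times\cdot)$, uses the decomposition $\rpp_{s,t}=\rpp^{\mathrm{Ito}}_{s,t}+\tfrac12(t-s)\mathbf 1_{3\times3}$ to split the compensated Riemann sums, recognizes the $\tfrac12(t-s)\mathbf 1$ contribution as the usual It\^o--Stratonovich correction $\tfrac12\sum_i\int_0^t g^2(u_r\times e_i)\times e_i\,dr$, and then shows via a second-moment estimate that $\sum_\pi Y'_{t_i}[\rpp^{\mathrm{Ito}}_{t_i,t_{i+1}}]\to 0$ in $L^2(\Omega;L^2(\T))$ as $|\pi|\to0$. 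This yields $\int_0^t d\G_r u_r=\int_0^t g\,u_r\times\circ d\beta_r$ without any Wong--Zakai input. Your argument, by contrast, needs the statement that the piecewise-linear PDE solutions $u^n$ converge to the Stratonovich solution $\tilde u$, i.e.\ a \emph{classical} probabilistic Wong--Zakai theorem for the LLG SPDE. The phrase ``pathwise Wong--Zakai theorem applied to the ODE in the stochastic variable'' does not name a result that delivers this: the equation is a genuine PDE, not an ODE fibred over $x$, and the convergence $\int_0^t u^n_r\times dw^n_r\to\int_0^t u_r\times\circ d\beta_r$ is precisely the non-trivial stochastic-analytic fact one must establish. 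Such results exist in the literature (e.g.\ \cite{brzezniak2019wong}) but under restrictive noise assumptions, and in any case this is exactly the kind of statement the paper seeks to \emph{derive} from the rough formulation rather than assume. Closing the gap by hand amounts to essentially the same compensated-sum computation the paper does.

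For (C3) you again diverge from the paper, but here your alternative is sound. The paper invokes \cite[Theorem~2.19]{gerasimovics2020non}, which for $k\ge 2>\tfrac{11}{6}$ shows directly that Stratonovich solutions of quasilinear systems with uniformly elliptic principal part $a_t=\mathbf 1-u_t\times$ are rough solutions, the required moment bound on $|a|_{C^{k-1}}$ coming from (C1). Your route via Stratonovich uniqueness (It\^o's formula on $\|z\|_{L^2}^2$, antisymmetry of the noise coefficient, and the drift estimate \eqref{eq:drift_uniqueness}) is more self-contained and avoids the external reference; the trade-off is that it is logically downstream of (C2), so the gap there propagates, and it only treats Stratonovich solutions in the regularity class where your uniqueness argument applies, whereas the paper's cited result gives the implication Stratonovich $\Rightarrow$ rough directly.
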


\begin{proof}
		\textit{Proof of \ref{C1}.}
	The moment estimates are a consequence of the estimate obtained in Theorem 
	\ref{teo:higher_order_regularity}
	Indeed, \eqref{eq:a_priori_k} shows that
	\[
	\esssup_{t\in[0,T]}\|u_t(\cdot)\|_{H^k}^2
	\lesssim K \|u_0\|_{H^k}
	\]
	for some random variable $K\lesssim 1+\exp(\|\G\|_{\mathcal{RD}^p_a(H^{k+1})})$. If we can show that 
	$\|\G\|_{\mathcal{RD}^p_a(H^{k+1})}$ has exponential tail, the result will follow.
	From our definition of $\G$ and the rough driver metric,  we have (using that $\mathcal F\colon 
	\R^3\to \mathcal L_a(\R^3)$ is continuous):
	\[
	\|\G\|_{\mathcal{RD}^p_a(H^{k+1})}\lesssim_{\|g\|_{H^{k+1}}} \|\mathcal 
	F(\rp)\|_{\mathcal{RD}^p_a(\R^3)}\lesssim
	\|\rp\|_{\mathscr C^{1/p}_g(\R^3)}\, ,
	\]
	where \( \mathscr C_g^{1/p} \)-denotes the usual space geometric, 2-step rough paths which are \( 1/p \)-H\"older regular (see \cite{FrizHairer} for a definition), and we recall that $\rp$ is the Stratonovich enhancement of the Brownian 
	motion $\beta\colon \Omega\times [0,T]\to \R^3$.
	It follows from \cite[Theorem A.12]{FrizVictoire} that $\mathbb 
	E[\exp(m\|\G\|_{\RD^p})]\lesssim 
	\mathbb E[\exp(m\|\rp\|_{\mathscr C^{1/p}})]<\infty$ for every $m>0$, hence our 
	conclusion.

	\item[\textit{\indent Proof of \ref{C2}}.]
	Introduce the pair of stochastic processes \( (Y,Y')\colon \Omega\times[0,T]\to L^2(\T;\mathcal L_a(\R^3))\times L^2(\T;\mathcal L_a(\R^{3\times3})) \) such that for each \( a,b\in \R^3 \) and \( t\in [0,T] \)
	\[
	Y_t(\omega;x)[a]:=g(x)u_t(\omega;x)\times a,\qquad
	Y'_t(\omega;x)[a\otimes b]:=g(x)^2 (u_t(\omega;x)\times a)\times b,
	\quad \mathbb P\otimes dx\text{-almost everywhere.}
	\]
	Since we already established that \( u_t \) is \( \mathcal F_t\)-adapted, it remains to show that the Stratonovich integral, when integrated against any test function $\varphi\in C^{\infty}(\T)$ equals the pathwise rough integral.
	In fact, we are going to show the slightly stronger statement that the two integrals are equal as elements of the functional space $L^2(\T;\R^3)$. 
	For that purpose, we need to show that on a set of full probability measure $\Omega_0\subset \Omega$,
	in the notations of Remark \ref{rem:global},
	\begin{equation}
		\label{rough_equals_strato}
	\int_0^t d\G_r u_r = \int_0^t gu\times \circ d\beta_t\,.
	\end{equation}
	To show \eqref{rough_equals_strato}, we first observe thanks to \eqref{murky_formula} that \( \mathbb P \)-a.s.,
	\begin{align}
	\int_0^t d\G_r u_r
	&=\lim_{\pi}\sum_{[t_i,t_{i+1}]\in \pi}(G_{t_i,t_{i+1}}u_{t_i}+\GG_{t_i,t_{i+1}}u_{t_i})
	\nonumber\\
	&=\lim_{\pi}\sum_{[t_i,t_{i+1}]\in \pi}(Y_{t_i}[\delta\beta_{t_i,t_{i+1}}]+Y'_{t_i}[\rpp^{\mathrm{Ito}}_{t_i,t_{i+1}}]) + \frac12 \int _0^tY'_r[\mathbf{1}_{3\times 3}] dr
	\label{integral_corrected}
\end{align}
	where the limit is taken along any sequence of partitions \( (\pi_n) \) of \( [0,t] \) such that $|\pi_n|\equiv\max(t_{i+1}-t_i)\to 0$, and we set \( \rpp^{\mathrm{Ito}}_{s,t}:=\rpp_{s,t}-\frac12(t-s)\mathbf{1}_{3\times 3} .\)
	It is immediate to verify that the last term in \eqref{integral_corrected} corresponds to the usual Stratonovich/It\^o corrections, namely
	\[
	\frac12 \int _0^tY'_r[\mathbf{1}_{3\times 3}] dr = \frac12\sum_{i=1}^3\int_0^t g^2(u_r\times e_i)\times e_idr
	\]
	(as Bochner integrals in \( L^2(\T;\R^3) \))
	where \( (e_1,e_2,e_3) \) is the canonical basis of \( \R^3 \).
	On the other hand, the right hand side in \eqref{rough_equals_strato} is equal to \( \lim_{\pi}\sum_{[t_i,t_{i+1}]\in \pi}Y_{t_i}\delta\beta_{t_i,t_{i+1}}\), by definition of an It\^o integral.
	In particular, the conclusion follows from the estimate
	\begin{equation}
		\label{ito_vanish}
		\lim_{\pi} \mathbb E[\|\sum_{[t_i,t_{i+1}]\in \pi}Y'_{t_i}\rpp^{\mathrm{Ito}}_{t_i,t_{i+1}}\|_{L^2}^2] \lesssim |\pi|\equiv\max_{t_i\in \pi}(t_{i+1}-t_i)\,,
	\end{equation}
 the proof of which is identical to that of \cite[eq.\ (5.1) p.~68]{FrizHairer} and therefore omitted. Hence the identity \eqref{rough_equals_strato}, and therefore \eqref{weak_Cc} is proved.

	\item[\textit{\indent Proof of \ref{C3}}.] 
		Since \( k\ge2>\frac{11}{6} \), the proof of \ref{C3} is carried out by the same arguments as in \cite[Theorem 2.19]{gerasimovics2020non}, 
	the only difference being that the equation considered there is scalar. 
	Observing that the linear map $a_t(x)=(\mathbf{1}_{3\times 3}-u_t(x)\times)\in \mathcal L(\R^3)$ is uniformly elliptic, it is easily seen that the estimates used inside the proof are unchanged. Therefore, we only need to check the condition
	\begin{equation}
		\mathbb E[\esssup_{t\in [0,T]}|a_t(\cdot)|_{C^{k-1}}^m]< \infty\,,
	\end{equation}
	for every $m\in[1,\infty)$, which is a consequence of \ref{C1} and the embedding $H^k\hookrightarrow C^{k-1}$.	
\end{proof}

	\section{Continuity of the It\^o-Lyons map}
	\label{sec:wong-zakai}
	We are going to state that the map that associates a realization of the noise with the solution to \eqref{rLLG} is continuous, i.e. 
	\begin{equation}\label{intro:wz_map}
	\mathbf{A}\longrightarrow \pi(a_0;\mathbf{A})\, ,
	\end{equation}
	where by $\pi(a_0;\mathbf{A})$ we mean the solution to \eqref{LLG_apriori} with initial condition $a_0$ and driven by the rough driver $\mathbf{A}$. 
	Let us stress that the continuity of the map \eqref{intro:wz_map} is a peculiarity of the rough path theory and it is not true in the classical It\^o calculus. 
	
	The first direct consequence of the continuity of \eqref{intro:wz_map}, is the so-called Wong-Zakai convergence: if $(\G ^n)_n$ is an approximations to the noise $\G $, then the sequence $(\pi(a_0;\G ^n))_n$ converges to the solution $\pi(a_0;\G )$. This convergence occurs in a stronger topology compared to the convergence obtained through the compactness method  in the existence proof.  Indeed, the compactness in the existence proof  implies that the approximants converge to the solution in the $L^2(H^1)\cap C([0,T];L^2)$-norm, whereas the following procedure ensures that the convergence occurs in $L^\infty(H^1)\cap L^2(H^2)\cap\mathcal{V}^p(L^2)$.
	
	A further direct application is the $\omega$-wise convergence  to the deterministic solution for vanishing noise.
	In the first subsection we investigate an iterative estimate for the remainder: the proof  is quite technical and we state it in a more general framework. In the second section we state the continuity of the It\^o-Lyons map.
		We introduce some useful identities
		\begin{align}\label{eq:identities_convergence}
			(Aa-Bb)^i(c-d)^j&=((A-B)a(c-d)^j+B(a-b)(c-d)^j)^i \, ,
			\\
			(Aa-Bb)^i(Cc-Dd)^j&=((A-B)a)^i((C-D)c)^j+(A-B)a)^i(D(c-d))^j
			\\
			\nonumber
			&\quad+(B(a-b))^i((C-D)c)^j+(B(a-b))^i(D(c-d))^j \, .
		\end{align}
	
	\subsection{Bilinear estimates}
Let $\A,\B\in \RD^{p}_a(H^\sigma)$ and assume that one has $a,b:[0,T]\to L^2(\T;\R^n)$ such that
\begin{equation}\label{eq:a_b}
\begin{aligned}
da = fdt + d\A a
\\
db = gdt + d\B b
\end{aligned}
\end{equation}
for some $f,g\in L^2(L^2).$
It is worth noticing that the difference $(A-B,\AA-\BB)$ is not a rough driver in general,
which is a relic of the fact that the space of rough paths of finite $p$-variation with $p>2$ is 
not linear (this fact is underlined for instance in 
\cite{FrizVictoire}). Consequently, we cannot apply the estimates of Section \ref{sec:apriori} 
to estimate remainders of the form $(\partial_x ^N (a-b)^i \partial_x ^N (a-b))^{\natural}$.
	We will nonetheless follow a similar procedure, but this time different factors than in Theorem \ref{th:remainder_interate} appear.

	In Theorem \ref{th:remainder_interate} the computations are performed at the level of the approximations of the solutions and the aim is to use that result to prove an uniform bound. 
	Differently the aim of Theorem \ref{th:wz_remainder} is to write an estimate of the remainder in such a way that, under the knowledge of some weaker convergences, we get a stronger convergence. 
	For this reason we need to highlight some `good terms' that will make this convergence happen under our hypothesis, i.e.\ terms of the form 
	\begin{align}\label{eq:good_elements}
	\partial_x ^{n} (A-B), \quad \partial_x ^{m}(\AA -\mathbb{B}), \quad \delta (\partial_x ^{n} 
	(a-b) \partial_x ^{m} (a-b))\, ,
	\end{align}
	for $n\in \{1,\dots,N\}$, $m\in \{1,\dots,M\}$. 
		
	In this section we assume that there are $a,b\in L^\infty (H^k)\cap L^2(H^{k+1})$ as in \eqref{eq:a_b} and from the previous Theorem \ref{th:remainder_interate} we know how to estimate the corresponding remainders for all the derivatives.	
From the product formula, Proposition \ref{pro:existence_remainder_k}, we have for $z=a-b$
	\begin{equation}\label{eq:general_eq_der_wz}
	\begin{aligned}
	\delta z^{\otimes 2}_{s,t}
	&=2\int_s^tz\odot(f-g)dr 
	+ (I+\II+\tilde\II)(N,N)+(z^{\otimes 2})^{\natural}\, ,
	\end{aligned}
	\end{equation}
	where, letting $(Z,\ZZ)=(A-B,\AA-\BB)$
	\[
	\begin{aligned}
	I_{s,t}
	&=(A_{s,t}a_s - B_{s,t}b_s)\otimes z + z\otimes (A_{s,t}a_s - B_{s,t}b_s)
	\\
	&=(A_{s,t}z_s + Z_{s,t}a_s)\otimes z_s + z_s\otimes (A_{s,t}z_s + Z_{s,t}a_s)\, ,
	\\[0.3em]
	\II_{s,t}
	&=(\AA_{s,t}a_s - \BB_{s,t}b_s)\otimes z + z\otimes (\AA_{s,t}a_s - \BB_{s,t}b_s)
	\\
	&=(\AA_{s,t}z_s + \ZZ_{s,t}a_s)\otimes z_s + z_s\otimes (\AA_{s,t}z_s + \ZZ_{s,t}a_s)\, ,
	\\[0.3em]
	\tilde\II_{s,t}
	&=(A_{s,t}a_s - B_{s,t}b_s)^{\otimes2}
	\\
	&=(A_{s,t}z_s + Z_{s,t}a_s)^{\otimes2}\, .
	\end{aligned}
	\]
	
	\begin{theorem}\label{th:wz_remainder}
		Fix $k\geq 1$, consider $a_0,b_0\in H^{k}(\T ;\mathbb{S}^2)$ and $\mathbf{A}, \mathbf{B}\in \RD ^p_a(H^{k+1})$. Let $a=\pi(a_0;\mathbf{A})$ and $b=\pi(b_0;\mathbf{B})$ be the solutions to \eqref{rLLG}. Denote $z:=a-b$, $Z:=A-B$, $\mathbb{Z}:=\AA -\mathbb{B}$ and consider the following controls,
		\begin{align*}
		&\omega_{\D (i,j)}(s,t):=\int_s^t \int_\T \big( |\partial_x ^i (f_r -g_r)\partial_x ^j  
		z_r|+|\partial_x ^j (f_r -g_r)\partial_x ^i  z_r| \big)  d x \, d r\, , \\
		&\omega_{ \D }(s,t):=\sum_{i,j\in \{0,1,\dots,k\}} \omega_{ \D (i,j)}(s,t)\, .
		\end{align*}
		Then, for all $i,j\in\{0,1,\dots,k\}$, the following estimate holds locally, i.e. consider a partition $(P_k)_k$ of $[0,T]$, then for every $k$ and for all $s\leq t\in P_k$
		\begin{align}\label{eq:remainder_estimate_k}
		\omega_{ (\partial_x ^kz)^{2,\natural};H^{-1}}^{3/p}(s,t)&\lesssim \; \omega^{2/p}(s,t) 
		\omega_{ \mathbf{Z};H^{k+1}}^{1/p}(s,t)+\omega^{1/p}\omega_{ \D }(s,t) 
		+\omega^{3/p}(s,t)\|z\|_{L^\infty (H^k)}^2\, ,
		\end{align}
		where $\omega:=\omega_{\mathbf{A};H^{k+1}}+\omega_{\mathbf{B};H^{k+1}}$. 
	\end{theorem}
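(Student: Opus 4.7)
The plan is to follow the \textbf{rough standard machinery} RM1--RM2--RM3 of Section~\ref{subsec:bootstrap} applied to the equation \eqref{eq:general_eq_der_wz} for $z^{\otimes 2}$ (and its analogues for $(\partial_x^n z \otimes \partial_x^m z)$, $n,m\leq k$), with an induction on $k$. The crucial difference with Theorem~\ref{th:remainder_interate} is that $(Z,\ZZ)$ is \emph{not} a rough driver; one cannot test a product formula on $z\otimes z$ directly. Instead, I would work with the equations for $a$ and $b$ separately, and systematically split every bilinear cross-term by means of the identities \eqref{eq:identities_convergence}.

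\textbf{Step 1 (RM1, expansion of $\delta$ of the remainder).} Using Theorem~\ref{teo:rem_derivatives} from Appendix~\ref{app:combinatorial}, I would derive a formula for $\delta(\partial_x^n z \otimes \partial_x^m z)^{\natural}_{s,u,t}$ analogous to the one in Proposition~\ref{pro:remainder_existence}, but where the ingredients are now decomposed via
\[
A_{s,t}a_s - B_{s,t}b_s = A_{s,t}z_s + Z_{s,t}a_s\,,
\]
and its $\ZZ$-analogue. The Chen-type identity for the \emph{difference} reads $\delta Z_{s,u,t} = 0$ and
\[
\delta\ZZ_{s,u,t} = A_{u,t}Z_{s,u} + Z_{u,t}B_{s,u}\,,
\]
which already yields a factor $\omega_{\mathbf Z;H^{k+1}}^{1/p}$. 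Grouping everything, the terms appearing in $\delta(\partial_x^k z \otimes \partial_x^k z)^{\natural}_{s,u,t}$ fall into three families: (i) factors of the form (polynomial in $A,B,\AA,\BB$) times (a derivative of $Z$ or $\ZZ$) against $\|a\|_{L^\infty(H^k)}$ or $\|z\|_{L^\infty(H^k)}$; (ii) factors of the form (polynomial in $A,B,\AA,\BB$) times a drift difference $\D(n,m)$; and (iii) factors of the form (polynomial in $A,B,\AA,\BB$) times a lower-order remainder $(\partial_x^n z \otimes \partial_x^m z)^{\natural}_{s,u}$ with $n+m\leq 2k$.

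\textbf{Step 2 (RM2, Sewing Lemma).} Each of the three families above is controlled by $\omega^{\zeta}$ with $\zeta = 3/p >1$ (since $p<3$), because a level-1 factor contributes $\omega^{1/p}$, a level-2 factor contributes $\omega^{2/p}$, and the difference factor contributes $\omega_{\mathbf Z;H^{k+1}}^{1/p}$. Hence Lemma~\ref{lemma_sewing} applies with $E=H^{-1}$ and yields
\begin{align*}
\|(\partial_x^k z)^{\otimes 2;\natural}_{s,t}\|_{H^{-1}}
&\lesssim \omega^{2/p}(s,t)\,\omega_{\mathbf Z;H^{k+1}}^{1/p}(s,t)
+ \omega^{1/p}(s,t)\,\omega_{\D}(s,t)\\
&\quad + \omega^{3/p}(s,t)\|z\|_{L^\infty(H^k)}^2
+ \omega^{1/p}(s,t)\!\!\!\!\sum_{0\leq n+m\leq 2k}\!\!\!\omega_{(\partial_x^n z\otimes \partial_x^m z)^{\natural};H^{-1}}^{3/p}(s,t)\,,
\end{align*}
where the last sum regroups the remainder contributions of Step~1(iii).

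\textbf{Step 3 (RM3, absorb and induct).} As in Theorem~\ref{th:remainder_interate}, for the top-order term $(n,m)=(k,k)$ appearing in the last sum one refines the covering $(P_k)_k$ so that $\omega^{1/p}(s,t)$ is smaller than a fixed fraction of the implicit constant, which allows to absorb the corresponding term into the left-hand side after taking $p/3$-variations. For the strictly lower-order remainders ($n+m < 2k$, or $n+m = 2k$ but $(n,m)\neq (k,k)$), one invokes the induction hypothesis on $k$ (the base case $k=0$ being the analogue of Step~1 of Theorem~\ref{th:remainder_interate}, in which $(Z,\ZZ)$ creates the extra term $\omega^{2/p}\omega_{\mathbf Z}^{1/p}$), and each such contribution is seen to be already of one of the three types claimed in \eqref{eq:remainder_estimate_k}.

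\textbf{Main obstacle.} The bulk of the work, and the main source of technical difficulty, lies in Step~1: keeping track of the combinatorics of the mixed remainders $(\partial_x^n z\otimes\partial_x^m z)^{\natural}$ while splitting every product of the form $(\partial_x^l A)a - (\partial_x^l B)b$ via the identities \eqref{eq:identities_convergence} in a way that isolates \emph{one} factor involving either $\mathbf Z$, a drift difference, or a strictly lower-order remainder. The algebraic bookkeeping for $k\ge 2$ is heavy and naturally organised by the formulas of Proposition~\ref{pro:remainder_existence}, which must be rewritten for the difference problem.
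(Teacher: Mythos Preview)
Your proposal is correct and follows essentially the same route as the paper: rough standard machinery RM1--RM3, the combinatorial expansion from Theorem~\ref{teo:rem_derivatives}, the splitting identities \eqref{eq:identities_convergence}, and induction on the order of derivatives (the paper spells out $N=0$ and $N=1$ explicitly before inducting, but this is only presentation).

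One small point you gloss over: in Step~1, the expansion of $\delta(\partial_x^n z\otimes\partial_x^m z)^{\natural}_{s,u,t}$ via Theorem~\ref{teo:rem_derivatives} also produces terms of the form $Z_{u,t}$ (or $\ZZ_{u,t}$) multiplied by \emph{mixed} increments $\delta(\partial_x^n a\otimes\partial_x^m z)_{s,u}$, and after substituting the corresponding product equation these yield mixed remainders $(\partial_x^n a\otimes\partial_x^m z)^{\natural}$ and pure remainders $(a\otimes a)^{\natural}$, $(a\otimes b)^{\natural}$, $(b\otimes b)^{\natural}$---not only $z$-remainders. These are not covered by your family~(iii). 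The paper handles them by observing that, being multiplied by a factor of $Z$ or $\ZZ$, they contribute at worst $\omega_{\mathbf Z}^{1/p}\cdot\omega^{2/p}$, since the mixed and pure remainders are already controlled by $\omega^{2/p}$ from the existence theory (Theorem~\ref{th:remainder_interate} and its product-formula analogue for $a\otimes b$). This fits into your family~(i) once you enlarge ``against $\|a\|_{L^\infty(H^k)}$'' to ``against any quantity bounded by the a~priori estimates on $a,b$'', but it is worth making explicit.
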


	\begin{proof} 
		We proceed as in Theorem \ref{th:remainder_interate}: the main ingredients are Theorem \ref{teo:rem_derivatives} and the steps \ref{RM1}--\ref{RM3} described in Section \ref{Section_remainder_existence}. We show first how the remainder can be estimated in the case $N=0$. Second, we show the case $N=1$: in this way we see how the derivation impacts the estimate. In conclusion we use the case $N=k-1$ as inductive hypothesis and we conclude the estimate up to $N=k$ for induction. 	We denote by $\omega_{A}:=\omega_{A;H^1}$, $\omega_{\AA }:=\omega_{\AA ;H^1}$, 
		$\omega_{\mathbf{A}}:=\omega_{\mathbf{A};H^1}$.
		We adopt the same convention for the controls of $\mathbf{B}$ and $\mathbf{Z}$. 
		
		\item{\textbf{\indent Step 1: N=0.}}  As for the proof of Theorem \ref{th:remainder_interate}, we hinge on the rough standard machinery and we start with the first step \ref{RM1}. 
		We start by considering $\delta( z^iz^j)_{s,t}$ in the form \eqref{eq:general_eq_der_wz} and we estimate
		$$z^{2,\natural,i,j}_{s,t}:=\delta (z^iz^j)_{s,t}-2\int_s^tz\odot(f-g)^{i,j}dr-(I+\II+\tilde\II)_{s,t}^{i,j}(0,0)\, .$$
		We note that
		(as a consequence of Theorem \ref{th:remainder_interate})
		\begin{equation}
			\label{delta_I}
			\begin{aligned}
		\textcolor{black}{ - }\delta I_{s,u,t}
		&=\sum_{l} Z^{i,l}_{u,t}[\delta (a^lz^j)_{s,u}-T_{a^lz^j}]-B^{i,l}_{u,t}[\delta (z^lz^j)_{s,u}-T_{z^lz^j}]\\
		&\quad 
		+\sum_{l} Z^{j,l}_{u,t}[\delta (a^lz^i)_{s,u}-T_{a^lz^i}]-B^{j,l}_{u,t}[\delta (z^lz^i)_{s,u}-T_{z^lz^i}]\, ,
		\\
		-\delta\II^{i,j}_{s,u,t}
		&=\sum_{l} [\mathbb{Z}^{i,l}_{u,t}\delta (a^lz^j)_{s,u}-\mathbb{B}^{i,l}_{u,t}\delta (z^lz^j)_{s,u}]
		+\sum_{l} [\mathbb{Z}^{j,l}_{u,t}\delta (a^lz^i)_{s,u}-\mathbb{B}^{j,l}_{u,t}\delta (z^lz^i)_{s,u}]\, ,
		\end{aligned}
		\end{equation}
		while
		\begin{align*}
		&-\delta \tilde\II_{s,u,t}^{i,j}
		=\sum_{l,m} Z^{i,l}_{u,t}[Z^{j,m}_{u,t}\delta(a^lb^m)_{s,u}-A^{j,m}_{u,t}\delta(a^lz^m)_{s,u}]+\sum_{l,m}B^{i,l}_{u,t}[Z^{j,m}_{u,t}\delta(z^lb^m)_{s,u}-A^{j,m}_{u,t}\delta(z^lz^m)_{s,u}]\, .
		\end{align*}	
Substituting $(\delta z^iz^j)_{s,t}$ with \eqref{eq:general_eq_der_wz}, for all test functions $\Phi\in H^1$ so that $\|\Phi\|_{H^1}\leq 1$ we can estimate these terms as
		\begin{align*}
		\quad&[\omega_{\delta aa}^{1/p}+\omega_{\delta a b}^{1/p}]\omega_{\mathbb{Z}}^{2/p}\|\Phi\|_{H^1} +\omega_{\mathbb{B}}^{2/p}\big[2\omega_{Z}^{1/p}+2\omega_{\mathbb{Z}}^{1/p}+\omega_B^{1/p}\|z\|^2_{L^\infty (L^2)}+\omega_B^{1/p}\|z_s\|^2_{L^\infty (L^2)} \big]\|\Phi\|_{H^1}\\
		&+ C\omega_{\mathbb{B}}^{2/p} \big[\omega_{Z}^{1/p}(\omega_A^{1/p}+\omega_B^{1/p})+  \omega^{2/p}_{B}(\|z\|_{L^\infty(L^2)}^2+\|z\|_{L^\infty(L^2)}^2)\big]\|\Phi\|_{H^1}\\
		&+C\omega_\mathbb{Z}^{1/p}[\omega_{a^{2,\natural};H^{-1}}^{3/p}+\omega_{(ab)^{\natural};H^{-1}}^{3/p}+\omega_{b^{\natural,2};H^{-1}}^{3/p}]\|\Phi\|_{H^1}+\omega_{\mathbb{B}}^{1/p}\omega_{z^{2,\natural};H^{-1}}^{3/p}\|\Phi\|_{H^1}\, ,
		\end{align*}
		where we used that $\|a\|_{L^\infty}=\|b\|_{L^\infty}=1$.
		The control $\omega_{\delta aa}^{1/p}$ is bounded since we assume $a$ is a solution (it is a consequence of the estimate of $a^{2,\natural}$).  The same conclusion can be achieved for $\omega_{\delta ab}^{1/p}$: this is also bounded from similar steps and the knowledge that $a,b$ are solutions. 
		The mixed term $\omega_{(ab)^{\natural};H^{-1}}^{3/p}$ is bounded for similar reasons. 
		
		In conclusion, by collecting all the previous contributions,
		\begin{align*}
		\|\delta z^{\natural,2}_{s,u,t}\|_{H^{-1}}
		&\leq\omega^{1/p}\omega_{ \D (0,0)}+[\omega_{\mathbb{Z}}^{2/p} +\omega_{Z}^{1/p} ]\omega^{2/p}+\omega^{3/p}\|z\|_{L^\infty (L^2)}^2\, ,
		\end{align*}
		and we can apply the sewing Lemma \ref{lemma_sewing} and conclude the estimate of the remainder. 
		
		\item{\textbf{\indent Step 2: N=1.}} Now we want to iterate the procedure and we consider also the case $N=1$, to see how the taking the derivative of the equation  impacts on the estimate. As a consequence of  Theorem \ref{th:remainder_interate} and with similar computations as above
		\begin{align*}
		\| \delta (\partial_x z)^{\natural,2,i,j}_{s,u,t}\|_{H^{-1}} &\lesssim\;  \omega^{2/p} \big[\omega_{Z}^{1/p}+\omega_{\partial_x Z}^{1/p}+\omega_{\mathbb{Z}}^{1/p}+\omega_{\partial_x \mathbb{Z}}^{1/p} \big]+\omega^{3/p}\big[\|\partial_x z\|_{L^\infty(L^2)}^2+\|z\|_{L^\infty(L^2)}^2\big]\\
		&\quad+\omega^{1/p}\big[\omega_{\D (0,0)}+\omega_{\D (0,1)}+\omega_{\D (1,0)}+\omega_{\D (1,1)}\big]\\
		&\quad+\omega^{1/p}\big[\omega^{3/p}_{(\partial_x z)^{\natural,2};H^{-1}}+\omega^{3/p}_{(z\partial_x z)^{\natural};H^{-1}}+\omega^{3/p}_{z^{\natural,2};H^{-1}}\big]\, .
		\end{align*}
		Hence step \ref{RM1} of the rough standard machinery is complete. Note that the  amount $\omega^{3/p}_{(z\partial_x z)^{\natural};H^{-1}}$ can be estimated by
		\begin{align*}
		\omega^{3/p}_{(z\partial_x z)^{\natural};H^{-1}} \lesssim \omega^{3/p}_{z^{2,\natural};H^{-1}}+\|z\|_{L^\infty(L^2)}^2+\|\partial_x z\|_{L^\infty(L^2)}^2+\omega_{\D (0,1)}\, ,
		\end{align*}
		and the amount $\omega^{3/p}_{z^{2,\natural};H^{-1}}$ can be estimated by the step before.
		We can then apply the sewing Lemma \ref{lemma_sewing} and show that
		\begin{align*}
		\| (\partial_x z)^{\natural,2}_{s,t}\|_{H^{-1}} &\lesssim \; \omega^{2/p} [\omega_{Z}^{1/p}+\omega_{\partial_x Z}^{1/p}+\omega_{\mathbb{Z}}^{1/p}+\omega_{\partial_x \mathbb{Z}}^{1/p} ]+\omega^{3/p}[\|\partial_x z\|^2_{L^\infty(L^2)}+\|z\|^2_{L^\infty(L^2)}]\\
		&\quad+\omega^{1/p}[\omega_{\D (0,0)}+\omega_{\D (0,1)}+\omega_{\D (1,0)}+\omega_{\D (1,1)}+\omega^{3/p}_{(\partial_x z)^{\natural,2};H^{-1}}]\,.
		\end{align*}
		This concludes \ref{RM2}. By implementing in a standard way \ref{RM3}, we conclude that
		\begin{align*}
		\| (\partial_x z^{\otimes 2})^{\natural}\|_{H^{-1}}&\lesssim \omega^{2/p} [\omega_{Z}^{1/p}+\omega_{\partial_x Z}^{1/p}+\omega_{\mathbb{Z}}^{1/p}+\omega_{\partial_x \mathbb{Z}}^{1/p} ]+\omega^{3/p}[\|\partial_x z\|^2_{L^\infty(L^2)}+\|z\|^2_{L^\infty(L^2)}]\\
		&\quad+\omega^{1/p}[\omega_{\D (0,0)}+\omega_{\D (0,1)}+\omega_{\D (1,0)}+\omega_{\D (1,1)}]\, .
		\end{align*}
		\item{\textbf{\indent Step 3:}} It can be proved by induction that \eqref{eq:remainder_estimate_k} holds, by following the same procedure as in the last step of the proof of Theorem \ref{th:remainder_interate}.
	\end{proof}

	\subsection{The Wong-Zakai convergence }
	\paragraph{The first order Wong-Zakai convergence}
	The aim of this paragraph is to prove that the solution is a continuous map of  the noise and of the initial condition with respect to the $L^\infty(L^2)\cap L^2(H^1)$ topology, as explained in the following statement
	\begin{proposition}\label{prop:cont_wz_1}
		(Continuity of the It\^o-Lyons map) : The map 
		\begin{align*}
		\pi:H^1(\T ;\mathbb{S}^2)\times\RD ^p_a(H^2)  &\longrightarrow L^\infty(L^2)\cap L^2(H^1)\cap \mathcal{V}^p(H^{-1})\\
		(a_0, \;\;\mathbf{A}) \quad &\longmapsto a=\pi(a_0;\mathbf{A})\, ,
		\end{align*}
		where $\pi(a_0,\mathbf{A})$ is the unique solution to \eqref{LLG} with initial condition $a_0$ and noise $\mathbf{A}$, exists and is continuous.
	\end{proposition}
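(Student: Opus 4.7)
The existence part of the map $\pi$ is already provided by Theorem \ref{thm:existence}, so the task is to establish continuity. Fix $(a_0,\A),(b_0,\B)\in H^1(\T;\mathbb S^2)\times \RD_a^p(H^2)$ and set $a:=\pi(a_0,\A)$, $b:=\pi(b_0,\B)$, $z:=a-b$, $Z:=A-B$, $\ZZ:=\AA-\BB$. My plan is to derive a Gronwall-type inequality for the map $t\mapsto \|z_t\|_{L^2}^2+\int_0^t\|\dd z_r\|_{L^2}^2dr$ whose source term is controlled by $\|a_0-b_0\|_{L^2}^2+\|\A-\B\|_{\RD_a^p(H^2)}$, and then deduce the $\V^p(H^{-1})$ bound directly from the equation.

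The first step is to write $dz=(f-g)dt+d\A a-d\B b$ with $f-g$ standing for the differences of the LLG drifts ($\dd^2z+a|\dd a|^2-b|\dd b|^2+ a\times \dd^2 a-b\times\dd^2 b$), and then apply the tensorized product formula of Proposition \ref{pro:product} (with $\A=\B=$rough driver of $z$) to express $\delta z^{\otimes 2}_{s,t}$ as in \eqref{eq:general_eq_der_wz}. Testing against the constant $\mathbf 1$ the noise terms split, thanks to antisymmetry of $A$ and the identity of Remark \ref{rem:levy}, as the combination of a ``pure $\A$'' part (which vanishes as in the proof of Lemma \ref{lem:constraint}) and a ``perturbation'' part that is bilinear in the difference $(\ZZ,Z)$ and the solution $a$: explicitly, one obtains
\begin{equation*}
\delta(\|z\|_{L^2}^2)_{s,t}=2\int_s^t\langle z_r,f_r-g_r\rangle dr+\mathcal R_{s,t}+\langle z^{\otimes 2,\natural}_{s,t},\mathbf 1\rangle,
\end{equation*}
where $\mathcal R_{s,t}$ collects the terms $2\langle Za_s,z_s\rangle$, $2\langle\ZZ a_s,z_s\rangle$ and $\|Az_s+Za_s\|_{L^2}^2-\|Az_s\|_{L^2}^2$, all of which are bounded by $\omega_{\mathbf Z;H^1}^{1/p+2/p}(s,t)(1+\|z\|_{L^\infty(L^2)}^2)$ using $\|a\|_{L^\infty}=1$ and the boundedness of $A$ in $L^\infty$.

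Next I would bound the deterministic drift $\int_s^t\langle z_r,f_r-g_r\rangle dr$ term by term, following exactly the Young/interpolation computations of the uniqueness proof (see \eqref{est:I}--\eqref{est:III}), which yields
\begin{equation*}
\Big|\int_s^t\langle z_r,f_r-g_r\rangle dr\Big|\le -\frac12\int_s^t\|\dd z_r\|_{L^2}^2dr+C\int_s^t\bigl(1+\|\dd a_r\|_{L^2}^4+\|\dd b_r\|_{L^2}^4\bigr)\|z_r\|_{L^2}^2dr,
\end{equation*}
where the constant $C$ is uniform since $a,b$ are uniformly bounded in $L^\infty(H^1)\cap L^2(H^2)$ by Proposition \ref{pro:energy} and Corollary \ref{cor:L^2(L^2)}. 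For the remainder $\langle z^{\otimes 2,\natural}_{s,t},\mathbf 1\rangle$, I would apply Theorem \ref{th:wz_remainder} with $k=0$, estimating the corresponding drift control $\omega_{\D}(s,t)$ in $H^{-1}$ by integration by parts (analogously to \eqref{est:D00}--\eqref{est:D01} for $\D(0,1),\D(1,0)$), which gives $\omega_{\D}(s,t)\lesssim\int_s^t(1+\|\tt_{a_r}\|_{L^2}^2+\|\tt_{b_r}\|_{L^2}^2)\,dr$ up to factors depending on the uniform $H^1$ bounds.

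Combining these estimates, the rough Gronwall Lemma \ref{lem:gronwall} applied to $E_t:=\|z_t\|_{L^2}^2+\int_0^t\|\dd z_r\|_{L^2}^2dr$ (with control $\omega=\omega_{\A;H^2}+\omega_{\B;H^2}+(t-s)$ enlarged by the $L^4$ norms of $\dd a,\dd b$, which are finite in $L^1(0,T)$ by interpolation) yields
\begin{equation*}
\sup_{t\in[0,T]}\|z_t\|_{L^2}^2+\int_0^T\|\dd z_r\|_{L^2}^2dr\lesssim \|a_0-b_0\|_{L^2}^2+\|\A-\B\|_{\RD_a^p(H^2)}^{2/p},
\end{equation*}
with constants depending continuously on $(\|a_0\|_{H^1},\|b_0\|_{H^1},\|\A\|_{\RD_a^p(H^2)},\|\B\|_{\RD_a^p(H^2)})$. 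Finally, the $\V^p(H^{-1})$ bound on $z$ follows directly by reading the equation: the drift is bounded in $\V^1(H^{-1})$ by the previous estimates, the rough term $Za_s+Az_s+\ZZ a_s+\AA z_s$ is bounded in $\V^p(H^{-1})$ by $\omega_{\A-\B;H^2}^{1/p}+\omega_{\B;H^2}^{1/p}\|z\|_{L^\infty(L^2)}$, and the remainder $z^{\natural}$ is in $\V^{p/3}_{2,\mathrm{loc}}(L^2)\hookrightarrow\V^p(H^{-1})$ with the same quantitative control.

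The main obstacle I anticipate is the bookkeeping needed for the mixed terms in Theorem \ref{th:wz_remainder}: one must carefully track how the bilinear factors $Z\otimes a$, $\AA\otimes z$, etc. enter the Chen-type expansion of $\delta z^{\otimes 2,\natural}_{s,u,t}$ and show that they remain bounded independently of the absorbing step. This is precisely what Theorem \ref{th:wz_remainder} accomplishes through the rough standard machinery, so its application is the crucial structural ingredient.
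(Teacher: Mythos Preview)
Your proposal is correct and follows essentially the same route as the paper: both apply the product formula to $z^{\otimes2}$, test against $\mathbf 1$, bound the drift via the uniqueness computations \eqref{est:I}--\eqref{est:III}, estimate the rough terms $I+\II+\tilde\II$ in terms of $(Z,\ZZ)$, control the remainder through Theorem~\ref{th:wz_remainder}, and close with the rough Gronwall Lemma~\ref{lem:gronwall}, deducing the $\V^p(H^{-1})$ bound afterwards from the equation (as in Corollary~\ref{cor:L^2(L^2)}). Your observation that the ``pure $\A$'' contribution cancels upon testing against $\mathbf 1$ is a slightly sharper accounting than the paper's coarser bound $(I+\II+\tilde\II)(0,0)\lesssim \beta_1(Z,\ZZ)+\omega^{1/p}\|z\|_{L^\infty(L^2)}^2$, but both lead to the same Gronwall structure; one small imprecision to fix is your stated bound on $\omega_{\D}$, which must retain a factor of $\|z\|$ (it is the $H^{-1}$ norm of $z\odot(f-g)$), otherwise the $\omega^{1/p}\omega_{\D}$ term in the remainder estimate would not close.
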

	\begin{remark}
		The existence proof ensures the $L^2(L^2)\cap C([0,T];H^{-1})$ convergence of the sequence of the approximants $(u^n)_n$ to $u$. By reshaping the above claim into a Wong-Zakai convergence result, we recover the $L^\infty(L^2)\cap L^2(H^1)\cap \mathcal{V}^p(H^{-1})$ convergence. Note that no compactness is used in the proof.
	\end{remark}
	\begin{proof} We denote by $z:=a-b$, $Z:=A-B$, $\mathbb{Z}:=\AA -\mathbb{B}$ and $\omega:=\omega_{A;H^1}+\omega_{B;H^1}+\omega_{Z;H^1}$.
		From the existence and uniqueness theorem, the map $\pi$ exists and is well posed.  Then want to prove that for every $\epsilon>0$ there exists $\delta >0$ such that if  $(a_0,\mathbf{A}), (b_0,\mathbf{B}) \in H^1(\T ;\mathbb{S}^2)\times \RD ^p_a(H^1) $ are so that 
		\begin{align*}
		\|z_0\|_{H^1}^2+\|\mathbf{A}-\mathbf{B}\|_{\RD ^p_a(H^1)}<\delta\, ,
		\end{align*}
		then $\|z\|_{L^\infty(H^1)\cap L^2(H^2)}<\epsilon$, where $a=\pi(a_0;\mathbf{A})$ and $b=\pi(b_0;\mathbf{B})$. In order to conclude, from the product formula in Proposition \ref{pro:product}, we write the equation for $z^2$ as
		\begin{align*}
		\delta (\| z\|^2_{L^2})_{s,t}=\sum_{i=1}^{3}\int_{\T }\big[(\D +I+\II+\tilde\II)_{s,t}^{i,i}(0,0) +z^{\natural,2,i,i}_{s,t}\big]d x\, ,
		\end{align*}
		in the notations of \eqref{eq:general_eq_der_wz}.
		The drift estimate follows the same steps as in the proof of uniqueness in Theorem~\ref{thm:existence}, and thus can be bounded by the right hand side of \eqref{eq:drift_uniqueness}. 
		By recalling the identities \eqref{eq:identities_convergence}, the rough terms are estimated as 
		\[
		(I+\II+\tilde\II)_{s,t}(0,0)
		\leq C_0[\beta_1(Z,\mathbb{Z})_{s,t}+\omega^{1/p}(s,t)\|z\|^2_{L^\infty(s,t;L^2)}]\, ,
		\]
		where for $k\geq 1$ we introduce
		\begin{align}\label{beta_k}
		\beta_k (Z,\mathbb{Z})_{s,t}:=C_k(a,b,\mathbf{A},\mathbf{B})[\|Z_{s,t}\|_{H^{k+1}}+\|Z_{s,t}\|_{H^{k+1}}^2+\|\mathbb{Z}_{s,t}\|_{H^{k+1}}]\, ,
		\\\nonumber
		C_k:=C_k(a,b,\mathbf{A},\mathbf{B}):=\|a\|^2_{L^\infty(H^k)}+\|b\|^2_{L^\infty(H^k)}+\|\mathbf{A}\|_{\RD _a^p(H^{k+1})}+\|\mathbf{B}\|_{\RD _a^p(H^{k+1})}\,.
		\end{align}
		We know how to estimate the remainder $z^{\natural,2,i,i}$ locally in time  by means of Theorem \ref{th:wz_remainder}, where we recall that ``locally in time'' means that there exists a covering $(P_i)_i$ of $[0,T]$ such that the inequality in Theorem \ref{th:wz_remainder} holds for every $s\leq t \in P_i$ and for every $i$. By taking into account the remainder estimate from Theorem~\ref{th:wz_remainder}, the estimate of the noise terms, the drift and by choosing a suitable $\epsilon$, we conclude locally for $s\leq t \in P_i$: 
		\begin{align*}
		\delta (\|z\|^2_{L^2})_{s,t}+\int_{s}^{t}\|\partial_x  z_r\|^2_{L^2} dr
		&\leq C_1\beta_1(Z,\mathbb{Z})_{s,t}+\omega_{\mathbf{Z};H^1}^{1/p}\omega^{2/p}(s,t)+\Gamma(s,t)\|z_r\|_{L^\infty(s,t;L^2)}^2\, ,
		\end{align*}
		where $\Gamma(s,t):=C((t-s)(1+\|a_0\|_{H^1}^2+\|b_0\|_{H^1}^2)+\omega^{1/p}(s,t))$ (we used the a priori estimates on $a,b$). Then from the rough Gronwall Lemma \ref{lem:gronwall} applied to $\bar{\omega}(s,t):=\Gamma(s,t)$ and $\varphi(s,t):=C_1\beta_1(Z,\mathbb{Z},z)_{s,t}+\omega_{\mathbf{Z};H^1}^{1/p}\omega^{2/p}(s,t)$,  
		\begin{align}\label{eq:speed_wz_1}
		\delta (\|z\|^2_{L^2})_{0,T}+\int_{0}^{T}\|\partial_x  z_r\|^2_{L^2}  d r &\lesssim\exp\big(\Gamma(0,T)\big)\big[\|z_0\|_{L^2}^2+C_1\beta_1(Z,\mathbb{Z},z)_{0,T}+\omega_{\mathbf{Z};H^1}^{1/p}\omega^{2/p}(0,T)\big] \, .
		\end{align}
		The convergence in $\mathcal{V}^p(H^{-1})$ follows from considerations analogous to Corollary \ref{L2H2} and this yields our conclusion. 
	\end{proof}
	\begin{corollary}\label{cor:wz_1}
		Let $u^0\in H^1(\T ;\mathbb{S}^2)$ and $(G^n,\GG ^n)_n$ be an approximation of  $(G,\GG )$ with respect to the $\RD ^p_a(H^2)$-norm, for $p\in [2,3)$. Consider $u^n:=\pi(u^0;\G ^n)$ and $u:=\pi(u^0;\G )$.
		The following Wong-Zakai $\omega$-wise type convergence result holds
		\begin{equation*}
		\lim_{n\rightarrow +\infty} \|u^n-u\|^2_{L^\infty (L^2)\cap L^2(H^1)\cap \mathcal{V}^p(H^{-1})}=0\, ,
		\end{equation*}
		with speed of convergence $1$, i.e.  
		\begin{align}
			\|\pi(u^0,\mathbf{G}^n)-\pi(u^0,\mathbf{G})\|_{L^\infty(L^2)\cap L^2(H^1)\cap \mathcal{V}^p(H^{-1})}\lesssim \|\mathbf{G}^n-\mathbf{G}\|_{\mathcal{RD}^p_a(H^2)}\, .
		\end{align}
	\end{corollary}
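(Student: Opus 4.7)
My plan is to deduce Corollary \ref{cor:wz_1} directly from Proposition \ref{prop:cont_wz_1}, specialized to the situation where two solutions share the same initial datum $u^0$ but are driven by different rough drivers. Setting $a_0 = b_0 = u^0$, $\mathbf A = \mathbf G$, $\mathbf B = \mathbf G^n$, and writing $z^n := u^n - u$, $Z^n := G^n - G$, $\ZZ^n := \GG^n - \GG$, one has $z^n_0 = 0$, so the key estimate \eqref{eq:speed_wz_1} of Proposition \ref{prop:cont_wz_1} reduces to
\[
\|z^n\|_{L^\infty(L^2)}^2 + \int_0^T\|\partial_x z^n_r\|_{L^2}^2 dr
\lesssim \exp(\Gamma_n(0,T))\,\bigl[ C_1^n\,\beta_1(Z^n, \ZZ^n)_{0,T} + \omega_{\mathbf Z^n;H^1}^{1/p}(0,T)\,\omega_n^{2/p}(0,T) \bigr],
\]
where every quantity subscripted by $n$ depends only on the $\RD^p_a(H^2)$-norms of $\mathbf G$ and $\mathbf G^n$ (and on $u^0$).

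First, I would argue that the prefactor on the right-hand side is uniformly bounded in $n$. The convergence $\mathbf G^n \to \mathbf G$ forces $(\|\mathbf G^n\|_{\RD^p_a(H^2)})_n$ to be bounded; by the a priori estimates of Proposition \ref{pro:energy} and Corollary \ref{cor:L^2(L^2)}, the same holds for $(\|u^n\|_{L^\infty(H^1)\cap L^2(H^2)})_n$ and for $\|u\|_{L^\infty(H^1)\cap L^2(H^2)}$, and hence $\Gamma_n$, $C_1^n$ and $\omega_n$ are uniformly bounded.

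Second, I would inspect the definitions of $\beta_1$ in \eqref{beta_k} and of $\omega_{\mathbf Z^n;H^1}^{1/p}$ and show that each term is dominated by $\|\mathbf G^n - \mathbf G\|_{\RD^p_a(H^2)}$; the scalings $\|Z^n\|^2$ and the square-root contributions coming from $\|\ZZ^n\|_{\V^{p/2}}^{p/2}$ are, for small $\|\mathbf G^n - \mathbf G\|$, absorbed into the linear term thanks to the smallness of $\|\mathbf G^n - \mathbf G\|$ and the homogeneous scaling built into the rough driver metric. Combined with step one this would give
\[
\|z^n\|_{L^\infty(L^2)}^2 + \|\partial_x z^n\|_{L^2(L^2)}^2 \lesssim \|\mathbf G^n - \mathbf G\|_{\RD^p_a(H^2)},
\]
and extracting a square root provides the announced linear rate in $L^\infty(L^2)\cap L^2(H^1)$; the case without rate follows a fortiori.

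Finally, to upgrade the convergence to $\V^p(H^{-1})$, I would follow the template of Corollary \ref{cor:L^2(L^2)} applied now to $z^n$: one rearranges the equation obtained by subtracting \eqref{rLLG} for $u^n$ and $u$ to isolate the remainder $z^{n,\natural}$, uses the Sewing Lemma to obtain an $H^{-1}$-bound on this remainder in terms of the drift and the rough-driver increments $Z^n$, $\ZZ^n$, and then estimates the resulting $p$-variation of $\delta z^n$ via Young/Cauchy--Schwarz inequalities, invoking the already-established $L^\infty(L^2)\cap L^2(H^1)$ bound on $z^n$. The main obstacle I anticipate is the bookkeeping in step two, since one must track several mixed products of $\|Z^n\|$, $\|\ZZ^n\|$ and their higher powers and verify that they all collapse to a single linear bound in $\|\mathbf G^n - \mathbf G\|_{\RD^p_a(H^2)}$; once that is done, every other step is a direct instantiation of results already proved in Sections \ref{sec:apriori} and \ref{sec:wong-zakai}.
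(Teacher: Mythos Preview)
Your proposal is correct and follows essentially the same approach as the paper: specialize Proposition~\ref{prop:cont_wz_1} to $a_0=b_0=u^0$, observe that all constants in \eqref{eq:speed_wz_1} are uniform in $n$ thanks to the a~priori bounds, and read off the rate from the smallest exponent of $\|\mathbf G^n-\mathbf G\|_{\RD^p_a(H^2)}$. Your final step reproving the $\mathcal V^p(H^{-1})$ convergence is unnecessary, since that space is already part of the target of the continuous map in Proposition~\ref{prop:cont_wz_1}; the corollary is obtained by a direct application of that proposition.
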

	\begin{proof}
		As a consequence of Proposition \ref{prop:cont_wz_1}, since $(G^n,\GG ^n) \rightarrow (G,\GG )$ in the rough path topology, then
		\begin{equation*}
		\lim_{n\rightarrow +\infty} \pi(u^0, (G^n,\GG ^n))=\pi(u^0, (G,\GG ))
		\end{equation*}
		in $L^\infty(L^2)\cap L^2(H^1)\cap  \mathcal{V}^p(H^{-1})$.
		Indeed the constant appearing in \eqref{eq:speed_wz_1} does not depend on $n$, because the sequence $(u^n)_n$ is uniformly bounded in $L^\infty(H^1)\cap L^2(H^2)\cap \mathcal{V}^p(H^{-1})$ and $(G^n,\GG ^n)$ is uniformly bounded with respect to the rough path topology (see \cite{FrizVictoire}). We also observe that inequality \eqref{eq:speed_wz_1} furnishes also a speed of convergence: indeed when $\mathbf{G}^n$ converges to $\mathbf{G}$, $u^n$ converges to $u$ with speed $1$ (we choose the smallest exponent appearing of the $\mathcal{RD}^p_a(H^2)$ norm). 
	\end{proof}
	\paragraph{The second order Wong-Zakai convergence}
	The continuity of the It\^o-Lyons map in Proposition \ref{prop:cont_wz_1} can be stated with respect to a finer topology. Indeed in this paragraph we show the continuity of the map
	\begin{align*}
	\Psi^1:H^1(\T ;\mathbb{S}^2)\times\RD ^p_a(H^2) &\longrightarrow L^\infty(H^1)\cap L^2(H^2)\cap \mathcal{V}^p(L^2)\\
	(a_0, \;\;\mathbf{A}) \quad &\longmapsto a=\pi(a_0;\mathbf{A})\, .
	\end{align*}
	We proceed as before: the continuity of the map $\pi$ implies a Wong-Zakai convergence result that enables us to conclude the strong convergence of the sequence of the approximants $(u^n)_n$ to the solution $u$ in $L^\infty(H^1)\cap L^2(H^2)\cap\mathcal{V}^p(L^2)$, namely
	\begin{equation}\label{WZ_conv}
	\lim_{n\rightarrow +\infty} \|u^n-u\|_{L^\infty(H^1)\cap L^2(H^2)\cap \mathcal{V}^p(L^2)}^2 =0\, .
	\end{equation}
	It is worth noticing that the compactness method employed in the existence result implies only the strong convergence of $(u^n)_n$ to $u$ in $L^2(H^1)\cap C([0,T];L^2)$.  The continuity of $\pi$ implies also the continuity of the solution with respect to the initial data in $H^1(\mathbb{T};\mathbb{S}^2)$. We state the main result of this paragraph.

	\begin{theorem}\label{th:cont_wz_2} (Continuity of the It\^o-Lyons map)
		The map 
		\begin{align*}
		\Psi^1:H^1(\T ;\mathbb{S}^2)\times\RD ^p_a(H^2) &\longrightarrow L^\infty(H^1)\cap L^2(H^2)\cap \mathcal{V}^p(L^2)\\
		(a_0, \;\;\mathbf{A}) \quad &\longmapsto a=\pi(a_0;\mathbf{A})
		\end{align*}
		is continuous.
	\end{theorem}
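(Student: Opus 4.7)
The plan is to extend the $L^2$ argument of Proposition \ref{prop:cont_wz_1} to the $H^1$-level by working with the equation for $\partial_x z \otimes \partial_x z$, where $z := a - b$ and $a=\pi(a_0,\A)$, $b=\pi(b_0,\B)$. Writing $\Z=(Z,\ZZ)=(A-B,\AA-\BB)$, I would decompose the rough input of the $z$-equation as $d\A a - d\B b = d\B z + d\Z a$, which isolates a small contribution in $\Z$ from the main linear transport by $\B$. The nonlinear drift of the $z$-equation is $f_z = \partial_x^2 z + z|\partial_x a|^2 + b(\partial_x z\cdot\partial_x(a+b)) + z\times\partial_x^2 a + b\times\partial_x^2 z$, obtained from the trilinear expansions of $a|\partial_x a|^2 - b|\partial_x b|^2$ and $a\times\partial_x^2 a - b\times\partial_x^2 b$ already used in the uniqueness proof of Theorem \ref{thm:existence}.

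First, I would apply the product formula (Proposition \ref{pro:product}) to $\partial_x z\otimes\partial_x z$ in the form \eqref{eq:general_eq_der_wz} with $N=M=1$ and test the resulting identity against the constant matrix $\mathbf 1$. As in the proof of Proposition \ref{pro:energy}, the antisymmetry of $\B$ together with Remark \ref{rem:levy} force the purely $\B$-driven contributions acting on $\partial_x z\otimes \partial_x z$ to collapse, and the surviving rough terms fall into two groups: ``good'' terms involving $Z,\ZZ,\partial_x Z,\partial_x \ZZ$ that are small in $\|\A-\B\|_{\RD^p_a(H^2)}$, and mixed terms such as $\partial_x B_{s,t} z_s$ or $\partial_x\BB_{s,t}z_s$ paired with $\partial_x z$, which are controlled using H\"older inequality together with the a priori bound on $\|z\|_{L^\infty(L^2)\cap L^2(H^1)}$ already supplied by Proposition \ref{prop:cont_wz_1} (this is where the bootstrap from the weaker continuity is used).

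Next, I would handle the drift by integrating by parts, $\int_\T \partial_x z\cdot\partial_x f_z\,dx=-\int_\T \partial_x^2 z\cdot f_z\,dx$, which avoids third derivatives. The key simplification is the pointwise identity $\partial_x^2 z\cdot(b\times \partial_x^2 z)\equiv 0$, which removes the only term potentially obstructing closure at the $H^2$-level, and the dissipative contribution $-2\int_s^t\|\partial_x^2 z\|_{L^2}^2 dr$ then absorbs all other drift terms via Young's inequality and the 1D interpolation $\|f\|_{L^\infty}^2\lesssim \|f\|_{L^2}\|f\|_{H^1}$ combined with the uniform $L^\infty(H^1)\cap L^2(H^2)$ bounds on $a,b$ from Proposition \ref{pro:energy} and Corollary \ref{cor:L^2(L^2)}. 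For the remainder $\langle (\partial_x z^{\otimes 2})^\natural_{s,t},\mathbf 1\rangle$ I would invoke Theorem \ref{th:wz_remainder} with $k=1$, which gives a bound of the form $\omega_{\Z;H^2}^{1/p}\omega^{2/p} + \omega^{1/p}\omega_\D + \omega^{3/p}\|z\|_{L^\infty(H^1)}^2$.

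Collecting these estimates puts the inequality into the shape required by the rough Gronwall lemma (Lemma \ref{lem:gronwall}), whence closure yields
\[
\sup_{t\in[0,T]}\|\partial_x z_t\|_{L^2}^2+\int_0^T\|\partial_x^2 z_r\|_{L^2}^2 dr \lesssim \exp(\omega(0,T))\Big[\|z_0\|_{H^1}^2 + \beta_1(Z,\ZZ)_{0,T} + \omega_{\Z;H^2}^{1/p}(0,T)\omega^{2/p}(0,T)\Big],
\]
in the notation of \eqref{beta_k}, which is what is needed for the continuity into $L^\infty(H^1)\cap L^2(H^2)$. The $\V^p(L^2)$ component follows along the lines of Corollary \ref{cor:L^2(L^2)}, since $\|\delta z_{s,t}\|_{L^2}$ is bounded by $\int_s^t\|\tt_a-\tt_b\|_{L^2}dr$ plus a small Sewing remainder, both of which are already estimated. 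The main obstacle I anticipate is the bookkeeping of the first step: after the product formula and testing against $\mathbf 1$, one must identify precisely which tensor contractions of noise terms vanish by antisymmetry (and by the L\'evy-area correction of Remark \ref{rem:levy}) and which survive, and then check that every surviving term is either small in $\|\Z\|_{\RD^p_a(H^2)}$ or amenable to the rough Gronwall structure driven by the a priori bounds on $a$ and $b$.
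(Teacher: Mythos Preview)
Your proposal is correct and follows essentially the same route as the paper: apply the product formula to $\partial_x z^{\otimes 2}$, test against $\mathbf 1$, exploit the antisymmetry of $\B$ and the cross-product cancellation $\partial_x^2 z\cdot(b\times\partial_x^2 z)=0$ to isolate the dissipative term $-\|\partial_x^2 z\|_{L^2(L^2)}^2$, invoke Theorem \ref{th:wz_remainder} for the remainder (with the drift controls $\omega_{\D(n,m)}$ bounded via Proposition \ref{proposition:drift_wz}), and close with the rough Gronwall lemma and Proposition \ref{prop:cont_wz_1}. The only minor deviation is that you propose to bootstrap the mixed noise terms $\partial_x B_{s,t}z_s\cdot\partial_x z_s$ through the $L^\infty(L^2)$ bound already furnished by Proposition \ref{prop:cont_wz_1}, whereas the paper absorbs them directly into the $\omega^{1/p}\|z\|_{L^\infty(H^1)}^2$ term of the Gronwall inequality; both are valid.

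One small caveat on the $\V^p(L^2)$ component: the specific trick of Corollary \ref{cor:L^2(L^2)} (writing $|\delta u_{s,t}|^2=-2u_s\cdot\delta u_{s,t}$ via the sphere constraint) does not transfer to $z=a-b$, since $z$ is not sphere-valued. Your stated argument is in fact different and correct---read $\|\delta z_{s,t}\|_{L^2}$ directly off the $z$-equation---but do not forget that in addition to the drift integral and the $\V^{p/3}$ sewing remainder, there are genuine $\V^p$ rough increments $B_{s,t}z_s+Z_{s,t}a_s+\BB_{s,t}z_s+\ZZ_{s,t}a_s$; these carry the $p$-variation regularity and are small precisely because $\|z\|_{L^\infty(L^2)}$ and $\|\Z\|$ are.
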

	\begin{proof} Define $\omega:=\omega_{ \mathbf{A}; H^2}+\omega_{ \mathbf{B}; H^2}$, $z:=a-b$, $Z:=A-B$ and $\mathbb{Z}:=\AA -\mathbb{B}$.
		We proceed as in Proposition \ref{prop:cont_wz_1} and we want to prove that for every $\epsilon>0$ there exists $\delta >0$ such that if  $(a_0,(A,\AA )), (b_0,(B,\mathbb{B})) \in H^1(\T ;\mathbb{S}^2)\times\RD ^p_a(H^2)  $ are so that 
		\begin{align*}
		\|z_0\|_{H^1}^2+\|\mathbf{A}-\mathbf{B}\|_{\RD _a^p(H^2)}<\delta\, ,
		\end{align*}
		then $\|z\|_{L^\infty(H^1)\cap L^2(H^2)}<\epsilon$, where $a=\pi(a_0;\mathbf{A})$ and $b=\pi(b_0;\mathbf{B})$. 
		Thus from the product formula in Proposition \ref{pro:product}, we can test the equation by $\mathbf{1}=(1_{i=j})$, which gives (in the notations of \eqref{eq:general_eq_der_wz}):
		\begin{align*}
		\delta (\|\partial_x  z\|^2_{L^2})_{s,t}=\int_{\T }(\D +I+\II+\tilde\II)_{s,t}(1,1)d 
		x+\langle (\partial_x ^k z)_{s,t}^{\natural,2},\mathbf{1}\rangle\,  .
		\end{align*}
		 We address first the drift term 
		\begin{align*}
		\|\D ^{i,i}(1,1)_{s,t}\|_{L^1}
		=-\int_{s}^{t}\|\partial_x ^{2} z \|^2_{L^2} d r+\int_{s}^{t}\int_{\T } \big[
		(a|\partial_x  a|^2-b|\partial_x  b|^2)\cdot\partial_x ^{2}z+ (a\times\partial_x 
		^{2}a-b\times\partial_x ^{2} b)\cdot\partial_x ^{2}z\big] d x\,dr.
		\end{align*}
		Then, from Young's inequality and $\epsilon>0$
		\begin{align*}
		\int_{s}^{t}\int_{\T } | (a|\partial_x  a|^2-b|\partial_x  b|^2)\cdot \partial_x ^{2}z|dx dr
		&\leq \int_{s}^{t}\int_{\T } \left|z|\partial_x  a|^2+b(|\partial_x  a|^2-|\partial_x  
		b|^2)\right||\partial_x ^{2}z|dxdr\\
		&\leq \frac{1}{2\epsilon}\|z\|_{L^\infty(s,t;H^1)}^2\int_{s}^{t}\|\partial_x  a\|_{L^4}^4 
		d r+\frac{\epsilon}{2}\|\partial_x ^{2}z\|_{L^2(s,t;L^2)}^2\\
		&\quad+ \frac{1}{2\epsilon}\|\partial_x  z\|^2_{L^\infty(s,t;L^2)}[\|\partial_x  
		a+\partial_x  b\|^2_{L^2(H^1)}]+\frac{\epsilon}{2}\|\partial_x 
		^{2}z\|^2_{L^2(s,t;L^2)}\,,
		\end{align*}
		where we used that the solution takes value on the sphere. Again from Young's inequality, for $\epsilon>0$
		\begin{equation*}
		\int_{s}^{t}\int_{\T } (a_r\times\partial_x ^{2}a_r-b_r\times\partial_x ^{2} 
		b_r)\cdot\partial_x ^{2}z_r  d x\,dr
		\leq \frac{\epsilon}{2}\|\partial_x ^{2} 
		z\|^2_{L^2(s,t;L^2)}+\frac{1}{2\epsilon}\|z\|^2_{L^\infty(s,t;H^1)}\|b\|^2_{L^2(H^2)}\, .
		\end{equation*}
		We estimate the remainder  $\langle (\partial_x ^k 
		z)_{s,t}^{\natural,2,i,j},\mathbf{1}\rangle$ by means of Theorem \ref{th:wz_remainder} 
		\begin{align*}
		\|(\partial_x  z)^{\natural,2}_{s,t}\|_{H^{-1}}\lesssim & \;  \omega^{1/p}(s,t)\big[\omega_{\mathbf{Z};H^2}^{1/p}(s,t)+\| z\|_{L^\infty (s,t;H^1)}^2+[\omega_{\D (1,1)}+\omega_{\D (1,0)}+\omega_{\D (0,1)}+\omega_{\D (0,0)}](s,t)\big]\, .
		\end{align*}
		We see that we need to estimate the drift terms. From Proposition \ref{proposition:drift_wz}
		\begin{align*}
		\sum_{n,m=0}^2\|\D _{s,t}(n,m)\|_{H^{-1}}\lesssim 
		\|z\|_{L^\infty(s,t;H^1)}^2+\int_{s}^{t}\|\partial_x ^{2} z_r \|_{L^2}^2  d r\, .
		\end{align*}
		Now we examine the $L^1$ norm of $|I|+|\II|+|\tilde\II|$.  By recalling the definitions of $C_1$ and $\beta_1(Z,\mathbb{Z})_{s,t}$ in \eqref{beta_k}, we can bound the noise terms by $C_1[\beta_1(Z,\mathbb{Z})_{s,t}+\omega^{1/p}(s,t)\|z\|^2_{L^\infty(s,t;H^1)}]$.
		After choosing a suitable $\epsilon$ to absorb some drift elements to the left hand side, we are left with the following inequality 
		\begin{align*}
		\delta \big(\|\partial_x  z\|^{2}_{L^2} \big)_{s,t} + \frac{1}{4}\int_{s}^{t} \|\partial_x 
		^{2} z_r\|^2_{L^2} d r&\lesssim
		C_1\beta_1(Z,\mathbb{Z})_{s,t}+\omega_{\mathbf{Z};H^2}^{1/p}\omega^{2/p}(s,t)+\Gamma_1(s,t)\|z\|_{L^\infty (s,t;H^1)}^2\, ,
		\end{align*}
		where $\Gamma_1(s,t):=C[(t-s)[1+\|a_0\|_{H^1}^2+\|b_0\|_{H^1}^2]+\omega^{1/p}(s,t)]$, where $C$ is a positive universal constant. From the Gronwall Lemma \ref{lem:gronwall} applied to $\bar{\omega}:=\Gamma_1$ and $\varphi:=C_1\beta_1(Z,\mathbb{Z})+\omega_{\mathbf{Z};H^2}^{1/p}\omega^{2/p}$, we conclude that
		\begin{align}\label{eq:speed_conv_wz_2}
		\delta \big(\|\partial_x  z\|^{2}_{L^2} \big)_{0,T} + \int_{0}^{T} \|\partial_x ^{2} 
		z_r\|^2_{L^2} d r&\leq \exp\big(  \Gamma_1(0,T)    \big)
		[\|z_0\|^2_{H^1}+\varphi(0,T)]\, ,
		\end{align}
		from which we also get a speed of convergence. By coupling Theorem \ref{prop:cont_wz_1} and the last estimate, the claim follows.
	\end{proof}
	As a consequence of Theorem \ref{th:cont_wz_2} we can show the strong convergence  \eqref{WZ_conv}.
	\begin{corollary}[Wong-Zakai convergence]\label{cor:wz:2}
		Let $u^0\in H^1(\mathbb{T};\mathbb{S}^2)$ and $(G^n,\GG ^n)_n$ be an approximation of  $(G,\GG )$ with respect to the $\RD ^p_a(H^2)$-norm, for $p\in [2,3)$  Consider $u^n:=\pi(u^0;\G ^n)$ and $u:=\pi(u^0;\G )$. We obtain $\omega$-wise the following Wong-Zakai type convergence result 
		\begin{equation}
		\lim_{n\rightarrow +\infty}\|u^n-u\|_{L^\infty(H^1)\cap L^2(H^2)\cap\mathcal{V}^p(L^2)}^2=0\, .
		\end{equation}
		with speed of convergence	with speed of convergence $1$, i.e.  
		\begin{align}
		\|\Psi^1(u^0,\mathbf{G}^n)-\Psi^1(u^0,\mathbf{G})\|_{L^\infty(H^1)\cap L^2(H^2)\cap\mathcal{V}^p(L^2)}\lesssim \|\mathbf{G}^n-\mathbf{G}\|_{\mathcal{RD}^p_a(H^2)}\, .
		\end{align}
	\end{corollary}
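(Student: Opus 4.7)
The corollary is a direct consequence of Theorem~\ref{th:cont_wz_2}, extracted from the quantitative bound \eqref{eq:speed_conv_wz_2} derived inside its proof. The plan is to specialize that estimate to the choice $(a_0,\A)=(u^0,\G^n)$ and $(b_0,\B)=(u^0,\G)$, so that $z_0=u^0-u^0=0$ in $H^1$, and then read off both the limit and the rate.

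First I would verify the uniformity needed for the constants to behave. Since $\G^n\to\G$ in $\RD^p_a(H^2)$, the sequence $(\|\G^n\|_{\RD^p_a(H^2)})_n$ is bounded. By Proposition~\ref{pro:energy} and Corollary~\ref{cor:L^2(L^2)}, this gives a uniform bound on $\|u^n\|_{L^\infty(H^1)\cap L^2(H^2)}$. Consequently the constant $C_1=C_1(u^n,u,\G^n,\G)$ appearing in \eqref{beta_k} and the control $\Gamma_1(0,T)=C\bigl[T(1+\|u^0\|_{H^1}^2+\|u^0\|_{H^1}^2)+\omega^{1/p}(0,T)\bigr]$ are bounded independently of $n$, and so is $\exp(\Gamma_1(0,T))$.

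Second, plugging $z_0=0$ into \eqref{eq:speed_conv_wz_2} yields
\[
\sup_{t\in[0,T]}\|\partial_x(u^n_t-u_t)\|_{L^2}^2+\int_0^T\|\partial_x^2(u^n_r-u_r)\|_{L^2}^2\,dr
\lesssim
C_1\,\beta_1(Z^n,\ZZ^n)_{0,T}+\omega_{\Z^n;H^2}^{1/p}(0,T)\,\omega^{2/p}(0,T)\,,
\]
where $\omega=\omega_{\G^n;H^2}+\omega_{\G;H^2}$. Using the definition \eqref{beta_k} of $\beta_1$ and the definition of the controls $\omega_{\Z^n;H^2}$, both terms on the right are bounded by a universal multiple of $\|\G^n-\G\|_{\RD_a^p(H^2)}$ (after absorbing uniform factors coming from $C_1$ and $\omega^{2/p}(0,T)$), so the $L^\infty(H^1)\cap L^2(H^2)$ convergence and the advertised linear rate both follow.

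Third, for the $\V^p(L^2)$ part, I would mimic the argument used at the end of Corollary~\ref{cor:L^2(L^2)} applied to $z=u^n-u$. Writing \eqref{LLG_apriori} for $u^n$ and $u$ and subtracting, the noise terms split as
\[
G^n_{s,t}u^n_s-G_{s,t}u_s=Z^n_{s,t}u^n_s+G_{s,t}z_s,\qquad
\GG^n_{s,t}u^n_s-\GG_{s,t}u_s=\ZZ^n_{s,t}u^n_s+\GG_{s,t}z_s\,,
\]
and each summand admits an $L^2$-bound of the form $\omega_{\A,H^1}^{1/p}(s,t)\bigl(\|\G^n-\G\|_{\RD_a^p(H^2)}+\|z\|_{L^\infty(L^2)}\bigr)$, while the drift contribution is controlled in $L^2$ by $\int_s^t(\|\partial_x^2 z_r\|_{L^2}+\|u^n-u\|_{L^\infty}(\|\partial_x u^n\|_{L^2}+\|\partial_x u\|_{L^2}))\,dr$, which is of bounded variation thanks to the $L^2(H^2)$ bound already obtained. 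The remainder is estimated through Theorem~\ref{th:wz_remainder} (converted from an $H^{-1}$-bound to an $L^2$-bound by re-running the rough standard machinery against a constant test function). Summing these contributions in the $p$-variation norm yields the $\V^p(L^2)$ convergence with the same linear rate.

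The only non-routine step is the bookkeeping check in the second paragraph, namely verifying that every quantity appearing in the right-hand side of \eqref{eq:speed_conv_wz_2} is dominated linearly (rather than by some fractional power) by $\|\G^n-\G\|_{\RD_a^p(H^2)}$; this reduces to the observation that the $\RD_a^p$-norm of $\G^n-\G$ already encodes the worst power of $Z^n$ and $\ZZ^n$ present in $\beta_1$ and in $\omega_{\Z^n;H^2}^{1/p}$, because $\|\G^n-\G\|_{\RD_a^p(H^2)}\to 0$ and so the higher-order powers are dominated by the linear one for $n$ large.
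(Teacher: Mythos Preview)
Your proposal is correct and follows essentially the same approach as the paper: the paper's proof is the single sentence ``A direct consequence of Theorem~\ref{th:cont_wz_2}'', and what you have written is precisely the unpacking of that sentence --- specializing the quantitative estimate \eqref{eq:speed_conv_wz_2} to $z_0=0$, checking uniformity of the constants via the a priori bounds (exactly as the paper does explicitly in the parallel Corollary~\ref{cor:wz_1}), and handling the $\mathcal V^p(L^2)$ part by analogy with Corollary~\ref{cor:L^2(L^2)}. Your closing remark on why the rate is linear (higher powers of $\|\G^n-\G\|$ being dominated by the first for $n$ large) matches the paper's own informal justification ``we choose the smallest exponent appearing of the $\mathcal{RD}^p_a(H^2)$ norm''.
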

	\begin{proof}
		A direct consequence of Theorem \ref{th:cont_wz_2}.
	\end{proof}
	
	The continuity of the It\^o-Lyons map provides also the $\omega$-wise convergence of \eqref{rLLG} driven by a vanishing noise $(\Lambda_{\sqrt{\epsilon}} \mathbf{G})_{\epsilon>0}$ to the solution to the deterministic LLG equation. 
	In the sequel we will investigate the deviations of the solutions to \eqref{rLLG} driven by vanishing noise from the solution to the deterministic equation, also known as large deviation principle.
	
	\subsection{The k-th order Wong-Zakai convergence}
	
	If the initial condition $a_0$ belongs to $H^k(\mathbb{T};\mathbb{S}^2)$ for $k\geq 2$ and the noise $\G \in\mathcal{RD}^p_a(H^{k+1})$, then from Theorem \ref{teo:higher_order_regularity} there exists a unique solution $a=\pi(a_0;\mathbf{A})$ to \eqref{rLLG} such that $a\in L^\infty(H^k)\cap L^2(H^{k+1})\cap\mathcal{V}^p(H^{k-1})$. 
	By an immediate generalization of Lemma \ref{lem:aubin}, the compactness method used in the proof of existence also implies the strong convergence of the approximants to the solution $a$ in $L^2(H^{k})\cap C([0,T];H^{k-1})$.
	We will see in this paragraph that convergence happens in a stronger sense, namely in $L^\infty(H^{k})\cap L^2(H^{k+1})\cap \mathcal{V}^p(H^{k-1})$ as a consequence of the continuity of the It\^o-Lyons map, stated in Theorem \ref{th:cont_wz_k}:
	\begin{theorem}\label{th:cont_wz_k} (Continuity of the It\^o-Lyons map)
		The map  
		\begin{align*}
		\Psi^k:H^k(\T ;\mathbb{S}^2)\times\RD ^p_a(H^{k+1})   &\longrightarrow L^\infty(H^k)\cap L^2(H^{k+1})\cap \mathcal{V}^p(H^{k-1})\\
		(a_0, \mathbf{A})  &\longmapsto a=\pi(a_0;\mathbf{A})
		\end{align*}
		is continuous.
	\end{theorem}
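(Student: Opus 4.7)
The plan is to argue by induction on $k\ge 1$, the base case $k=1$ being Theorem \ref{th:cont_wz_2}. Fix $k\ge 2$ and assume the claim for $k-1$; consider two inputs $(a_0,\A), (b_0,\B)\in H^k(\T;\mathbb S^2)\times \RD^p_a(H^{k+1})$ together with the corresponding solutions $a=\pi(a_0;\A)$, $b=\pi(b_0;\B)$, and set $z=a-b$, $Z=A-B$, $\ZZ=\AA-\BB$ and $\omega:=\omega_{\A;H^{k+1}}+\omega_{\B;H^{k+1}}$. Both $a$ and $b$ belong to $L^\infty(H^k)\cap L^2(H^{k+1})\cap \V^p(H^{k-1})$ thanks to Theorem \ref{teo:higher_order_regularity}, with uniform bounds in terms of the data. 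The goal is a local energy inequality for the quantity $\|\partial_x^k z\|_{L^2}^2+\int_s^t\|\partial_x^{k+1}z_r\|_{L^2}^2dr$, to which the rough Gronwall lemma will be applied.

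First I would write down the equation satisfied by $\partial_x^k z\otimes \partial_x^k z$ via the product formula (Proposition \ref{pro:product}) and test it against the matrix $\mathbf 1=(1_{i=j})$. The anti-symmetry of $\A$ and $\B$ kills the ``pure'' level-one contribution $\langle A_{s,t}\partial_x^k a_s\otimes \partial_x^k a_s - B_{s,t}\partial_x^k b_s\otimes \partial_x^k b_s,\mathbf 1\rangle$ up to the ``difference'' pieces containing $Z_{s,t}$ or $\partial_x^j Z_{s,t}$, and similarly at the second level modulo the anti-symmetric L\'evy part as in Step 4 of Theorem \ref{th:remainder_interate}. Collecting the surviving noise terms $I(k,k),\II(k,k),\tilde\II(k,k)$ and using the splittings \eqref{eq:identities_convergence}, each piece can be dominated by $C_k\bigl[\beta_k(Z,\ZZ)_{s,t}+\omega^{1/p}(s,t)\|z\|_{L^\infty(s,t;H^k)}^2\bigr]$ with $C_k,\beta_k$ as in \eqref{beta_k}.

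Next comes the key drift step, which is the main obstacle: estimating $\langle \D(k,k)_{s,t},\mathbf 1\rangle$ for the \emph{difference} $z$. Adapting Proposition \ref{pro:existence_drift_k}, one integrates by parts to transfer one derivative, thereby extracting the dissipative $-\int_s^t\|\partial_x^{k+1}z_r\|_{L^2}^2\,dr$, and bounds the remaining terms coming from $\partial_x^{k-1}\bigl[(a|\partial_x a|^2-b|\partial_x b|^2) + (a\times\partial_x^2 a-b\times \partial_x^2 b)\bigr]$ using the Moser--type interpolation \eqref{eq:interp_iterative_1}, the sphere constraint $\|a\|_{L^\infty}=\|b\|_{L^\infty}=1$ and the one-dimensional embedding \eqref{eq:interpolation_1D}. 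Each such term factors as a product of a ``small'' $H^k$-norm of $z$ and a uniformly bounded combination of $H^k$-norms of $a,b$ times $\|\partial_x^{k+1}z\|_{L^2}$, so that after Young's inequality one absorbs half of the dissipation and is left with $\Gamma_k(s,t)\|z\|_{L^\infty(s,t;H^k)}^2+$ lower order remainders controlled by the inductive hypothesis in $L^\infty(H^{k-1})\cap L^2(H^k)$. The remainder $\langle(\partial_x^k z^{\otimes 2})^\natural_{s,t},\mathbf 1\rangle$ is handled by the bilinear estimate of Theorem \ref{th:wz_remainder}, which produces precisely $\omega^{2/p}\omega_{\Z;H^{k+1}}^{1/p}+\omega^{1/p}\omega_{\D}+\omega^{3/p}\|z\|_{L^\infty(H^k)}^2$; the $\omega_\D$ factor is in turn bounded as above.

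Putting everything together yields, locally on a covering whose mesh depends only on $\omega$,
\[
\delta(\|\partial_x^k z\|_{L^2}^2)_{s,t}+\tfrac14\int_s^t\|\partial_x^{k+1}z_r\|_{L^2}^2\,dr
\le C_k\beta_k(Z,\ZZ)_{s,t}+\omega_{\Z;H^{k+1}}^{1/p}(s,t)\,\omega^{2/p}(s,t)+\Gamma_k(s,t)\|z\|_{L^\infty(s,t;H^k)}^2
+R_{k-1}(s,t),
\]
where $R_{k-1}$ gathers the contributions controlled by the inductive hypothesis, which tend to zero as $(b_0,\B)\to(a_0,\A)$ in $H^{k-1}\times \RD^p_a(H^k)$. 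An application of the rough Gronwall lemma (Lemma \ref{lem:gronwall}) with $\bar\omega=\Gamma_k$ and $\varphi$ collecting the data terms then gives
\[
\|z\|_{L^\infty(H^k)}^2+\int_0^T\|\partial_x^{k+1}z_r\|_{L^2}^2\,dr
\lesssim \exp(\Gamma_k(0,T))\bigl[\|z_0\|_{H^k}^2+C_k\beta_k(Z,\ZZ)_{0,T}+\omega_{\Z;H^{k+1}}^{1/p}\omega^{2/p}(0,T)+R_{k-1}(0,T)\bigr],
\]
with constants depending only on the data through $\|a_0\|_{H^k},\|b_0\|_{H^k},\|\A\|_{\RD^p_a(H^{k+1})},\|\B\|_{\RD^p_a(H^{k+1})}$. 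Combining with the $\V^p(H^{k-1})$-bound, which follows from the equation and the preceding $L^\infty(H^k)\cap L^2(H^{k+1})$ control exactly as in Corollary \ref{cor:L^2(L^2)}, concludes the inductive step and hence the continuity of $\Psi^k$.
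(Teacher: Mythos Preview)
Your proof sketch is correct and follows essentially the same approach as the paper: induction on $k$ with base case Theorem \ref{th:cont_wz_2}, the product formula tested against $\mathbf 1$, the bilinear remainder estimate of Theorem \ref{th:wz_remainder}, the drift analysis via Moser-type interpolation, and the rough Gronwall lemma. The only cosmetic difference is that the paper packages your ``adapting Proposition \ref{pro:existence_drift_k}'' step into a separate statement (Proposition \ref{proposition:drift_wz}) tailored to the difference $z=a-b$, but the content and the resulting inequality \eqref{eq:speed_conv_k} are the same.
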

	The proof of Theorem \ref{th:cont_wz_k} makes use of the following Proposition \ref{proposition:drift_wz}, whose proof is postponed at the end of this subsection.
	\begin{proposition}\label{proposition:drift_wz}
		Let  $k\in\mathbb{N}$ , $a_0,b_0\in H^k(\mathbb{T};\mathbb{S}^2)$, $a,b\in L^\infty(H^k)\cap L^2(H^{k+1})\cap\mathcal{V}^p(H^{k-1})$ and $a_t(x),b_t(x)\in\mathbb{S}^2$ for a.e. $(t,x)\in   [0,T] \times\T $.
		Denote by $z:=a-b$. Then for all $s\le t \in [0,T]$
		\begin{align*}
		\|\mathcal{D}(k,k)\|_{\mathcal{V}^1(s,t;H^{-1})} 
		&\lesssim_{k,\|z_0\|_{H^k}}\|z\|_{L^\infty(s,t;H^k)}^2+\|z\|_{L^2(s,t;H^{k})}^2+\|z\|_{L^2(s,t;H^{k+1})}^2\, .
		\end{align*}
		The constant depends on some power of the $H^k$-norm of the initial conditions $a_0,b_0$ .
	\end{proposition}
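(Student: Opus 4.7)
The strategy is to mimic the proof of Proposition \ref{pro:existence_drift_k}, but now applied to the difference $z=a-b$ in place of a single solution, exploiting cancellations between the drifts of $a$ and $b$ to manufacture a factor of $z$ in every term. Writing $f-g = \partial_x^2 z + N_1 + N_2$, where
\[
N_1 := a|\partial_x a|^2 - b|\partial_x b|^2,\qquad N_2 := a\times\partial_x^2 a - b\times\partial_x^2 b,
\]
we test $\mathcal D(k,k)_{s,t}$ against an arbitrary $\Phi\in H^1(\T;\R^{3\times3})$ with $\|\Phi\|_{H^1}\le 1$. First I would split $N_1 = z|\partial_x a|^2 + b\,\partial_x z\cdot\partial_x(a+b)$ and $N_2 = z\times\partial_x^2 a + b\times\partial_x^2 z$ so that each summand visibly depends on $z$. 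The $\partial_x^2 z$ contribution and the $b\times\partial_x^2 z$ contribution share the same top-order structure as in Proposition \ref{pro:existence_drift_k}: after applying $\partial_x^k$, relabelling the tensor indices, and integrating by parts once in $x$, the leading term $\dd^{k+1}z^i\dd^k z^j(\Phi^{i,j}-\Phi^{j,i})$ is paired with the antisymmetric part $\Psi=\Phi-\Phi^\star$, and the standard estimate $\|\dd^{k+1}z\|_{L^2}\|\dd^{k}z\|_{L^2}\|\Psi\|_{L^\infty}$ yields the desired bound by $\|z\|_{L^\infty(H^k)}^2+\|z\|_{L^2(H^{k+1})}^2$.

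Next, I would handle the genuinely nonlinear pieces via the interpolation inequality \eqref{eq:interp_iterative_1}. For the term $z|\partial_x a|^2$, applying $\partial_x^{k-1}$ (one integration by parts having moved a derivative to $\Phi$) and using
\[
\|\dd^{k-1}(z\otimes\dd a\otimes \dd a)\|_{L^2}\lesssim \|z\|_{L^\infty}\|\dd a\|_{H^{k-1}}\|\dd a\|_{L^\infty} + \|\dd a\|_{L^\infty}^2\|z\|_{H^{k-1}}
\]
together with $\|\dd a\|_{L^\infty}\lesssim \|a\|_{H^2}$ and the embedding $H^1\hookrightarrow L^\infty$ produces a bound by $C(\|a\|_{L^\infty(H^k)},\|b\|_{L^\infty(H^k)})\big(\|z\|_{L^2(H^k)}^2 + \|z\|_{L^\infty(H^k)}^2\big)$. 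The mixed term $b\,\dd z\cdot\dd(a+b)$ is treated identically after using $\|b\|_{L^\infty}=1$. For $z\times\dd^2 a$, one integration by parts in the drift pairing allows the two derivatives on $a$ to be redistributed via \eqref{eq:interp_iterative_1} between $z$ and $a$, again producing only quantities controllable by $\|z\|_{L^\infty(H^k)}^2$ and the $L^2_t H^{k+1}_x$ norm of $z$ (the latter arising when the full weight $k+1$ falls on $z$, which happens at most once).

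The conclusion then follows by taking the supremum over $\Phi$ with $\|\Phi\|_{H^1}\le 1$ and summing the Bochner integral bounds in $r$, yielding an estimate on $\|\mathcal D(k,k)\|_{1\text{-var};[s,t];H^{-1}}$ of the claimed form, with the constant $C_{k,\|z_0\|_{H^k}}$ coming from the a priori bound of Theorem \ref{teo:higher_order_regularity} on the $L^\infty(H^k)\cap L^2(H^{k+1})$ norms of $a$ and $b$, which in turn depend polynomially on $\|a_0\|_{H^k}$ and $\|b_0\|_{H^k}$. The main technical obstacle is the bookkeeping: ensuring that in each term obtained after the Leibniz expansion of $\partial_x^k(f-g)$, at least one factor carries a $z$ and that the total number of derivatives distributes as $k+1$ across $z$ and at most $k$ across $a,b$, so that \eqref{eq:interp_iterative_1} and \eqref{eq:interpolation_1D} can absorb the leading order into $\|z\|_{L^2(H^{k+1})}^2$ rather than into $\|a\|_{H^{k+1}}$ or $\|b\|_{H^{k+1}}$ terms that would not be controlled by $z$. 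This is precisely where the antisymmetrization trick against $\Phi-\Phi^\star$ is essential and dictates the ordering of the manipulations above.
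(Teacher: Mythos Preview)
Your overall strategy is correct and close in spirit to the paper's own proof: split the nonlinearities so that every term carries a factor of $z$, then control each piece via the Moser-type product estimate \eqref{eq:interp_iterative_1} and the embedding $H^1\hookrightarrow L^\infty$. The paper uses essentially the same splitting $N_1 = z|\partial_x a|^2 + b(|\partial_x a|^2-|\partial_x b|^2)$ and $N_2 = z\times\partial_x a \cdot + b\times\partial_x z\cdot$ (after expanding Leibniz), estimates $\|\partial_x^{k} N_1\|_{L^2}$ and the cross-product expansion directly, and pairs the result with $\partial_x^k z$ in $L^2$.

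There is, however, one conceptual error in your write-up that you should fix. You claim that for the top-order Laplacian contribution the index swap produces the antisymmetric part $\Psi=\Phi-\Phi^\star$, as in Proposition~\ref{pro:existence_drift_k}. That trick worked there because the drift was $\mathcal D(k-1,k)$, with \emph{asymmetric} derivative orders. Here, for $\mathcal D(k,k)$, the two terms in the definition are $\partial_x^k(f-g)^i\partial_x^k z^j$ and $\partial_x^k z^i\partial_x^k(f-g)^j$; swapping $(i,j)$ in the second and summing gives $\partial_x^k(f-g)^i\partial_x^k z^j(\Phi^{i,j}+\Phi^{j,i})$, the \emph{symmetric} part. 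So nothing cancels. Concretely, the leading contribution from $\partial_x^2 z$ after one integration by parts is
\[
-2\int_{\T}\partial_x^{k+1}z^i\,\partial_x^{k+1}z^j\,\Phi^{i,j}\,dx
\;-\;\big(\text{terms with }\partial_x\Phi\big),
\]
which is directly bounded by $\|\partial_x^{k+1}z\|_{L^2}^2\|\Phi\|_{L^\infty}$, hence by $\|z\|_{L^2(H^{k+1})}^2$ after integrating in time. No antisymmetrization is available, and none is needed: the bound you want is exactly what appears on the right-hand side of the statement. The paper accordingly does not invoke the antisymmetric trick for this proposition at all; it proceeds by the direct $L^2$ estimate of $\partial_x^k N_1$, an explicit Leibniz expansion for the cross-product piece, and H\"older/Young. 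Your final sentence, asserting that the antisymmetrization is ``essential and dictates the ordering,'' should therefore be dropped; the rest of your outline goes through once you replace that step by the straightforward bound above.
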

	
	\begin{proof}[Proof of Theorem \ref{th:cont_wz_k}]
		Define $\omega:=\omega_{\mathbf{A};H^{k+1}}+\omega_{\mathbf{B};H^{k+1}}$, $z:=a-b$, $Z:=A-B$ and $\mathbb{Z}:=\AA -\mathbb{B}$, where $a=\pi(a_0;\mathbf{A})$ and $b=\pi(b_0;\mathbf{B})$.
		We can prove the claim by induction, assuming as the initial step Theorem \ref{th:cont_wz_2} and as the inductive step that the map
		\begin{align*}
		\Psi^{k-1}:H^{k-1}(\T ,\mathbb{S}^2)\times\RD ^p_a(H^k)  &\longrightarrow L^\infty(H^{k-1})\cap L^2(H^{k})\\
		(a_0, \mathbf{A})  &\longmapsto a=\pi(a_0;\mathbf{A})
		\end{align*}
		is continuous and that the a priori bound holds:
		\begin{align*}
		\delta(	\|z\|^2_{H^{k-1}})_{s,t}+\int_{s}^{t}\|z_r\|^2_{H^{k}} d r\lesssim
		\exp(\Gamma_{k-1}(s,t))[C_{k-1}\beta_{k-1}(Z,\mathbb{Z})_{s,t}+\omega_{\mathbf{Z};H^{k}}^{1/p}\omega^{2/p}(s,t)
		+\|\partial_x^{k-1} z_0\|_{L^2}^2]\, ,
		\end{align*}
		where $\Gamma_{k-1}(s,t):=C[(t-s)(1+\|a_0\|_{H^{k-1}}^2+\|b_0\|_{H^{k-1}}^2)+\omega^{1/p}(s,t)]$, where $C$ is a positive universal constant. Recall the definition of $C_k$ and $\beta_{k-1}(Z,\mathbb{Z})$ in \eqref{beta_k}.
		As in Theorem \ref{prop:cont_wz_1} and in Theorem \ref{th:cont_wz_2}, we want to prove that for every $\epsilon>0$ there exists $\delta >0$ such that if  $(a_0,\mathbf{A}), (b_0,\mathbf{B}) \in H^k(\T ;\mathbb{S}^2)\times\RD ^p_a(H^k)  $ are so that 
		\begin{align*}
		\|z_0\|_{H^k}^2+\|\mathbf{A}-\mathbf{B}\|_{\RD ^p_a(H^{k+1})}<\delta\, ,
		\end{align*}
		then $\|a-b\|_{L^\infty(H^k)\cap L^2(H^{k+1})}<\epsilon$. From the product formula in Proposition \ref{pro:product}, we can test the equation by $\mathbf{1}=(1,1,1)\in \mathbb{R}^3$
		\begin{align}\label{eq:eq1}
		\delta (\|\partial_x ^{k} z\|^2_{L^2})_{s,t}=\int_{\T }(\D + I+\II+\tilde\II)_{s,t}(k,k) d 
		x+\langle (\partial_x ^k z)_{s,t}^{\natural,2},\mathbf{1}\rangle\, ,
		\end{align}
		in the notations of \eqref{eq:general_eq_der_wz}. 
		First we analyse the drift, 
		\begin{align*}
		\int_{\T }\D _{s,t}(k,k) d x&=-\int_{s}^{t}\|\partial_x ^{k+1}z_r\|^2_{L^2} dr+ 
		\int_{s}^{t} \int_{\T } \partial_x ^{k-1} (a_r|\partial_x  a_r|^2-b_r|\partial_x  
		b_r|^2)\cdot\partial_x ^{k+1} z_r  d x \,d r\\
		&\quad+\int_{s}^{t} \int_{\T }\partial_x ^{k-1}(a_r\times\partial_x 
		^{2}a_r-b_r\times\partial_x ^{2} b_r)\cdot \partial_x ^{k+1} z_r  d x \, dr\, .
		\end{align*}
		We note that the $L^2(L^2)$ norm of $\partial_x ^{k+1}z$ has a negative sign, thus we pass 
		it to the left hand side of \eqref{eq:eq1}: it will give regularity in the a priori 
		estimate. The other elements can be estimated with similar computations as in Proposition 
		\ref{proposition:drift_wz}.
		As a consequence of Theorem \ref{th:wz_remainder} and of Proposition \ref{proposition:drift_wz}, we can estimate the remainder term.
		By analogous computations as in Theorem \ref{th:cont_wz_2} and by the Gronwall's lemma, we conclude that
		\begin{equation}\label{eq:speed_conv_k}
		\delta(	\|z\|^2_{H^{k}})_{0,T}+\int_{0}^{T}\|z_r\|^2_{H^{k+1}} d r\leq 
		\exp(\Gamma_{k}(0,T))[C_{k}\beta_{k}(Z,\mathbb{Z})_{0,T}
		+\omega_{\mathbf{Z};H^{k+1}}^{1/p}\omega^{2/p}(0,T)+\|\partial_x^k z_0\|_{L^2}^2]\, ,
		\end{equation}
		where $\Gamma_{k}(s,t):=C((t-s)(1+\|a_0\|_{H^{k}}^2+\|b_0\|_{H^{k}}^2)+\omega^{1/p}(s,t))$.
		From the inductive step and the hypothesis, the continuity of the map follows. Note that we get again a speed of convergence $1$. Consider the approximations $(\mathbf{G}_n)_n$ to a rough path $\mathbf{G}$. Then we read by \eqref{eq:speed_conv_k} that
		\begin{align*}
			\|\Psi^k(u^0,\mathbf{G}^n)-\Psi^k(u^0,\mathbf{G})\|_{L^\infty(H^k)\cap L^2(H^{k+1})}\lesssim\|\mathbf{G}^n-\mathbf{G}\|_{\mathcal{RD}^p_a(H^{k+1})}\,,
		\end{align*}
		which means that the speed of convergence is again $1$.
	\end{proof}
	The conclusions in Corollary \ref{cor:wz:2} hold also for the improved topology obtained in Theorem \ref{th:cont_wz_k}.
	\begin{corollary}\label{cor:wz_k}
	Let $u^0\in H^k$ and $(G^n,\GG ^n)_n$ be an approximation of  $(G,\GG )$ with respect to the $\RD ^p_a(H^{k+1})$-norm, for $p\in [2,3)$.  Consider $u^n:=\pi(u^0;\G ^n)$ and $u:=\pi(u^0;\G )$. We obtain $\omega$-wise the following Wong-Zakai type convergence result 
	\begin{equation}
	\lim_{n\rightarrow +\infty}\|u^n-u\|_{L^\infty(H^k)\cap L^2(H^{k+1})\cap  \mathcal{V}^p(H^{k-1})}^2=0\, ,
	\end{equation}
	with speed of convergence	with speed of convergence $1$, i.e.  
	\begin{align}
	\|\Psi^k(u^0,\mathbf{G}^n)-\Psi^k(u^0,\mathbf{G})\|_{L^\infty(H^{k})\cap L^2(H^{k+1})\cap \mathcal{V}^p(H^{k-1})}\lesssim \|\mathbf{G}^n-\mathbf{G}\|_{\mathcal{RD}^p_a(H^{k+1})}\, .
	\end{align}
	\end{corollary}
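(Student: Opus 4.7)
The plan is to derive the corollary directly from Theorem \ref{th:cont_wz_k}. Since both $u^n=\Psi^k(u^0,\mathbf{G}^n)$ and $u=\Psi^k(u^0,\mathbf{G})$ arise from the same initial condition $u^0\in H^k(\mathbb{T};\mathbb{S}^2)$, the continuity of $\Psi^k$ as a map from $H^k(\mathbb{T};\mathbb{S}^2)\times \mathcal{RD}^p_a(H^{k+1})$ into $L^\infty(H^k)\cap L^2(H^{k+1})\cap \mathcal{V}^p(H^{k-1})$ immediately delivers the $\omega$-wise qualitative convergence $\|u^n-u\|\to 0$. The structure of the argument parallels that of Corollary \ref{cor:wz:2}, simply applied one regularity level higher.

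For the quantitative rate, I would reuse the a priori estimate \eqref{eq:speed_conv_k} obtained inside the proof of Theorem \ref{th:cont_wz_k}, with the specializations $a=u^n$, $b=u$, $a_0=b_0=u^0$ (so $z_0=0$), $\mathbf{A}=\mathbf{G}^n$, $\mathbf{B}=\mathbf{G}$. Setting $\omega=\omega_{\mathbf{G}^n;H^{k+1}}+\omega_{\mathbf{G};H^{k+1}}$ and recalling that $(\mathbf{G}^n)_n$ converges in $\mathcal{RD}^p_a(H^{k+1})$ and therefore remains bounded in that space, one sees that both the exponential factor $\exp(\Gamma_k(0,T))$ and the prefactor $C_k$ are majorized by a deterministic constant independent of $n$. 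The quantity $\beta_k(Z,\mathbb{Z})_{0,T}$ (with $Z=G^n-G$, $\mathbb{Z}=\mathbb{G}^n-\mathbb{G}$) involves only first powers of $\|Z\|_{H^{k+1}}$ and $\|\mathbb{Z}\|_{H^{k+1}}$, while the remaining term $\omega_{\mathbf{Z};H^{k+1}}^{1/p}\omega^{2/p}(0,T)$ carries the power $1/p<1$. Taking the smallest exponent appearing in the definition of the $\mathcal{RD}^p_a(H^{k+1})$-norm yields the announced linear rate.

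The $\mathcal{V}^p(H^{k-1})$ piece of the norm is not delivered directly by \eqref{eq:speed_conv_k}, which controls only the $L^\infty(H^k)\cap L^2(H^{k+1})$ part. For this I would follow the strategy of \eqref{u_p-var_L2} in Corollary \ref{cor:L^2(L^2)}: apply $\partial_x^{k-1}$ to the equation satisfied by $z:=u^n-u$, test against $\partial_x^{k-1}z$, then invoke the Sewing Lemma to estimate the rough contribution in terms of $\|\mathbf{G}^n-\mathbf{G}\|_{\mathcal{RD}^p_a(H^{k+1})}$ together with the $L^\infty(H^k)\cap L^2(H^{k+1})$-bound on $z$ already established. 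Since every contribution scales at most linearly in the relevant perturbation, the resulting estimate compounds to the stated rate.

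I do not anticipate any real obstacle: the statement is essentially the quantitative by-product of Theorem \ref{th:cont_wz_k}. The only bookkeeping to be careful about is the uniform-in-$n$ control of the implicit constants in \eqref{eq:speed_conv_k}, which rests on the convergence of $\mathbf{G}^n$ (hence its boundedness) in the rough driver norm; everything else is inherited from the higher-order a priori estimates of Sections \ref{sec:apriori}--\ref{sec:wong-zakai}.
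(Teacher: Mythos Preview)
Your proposal is correct and matches the paper's approach: the paper gives no detailed proof for this corollary, treating it (like Corollary~\ref{cor:wz:2}) as an immediate consequence of the continuity of $\Psi^k$ established in Theorem~\ref{th:cont_wz_k}, with the rate read off from \eqref{eq:speed_conv_k}. Your write-up is in fact more explicit than the paper's, correctly flagging the uniform-in-$n$ control of the constants and the separate treatment of the $\mathcal{V}^p(H^{k-1})$ norm; one small slip is that $\beta_k$ also contains a quadratic term $\|Z\|_{H^{k+1}}^2$, but this is harmless for the linear rate since the first-order terms dominate as $\mathbf{G}^n\to\mathbf{G}$.
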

	
		\begin{proof}[Proof of Proposition \ref{proposition:drift_wz}]
		Inspired by the computations on the drift in \cite{Melcher}, we consider 
		\begin{align*}
		\|\partial_x ^{k} ((a |\partial_x  a|^2)^i&-\partial_x ^{k} (b|\partial_x  
		b|^2)^i)\|_{L^2}\leq  \|\partial_x ^{k} ((a-b)^i |\partial_x  a|^2)\|_{L^2}+\|\partial_x 
		^{k} (b^i (|\partial_x  a|^2-|\partial_x  b|^2))\|_{L^2}\, .
		\end{align*}
		We recall that for $ 0 \leq|\alpha|\leq k $
		\begin{align*}
		\partial_x ^{\alpha} (f g)=\sum_{j=1}^{|\alpha|}\binom{|\alpha|}{j}\partial_x 
		^{|\alpha|-j}f\partial_x ^{j} g\, .
		\end{align*}
		From the interpolation inequality \eqref{eq:interp_iterative_1} applied to $f:=z^i \partial_x  a$ and $g:=\partial_x  a\cdot$, it follows that
		\begin{align*}
		\|\partial_x ^{k} (z^i |\partial_x  a|^2)\|_{L^2}&\lesssim_k\|z^i\|_{L^\infty} 
		\|\partial_x  a\|_{L^\infty}\|\partial_x  a\|_{H^k}+\| z^i \partial_x  
		a\cdot\|_{H^k}\|\partial_x  a\|_{L^\infty}\\
		&\lesssim_k\|z^i\|_{L^\infty} \|\partial_x  a\|_{L^\infty}\|\partial_x  a\|_{H^k}+\| z^i \|_{H^k}\| \partial_x  a\|_{H^k}\|\partial_x  a\|_{L^\infty}\, .
		\end{align*}
		By employing the Morrey's embedding theorem in one dimension, 
		\begin{align*}
		\|\partial_x ^{k} (z^i |\partial_x  a|^2)\|_{L^2}&\lesssim_k \|z^i\|_{H^1} \|\partial_x  
		a\|_{H^1}\|\partial_x  a\|_{H^k}+\| z^i \|_{H^k}\| \partial_x  a\|_{H^k}\|\partial_x  
		a\|_{H^1}\, .
		\end{align*}
		From the interpolation inequality \eqref{eq:interp_iterative_1} applied to $f:=b^i (\partial_x  a+\partial_x  b)$ and $g:=(\partial_x  a-\partial_x  b)\cdot$, it follows 
		\begin{align*}
		\|\partial_x ^{k} (b^i (|\partial_x  a|^2-|\partial_x  b|^2))\|_{L^2}&\lesssim_k \|b^i 
		(\partial_x  a+\partial_x  b)\|_{L^\infty}\|\partial_x  a-\partial_x  b\|_{H^k}\\
		&\quad+\| b^i (\partial_x  a+\partial_x  b)\|_{H^k}\|\partial_x  a-\partial_x  b\|_{L^\infty}\\
		&\lesssim_k  \|b\|_{L^\infty} \|\partial_x  a+\partial_x  b\|_{L^\infty}\|\partial_x  a-\partial_x  b\|_{H^k}\\
		&\quad+\| b\|_{H^k} \|\partial_x  a+\partial_x  b\|_{H^k}\|\partial_x  a-\partial_x  b\|_{L^\infty}\, ,
		\end{align*}
		where in the last inequality we have employed again the interpolation inequality \eqref{eq:interp_iterative_1} to $f:=b^i $ and $g:=(\partial_x  a+\partial_x  b)\cdot$.
		Hence we can estimate the following
		\begin{align*}
		\int_{s}^{t}\|(\partial_x ^{k} (a_r |\partial_x  a_r|^2)-\partial_x ^{k} (b_r|\partial_x  
		b_r|^2))  \|_{L^2}  d r&\lesssim_k\|z\|_{L^\infty(s,t;H^1)} \int_{s}^{t} \|\partial_x  
		a_r\|_{H^1}\|\partial_x  a_r\|_{H^k}  d r\\
		&\quad +\|z\|_{L^\infty(s,t;H^k)} \int_{s}^{t} \|\partial_x  a_r\|_{H^k}\|\partial_x  a_r\|_{H^1}  d r\\
		&\quad +\|b\|_{L^\infty(L^\infty)}\int_{s}^{t} \|\partial_x  a_r+\partial_x  b_r\|_{H^1}\|\partial_x  a_r-\partial_x  b_r\|_{H^k}  d r\\
		&\quad +\|b\|_{L^\infty (H^k)} \int_{s}^{t} \|\partial_x  a_r+\partial_x  b_r\|_{H^k}\|\partial_x  a_r-\partial_x  b_r\|_{H^1}  d r\, .
		\end{align*}
		Notice that if $k=1$, the above is bounded; if $k>1$, then we can consider the boundedness of $\|a\|_{L^\infty(H^2)}$, $\|b\|_{L^\infty(H^2)}$.
		In conclusion for every $\Phi\in H^1$ such that $\|\Phi\|_{H^1}\leq 1$, from the H\"older's inequality in time and in space and Young's inequality for all $\epsilon>0$
		\begin{align*}\label{eq:wz_k_difference_estimate}
			\int_{s}^{t}\langle\partial_x ^{k}( a |\partial_x  a|^2-b|\partial_x  
			b|^2)^i\partial_x^k z^j,\Phi \rangle d r 
		&\lesssim_k\|\partial_x ^{k} (a |\partial_x  a|^2)^i-\partial_x ^{k} (b|\partial_x  
		b|^2)^i  \|_{L^1(s,t;L^2)}\|\partial_x^k z\|_{L^\infty(s,t;L^2)}\|\Phi\|_{H^1} \\
		&\lesssim_k \|z\|_{L^2(s,t;H^k)}^2+\|z\|_{L^2(s,t;H^{k+1})}^2+\|z\|_{L^\infty(s,t;H^k)}^2\, ,
		\end{align*}
		where the constant depends on some powers of the initial condition $\|a^0\|_{H^k(\mathbb{T};\mathbb{S}^2)}$ and $\|b^0\|_{H^k(\mathbb{T};\mathbb{S}^2)}$ (the bound are a consequence of the a priori estimates in the existence proof).
		Consider the explicit  derivatives of $\partial_x ^{k} (z\times \partial_x  a)$,
		\begin{align*}
		\partial_x ^{k} (z\times  \partial_x  a)=\sum_{j=0}^{k}\binom{k}{j}\partial_x ^{k-j}z\times 
		\partial_x ^{j}  \partial_x  a,\quad \partial_x ^{k} (b\times  \partial_x  
		z)=\sum_{j=0}^{k}\binom{k}{j}\partial_x ^{k-j}b\times\partial_x ^{j}  \partial_x  z\, .
		\end{align*}
		First we estimate the $L^2$-norm of the $0$-th and the $k$-th terms of the first sum above,
		\begin{align*}
		\|\partial_x ^{k}z\times \partial_x  a\|_{L^2}+\|z\times \partial_x 
		^{k+1}a\|_{L^2}&\lesssim_k \|\partial_x ^{k}z\|_{L^2}\|\partial_x  a\|_{L^\infty}+\| 
		\partial_x ^{k+1}a\|_{L^2}\|z\|_{L^
			\infty}\\
		&\lesssim_k \|\partial_x ^{k}z\|_{L^2}\|\partial_x  a\|_{H^1}+\| \partial_x 
		^{k+1}a\|_{L^2}\|z\|_{H^1}\, .
		\end{align*}
		Then the other elements of the sum for $j=1,\dots,k-1$ are bounded by
		\begin{align*}
		\|\partial_x ^{k-j}z\times \partial_x ^{j}  \partial_x  a\|_{L^2} \lesssim_k \|\partial_x 
		^{k-j}z\|_{L^\infty}\|\partial_x ^{j}  \partial_x  a\|_{L^2}\, ,
		\end{align*}
		and the same computations to the second sum $ \partial_x ^{k} (b\times  \partial_x  z)$ and 
		conclude the following bound:
		\begin{align*}
		\|\partial_x ^{k-j} (b\times  \partial_x ^j z)\|_{L^2}\lesssim_k \|\partial_x 
		^{k-j}b\|_{L^\infty}\|\partial_x ^{j}  \partial_x  z\|_{L^2}\, .
		\end{align*}
		In conclusion, from H\"older's inequality, weighted Young's inequality and the above estimates, for all $\Phi\in H^1$
		\begin{align*}
		&\int_{s}^{t}\langle(\partial_x ^{k} (a_r \times \partial_x  a_r)-\partial_x ^{k} 
		(b_r\times \partial_x  b_r))^i\partial_x ^{k+1} z_r^j,\Phi\rangle d 
		r+\int_{s}^{t}\langle(\partial_x ^{k} (a_r \times \partial_x  a_r)-\partial_x ^{k} 
		(b_r\times \partial_x  b_r))^i\partial_x ^{k} z^j_r,\partial_x\Phi\rangle d r\\
		&\quad	\lesssim_k \big[\|z\|^2_{L^\infty(s,t;H^k)}+ 
		\|\partial_x ^{k+1} 
		z^j\|^2_{L^2(s,t;L^2)}\big]\|\Phi\|_{H^1}\,,
		\end{align*}	
		where the constant depends again on some power of the initial conditions. This estimate  concludes our proof.
	\end{proof}
	
	\section{Support theorem and a large deviations principle}
	\label{sec:LD}
	
	In Section \ref{sec:wong-zakai} we stated for every $k\in \mathbb{N}$ the continuity of the maps
	\begin{align*}
	\Psi^k:H^k(\T ;\mathbb{S}^2)\times\RD ^p_a(H^{k+1})   &\longrightarrow L^\infty(H^k)\cap L^2(H^{k+1})\cap \mathcal{V}^p(H^{k-1})\\
	(u^0, \;\;\G ) \quad &\longmapsto u=\pi(u^0;\G )\, ,
	\end{align*}
	where $\pi(u^0,\G )$ is the unique solution to \eqref{rLLG} and we established that the solution $u$ is a random variable on a probability space $(\Omega, \mathcal{G}, \mathbb{P})$ adapted with respect to the natural filtration. As straightforward consequences of the continuity of the maps $\Psi^k$ we can derive a support theorem for the law of the solution $u$ and a large deviation principle, both in the case of Brownian motion. In order to simplify the presentation, we restrict ourselves to a noise term of the form considered in Example \ref{exa:B}.

	\subsection{A  Stroock-Varadhan support theorem.}
	Fix $k\in \mathbb{N}$ and $p\in (2,3)$. We aim to describe the support of the law of the unique solution to \eqref{rLLG}. Consider a filtered probability space $(\Omega,\mathcal{G},(\mathcal{G}_t)_t,\mathbb{P})$, let 
	 $\omega\rightarrow\rp(\omega)\equiv(\beta(\omega),\rpp(\omega))$  be a random pathwise rough driver and $\omega\rightarrow \beta(\omega)$ is a Brownian motion with respect to $(\mathcal{G}_t)_t$, $g\in H^{k+1}$ and $\G(\omega)=\Lambda_{g(x)}\mathcal F(\rp(\omega))$ be as in \eqref{simple_RD}.
	 Given $u^0\in H^k(\T ;\mathbb{S}^2)$, let $u\colon \Omega\times [0,T]\to L^2$ be the pathwise solution to \eqref{rLLG} as built in Theorem \ref{thm:existence}.
	We recall that $u=\pi(u^0,\G )$, $\mathbb P$-a.s., where $\pi$ denotes the solution map and is deterministic.
	 Let  $\mu_u:=\mathbb{P}\circ u^{-1}$ be the law of the previous solution on the measurable space $(\mathcal Y,\mathcal B_{\mathcal Y})$, where $\mathcal{Y}:=L^\infty(H^k)\cap L^2(H^{k+1})\cap \mathcal{V}^p(H^{k-1})$ and $\mathcal{B}_\mathcal{Y}$ denotes its Borel $\sigma$-algebra. 
	 We want to describe the support of $\mu_u$ on $(\mathcal{Y}, \mathcal{B}_\mathcal{Y})$, that is we want to characterise
	\begin{align*}
	\mathrm{supp}(\mu_u)=\{y\in \mathcal{Y}: \forall  \text{ open neighbourhood } A_y\in \mathcal{B}_{\mathcal{Y}}\text{ of } y\, \text{ with }  \mu_u(A_y)>0 \}\, .
	\end{align*}

	For a continuous path $w\colon [0,T]\rightarrow \R ^3$,  $n\in \mathbb{N}$, we define $D_n:=\{Ti/2^n:i\in \{0,\cdots, 2^n-1\}\}$ and $w^n\colon [0,T]\rightarrow \R^3$ be the continuous linear path defined as
	\begin{equation}  \label{w_n_def}
	w^n_t:=\sum_{i\in D_n} \mathbf 1_{[t_i,t_{i+1} )}(t)\bigg[w_{t_i}+\frac{t-t_{i}}{t_{i+1}-t_i}(w_{t_{i+1}}-w_{t_i})\bigg]\, .
	\end{equation}
	The paths $(w^n)_n$ are referred to as (dyadic) piecewise linear approximations of the path $w$. 
	For the Brownian motion defined in Example \ref{exa:B}, the sequence $(\boldsymbol\beta^n)_n=(\beta^n,\rpp^n)$, where the second component is defined canonically through $\rpp^n_{s,t}:=\int_s^t\delta \beta_{s,r}^n\otimes d\beta^n _r$, can be used to approximate the geometric rough path $\rp$, in the $p$-variation topology.
	For $p\in (2,3)$ we denote by $\D \subset \mathcal{V}^p([0,T]; \mathbb{R}^3)\times\mathcal{V}_2^{p/2}([0,T]; \mathbb{R}^{3\times3})$ the space of (canonical enhancements of) piecewise linear approximations of continuous paths as in \eqref{w_n_def}.
	
	We recall a version of support theorem for the rough Brownian motion (see \cite{Ledoux_Qian_Zhang_p_var}, Theorem 5):
	\begin{proposition}\label{pro:support_p_var}
		Let $\rp=(\delta\beta,\rpp)$ be the enhanced Brownian motion defined in Example \ref{exa:B} with $H=\frac12$, then the support of its distribution is the closure of the space of dyadic rough paths $\mathcal{D}$ with respect to the $p$-variation norm, for $p\in (2,3)$. 
	\end{proposition}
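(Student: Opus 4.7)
The plan is to establish the support equality by proving the two inclusions $\mathrm{supp}(\mu_{\rp})\subseteq \overline{\mathcal D}^{p\text{-var}}$ and $\overline{\mathcal D}^{p\text{-var}}\subseteq \mathrm{supp}(\mu_{\rp})$. The first is obtained from an approximation result, whereas the second relies on a Cameron-Martin translation argument combined with a small-ball estimate for the Brownian rough path.

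For the first inclusion, recall that by Proposition \ref{pro:support_p_var}'s own setting, the canonical enhancement of the dyadic piecewise linear interpolation $\beta^n$ defined through \eqref{w_n_def}, together with $\rpp^n_{s,t}:=\int_s^t\delta\beta^n_{s,r}\otimes d\beta^n_r$, yields a sequence $(\rp^n)_n\subset \mathcal D$ that converges to $\rp$ in the $p$-variation rough path topology $\mathbb P$-almost surely, for every $p>2$. This is a classical result (see e.g.\ \cite[Chap.~13]{FrizVictoire}), proved by combining the uniform convergence $\beta^n\to\beta$ with Kolmogorov-type estimates on the Lévy area. It implies that $\rp(\omega)\in \overline{\mathcal D}^{p\text{-var}}$ on a set of full $\mathbb P$-measure, hence the first inclusion of supports.

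For the reverse inclusion, since $\mathrm{supp}(\mu_{\rp})$ is closed, it is enough to show $\mathcal D\subseteq \mathrm{supp}(\mu_{\rp})$. Fix $\mathbf h=(\delta h,\mathbb h)\in \mathcal D$ associated with a piecewise linear path $h\colon [0,T]\to \R^3$ and $\epsilon>0$. We wish to prove $\mathbb P\bigl(d_{p\text{-var}}(\rp,\mathbf h)<\epsilon\bigr)>0$. The idea is to use the fact that $h$ is a Cameron-Martin path (being piecewise linear, it has an $L^2$-derivative), so the shift $T_h\colon \omega\mapsto \omega+h$ leaves $\mathbb P$ quasi-invariant. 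One then relies on the rough path translation formula $\rp(\omega+h)=\rp(\omega)\oplus \mathbf h$, where $\oplus$ denotes the group-like composition incorporating the Young cross-integrals $\int \delta\beta\otimes dh$ and $\int \delta h\otimes d\beta$ (well-defined since $h$ has finite $1$-variation). By continuity of this composition in $p$-variation and by translation-invariance of $\mathbb P\sim \mathbb P\circ T_h^{-1}$, the probability $\mathbb P\bigl(d_{p\text{-var}}(\rp,\mathbf h)<\epsilon\bigr)$ is bounded below by a positive multiple of $\mathbb P\bigl(\|\rp\|_{p\text{-var}}<\delta\bigr)$ for a suitably small $\delta=\delta(\epsilon,h)$. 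The latter quantity is positive by the small-ball probability for rough Brownian motion, which in turn follows from the classical uniform support theorem for Brownian motion together with standard Gaussian moment estimates on the Lévy area.

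The main technical difficulty lies in justifying the translation identity $\rp(\omega+h)=\rp(\omega)\oplus \mathbf h$ rigorously and globally in the $p$-variation topology, because the Lévy area of $\omega+h$ mixes Stratonovich integrals against $\omega$ with Young integrals against $h$; this requires a careful estimate of the cross-terms $\int \delta\beta\otimes dh$ and $\int \delta h\otimes d\beta$ in $p/2$-variation norm using that $h$ has finite $1$-variation and $\beta$ has finite $p$-variation with $p<3$. Once the translation formula is in place, the remainder of the argument is a routine combination of quasi-invariance and the small-ball bound.
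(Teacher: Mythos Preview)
The paper does not actually prove this proposition: it is stated as a recall of a known result, with a direct citation to \cite{Ledoux_Qian_Zhang_p_var} (Theorem~5 therein) and no accompanying argument. Your proposal sketches precisely the standard proof used in that reference (and in \cite[Chap.~15]{FrizVictoire}): the inclusion $\mathrm{supp}(\mu_{\rp})\subseteq \overline{\mathcal D}^{p\text{-var}}$ via almost-sure convergence of the dyadic enhancements, and the reverse inclusion via Cameron--Martin quasi-invariance combined with the small-ball estimate for the Brownian rough path. The sketch is sound; in particular the translation identity $\rp(\omega+h)=\rp(\omega)\oplus\mathbf h$ and the Young cross-integral bounds you flag as the main technical step are exactly what Ledoux--Qian--Zhang establish. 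There is therefore nothing to compare against a proof in the paper itself, since the paper defers entirely to the literature at this point.
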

	As a consequence of Proposition \ref{pro:support_p_var} and the continuity of the It\^o-Lyons map we conclude a support theorem for the solution to \eqref{rLLG}.
	\begin{theorem}
		Let $k\in \mathbb{N}$, $u^0\in H^k(\T ;\mathbb{S}^2)$ and $u=\pi(u^0,\G )\in \mathcal{Y}$ be the solution to \eqref{rLLG}, where in the notations of Section \ref{sec:RD}, 
		\[
		\G(\omega;x) =(g(x)\mathcal F(\delta\beta(\omega)),g(x)^2\mathcal F^{\otimes 2}\rpp(\omega))\in \RD ^p_a(H^{k+1})
		\]
		 is the Brownian rough driver as defined in \eqref{simple_RD} with $H=\frac12$.
		 Then the support of its distribution is the closure $\overline{\Psi^k(u^0;\Lambda_{g(\cdot)}\circ\mathcal F(\mathcal D))}^{p\mathrm{-var};\mathcal{Y}}$ of the image through the solution map $\Psi^k(u^0;\Lambda_{g(\cdot)}\circ\mathcal F(\cdot))  $ of the space of dyadic rough paths with respect to the $\mathcal{Y}$-norm, for $p\in (2,3)$. 
	\end{theorem}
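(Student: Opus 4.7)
The strategy is to apply a ``contraction principle'' for supports, combining the continuity of the It\^o-Lyons map (Theorem~\ref{th:cont_wz_k}) with the support description of the enhanced Brownian motion in $p$-variation (Proposition~\ref{pro:support_p_var}). The key observation is that the solution $u$ is obtained from the rough path $\rp$ via a composition of three maps, all of which we would verify to be continuous.

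First, I would establish continuity of the composite map
\begin{equation*}
\Phi:=\Psi^k(u^0;\,\cdot\,)\circ \Lambda_{g(\cdot)}\circ \mathcal F \colon \V^p([0,T];\R^3)\times \V_2^{p/2}([0,T];\R^{3\times3}) \longrightarrow \mathcal Y,
\end{equation*}
where the domain is endowed with the $p$-variation product topology and $\mathcal Y:=L^\infty(H^k)\cap L^2(H^{k+1})\cap\V^p(H^{k-1})$. This decomposes into three continuous pieces: (i) the algebraic lift $\mathcal F\colon \rp\mapsto (\mathcal F(\delta\beta),\mathcal F^{\otimes 2}\rpp)$ is bounded and multiplicative in the entries, hence continuous from rough paths in $\R^3$ to $\R^{3\times3}$-valued constant-in-space rough drivers; (ii) the dilation $\G\mapsto \Lambda_{g(\cdot)}\G$ is continuous from those to $\RD^p_a(H^{k+1})$, using that $H^{k+1}(\T)$ is a Banach algebra (since $k+1\ge2>\tfrac12$) and the explicit expression \eqref{nota:dilation} for $\Lambda_{g(\cdot)}$, which reduces to pointwise multiplication by $g$ (resp.\ $g^2\in H^{k+1}$) in the corresponding components; (iii) the map $\Psi^k(u^0;\,\cdot\,)\colon \RD^p_a(H^{k+1})\to \mathcal Y$ is continuous by Theorem~\ref{th:cont_wz_k}.

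Next, I would invoke the general measure-theoretic fact that whenever $\Phi:E\to F$ is continuous between Polish (or metrizable) spaces and $X\colon \Omega\to E$ is a random variable, the law of $\Phi(X)$ has support $\overline{\Phi(\mathrm{supp}(X))}^F$. One inclusion is obtained from $\mathbb P(\Phi(X)\in \overline{\Phi(\mathrm{supp}(X))})\ge \mathbb P(X\in\mathrm{supp}(X))=1$; the reverse inclusion uses that for $y=\Phi(x_0)$ with $x_0\in\mathrm{supp}(X)$, continuity forces $\Phi^{-1}(V)$ open containing $x_0$ for any open neighborhood $V$ of $y$, so $\mathbb P(\Phi(X)\in V)=\mathbb P(X\in \Phi^{-1}(V))>0$.

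Applying this with $E=\V^p\times\V_2^{p/2}$, $F=\mathcal Y$, $X=\rp$ and noting that $u=\Phi(\rp)$ $\mathbb P$-a.s., we get
\begin{equation*}
\mathrm{supp}(\mu_u)
=\overline{\Phi(\mathrm{supp}(\rp))}^{p\mathrm{-var};\mathcal Y}
=\overline{\Phi\left(\overline{\mathcal D}^{p\mathrm{-var}}\right)}^{p\mathrm{-var};\mathcal Y}
=\overline{\Phi(\mathcal D)}^{p\mathrm{-var};\mathcal Y},
\end{equation*}
where the second equality is Proposition~\ref{pro:support_p_var} and the third uses continuity of $\Phi$ (which gives $\Phi(\overline A)\subseteq \overline{\Phi(A)}$, the reverse inclusion being automatic from $A\subseteq \overline A$). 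Unfolding $\Phi(\mathcal D)=\Psi^k(u^0;\Lambda_{g(\cdot)}\circ \mathcal F(\mathcal D))$ yields the claimed description.

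The main obstacle I anticipate is step (ii) above: ensuring that the dilation $\Lambda_{g(\cdot)}$ preserves the full Sobolev regularity $H^{k+1}$ and depends continuously on its argument in the correct $(p,p/2)$-variation norm. This is essentially a Leibniz/multiplier estimate, but must be carried out componentwise on both levels, using that multiplication by $g\in H^{k+1}$ (resp.\ $g^2$) is a bounded operator on $H^{k+1}(\T)$, and compatibility with Chen's relation \eqref{chen-rela} at the level of $\Lambda_{g(\cdot)}\mathcal F(\rp)$ — which is straightforward since the dilation commutes with the Chen structure, as already observed after \eqref{simple_RD}. Once this is in hand, the remainder of the argument is formal.
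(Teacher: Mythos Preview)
Your proposal is correct and follows essentially the same approach as the paper: both arguments combine the continuity of the composite map $\Phi=\Psi^k(u^0;\cdot)\circ\Lambda_{g(\cdot)}\circ\mathcal F$ with Proposition~\ref{pro:support_p_var} via the general ``contraction principle for supports'' that $\mathrm{supp}(\Phi(X))=\overline{\Phi(\mathrm{supp}(X))}$. The paper carries out the two inclusions of this principle explicitly rather than stating it abstractly, and it establishes the first inclusion by invoking the a.s.\ convergence $u=\lim_n\phi(\rp^n)$ from the existence construction, but otherwise the structure is the same.
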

	\begin{proof}
		From Proposition \ref{pro:support_p_var},  $\mathrm{supp} \; \mathbb{P} \circ \rp^{-1}=\overline{\D }^{\,p\mathrm{-var};\R^3\times \R^{3\times 3}}$.
		In the notations of Section \ref{sec:RD}, define $\phi(\cdot):=\Psi^k(u^0,\Lambda_{g(x)}\mathcal{F}(\cdot))$, so that $u=\phi(\rp)$.
		We recall that the map $\rp\rightarrow \Lambda_{g(\cdot)} \mathcal{F}(\rp)=\G$ is continuous and that the solution map is also continuous with respect to the rough driver. 
		  We aim to show that $\mathrm{supp} \, \mu_u=\overline{\phi(\D )}^{\, p\mathrm{-var};\mathcal{Y}}\,.$

		We prove first that $\mathrm{supp}\;\mu_u\subset\overline{\phi(\D )}^{\, p\mathrm{-var};\mathcal{Y}}$. By our construction of the solution, $u=\lim_{n\rightarrow +\infty}\phi(\rp^n)$ $\mathbb{P}-$a.s. and therefore $\mathbb{P}(u\in\overline{\phi(\D )}^{\, p\mathrm{-var};\mathcal{Y}})=1$. This implies that $\mathrm{supp}\;\mu_u\subset\overline{\phi(\D )}^{\, p\mathrm{-var};\mathcal{Y}}$, indeed by an equivalent definition 
		\begin{align*}
		\mathrm{supp}\;\mu_u=\cap_{A\in \mathcal{B}_\mathcal{Y}:A=\bar{A}} \{\mathbb{P}(u\in A^c)=0\}=\cap_{A\in \mathcal{B}_\mathcal{Y}:A=\bar{A}} \{\mathbb{P}(u\in A)=1\},
		\end{align*}
		we observe that the support is the smallest set of full probability. Since also the set $\overline{\phi(\D )}^{\, p\mathrm{-var};\mathcal{Y}}$ is of full probability, then $\mathrm{supp}\;\mu_u\subset\overline{\phi(\D )}^{\, p\mathrm{-var};\mathcal{Y}}$.
		
		To show the opposite direction,
		let $v\in \overline{\phi(\D )}^{\, p\mathrm{-var};\mathcal{Y}}=\phi(\overline{\D }^{\, p\mathrm{-var};\R^3\times \R^{3\times 3}})=\phi(\mathrm{supp} \, \mathbb{P}\circ \rp^{-1})$ (where we used the continuity of $\phi$ and Proposition \ref{pro:support_p_var} ), then there exists a sequence $(\gamma_n)_n\subset \mathcal{D}$ such that $\lim_{n\rightarrow +\infty}\gamma_n=\gamma$ and $v=\phi(\gamma)$. Let  $A_v\in \mathcal{B}_{\mathcal{Y}}$ be an open neighbourhood of $v$, then
		\begin{align*}
			\mathbb{P}(u\in A_v)=\mathbb{P}(\phi(\rp)\in A_v)=\mathbb{P}(\rp\in\phi^{-1}( A_v)).
		\end{align*}
		We note that $\phi^{-1}( A_v)$ is a neighbourhood of $\gamma$, indeed $v=\phi(\gamma)\in A_v$ and $\phi^{-1}(A_v)$ is an open set, from the continuity of $\phi$. Since $\gamma\in \mathrm{supp} \; \mathbb{P} \circ \rp^{-1}$, from the definition of support $\mathbb{P}(\gamma\in\phi^{-1}( A_v))>0.$  Thus also $\mathbb{P}(u\in A_v)>0$ and $v\in \mathrm{supp}\;\mu_u$ and this concludes the proof.
	\end{proof}
		
	\subsection{A large deviations principle}
	Informally speaking, we expect that if the noise approaches $0$, then the stochastic solution $u^\epsilon=\pi(u^0,\Lambda_{\sqrt{\epsilon}}\mathbf{G})$ to \eqref{rLLG} driven by the vanishing noise $\Lambda_{\sqrt{\epsilon }}\mathbf{G}$ approaches the deterministic solution $u=\pi(u^0,0)$ to the deterministic LLG equation for $\epsilon\rightarrow 0$. In order to measure the deviation of the stochastic solutions from the deterministic solution, we investigate the asymptotic behaviour of the the trajectories $u^\epsilon-u$.  We restrict the exposition to the case in which the solution is driven by the enhancement of a Brownian motion.
	
	In the classical theory, the large deviations of Brownian motion were proved in the Schilder's theorem (see for instance Theorem 13.38 in \cite{FrizVictoire}). 
	Let $\mathcal{X}$ be a topological space endowed with the Borel $\sigma$-algebra $\mathcal{B}_{\mathcal{X}}$.
	Recall that a map $\mathcal{I}\colon \mathcal{X}\rightarrow [0,+\infty] $ is a \textit{a good rate function} if the level sets $\{x\in \mathcal{X}: \mathcal{I}(x)\leq c\}$ for  $c\in\R $ are compact.
	Moreover, given such $\mathcal I,$ a family of probability measures $(\mu_\epsilon)_{\epsilon>0}$ is said to satisfy a \textit{large deviation principle (LDP)} on $(\mathcal{X}, \mathcal{B}_{\mathcal{X}})$ with good rate function $\mathcal{I}$ if for every $A\in \mathcal{B}_{\mathcal{X}}$ 
	\begin{equation*}
	-\mathcal{I}(A^o)\leq \liminf_{\epsilon\rightarrow 0}\epsilon\log\mu_\epsilon(A)\leq\limsup_{\epsilon\rightarrow 0}\epsilon\log\mu_\epsilon(A) \leq -\mathcal{I}(\bar{A})\, ,
	\end{equation*}
	where $A^o$ is the interior of the set $A$, $\bar{A}$ is the closure of $A$. 
	
	The following theorem is the rough counterpart of Schilder's theorem for enhanced Brownian motion (see \cite[Theorem 1]{Ledoux_Qian_Zhang_p_var} or \cite[Theorem 13.43]{FrizVictoire} in the $\alpha$-H\"older topology) and it states a large deviation principle for the vanishing enhanced Brownian motion $\Lambda_{\sqrt{\epsilon}} \rp\equiv (\sqrt{\epsilon} \delta\beta,\epsilon \rpp)$, where $\epsilon>0$. 
	We introduce the \textit{Cameron-Martin space} which is the subset of $W^{1,2}([0,T];\R ^3)$ whose paths are null at $0$, i.e.
	\begin{equation*}
	\mathcal{H}:=\Big\{\int_{0}^{\cdot}\dot{h}_t dt: \dot{h}\in L^2([0,T];\R ^3), \,h_0=0\Big\}\, .
	\end{equation*}
	
	\begin{theorem}[Schilder theorem for rough paths]
		The family $(\Lambda_{\sqrt{\epsilon} }\rp)_{\epsilon>0}$ satisfies a large deviation principle in  $\mathcal{V}^p(\R ^3)\times\mathcal{V}^{p/2}_2(\R ^3)$ for $p\in (2,3)$ with good rate function
		\begin{equation}\label{good_rate_fc_BM}
		\mathcal{I}(h):=\left\{
		\begin{aligned}
		&\int_{0}^{T}|\dot{h}_t|^2 dt \quad\text{if}\quad  h\in \mathcal{H}
		\\[0.3em]
		&+\infty\quad \quad \quad \quad\mathrm{otherwise}\, .
		\end{aligned}
		\right.
		\end{equation} 
	\end{theorem}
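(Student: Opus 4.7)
The plan is to deduce this rough Schilder theorem from the classical one by combining two ingredients: the classical LDP for rescaled Brownian motion in uniform (and then $p$-variation) topology, and an exponentially good approximation in rough path metric via dyadic piecewise linear lifts. This is essentially the strategy from \cite[Chap.~13]{FrizVictoire}, adapted to the $p$-variation setting of \cite{Ledoux_Qian_Zhang_p_var}.

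First, I would strengthen classical Schilder: on $C([0,T];\R^3)$ endowed with the uniform norm, $(\sqrt\epsilon\beta)_{\epsilon>0}$ satisfies an LDP with rate function $\mathcal I$. To upgrade the topology, I would invoke the inverse contraction principle together with exponential tightness of $(\sqrt\epsilon\beta)$ in $\mathcal V^p(\R^3)$ for $p\in(2,3)$. Exponential tightness reduces to showing, for each $R>0$, the existence of $K_R\subset \mathcal V^p$ compact such that $\limsup_{\epsilon\downarrow 0}\epsilon\log \mathbb P(\sqrt\epsilon\beta\notin K_R)\le -R$, which one proves via Fernique-type inequalities: $\mathbb E[\exp(c\|\beta\|_{p\mathrm{-var}}^2)]<\infty$ for $c$ small enough (by Garsia--Rodemich--Rumsey applied to Brownian paths, or equivalently by Besov embeddings), together with an Arzelà--Ascoli compactness criterion in $\mathcal V^p$.

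Second, I would handle the second level $\rpp$. The contraction principle gives an LDP for the canonical lift $\Lambda_{\sqrt\epsilon}\rp^n:=(\sqrt\epsilon \delta\beta^n,\epsilon\rpp^n)$ of the dyadic piecewise linear approximation $\beta^n$ defined in \eqref{w_n_def}, since for each fixed $n$ the lift map $h\mapsto(h,\int\delta h\otimes dh)$ from $\mathcal H$ to $\mathcal V^p\times\mathcal V_2^{p/2}$ is continuous when restricted to piecewise-affine paths with a uniform number of breakpoints. The rate function obtained is still $\mathcal I$, because on $\mathcal H$ the second level $\int\delta h\otimes dh$ is determined by $h$. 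The key step is then an \textbf{exponentially good approximation} estimate of the form
\[
\lim_{n\to\infty}\limsup_{\epsilon\downarrow 0}\epsilon\log \mathbb P\Big(\|\sqrt\epsilon\beta-\sqrt\epsilon\beta^n\|_{p\mathrm{-var}}+\|\epsilon\rpp-\epsilon\rpp^n\|_{p/2\mathrm{-var}}>\delta\Big)=-\infty,
\]
for every $\delta>0$. The first term is controlled by standard estimates on dyadic approximation of Brownian paths. For the second level one writes $\rpp_{s,t}-\rpp^n_{s,t}$ as a stochastic integral and invokes Gaussian hypercontractivity / Fernique-type bounds for homogeneous Wiener chaos of order~$2$: there exist constants $C,c>0$ such that $\mathbb E[\exp(c\|\rpp-\rpp^n\|_{p/2\mathrm{-var}})]\le C$ with a rate that decays in $n$. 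From this exponential approximation and the LDP for each $\Lambda_{\sqrt\epsilon}\rp^n$, the extended contraction principle (Dembo--Zeitouni, Theorem~4.2.23) yields the desired LDP for the limiting object $\Lambda_{\sqrt\epsilon}\rp$ with the same good rate function $\mathcal I$.

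The main obstacle is the Fernique-type control of $\|\rpp-\rpp^n\|_{p/2\mathrm{-var}}$ in exponential moment with rate decaying as $n\to\infty$. Once this second-level estimate is in hand, everything else is standard: the first-level estimate is the usual Lévy modulus argument, the contraction to smooth lifts is deterministic, and the transfer principle is a black box. Since a detailed treatment is available in \cite{Ledoux_Qian_Zhang_p_var} and \cite[Chap.~13]{FrizVictoire}, I would simply cite it here, noting that their results are formulated in the precise topology $\mathcal V^p(\R^3)\times\mathcal V_2^{p/2}(\R^3)$ required for our subsequent application to \eqref{rLLG}.
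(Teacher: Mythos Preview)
Your proposal is correct and aligns with the paper's treatment: the paper does not prove this statement at all but simply cites it as \cite[Theorem~1]{Ledoux_Qian_Zhang_p_var} (for the $p$-variation topology) and \cite[Theorem~13.43]{FrizVictoire} (for the H\"older topology). Your outline of the exponentially-good-approximation argument is precisely the strategy used in those references, and your closing remark that one ``would simply cite it here'' is exactly what the paper does.
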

	We aim to derive a large deviation principle for the family of probability measures $\nu_{\epsilon}:=\mathbb{P}\circ (u^{\epsilon})^{-1}$, $\epsilon>0$, where   $u^\epsilon=\Psi^k(u^0,\Lambda_{\sqrt{\epsilon} g(\cdot)}\mathcal{F}(\rp) )$ and $\rp$ the Stratonovich enhancement of a $\mathbb{R}^3$ valued Brownian motion (recall that we take in consideration rough drivers of the form introduced in Example \ref{exa:B}).
	The large deviation principle for $(\nu_{\epsilon})_{\epsilon>0}$ is a consequence of the following contraction principle (Appendix C, Theorem C.6 in \cite{FrizVictoire}).
	\begin{theorem}\label{teo:contr-principle}
		(Contraction principle) Let $\mathcal{X}$ and $\mathcal{Y}$ be Hausdorff topological spaces. 
		Suppose that $f:\mathcal{X}\rightarrow\mathcal{Y}$ is a continuous measurable map. 
		If a set of probability measures $(\mu_\epsilon)_{\epsilon>0}$ on $\mathcal{X}$ satisfies a large deviation principle on $\mathcal{X}$ with good rate function $\mathcal{I}$, then the image measures $(\mu_\epsilon\circ f^{-1})_{\epsilon>0}$ satisfy a large deviation principle on $\mathcal{Y}$ with good rate function
		\begin{align*}
		\mathcal{J}(y):= \inf \big(\mathcal{I}(x) :\,x\in \mathcal{X}\;\mathrm{and}\;f(x)=y\big)\, .
		\end{align*}
	\end{theorem}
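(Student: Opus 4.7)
The plan is to verify in turn that (i) \( \mathcal{J} \) is a well-defined good rate function on \( \mathcal{Y} \), and (ii) the family of push-forwards \( \nu_\epsilon := \mu_\epsilon\circ f^{-1} \) satisfies the LDP upper and lower bounds with rate \( \mathcal{J} \). Throughout, the main ingredients are just the continuity of \( f \) together with the goodness of \( \mathcal{I} \); no probabilistic input beyond the hypothesized LDP for \( (\mu_\epsilon) \) is required.

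\textbf{Step 1 (goodness of \( \mathcal{J} \)).} Fix \( c\ge 0 \) and set \( K_c:=\{x\in \mathcal{X}:\mathcal{I}(x)\le c\} \), which is compact by hypothesis. I claim \( \{y\in \mathcal Y:\mathcal J(y)\le c\}=f(K_c) \). Indeed, if \( y=f(x) \) with \( x\in K_c \) then \( \mathcal{J}(y)\le \mathcal{I}(x)\le c \); conversely, if \( \mathcal J(y)\le c \), by definition of infimum one can extract \( x_n \) with \( f(x_n)=y \) and \( \mathcal{I}(x_n)\downarrow \mathcal J(y)\le c \); eventually \( x_n\in K_{c+1} \) which is compact, and after extraction \( x_n\to x_\star \). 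By lower semicontinuity of \( \mathcal{I} \) (a byproduct of its level sets being closed, since \( \mathcal X \) is Hausdorff) one gets \( \mathcal{I}(x_\star)\le c \), and continuity of \( f \) yields \( f(x_\star)=y \), so \( y\in f(K_c) \). Thus the level set is the continuous image of a compact set, hence compact, and (since \( \mathcal Y \) is Hausdorff) closed. This also shows the infimum defining \( \mathcal{J} \) is attained whenever it is finite.

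\textbf{Step 2 (LDP for the push-forwards).} Let \( B\subset \mathcal{Y} \) be a Borel set. Since \( f \) is continuous and measurable, \( f^{-1}(B^{o})\subset (f^{-1}(B))^{o} \) and \( \overline{f^{-1}(B)}\subset f^{-1}(\bar{B}) \), and \( f^{-1}(B) \) is measurable, so we may apply the LDP for \( (\mu_\epsilon) \) to \( f^{-1}(B) \). The upper bound gives
\[
\limsup_{\epsilon\to 0}\epsilon \log\nu_\epsilon(B)
=\limsup_{\epsilon\to 0}\epsilon\log \mu_\epsilon\big(f^{-1}(B)\big)
\le -\inf_{x\in f^{-1}(\bar B)}\mathcal I(x)
= -\inf_{y\in \bar B}\mathcal J(y),
\]
where the last equality follows from the elementary identity \( \inf\{\mathcal{I}(x):f(x)\in A\}=\inf\{\mathcal{J}(y):y\in A\} \), valid for any \( A\subset \mathcal Y \) by definition of \( \mathcal{J} \). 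The lower bound is symmetric:
\[
\liminf_{\epsilon\to 0}\epsilon\log \nu_\epsilon(B)
\ge -\inf_{x\in f^{-1}(B^{o})}\mathcal I(x)
= -\inf_{y\in B^{o}}\mathcal J(y).
\]
Together these give the LDP for \( (\nu_\epsilon) \) with good rate \( \mathcal{J} \).

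\textbf{Expected difficulty.} The only slightly delicate point is in Step 1: verifying that \( \{\mathcal{J}\le c\} \) is genuinely closed in \( \mathcal{Y} \) (rather than merely compact-in-the-image). This is where the Hausdorff hypothesis on \( \mathcal{Y} \) is used, guaranteeing that compact sets are closed; without it, the argument that continuous images of compact sets are compact still goes through, but the level sets might fail to be closed and \( \mathcal{J} \) would only be a weak rate function. All other computations are routine manipulations of infima and of the continuity/measurability of \( f \).
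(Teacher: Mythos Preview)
The paper does not give its own proof of this statement: it is quoted verbatim as the contraction principle and attributed to \cite[Appendix~C, Theorem~C.6]{FrizVictoire}. So there is nothing to compare against; your sketch stands on its own.

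Your argument is essentially the standard one and is correct, with one small caveat. In Step~1, when you write ``eventually \( x_n\in K_{c+1} \) which is compact, and after extraction \( x_n\to x_\star \)'', you are implicitly using \emph{sequential} compactness of \( K_{c+1} \). In a general Hausdorff space this need not follow from compactness. The fix is routine: replace the sequential argument by the finite intersection property. For each \( n\ge 1 \) the set \( f^{-1}(\{y\})\cap K_{c+1/n} \) is nonempty (by definition of the infimum) and closed in the compact set \( K_{c+1} \); these sets are nested, so their intersection \( f^{-1}(\{y\})\cap K_c \) is nonempty, which gives the desired \( x_\star \). With this adjustment Step~1 goes through in full generality. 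Step~2 is fine as written; the inclusions \( f^{-1}(B^{o})\subset (f^{-1}(B))^{o} \) and \( \overline{f^{-1}(B)}\subset f^{-1}(\bar B) \) and the identity \( \inf\{\mathcal I(x):f(x)\in A\}=\inf\{\mathcal J(y):y\in A\} \) are exactly what is needed.
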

	The next theorem states the large deviation principle for vanishing noise for the solution to \eqref{rLLG} on $\T $. 
	\begin{theorem} (Freidlin-Wentzell large deviations)\label{teo:Schilder-RP}
		Fix $k\in \mathbb{N}$, $p\in (2,3)$ and $u^0\in H^k(\T ;\mathbb{S}^2)$. 
		Consider $\mathbf{G}:=\Lambda_{g(\cdot)} \mathcal{F}(\rp)$, where $g\in H^{k+1}$ and $\rp$ is  the enhanced Brownian motion.
		Let $\epsilon>0$ and $u^\epsilon=\Psi^k(u^0,\Lambda_{\sqrt{\epsilon}}\mathbf{G} )$ be a solution to the \eqref{rLLG} and $u=\Psi^k(u^0, 0)$  be the deterministic solution to the (LLG) on $\T\times [0,T]$. 
		Then $(\nu_{\epsilon})_{\epsilon>0}$ satisfies a large deviations principle in $L^\infty(H^k)\cap L^2(H^{k+1})\cap \mathcal{V}^p(H^{k-1})$  with good rate function
		\begin{align}\label{rate_fct_on_Y}
		\mathcal{J}(y):= \inf _{h\in\mathcal{H}}\big(\mathcal{I}(h) : \Psi^k(u^0,\Lambda_{g(\cdot)}\mathcal{F}(h))=y\big)\, ,
		\end{align}
		where $\mathcal{I}$ is defined in \eqref{good_rate_fc_BM}.
	\end{theorem}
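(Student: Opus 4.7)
The strategy is to realize $u^\epsilon$ as the image of $\Lambda_{\sqrt\epsilon}\rp$ under a continuous map, and then invoke the contraction principle of Theorem \ref{teo:contr-principle} together with the rough-path Schilder theorem. More precisely, I will decompose the solution map as the composition
\[
\Phi\colon \mathcal V^p(0,T;\R^3)\times \mathcal V^{p/2}_2(0,T;\R^{3\times 3})
\xrightarrow{\;\Lambda_{g(\cdot)}\circ \mathcal F\;} \RD_a^p(H^{k+1})
\xrightarrow{\;\Psi^k(u^0,\cdot)\;} \mathcal Y,
\]
where $\mathcal Y:=L^\infty(H^k)\cap L^2(H^{k+1})\cap \mathcal V^p(H^{k-1})$, and show that $\Phi$ is continuous. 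Since $u^\epsilon=\Phi(\Lambda_{\sqrt\epsilon}\rp)$ by definition, the LDP of $(\Lambda_{\sqrt\epsilon}\rp)_\epsilon$ on $\mathcal V^p\times \mathcal V_2^{p/2}$ with good rate function $\mathcal I$ transports, via Theorem \ref{teo:contr-principle}, to an LDP for $\nu_\epsilon=\mathbb P\circ(u^\epsilon)^{-1}$ with good rate function
\[
\mathcal J(y)=\inf\{\mathcal I(h):h\in\mathcal V^p\times\mathcal V_2^{p/2},\ \Phi(h)=y\}\, .
\]

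First, I would verify continuity of the lifting map $\mathcal L:=\Lambda_{g(\cdot)}\circ\mathcal F$. The map $\mathcal F$ is linear and bounded from $\R^3$ into $\mathcal L_a(\R^3)$ and its tensorisation $\mathcal F\otimes\mathcal F$ is bounded from $\R^{3\times 3}$ into $\mathcal L(\R^9)$; consequently $(\beta,\rpp)\mapsto (\mathcal F(\delta\beta),\mathcal F^{\otimes 2}\rpp)$ is continuous (in fact Lipschitz) between the corresponding $(p,p/2)$-variation spaces. Multiplying pointwise by $g(x)$ (resp.\ $g(x)^2$) produces an element of $H^{k+1}(\T;\mathcal L_a(\R^3))$ (resp.\ $H^{k+1}(\T;\mathcal L(\R^9))$) by the algebra property of $H^{k+1}(\T)$, with the corresponding bound $\|\mathcal L(\rp)\|_{\RD^p_a(H^{k+1})}\lesssim \|g\|_{H^{k+1}}^2\,\|\rp\|_{\mathcal V^p\times\mathcal V_2^{p/2}}$. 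Geometricity and anti-symmetry follow from the computations already carried out in Example \ref{exa:B} and the subsequent extension \eqref{simple_RD}. Next, Theorem \ref{th:cont_wz_k} provides the continuity of $\Psi^k(u^0,\cdot)\colon \RD^p_a(H^{k+1})\to\mathcal Y$. Composing, $\Phi$ is continuous.

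With $\Phi$ in hand, the contraction principle yields the claim, modulo identifying the rate function. The Schilder theorem cited above gives good rate function $\mathcal I$ on $\mathcal V^p\times\mathcal V_2^{p/2}$, which is finite only on the canonical rough-path lift of Cameron-Martin paths; hence the infimum defining $\mathcal J(y)$ effectively ranges over $h\in\mathcal H$, producing exactly \eqref{rate_fct_on_Y}. Goodness of $\mathcal J$ is automatic from the contraction principle since $\Phi$ is continuous and $\mathcal I$ is good.

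The only genuinely delicate point is checking that the pointwise multiplication by $g$ in the construction of $\mathcal L$ is continuous not only into $L^2$-valued rough drivers but into the $H^{k+1}$-valued ones, which is where the algebra structure of $H^{k+1}(\T)$ (available since $\T$ is one-dimensional and $k+1\ge 1$) is needed; this is routine but should be stated explicitly. Everything else is a direct transcription of the classical contraction argument, taking advantage of the fact that the pathwise nature of Theorem \ref{th:cont_wz_k} removes all measure-theoretic subtleties usually encountered in stochastic LDP proofs.
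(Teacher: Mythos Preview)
Your proposal is correct and follows essentially the same route as the paper: factor the solution map as $\Psi^k(u^0,\cdot)\circ\Lambda_{g(\cdot)}\circ\mathcal F$, verify continuity of each factor (the paper does this more briefly, simply noting that $\mathcal F$ is continuous and citing the It\^o--Lyons continuity), and then apply the contraction principle of Theorem~\ref{teo:contr-principle} to transport the rough-path Schilder LDP for $(\Lambda_{\sqrt\epsilon}\rp)_\epsilon$ to $(\nu_\epsilon)_\epsilon$. Your treatment is in fact slightly more explicit than the paper's on the algebra property of $H^{k+1}$ and on the identification of the rate function via the Cameron--Martin support of $\mathcal I$.
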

	\begin{proof}
		As a consequence of Theorem \ref{th:cont_wz_2}, the map that associates the noise $\Lambda _{\sqrt{\epsilon}} \G $ to the solution $\pi(u^0;\Lambda_{\sqrt{\epsilon}} \G )$ is continuous. Since $\G=\Lambda_{g(\cdot)} \mathcal{F}(\rp)$ and $\mathcal{F}$ is continuous, also the map that associates $\rp$ to $\Psi^k(u^0, \Lambda_{\sqrt{\epsilon}g(\cdot)} \mathcal{F}(\rp))$ is continuous. 
		
		We state the problem in the framework of Theorem \ref{teo:contr-principle}.
		Since every metric space is a Hausdorff space, we can set  $\mathcal{X}:= \mathcal{V}^p(\R ^3)\times\mathcal{V}^{p/2}_2(\R ^3)$ and $\mathcal{Y}:=L^\infty(H^k)\cap L^2(H^{k+1})\cap \mathcal{V}^p(H^{k-1})$. Define $f(\rp):=\Psi^k(u^0,\Lambda_{g(\cdot)}\mathcal{F}(\rp))$ for $\rp \in \mathcal{X}$ and the family of measures $\mu_{\epsilon}(\cdot):=\mathbb{P}\circ(\Lambda_{\sqrt{\epsilon}}\boldsymbol\beta )^{-1}$ on $\mathcal{X}$. In addition, by Theorem \ref{teo:Schilder-RP} the sequence of measures $(\mu_{\epsilon})_{\epsilon>0}$ satisfies a large deviation principle on $\mathcal{X}$ with good rate function $\mathcal{I}$ as in \eqref{good_rate_fc_BM}. We can therefore apply the contraction principle in Theorem \ref{teo:contr-principle} and conclude that the sequence of probability measures $( \mu_{\epsilon}\circ f^{-1})_\epsilon$ satisfies a LDP on $\mathcal{Y}$ with good rate functions defined in \eqref{rate_fct_on_Y}.
	\end{proof}
		
	\appendix
	\section{Combinatorial Lemmas}
	\label{app:combinatorial}

	Here we prove some combinatorial results that are used both in the proof of higher order regularity or in the wong-zakai convergence.
	We introduce some algebraic identities that will be useful in the following: for any real maps $(g_t)$, $(G_{s,t})$
	and every $s\le u\le t\in[0,T]$:
	\begin{align}\label{der_1}
	\delta[(s,t)\mapsto G_{s,t}g_s]_{s,u,t}
	&=-G_{u,t}\delta g_{s,u}+\delta G_{s,u,t} g_s\, ,
	\intertext{and if $H=H_{s,t}$ is another such two-index map}
	\label{der2_gene}
	\delta [(s,t)\mapsto G_{s,t}H_{s,t}]_{s,u,t}
	&=G_{s,u}H_{u,t}+ G_{u,t}H_{s,u} + \delta G_{s,u,t}H_{s,t} + G_{s,t}\delta G_{s,u,t} - \delta G_{s,u,t}\delta H_{s,u,t}
	\end{align}
	 If  $G$ and $H$ are increments, the formula \eqref{der2_gene} simplifies and we have
	\begin{equation}\label{der_2}
	\delta [(s,t)\mapsto G_{s,t}H_{s,t}]_{s,u,t}=G_{s,u}H_{u,t}+G_{u,t}H_{s,u}\,.
	\end{equation}

	\subsection{The first Lemma}
	Recall the notations \eqref{tensor_vector}-\eqref{tensor_matrix}.
	We need the following preliminary result.
	\begin{lemma}\label{Lemma_1_combinatorics}
		Let $a,b,c,d:[0,T]\rightarrow \R^n $ and $A,B,C,D:[0,T]\times [0,T]\rightarrow \mathcal L(\R ^n)$.
		 Then for $s <u<t\in[0,T]$ we have 
		\begin{multline}
		\label{eq:combinatorics_1}
		\delta [(s,t)\mapsto(A_{s,t}a_s-B_{s,t}b_s)\otimes c_s]_{s,u,t}
		\\
		=-\big((A-B)_{u,t}\otimes \mathbf 1_{n\times n}\big)\delta [a\otimes c]_{s,u}-B_{u,t}\otimes \mathbf 1_{n\times n}\delta [(a-b)\otimes c]_{s,u}
		+(\delta A_{s,u,t}a_s-\delta B_{s,u,t}b_s)\otimes c_s\,.
		\end{multline}
		If $A,B,C,D$ are increments, then
		\begin{multline}
		\label{eq:combinatorics_2}
		\delta [(s,t)\mapsto(A_{s,t}a_s-B_{s,t}b_s)\otimes (C_{s,t}c_s-D_{s,t}d_s)]_{s,u,t}
		\\=(A_{u,t}-B_{u,t})\Big(\mathbf 1_{n\times n}\otimes (D_{u,t}-C_{u,t})\delta(a\otimes d)_{s,u}-\big(\mathbf 1_{n\times n}\otimes C_{u,t}\big)\delta[a\otimes (c-d)]_{s,u}\Big)
		\\+B_{u,t}\otimes \mathbf 1_{n\times n}\Big(\mathbf 1_{n\times n}\otimes (D_{u,t}-C_{u,t})\delta[(a-b)\otimes d]_{s,u}-\mathbf 1_{n\times n}\otimes C_{u,t}(\delta[(a-b)\otimes (c-d)]_{s,u})\Big)
		\\+(C_{u,t}c_s-D_{u,t}d_s)\otimes (A_{s,u}a_s-B_{s,u}b_s)+(C_{s,u}c_s-D_{s,u}d_s)\otimes (A_{u,t}a_s-B_{u,t}b_s)\,.
		\end{multline}
	\end{lemma}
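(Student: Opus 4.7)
The proof plan is to establish both identities by direct computation from the definition of the three-index coboundary $\delta$, combined with the one-variable identities \eqref{der_1} and \eqref{der2_gene} already recorded above. Set $F_{s,t}:=A_{s,t}a_s-B_{s,t}b_s$ throughout, and additionally $H_{s,t}:=C_{s,t}c_s-D_{s,t}d_s$ for the second identity. The whole argument reduces to purely algebraic bookkeeping once both sides are fully expanded: the right-hand sides are obtained from the left ones via an add-and-subtract manipulation that exposes the paired differences $A-B$ and $a-b$ (respectively $C-D$ and $c-d$).

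For \eqref{eq:combinatorics_1}, starting from the telescoping identity $F_{s,t}-F_{s,u}=F_{u,t}+\delta F_{s,u,t}$ and the fact that $c_s-c_u=-\delta c_{s,u}$, one directly obtains the baseline decomposition
\[
\delta[F\otimes c]_{s,u,t}=\delta F_{s,u,t}\otimes c_s-F_{u,t}\otimes\delta c_{s,u}\,.
\]
Applying \eqref{der_1} separately to $A_{s,t}a_s$ and $B_{s,t}b_s$, one computes
\[
\delta F_{s,u,t}=-A_{u,t}\delta a_{s,u}+B_{u,t}\delta b_{s,u}+\delta A_{s,u,t}a_s-\delta B_{s,u,t}b_s\,,
\]
and $F_{u,t}=(A-B)_{u,t}a_u+B_{u,t}(a-b)_u$. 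Substituting these and using the elementary but essential pairing identities
\[
-A_{u,t}\delta a_{s,u}+B_{u,t}\delta b_{s,u}=-(A-B)_{u,t}\delta a_{s,u}-B_{u,t}\delta(a-b)_{s,u},\qquad \delta a_{s,u}\otimes c_s+a_u\otimes\delta c_{s,u}=\delta(a\otimes c)_{s,u},
\]
together with their $(a-b)$ analogues, collects all contributions into the two `block' terms $(A-B)_{u,t}\otimes\mathbf{1}_{n\times n}\cdot\delta(a\otimes c)_{s,u}$ and $B_{u,t}\otimes\mathbf{1}_{n\times n}\cdot\delta((a-b)\otimes c)_{s,u}$, plus the explicit remainder $(\delta A_{s,u,t}a_s-\delta B_{s,u,t}b_s)\otimes c_s$, yielding \eqref{eq:combinatorics_1}.

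For \eqref{eq:combinatorics_2}, the assumption that $A,B,C,D$ are increments collapses the above formulas to $\delta F_{s,u,t}=-A_{u,t}\delta a_{s,u}+B_{u,t}\delta b_{s,u}$ and a symmetric expression for $\delta H_{s,u,t}$. The general product identity \eqref{der2_gene}, read through the bilinearity of $\otimes$, gives
\[
\delta[F\otimes H]_{s,u,t}=F_{s,u}\otimes H_{u,t}+F_{u,t}\otimes H_{s,u}+F_{s,t}\otimes\delta H_{s,u,t}+\delta F_{s,u,t}\otimes H_{s,t}-\delta F_{s,u,t}\otimes\delta H_{s,u,t}\,,
\]
which yields two `pure crossing' terms and three further terms involving at least one $\delta F$ or $\delta H$. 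The pure crossings, after shifting $a_u=a_s+\delta a_{s,u}$, $c_u=c_s+\delta c_{s,u}$, etc., and absorbing the resulting differences into the three $\delta$-bearing terms, reorganise into the last line of \eqref{eq:combinatorics_2}. Simultaneously, iterating the pairing trick of the previous step on both factors of the remaining tensor products reconstructs the four nested blocks $X\otimes\mathbf{1}_{n\times n}\cdot(\mathbf{1}_{n\times n}\otimes Y)\delta(\cdot\otimes\cdot)$, organised by which of the four combinations $(a,c),(a,d),(b,c),(b,d)$ the outputs involve under the `difference vs.\ total' decomposition.

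The only real obstacle is bookkeeping: a careless expansion of $F\otimes H$ generates numerous bilinear terms indexed by the choices $A$ vs $B$ and $C$ vs $D$ at pairs of times in $\{s,u,t\}$, and one must carefully track signs and tensor positions when regrouping them into the nested blocks. A robust strategy is to expand both sides directly in the `basis' of elementary tensor products such as $(A_{u,t}a_u)\otimes(C_{u,t}c_u)$ and verify coefficient-by-coefficient equality; no analytic argument is required beyond the definitions of $\delta$ and $\otimes$.
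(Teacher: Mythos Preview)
Your proposal is correct and follows essentially the same approach as the paper—both arguments are pure algebra based on \eqref{der_1} and \eqref{der_2}/\eqref{der2_gene}. The only difference is the order of operations: the paper first rewrites $(A_{s,t}a_s-B_{s,t}b_s)c_s=(A-B)_{s,t}a_sc_s+B_{s,t}(a-b)_sc_s$ (and, for the second identity, fully expands $(Aa-Bb)(Cc-Dd)$ into four elementary products) \emph{before} applying $\delta$, which makes the regrouping step you describe unnecessary; you instead apply $\delta$ first and regroup afterwards, which is slightly longer but equally valid.
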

	
		\begin{remark}\label{rem:combinatorics}
		In coordinates, the relations \eqref{eq:combinatorics_1}-\eqref{eq:combinatorics_2} translate as
		\begin{equation}
		\begin{aligned}
		\delta [(Aa-Bb)^ic^j]_{s,u,t}=&-\sum_{l} ((A^{i,l}-B^{i,l})_{u,t}\delta [a^lc^j]_{s,u}-B^{i,l}_{u,t}\delta [(a^l-b^l)c^j]_{s,u})\\
		&+(\delta A_{s,u,t}a_s-\delta B_{s,u,t} b_s)^ic^j_s\,,
		\end{aligned}
		\end{equation}
		\begin{equation}
		\begin{aligned}
		\delta [(Aa-Bb&)^i(Cc-Dd)^j]_{s,u,t}=\\&-\sum_{l,m} (A^{i,l}_{u,t}-B^{i,l}_{u,t})((D^{j,m}_{u,t}-C^{j,m}_{u,t})\delta[a^ld^m]_{s,u}-C^{j,m}_{u,t}\delta[a^l(c^m-d^m)]_{s,u})\\
		&-\sum_{l,m}B^{i,l}_{u,t}((D^{j,m}_{u,t}-C^{j,m}_{u,t})\delta[(a^l-b^l)d^m]_{s,u}-C^{j,m}_{u,t}\delta[(a^l-b^l)(c^m-d^m)]_{s,u}))\\
		&+(A_{u,t}a_s-B_{u,t}b_s)^i(C_{s,u}c_s-D_{s,u}d_s)^j+(A_{s,u}a_s-B_{s,u}a_s)^i(C_{u,t}c_s-D_{u,t}d_s)^j.
		\end{aligned}
		\end{equation}
	\end{remark}	
	
	\begin{proof}
		Herein, we always assume that $s \leq u \leq t\in[0, T]$. 
		We only deal with $n=1,$ the general case being merely a matter of notations.
		
	For the first identity, we observe that
		\begin{align*}
		(A_{s,t}a_s-B_{s,t}b_s)c_s
		=(A-B)_{s,t}a_sc_s+B_{s,t}(a-b)_sc_s\,,
		\end{align*} 
		The claim follows by applying $\delta[\cdot]_{s,u,t}$ to both sides and using the relation \eqref{der_1}.
		
		In order to prove the second equality, we notice that the following algebraic relation holds:
		\begin{align*}
		(A_{s,t}a_s-B_{s,t}b_s)(C_{s,t}c_s-D_{s,t}d_s)=A_{s,t}C_{s,t}a_sc_s-A_{s,t}D_{s,t}a_sd_s-B_{s,t}C_{s,t}b_sc_s+B_{s,t}D_{s,t}b_sd_s\, .
		\end{align*}
		We apply $\delta[\cdot]_{s,u,t}$ to both sides, yielding
		\begin{align*}
		\delta[(Aa-Bb)(Cc-Dd)]_{s,u,t}=\delta (ACac-ADad)_{s,u,t}-\delta(BCbc-BDbd)_{s,u,t}\, .
		\end{align*}
		We compute only $\delta [ACac-ADad]_{s,u,t}$, since the second term has the same behaviour. We use the relation \eqref{der_1}
		\begin{align*}
		\delta [ACac-ADad]_{s,u,t}=&\delta (AC)_{s,u,t}a_sc_s-\delta (AD)_{s,u,t}a_sd_s\\
		&-(AC)_{u,t}\delta(ac)_{s,u}+(AD)_{u,t}\delta(ad)_{s,u}\, .
		\end{align*}
		From the relation \eqref{der_2},
		\begin{align*}
		\delta (AC)_{s,u,t}a_sc_s-\delta (AD)_{s,u,t}a_sd_s= (A_{s,u}C_{u,t}+A_{u,t}C_{s,u})a_sc_s-(A_{s,u}D_{u,t}+A_{u,t}D_{s,u})a_sd_s\, .
		\end{align*}
		It also holds that
		\begin{align*}
		-A_{u,t}C_{u,t}\delta(ac)_{s,u}+A_{u,t}D_{u,t}\delta(ad)_{s,u}&=A_{u,t}(D_{u,t}\delta(ad)_{s,u}-C_{u,t}\delta(ac)_{s,u})
		\\
		&=A_{u,t}((D_{u,t}-C_{u,t})\delta(ad)_{s,u}-C_{u,t}(\delta(ac)_{s,u}-\delta(ad)_{s,u}))
		\\&=A_{u,t}((D_{u,t}-C_{u,t})\delta(ad)_{s,u}-C_{u,t}(\delta(a(c-d))_{s,u}))\, .
		\end{align*}
		Hence
		\begin{align*}
		\delta((Aa-Bb)(Cc-Dd))_{s,u,t}&=(A_{s,u}C_{u,t}+A_{u,t}C_{s,u})a_sc_s-(A_{s,u}D_{u,t}+A_{u,t}D_{s,u})a_sd_s\\
		&\quad-(B_{s,u}C_{u,t}+B_{u,t}C_{s,u})b_sc_s+(B_{s,u}D_{u,t}+B_{u,t}D_{s,u})b_sd_s\\
		&\quad+A_{u,t}((D_{u,t}-C_{u,t})\delta(ad)_{s,u}-C_{u,t}(\delta(a(c-d))_{s,u}))\\
		&\quad-B_{u,t}((D_{u,t}-C_{u,t})\delta(bd)_{s,u}-C_{u,t}(\delta(b(c-d))_{s,u}))\, .
		\end{align*}
		This shows our claim, since
		\[
		\begin{aligned}
		&A_{u,t}((D_{u,t}-C_{u,t})\delta(ad)_{s,u}-C_{u,t}(\delta(a(c-d))_{s,u}))\\
		&\quad-B_{u,t}((D_{u,t}-C_{u,t})\delta(bd)_{s,u}-C_{u,t}(\delta[b(c-d)]_{s,u}))\\
		&=(A_{u,t}-B_{u,t})((D_{u,t}-C_{u,t})\delta(ad)_{s,u}-C_{u,t}(\delta[a(c-d)]_{s,u}))\\
		&\quad+B_{u,t}\big((D_{u,t}-C_{u,t})\delta((a-b)d)_{s,u}-C_{u,t}(\delta[(a-b)(c-d)]_{s,u})\big)\,.
		\quad \quad \quad \quad \quad \quad \quad \qedhere		
		\end{aligned}
		\]
	\end{proof}

	\subsection{An iterative formula for the remainder}
	\label{Section_rem_est_WZ}
	This is a technical section where we introduce some estimates that we have used in the proof existence and are needed to show the continuity of the It\^o-Lyons map.  We may look at this section as similar in spirit and objectives as Section \ref{Section_remainder_existence}. Indeed also in what follows we use the rough standard machinery \ref{RM1}--\ref{RM3} to estimate a remainder. Since the proof does not rely on the geometric properties of LLG equation, we expose it in an abstract framework.

	Let $f\in L^2(L^2)$,  $a_0\in H^k(\T ;\R ^3)$ and $\mathbf{A}=(A,\AA )\in \RD _a^p(H^{k+1})$ for $p\in [2,3)$ and $k\in\mathbb{N}$. Assume that there exists a solution $a\in L^\infty(H^k)\cap L^2(H^{k+1})$ to the abstract equation
	\begin{equation}\label{eq;abstract_eq}
	\delta a_{s,t}=\int_{s}^{t}f_r d r+A_{s,t}a_s+\AA _{s,t}a_s+a^\natural_{s,t}\, ,
	\end{equation}
	where $a^\natural$ is defined implicitly by the above equation and $a^\natural\in \mathcal{V}^{1-}_2([0,T];L^2)$. We denote a solution to \eqref{eq;abstract_eq} starting in $a_0\in H^k$ and driven by the rough driver $\mathbf{A}=(A,\AA )\in \RD ^p_a(H^{k+1})$ for $p\in [2,3)$, by $a=\pi(a_0;\mathbf{A})$. 
	
	Consider an other rough driver $\mathbf{B}=(B,\mathbb{B})\in \RD ^p_a(H^{k+1})$  and let $b=\pi(b_0;\mathbf{B})$ be an existing solutions to equation \eqref{eq;abstract_eq}.  i.e. there exists a solution $b\in L^\infty(H^k)\cap L^2(H^{k+1})$ to the abstract equation
	\begin{equation*}
	\delta b_{s,t}=\int_{s}^{t}g _rd r+B_{s,t}b_s+\mathbb{B}_{s,t}b_s+b^\natural_{s,t}\, ,
	\end{equation*}
	
	In the sequel, we will need to estimate the difference $a-b$ in the $L^\infty(H^k)\cap 
	L^2(H^{k+1})$ norm for any $k\ge0$, and for that purpose we aim to use the equation satisfied 
	by $\partial_x ^N (a-b)^{\otimes 2}$ for $N\in\mathbb{N}_0$.
 The aim of this paragraph is to find a recursive estimate for the remainder associated with this equation. 
 As in Section \ref{Section_remainder_existence}, the remainder is defined locally on $s\leq t\in P_k$, where $(P_k)_k$ is a partition of $[0,T]$: in order to simplify the notations, the times $s\leq t$ are dropped where not needed. In particular $(Ab)c$ would correspond to $(A_{s,t}b_s)c_s$, whereas $(Ab)(Cd) $ stands for $(A_{s,t}b_s)(C_{s,t}d_s)$.
	
	For the reasons alluded in Section \ref{Section_remainder_existence}, we need to do the 
	computations for the more general term $\partial_x ^N (a-b)\otimes \partial_x ^M (a-b)$, for 
	any $N,M\in\mathbb{N}_0$.
	Specifically, we will compute an estimate for the remainder term
	\begin{align}\label{eq:general_eq_der}
	(\partial_x ^N (a-b) \partial_x ^M (a-b))^{\natural,i,j}:=\delta (\partial_x ^N (a-b)^i & 
	\partial_x ^M (a-b)^j)-(\D +I+\II+\tilde\II)^{i,j}(N,M)\, ,
	\end{align}
where
	\begin{equation}\label{Drift_NM}
	\begin{aligned}
	\D ^{i,j}(N,M)_{s,t}
	&:=\int_s^t (\partial_x ^N(f_r-g_r)^i\partial_x ^M(a_r-b_r)^j + \partial_x 
	^N(a_r-b_r)^i\partial_x ^M(f_r-g_r)^j ) d r\, ,
	\end{aligned}
	\end{equation}
and
	\begin{align}
	I&:={\sum_{n=0}^{N} \binom{N}{n} (\partial_x ^{N-n}A\partial_x ^{n} a-\partial_x 
	^{N-n}B\partial_x ^{n} b)^i \partial_x ^M(a-b)^j}
	\label{I_1}
	\\
	&\quad 
	+\sum_{m=0}^{M} \binom{M}{m}\partial_x ^N(a-b)^i(\partial_x ^{M-m}A\partial_x ^{m} a-\partial_x 
	^{M-m}B\partial_x ^{m} b)^j
	\label{I_2}
	\\&
	=I_1+I_2\,,
	\nonumber
	\\
	\II&:=\sum_{n=0}^{N} \binom{N}{n}(\partial_x ^{N-n}\AA \partial_x ^{n} a-\partial_x 
	^{N-n}\mathbb{B}\partial_x ^{n} b)^i \partial_x ^M(a-b)^j,
	\label{I_3}
	\\
	&\quad +\sum_{m=0}^{M} \binom{M}{m}\partial_x ^N(a-b)^i(\partial_x ^{M-m}\AA \partial_x ^{m} 
	a-\partial_x ^{M-m}\mathbb{B}\partial_x ^{m} b)^j 
	\label{I_4}
	\\&
	=\II_3 + \II_4\, ,
	\nonumber
	\\
	\tilde\II&:=\sum_{n=0}^{N} \binom{N}{n} (\partial_x ^{N-n}A\partial_x ^{n} a-\partial_x 
	^{N-n}B\partial_x ^{n} b)^i\sum_{m=0}^{M} \binom{M}{m}(\partial_x ^{M-m}A\partial_x ^{m} 
	a-\partial_x ^{M-m}B\partial_x ^{m} b)^j\, .
	\label{I_5}
	\end{align}
	Note that the term $I$ has bounded $p$-variation, whereas $\II,\tilde\II$ have bounded $p/2$-variation. 
	We denote these elements of only $\mathcal{V}^p$-variation of the equation $\partial_x ^N (a-b) 
	\partial_x ^M (a-b)$ by $T_{\partial_x ^N (a-b) \partial_x ^M (a-b)}$. More generally, consider 
	two solutions $a,b$ as above and consider the product equation: the elements of only 
	$\mathcal{V}^p$-variation are denoted by $T_{ab}$.
	
    Let $c,d,e,f:[0,T]\rightarrow \mathbb{R}^3$ and $C,D,E,F:[0,T]\rightarrow \mathcal{L}(\mathbb{R}^3)$, we introduce the following notation:
    \begin{equation}
    \begin{aligned}
    \boldsymbol\gamma (C,D,E,F,c,d,e,f)
    &:=-\sum_{l,m} (C^{i,l}_{u,t}-D^{i,l}_{u,t})((F^{j,m}_{u,t}-E^{j,m}_{u,t})\delta(c^lf^m)_{s,u}-E^{j,m}_{u,t}(\delta(c^l(e^m-f^m))_{s,u}))\nonumber\\
    &\quad-\sum_{l,m}D^{i,l}_{u,t}(F^{j,m}_{u,t}-E^{j,m}_{u,t})\delta((c^l-d^l)f^m)_{s,u}\, .
    \end{aligned}
    \end{equation}
	The objective is to write a general formula for $\delta (\partial_x ^N (a-b) \partial_x ^M 
	(a-b))^{\natural,i,j}_{s,u,t}$ and we make this computation now in Theorem 
	\ref{teo:rem_derivatives}. The proof is technical like the result, but we can no more postpone 
	it.
	\begin{theorem}\label{teo:rem_derivatives}
		Let $z:=a-b$, $Z:=A-B$ and $\mathbb{Z}:=\AA -\mathbb{B}$. In the notations of this section, 
		the quantity $\delta (\partial_x ^N (a-b) \partial_x ^M (a-b))^{\natural,i,j}_{s,u,t}$ 
		corresponding to equation \eqref{eq:general_eq_der} can be rewritten as 
		\begin{align}
		&+\sum_{n=0}^{N} \binom{N}{n} \sum_{l} \partial_x ^{N-n}Z^{i,l}_{u,t}(\delta (\partial_x 
		^{n} a^l\partial_x ^Mz ^j)_{s,u}-T_{\partial_x ^{n} a^l\partial_x 
		^Mz^j})\label{Final_I_1_a}\\
		&-\sum_{n=0}^{N} \binom{N}{n} \sum_{l}\partial_x ^{N-n}B^{i,l}_{u,t}(\delta(\partial_x 
		^{n}z^l \partial_x ^Mz^j)_{s,u}-T_{\partial_x ^{n}z^l  \partial_x 
		^Mz^j})\label{Final_I_1_b}\\
		&+\sum_{m=0}^{M} \binom{M}{m} \sum_{l} \partial_x ^{M-m}Z^{j,l}_{u,t}(\delta (\partial_x 
		^{m} a^l\partial_x ^Nz^i)_{s,u}-T_{\partial_x ^{m} a^l\partial_x 
		^Nz^i})\label{Final_I_2_a}\\
		&-\sum_{m=0}^{M} \binom{M}{m} \sum_{l}\partial_x ^{M-m}B^{j,l}_{u,t}(\delta(\partial_x 
		^{m}z^l \partial_x ^Nz^i)_{s,u}-T_{\partial_x ^{m}z^l \partial_x 
		^Nz^i})\label{Final_I_2_b}\\
		&+\sum_{n=0}^{N} \binom{N}{n}\sum_{l}\big( \partial_x ^{N-n}\mathbb{B}^{i,l}_{u,t}\delta 
		(\partial_x ^{n}z^l\partial_x ^M z^j)_{s,u}-\partial_x^{N-n} \mathbb{Z}^{i,l}_{u,t}\delta 
		(\partial_x^n a^l\partial_x^M z^j)_{s,u} \big)
		 \label{Final_I_3}\\
		&+\sum_{m=0}^{M} \binom{M}{m}\sum_{l} \big(\partial_x ^{M-m}\mathbb{B}^{j,l}_{u,t}\delta 
		(\partial_x ^{m}z^l\partial_x ^N z^i)_{s,u}- \partial_x^{M-m} \mathbb{Z}^{j,l}_{u,t}\delta 
		(\partial_x^m a^l\partial_x^N z^i)_{s,u}\big)
		 \label{Final_I_4}\\
		&+\sum_{n=0}^{N} \sum_{m=0}^{M}\binom{N}{n}\binom{M}{m} \boldsymbol\gamma (\partial_x ^{N-n} (A- 
		B)^i, \partial_x ^{M-m}(A-B)^j) \label{Final_I_5_a}\\
		&-\sum_{n=0}^{N} \sum_{m=0}^{M}\sum_{l,\ell}\binom{N}{n}\binom{M}{m}\partial_x ^{N-n} 
		B^{i,l}_{u,t}\partial_x ^{M-m}A^{j,\ell}_{u,t}\delta(\partial_x ^{n}z^l\partial_x 
		^{m}z^\ell)_{s,u}\label{Final_I_5_b}\, ,
		\end{align}
		where
		$\boldsymbol\gamma (\partial_x ^{N-n} (A- B)^i,\partial_x ^{M-m}(A-B)^j)$ is a shorthand notation for
		\[
		\boldsymbol\gamma (\partial_x 
		^{N-n}A,\partial_x ^{N-n}B,\partial_x ^{M-m} A,\partial_x ^{M-m} B,\dd^n a,\dd^n b,\dd^m 
		a,\dd^m b) \, .
		\]
	\end{theorem}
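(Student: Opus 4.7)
The starting point is to apply the Chen-type operator $\delta_{s,u,t}$ to both sides of the defining relation \eqref{eq:general_eq_der}. Since by construction $\D(N,M)_{s,t}$ is an increment (it is a Bochner integral from $s$ to $t$), one has $\delta \D(N,M)_{s,u,t}\equiv 0$, so
\begin{equation*}
\delta(\partial_x^N z\,\partial_x^M z)^{\natural,i,j}_{s,u,t}
= \delta[\partial_x^N z^i\,\partial_x^M z^j]_{s,u,t} - \delta[I+\II+\tilde\II]^{i,j}_{s,u,t}.
\end{equation*}
The term $\delta[\partial_x^N z^i\,\partial_x^M z^j]_{s,u,t}$ on the left is expanded by means of the Leibniz identity \eqref{der_2}, and will provide the building block $\delta(\partial_x^n z^l\,\partial_x^M z^j)_{s,u}$ (and its symmetric counterpart) that appears in \eqref{Final_I_1_b},\,\eqref{Final_I_2_b},\,\eqref{Final_I_3},\,\eqref{Final_I_4},\,\eqref{Final_I_5_b}. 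The right-hand side is treated by combining the binomial Leibniz expansion $\partial_x^N(Aa)=\sum_n \binom{N}{n}\partial_x^{N-n}A\,\partial_x^n a$ with the combinatorial identities of Lemma~\ref{Lemma_1_combinatorics}.

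Concretely, each summand of $I_1$ (see \eqref{I_1}) has the form $(\partial_x^{N-n}A\,\partial_x^n a - \partial_x^{N-n}B\,\partial_x^n b)^i\,\partial_x^M z^j$, which is exactly the structure treated in the first identity of Lemma~\ref{Lemma_1_combinatorics} (cf.\ Remark~\ref{rem:combinatorics}). Applying it produces the terms \eqref{Final_I_1_a} and \eqref{Final_I_1_b}, plus an extra contribution of the form $(\delta \partial_x^{N-n}A_{s,u,t}\,a_s - \delta \partial_x^{N-n}B_{s,u,t}\,b_s)^i\,\partial_x^M z^j_s$. The same operation on $I_2$, $\II_3$, $\II_4$ yields \eqref{Final_I_2_a}--\eqref{Final_I_2_b} and the two summands of \eqref{Final_I_3}--\eqref{Final_I_4}, each time up to an ``anomalous'' increment $\delta \partial_x^{N-n}\AA_{s,u,t}$ or $\delta \partial_x^{N-n}A_{s,u,t}$.

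These anomalous increments are absorbed using Chen's relation \eqref{chen-rela}, differentiated in space: the identity $\partial_x^k\delta\AA_{s,u,t}(x) = \sum_{j=0}^{k}\binom{k}{j}\partial_x^{k-j}A_{u,t}(x)\,\partial_x^{j}A_{s,u}(x)$ (applied both to $\A$ and $\B$) converts every $\delta\partial_x^{N-n}\AA$-increment into a bilinear expression in $A,B$. The same manipulation on $\tilde\II$ requires the second identity \eqref{eq:combinatorics_2} of Lemma~\ref{Lemma_1_combinatorics}, which is exactly suited to products of the type $(\partial_x^{N-n}A\,\partial_x^n a - \partial_x^{N-n}B\,\partial_x^n b)^i(\partial_x^{M-m}A\,\partial_x^m a - \partial_x^{M-m}B\,\partial_x^m b)^j$; this produces the $\boldsymbol\gamma$-terms collected in \eqref{Final_I_5_a} together with the mixed piece \eqref{Final_I_5_b}.

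The hardest part of the argument is not the algebra of any individual step but rather the bookkeeping: after expansion the anomalous $\delta$-increments from $I,\II$ and the Chen contributions from $\tilde\II$ must be regrouped so that the pieces proportional to $Z=A-B$ and $\ZZ$ separate cleanly from those proportional to $B$ and $\BB$, and the binomial coefficients $\binom{N}{n}\binom{M}{m}$ appear in their correct pairings. The key combinatorial identity here is $\sum_{k=0}^{N-n}\binom{N}{n+k}\binom{n+k}{k}=\binom{N}{n}\sum_{k=0}^{N-n}\binom{N-n}{k}$, which ensures that after collecting Chen contributions one recovers precisely the claimed form \eqref{Final_I_1_a}--\eqref{Final_I_5_b}. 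Once this repackaging is done, no further ingredient is needed; the statement follows by identifying both sides.
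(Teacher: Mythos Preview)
Your outline has the right ingredients (Lemma~\ref{Lemma_1_combinatorics}, Chen's relation, the binomial identity) but misidentifies the mechanism in two places, and this hides the step that actually makes the proof work.

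First, $\delta[\partial_x^N z^i\,\partial_x^M z^j]_{s,u,t}=0$: it is the three-index $\delta$ applied to a genuine increment, so there is nothing to expand via \eqref{der_2}. The whole computation is therefore $\delta(\cdot)^{\natural}_{s,u,t}=-\delta(I+\II+\tilde\II)_{s,u,t}$.

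Second, and more importantly, applying \eqref{eq:combinatorics_1} to $I_1$ does \emph{not} produce \eqref{Final_I_1_a}--\eqref{Final_I_1_b}. Since $A,B$ are increments, the ``extra contribution'' $(\delta A\,a-\delta B\,b)$ you refer to vanishes identically; what remains are the terms \eqref{eq_1_I_1}--\eqref{eq_2_I_1} of the paper, namely $\partial_x^{N-n}Z^{i,l}_{u,t}\,\delta(\partial_x^n a^l\partial_x^M z^j)_{s,u}$ and $\partial_x^{N-n}B^{i,l}_{u,t}\,\delta(\partial_x^n z^l\partial_x^M z^j)_{s,u}$. These are only of $p/2$-variation (a $p$-variation factor times a $p$-variation increment), which is not good enough. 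The crucial manoeuvre---absent from your sketch---is to \emph{add and subtract} the $p$-variation part $T_{\partial_x^n a^l\partial_x^M z^j}$ (resp.\ $T_{\partial_x^n z^l\partial_x^M z^j}$) inside each bracket. This is what creates the $p/3$-variation combinations in \eqref{Final_I_1_a}--\eqref{Final_I_2_b}. The price is a collection of leftover terms $\partial_x^{N-n}Z_{u,t}\cdot T_1$, $\partial_x^{N-n}B_{u,t}\cdot T_2$ (the paper's \eqref{Compensation_I_1_a}--\eqref{Compensation_I_1_d}), and it is \emph{these} that must cancel against the Chen contributions $\delta\partial_x^{N-n}\AA_{s,u,t}$, $\delta\partial_x^{N-n}\BB_{s,u,t}$ coming from $\II_3,\II_4$ and against the cross-terms \eqref{Compensation_I_5_a}--\eqref{Compensation_I_5_b} from $\tilde\II$. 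The binomial identity you quote (in its pointwise form $\binom{N}{n+k}\binom{n+k}{k}=\binom{N}{n}\binom{N-n}{k}$) is precisely what makes the cancellation between the $T$-leftovers from $I$ and the Chen terms from $\II$ exact. Without the add--subtract of $T$, you have no $p/3$-variation terms from $I$ and no candidates to cancel the Chen and $\tilde\II$ compensations, so the argument as you wrote it does not close.
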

\begin{remark}
	Note that $\delta (\partial_x ^N (a-b) \partial_x ^M (a-b))^{\natural,i,j}_{s,u,t}$ can be 
	written as sum of terms are all of $\mathcal V^{p/3}$-variation or higher regularity.
\end{remark}
	\begin{proof}
		Note that the Chen's relation implies that
		\begin{align} \label{eq: Chen_multidim}
		\delta (\partial_x ^{m} \AA )_{s,u,t}=\partial_x ^{m} \delta (\AA )_{s,u,t}=\partial_x 
		^{m}(A_{u,t}A_{s,u})=\sum_{l=0}^{m}  \binom{m}{l} \partial_x ^{m-l} A_{u,t}\partial_x ^{l} 
		A_{s,u}\, ,
		\end{align}
		and the same relation holds between $B$ and $\mathbb{B}$.
		By applying \eqref{eq:combinatorics_1} to $ \delta (\II_3)_{s,u,t}$, the elements that are on only $\mathcal V^{p/2}$- variation are
		\begin{align*}
		\sum_{n=0}^{N} \binom{N}{n}(\delta \partial_x ^{N-n}\AA _{s,u,t}\partial_x ^{n} a_s-\delta 
		\partial_x ^{N-n}\mathbb{B}_{s,u,t}\partial_x ^{n}b_s)^i(\partial_x ^Ma^j_s-\partial_x 
		^Mb^j_s)\, ,
		\end{align*}
		where by substituting the explicit form of $\delta \partial_x ^{N-n}\AA _{s,u,t}$ and 
		$\delta \partial_x ^{N-n}\mathbb{B}_{s,u,t}$ as in \eqref{eq: Chen_multidim}  and by 
		recalling that  $\binom{N}{n}=\binom{N}{N-n}$ the last line becomes
		\begin{equation}
		\begin{aligned}\label{Compensation_I_3}
		{\sum_{n=0}^{N} \binom{N}{N-n}\sum_{l=0}^{N-n} \binom{N-n}{l} (\partial_x ^{N-n-l} 
		A_{u,t}\partial_x ^{l} A_{s,u}\partial_x ^{n}a_s-\partial_x ^{N-n-l} B_{u,t}\partial_x ^{l} 
		B_{s,u}\partial_x ^{n}b_s)^i(\partial_x ^M a^j_s-\partial_x ^M b^j_s)}\, . 
		\end{aligned}
		\end{equation}
		The terms of $ \delta (\II_3)_{s,u,t}$ that contribute to the final estimate are in line \eqref{Final_I_3}. From $\delta (\II_4)_{s,u,t}$ in the last line we get similarly
		\begin{align}\label{Compensation_I_4}
		{\sum_{m=0}^{M} \binom{M}{m}\sum_{l=0}^{m} \binom{m}{l} (\partial_x ^{m-l} 
		A_{u,t}\partial_x ^{l} A_{s,u}\partial_x ^{m}a_s-\partial_x ^{m-l} B_{u,t}\partial_x ^{l} 
		B_{s,u}\partial_x ^{m}b_s)^j(\partial_x ^N a^i_s-\partial_x ^N b^i_s)}\, ,
		\end{align}
		and the contributions to the final estimate are in line \eqref{Final_I_4}.
		
		By applying \eqref{eq:combinatorics_2} to $\delta (\tilde\II)_{s,u,t}$, what needs to be compensated is
		\begin{align}
		&{+\sum_{n=0}^{N} \binom{N}{n} (\partial_x ^{N-n}A_{u,t}\partial_x ^{n} a_s-\partial_x 
		^{N-n}B_{u,t}\partial_x ^{n} b_s)^i  \sum_{m=0}^{M} \binom{M}{m} (\partial_x 
		^{M-m}A_{s,u}\partial_x ^{m} a_s-\partial_x ^{M-m}B_{s,u}\partial_x ^{m} b_s)^j} 
		\label{Compensation_I_5_a}\\
		&{+\sum_{n=0}^{N} \binom{N}{n} (\partial_x ^{N-n}A_{s,u}\partial_x ^{n} a_s-\partial_x 
		^{N-n}B_{s,u}\partial_x ^{n} b_s)^i  \sum_{m=0}^{M} \binom{M}{m} (\partial_x 
		^{M-m}A_{u,t}\partial_x ^{m} a_s-\partial_x ^{M-m}B_{u,t}\partial_x ^{m} b_s)^j}\, , 
		\label{Compensation_I_5_b}
		\end{align}
		and the contributions to the final estimates are in \eqref{Final_I_5_a} and  \eqref{Final_I_5_b}.
		
		Now we need to address \eqref{I_1} and \eqref{I_2}: we apply to both \eqref{eq:combinatorics_1} and we notice that no element is of $\mathcal V^{p/3}$-variation. Indeed, for what concerns  \eqref{I_1}, 
		\begin{align}
		\delta (I_1)_{s,u,t}&=
		-\sum_{n=0}^{N} \binom{N}{n} \sum_{l} (\partial_x ^{N-n}A^{i,l}_{u,t}-\partial_x 
		^{N-n}B^{i,l}_{u,t})\delta (\partial_x ^{n} a^l\partial_x ^M(a-b)^j)_{s,u} 
		\label{eq_1_I_1}\\
		&\quad+\sum_{n=0}^{N} \binom{N}{n} \sum_{l}\partial_x ^{N-n}B^{i,l}_{u,t}\delta(\partial_x 
		^{n}(a^l-b^l)  \partial_x ^M(a-b)^j)_{s,u} \label{eq_2_I_1} \, .
		\end{align}
		Analogously for  \eqref{I_2}. For this reason we need to find a compensation so that there appears a term of $\mathcal V^{p/3}$-variation that contributes to the final equality and so that the other elements appearing cancel with \eqref{Compensation_I_3}, \eqref{Compensation_I_4}, \eqref{Compensation_I_5_a} and \eqref{Compensation_I_5_b}. 
		
		We continue to work on $\delta (I_1)_{s,u,t}$ and we notice that we can act on $\delta 
		(\partial_x ^{n} a^l\partial_x ^M(a-b)^j)_{s,u}$ in \eqref{eq_1_I_1} and subtract the term 
		of only $\mathcal V^{p}$-variation, namely
		\begin{align*}
		T_1:=\sum_{p=0}^{n} \binom{n}{p}(\partial_x ^{n-p}A_{s,u}\partial_x ^{p} a_s)^l \partial_x 
		^M (a-b)^j_s+\sum_{m=0}^{M} \binom{M}{m}(\partial_x ^{M-m}A_{s,u}\partial_x ^{m} 
		a_s-\partial_x ^{M-m}B_{s,u}\partial_x ^{m} b_s)^j \partial_x ^{n} a^l_s.
		\end{align*}
		In this way, $\delta (\partial_x ^{n} a^l\partial_x ^M(a-b)^j)_{s,u}-T_1$ presents only 
		terms of $\mathcal V^{p/2}$-variation and therefore 
		\begin{align*}
		-\sum_{n=0}^{N} \binom{N}{n} \sum_{l} (\partial_x ^{N-n}A^{i,l}_{u,t}-\partial_x 
		^{N-n}B^{i,l}_{u,t})(\delta (\partial_x ^{n} a^l\partial_x ^M(a-b)^j)_{s,u}-T_1)
		\end{align*}
		is of $\mathcal V^{p/3}$-variation and appears in the final estimate in \eqref{Final_I_1_a}.
		We substitute $T_1$ and we notice that 
		\begin{align}
		&\quad-\sum_{n=0}^{N} \binom{N}{n} \sum_{l} (\partial_x ^{N-n}A^{i,l}_{u,t}-\partial_x 
		^{N-n}B^{i,l}_{u,t})	\sum_{p=0}^{n} \binom{n}{p}(\partial_x ^{n-p}A_{s,u}\partial_x ^{p} 
		a_s)^l \partial_x ^M (a-b)^j_s\nonumber\\
		&\quad-\sum_{n=0}^{N} \binom{N}{n} \sum_{l}(\partial_x ^{N-n}A^{i,l}_{u,t}-\partial_x 
		^{N-n}B^{i,l}_{u,t})\sum_{m=0}^{M} \binom{M}{m}(\partial_x ^{M-m}A_{s,u}\partial_x ^{m} 
		a_s-\partial_x ^{M-m}B_{s,u}\partial_x ^{m} b_s)^j \partial_x ^{n} a^l_s\nonumber\\
		&={-\sum_{n=0}^{N} \binom{N}{n} \sum_{p=0}^{n} \binom{n}{p}((\partial_x 
		^{N-n}A_{u,t}-\partial_x ^{N-n}B_{u,t})\partial_x ^{n-p}A_{s,u}\partial_x ^{p} a_s)^i 
		\partial_x ^M (a-b)^j_s}\label{Compensation_I_1_a}\\
		&\quad{-\sum_{n=0}^{N} \binom{N}{n} ((\partial_x ^{N-n}A_{u,t}-\partial_x 
		^{N-n}B_{u,t})\partial_x ^{n} a_s)^i\sum_{m=0}^{M} \binom{M}{m}(\partial_x 
		^{M-m}A_{s,u}\partial_x ^{m} a_s-\partial_x ^{M-m}B_{s,u}\partial_x ^{m} b_s)^j 
		}\label{Compensation_I_1_b}.
		\end{align}
		Now consider $\delta(\partial_x ^{n}(a^l-b^l)  \partial_x ^M(a-b)^j)_{s,u}$ in 
		\eqref{eq_2_I_1} and by subtracting $T_2$ defined as
		\begin{align*}
		T_2&:=\sum_{p=0}^{n} \binom{n}{p}(\partial_x ^{n-p}A_{s,u}\partial_x ^{p} a_s-\partial_x 
		^{n-p}B_{s,u}\partial_x ^{p} b_s)^l \partial_x ^M (a-b)^j_s\\
		&\quad+\sum_{m=0}^{M} \binom{M}{m}(\partial_x ^{M-m}A_{s,u}\partial_x ^{m} a_s-\partial_x 
		^{M-m}B_{s,u}\partial_x ^{m} b_s)^j (\partial_x ^{n} a^l_s-\partial_x ^{n} b^l_s)\, ,
		\end{align*}
		we can conclude that $\delta(\partial_x ^{n}(a^l-b^l)  \partial_x ^M(a-b)^j)_{s,u}-T_2$ 
		presents terms that are all of $\mathcal V^{p/2}$-variation and therefore 
		\eqref{Final_I_1_b} contributes to the final equality.
		The new term appearing from adding and subtracting $T_2$ is
		\begin{align}
		&\quad-\sum_{n=0}^{N} \binom{N}{n} \sum_{l} \partial_x ^{N-n}B^{i,l}_{u,t}\sum_{p=0}^{n} 
		\binom{n}{p}(\partial_x ^{n-p}A_{s,u}\partial_x ^{p} a_s-\partial_x ^{n-p}B_{s,u}\partial_x 
		^{p} b_s)^l \partial_x ^M (a-b)^j_s\nonumber\\
		&\quad-\sum_{n=0}^{N} \binom{N}{n}  \sum_{l} \partial_x ^{N-n}B^{i,l}_{u,t}\sum_{m=0}^{M} 
		\binom{M}{m}(\partial_x ^{M-m}A_{s,u}\partial_x ^{m} a_s-\partial_x ^{M-m}B_{s,u}\partial_x 
		^{m} b_s)^j (\partial_x ^{n} a^l_s-\partial_x ^{n} b^l_s)\nonumber\\
		&={-\sum_{n=0}^{N} \binom{N}{n}  \sum_{p=0}^{n} \binom{n}{p}(\partial_x 
		^{N-n}B_{u,t}\partial_x ^{n-p}A_{s,u}\partial_x ^{p} a_s-\partial_x ^{N-n}B_{u,t}\partial_x 
		^{n-p}B_{s,u}\partial_x ^{p} b_s)^i \partial_x ^M (a-b)^j_s} \label{Compensation_I_1_c}\\
		&\quad{-\sum_{n=0}^{N} \binom{N}{n} \partial_x ^{N-n}B_{u,t}(\partial_x ^{n} a_s-\partial_x 
		^{n} b_s)^i\sum_{m=0}^{M} \binom{M}{m}(\partial_x ^{M-m}A_{s,u}\partial_x ^{m} 
		a_s-\partial_x ^{M-m}B_{s,u}\partial_x ^{m} b_s)^j ) } \label{Compensation_I_1_d}\, .
		\end{align}
		The sum of \eqref{Compensation_I_1_b} and \eqref{Compensation_I_1_d} cancels with \eqref{Compensation_I_5_a}. The sum of \eqref{Compensation_I_1_a} and \eqref{Compensation_I_1_c} gives
		\begin{align}
		&\quad {-\sum_{p=0}^{N} \sum_{n=p}^{N} \binom{N}{n}   \binom{n}{p}(\partial_x 
		^{N-n}A_{u,t}\partial_x ^{n-p}A_{s,u}\partial_x ^{p} a_s-\partial_x ^{N-n}B_{u,t}\partial_x 
		^{n-p}B_{s,u}\partial_x ^{p} b_s)^i \partial_x ^M (a-b)^j_s}\nonumber\\
		&={-\sum_{p=0}^{N} \sum_{l=0}^{N-p} \binom{N}{l+p}   \binom{l+p}{p}(\partial_x 
		^{N-n}A_{u,t}\partial_x ^{n-p}A_{s,u}\partial_x ^{p} a_s-\partial_x ^{N-n}B_{u,t}\partial_x 
		^{n-p}B_{s,u}\partial_x ^{p} b_s)^i \partial_x ^M (a-b)^j_s}\label{Comepnsation_I_1_Fin}\, ,
		\end{align}
		where in the last equality we applied the change on indices $l:=n-p$.
		Now note that, if $p=n$, then
		\begin{align*}
		\binom{N}{N-n}\binom{N-n}{l}=\frac{N!}{n!l!(N-n-l)!}=\binom{N}{l+n}\binom{l+n}{l}\, ,
		\end{align*}
		and therefore \eqref{Comepnsation_I_1_Fin} cancels with \eqref{Compensation_I_3}.
		We can proceed similarly with \eqref{I_2}. First consider
		\begin{align*}
		\delta ({I_2})_{s,u,t}&={-\sum_{m=0}^{M} \binom{M}{m}\sum_{l} ((\partial_x 
		^{M-m}A^{j,l}-\partial_x ^{M-m}B^{j,l})_{u,t}\delta (\partial_x ^{m}a^l\partial_x 
		^N(a^i-b^i))_{s,u})} \\
		&\quad{+\sum_{m=0}^{M} \binom{M}{m}\sum_{l}\partial_x ^{M-m}B^{j,l}_{u,t}\delta (\partial_x 
		^{m}(a^l-b^l)\partial_{N}(a^i-b^i))_{s,u}} 
		\end{align*}
		and analogous steps as above lead to
		\begin{align}
		&{-\sum_{p=0}^{M}\sum_{l=0}^{M-p} \binom{M}{l+p}   \binom{l+p}{p}(\partial_x 
		^{M-m}A_{u,t}\partial_x ^{m-p}A_{s,u}\partial_x ^{p} a_s-\partial_x ^{M-m}B_{u,t}\partial_x 
		^{m-p}B_{s,u}\partial_x ^{p} b_s)^i \partial_x ^N (a-b)^i_s} \label{Comepnsation_I_2_a}\\
		&{-\sum_{m=0}^{M} \binom{M}{m} (\partial_x ^{M-m}A_{u,t}\partial_x ^{m} a_s-\partial_x 
		^{M-m}B_{u,t}\partial_x ^{m} b_s)^j\sum_{n=0}^{N} \binom{N}{n}(\partial_x 
		^{N-n}A_{s,u}\partial_x ^{n} a_s-\partial_x ^{N-n}B_{s,u}\partial_x ^{n} b_s)^i 
		}\label{Compensation_I_2_b}.
		\end{align}
		Now we notice that \eqref{Comepnsation_I_2_a} cancels with \eqref{Compensation_I_4}. This concludes the proof of Theorem \ref{teo:rem_derivatives}.
	\end{proof}

	\section{The perturbed equation with a regular input}
	\label{app:perturbed}
	
	In this section, we consider the problem with (sufficiently) regular input $\xi_t(x)$ taking values in $\mathcal L_a(\R^3)$, and we prove that the associated solution $u^\xi$ exists and is unique among a certain class.
	The following result states that for every such $\xi$ there exists a unique solution to
	\begin{equation} \label{rLLGxi}
	\left\{\begin{aligned}
	&du= [\partial_x ^{2} u+ u|\partial _x  u|^2 + u\times \partial_x ^{2} u]dt +  \xi u\,,
	\quad \text{on}\enskip [0,T]\times \T ,
	\\
	&u(0)=u^{0}\in H^1(\T ;\R^3)\,.
	\end{aligned}\right.
	\end{equation}

	\begin{proposition}\label{pro:B1}
	Let $\xi\in L^\infty(H^1)$ and $u^0\in H^1(\T ;\mathbb{S}^2)$, then there exists a unique solution $u^\xi\in L^\infty(H^1)\cap L^2(H^2)\cap H^1(L^2) $ to the deterministic equation \eqref{rLLGxi} such that $|u^\xi|_{\R^3}=1$ a.e. Moreover, if $u^0\in H^k(\T ;\mathbb{S}^2)$ and $\xi\in W^{1,\infty}(H^{k+1})$, then $u^\xi\in L^\infty(H^k)\cap L^2(H^{k+1})\cap H^1(H^{k-1})$ for $k\geq 2$.
	\end{proposition}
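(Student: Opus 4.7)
The plan is to construct the solution via a Faedo--Galerkin scheme, establish uniform a priori estimates paralleling those of Section~\ref{sec:apriori}, pass to the limit by compactness, and verify the sphere constraint and uniqueness using the arguments already developed in the main text.

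First, I would fix a trigonometric basis $(e_n)_{n\ge 1}$ of $L^2(\T;\R^3)$ and let $\Pi_N$ denote the orthogonal projection onto the span of the first $N$ modes. The Galerkin ODE
\[
\dot u^N = \Pi_N\bigl[\partial_x^2 u^N + u^N|\partial_x u^N|^2 + u^N\times\partial_x^2 u^N + \xi u^N\bigr],\qquad u^N(0)=\Pi_N u^0,
\]
is finite-dimensional with a right-hand side that is locally Lipschitz in $u$ and Carath\'eodory-measurable in $t$ (since $\xi\in L^\infty(H^1)$ and $H^1\hookrightarrow L^\infty$ in one dimension), hence admits a local solution by classical ODE theory.

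Second, I would derive uniform $L^\infty(H^1)\cap L^2(H^2)$ bounds on $u^N$ using the same ingredients as in Section~\ref{subsec:bootstrap}, but with the important simplification that $\xi u$ is a genuine Bochner-integrable perturbation, so that no rough-path machinery is required. The key observation is that antisymmetry $\xi(t,x)\in \mathcal L_a(\R^3)$ yields the pointwise identity $u\cdot \xi u \equiv 0$, so that the $L^2$ mass of $u^N$ is conserved and all contributions from $\xi u$ in the higher-order energy identities can be controlled by $\|\xi\|_{L^\infty(H^1)}\|u^N\|_{H^1}^2$ after one integration by parts. Combining this with the deterministic analogues of Proposition~\ref{pro:energy} and Corollary~\ref{cor:L^2(L^2)} yields a bound depending only on $\|u^0\|_{H^1}$, $\|\xi\|_{L^\infty(H^1)}$ and $T$, which in turn allows to extend $u^N$ globally.

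Third, by Aubin--Lions (Lemma~\ref{lem:aubin}), a subsequence converges strongly in $L^2(H^1)\cap C([0,T];L^2)$ to a limit $u^\xi$ that inherits the weak bounds in $L^\infty(H^1)\cap L^2(H^2)$. The convergences are more than sufficient to identify each of the nonlinear terms, exactly as in Section~\ref{sub_section:proof_existence}; reading off the equation then gives $\partial_t u^\xi \in L^2(L^2)$, i.e.\ $u^\xi\in H^1(L^2)$. The sphere constraint $|u^\xi|=1$ follows from Lemma~\ref{lem:constraint}, whose proof uses only antisymmetry of the driver and therefore applies verbatim. Uniqueness follows the template of the uniqueness argument in Section~\ref{sec:existence_uniqueness}: for two solutions $u_1,u_2$ with common initial datum the equation for $z=u_1-u_2$ is tested against $z$, and the extra term $\langle\xi z,z\rangle \equiv 0$ disappears by antisymmetry.

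The main obstacle I anticipate is the higher-order regularity assertion, which I would obtain by induction on $k$. Assuming the statement for $k-1$, applying $\partial_x^k$ to the equation and testing against $\partial_x^k u$ produces energy identities analogous to those of Theorem~\ref{teo:higher_order_regularity}, with the additional contribution $\langle \partial_x^k(\xi u),\partial_x^k u\rangle$ controlled through the interpolation inequality \eqref{eq:interp_iterative_1} together with the assumption $\xi\in W^{1,\infty}(H^{k+1})$. The technical difficulty, as in the main text, is the handling of $u\times \partial_x^2 u$ after $k$ differentiations, for which the sphere constraint and repeated integration by parts are crucial in order to keep the highest-order contributions under control. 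Once this is achieved, the time regularity $H^1(H^{k-1})$ is read off directly from the equation.
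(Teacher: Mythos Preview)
Your Galerkin scheme has a genuine gap: the projections $\Pi_N$ destroy the sphere constraint, so the approximants $u^N$ do \emph{not} satisfy $|u^N|_{\R^3}=1$, and without this the a priori bounds you invoke collapse. Proposition~\ref{pro:energy} and Corollary~\ref{cor:L^2(L^2)} are stated and proved under the explicit hypothesis $|u_t(x)|=1$; this is not cosmetic. The identity $\tt_u\cdot\dd^2u=|\dd^2u|^2-|\dd u|^4$ in \eqref{eq:step_L^2L^2_1} requires $u\cdot\dd^2u=-|\dd u|^2$, and the drift estimates \eqref{est:D00}--\eqref{est:D01} use $\|u\|_{L^\infty}=1$ and the pointwise orthogonality of $\tt_u$ and $u\times\tt_u$. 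For $u^N$ one only has $\|u^N\|_{L^\infty}\lesssim\|u^N\|_{H^1}$ via Sobolev, and testing the equation against $-\dd^2u^N$ leaves the cubic term $\int u^N|\dd u^N|^2\cdot\dd^2u^N$, which after Gagliardo--Nirenberg and Young produces a right-hand side of the type $\|u^N\|_{H^1}^4\|\dd u^N\|_{L^2}^6$; this does not close by Gronwall. Likewise your claim that ``the $L^2$ mass of $u^N$ is conserved'' is false: the term $\langle u^N|\dd u^N|^2,u^N\rangle=\int|u^N|^2|\dd u^N|^2$ does not cancel $\langle\dd^2u^N,u^N\rangle=-\|\dd u^N\|^2$ unless $|u^N|=1$. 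Lemma~\ref{lem:constraint} applies only to the \emph{limit}, once it exists and belongs to the right space; it says nothing about the approximants.

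The paper avoids this entirely by taking a different route: an implicit function theorem argument around the unperturbed deterministic solution $u=\pi(u^0,0)$, whose existence in $L^\infty(H^1)\cap L^2(H^2)\cap H^1(L^2)$ is classical. One writes $F(\xi,y)=(\partial_ty-\tt_y-\dd(y\times\dd y)-\xi y,\,y_0-u^0)$ from $L^\infty(H^1)\times Y$ into $L^2(L^2)\times H^1$, checks $C^1$-regularity, and shows that $D_2F(0,u)$ is a linear isomorphism by recognising it as a lower-order perturbation of the uniformly parabolic operator $h\mapsto\partial_th-(\mathbf 1-u\times)\dd^2h$. The implicit function theorem then yields $u^\xi=\varphi(\xi)$ for $\xi$ near zero, and a continuation argument (using the a priori estimates of Section~\ref{sec:apriori}, which \emph{do} apply to $u^\xi$ since Lemma~\ref{lem:constraint} gives $|u^\xi|=1$) extends this to all of $L^\infty(H^1)$. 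If you want to salvage a compactness approach, you would need an approximation that preserves the sphere---for instance a Ginzburg--Landau penalisation or a construction based on solving \eqref{auxiliary_ODE} pointwise---rather than a bare spectral Galerkin.
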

	\begin{proof}
		The  proof is based on the implicit mapping theorem (see for instance \cite{lang2012fundamentals} for a Banach space version). Let $u^0\in H^1(\T ,\mathbb S^2)$ and consider the spaces 
		\[ \begin{aligned}
		&X:= L^\infty (H^1), \quad &Y:= L^\infty(H^1) \cap L^2(H^{2})\cap H^{1}(L^{2}) , \quad &Z:= L^2(L^2)\times H^1.
		\end{aligned}
		\]
		We define a map $F:X\times Y \to Z$ by
		\[ \begin{aligned}
		F(\xi,y):= 
		\begin{bmatrix}
		\partial_ty - \tt_y - \partial_x(y\times \partial _x y) - \xi y
		\\
		y_0-u^0
		\end{bmatrix}
		\end{aligned} \, .
		\]
		
		Note that $F(0,y)=0$ if and only if $y$ is a solution to the unperturbed LLG equation, with initial datum $u^0$. For this equation, it follows from the classical theory (see e.g.\ \cite{Zhu_Guo_Tan}) that there exists a unique smooth solution and we denote it by $u$. More precisely, if the initial condition belongs to $H^k$, then the solution lies in $L^\infty(H^k)\cap L^2(H^{k+1})$.
		
		Let us show that $F$ is continuously Fr\'echet differentiable in an open neighbourhood of $(0,u)$, denoted by $U\subset X\times Y$. 
		Recall that $F$ is continuously Fr\'echet differentiable  at a point $v=(v_1,v_2)\in U$ if there exists a continuous linear map $\lambda :X\times Y \rightarrow Z$ and  functions $\phi:U\rightarrow Z$, $\psi:U\to \R ^{+}$ so that $|\phi (v)|_{Z}\leq |v|_{X\times Y}\psi(v)$ and $\lim_{|v|_{X\times Y}\rightarrow 0}\psi(v)=0$, and satisfying the property 
		\begin{align*}
		F(v+h)=F(v)+\lambda h+\phi(h)
		\end{align*}
		for all $h=(h_1,h_2)\in X\times Y$ so that $|h|\rightarrow 0$.  In order to define $\lambda, \phi$, we compute the first component 
		\begin{align*}
		F^1(v+h)-F^1(v)&= \partial_t h_2 -\partial_x ^{2} h_2-(v_2+h_2)|\partial_x 
		(v_2+h_2)|^2+v_2|\partial_x v_2|^2-(\xi+h_1)(v_2+h_2) +\xi v_2\,.
		\end{align*}
		We can define the continuous and linear functional $\lambda :X\times Y \rightarrow Z$ (continuous because it is a linear and bounded functional between Banach spaces) 
		\begin{align*}
		\lambda h:= \begin{bmatrix}
		\partial_t h_2 - \partial_x ^{2} h_2 - 2 v_2 (\partial _x v_2,\partial_x h_2 ) 
		-h_2|\partial_x v_2|^2-\partial_x (v_2\times \partial _x h_2)-\partial_x (h_2\times 
		\partial_x v_2)-h_1v_2- \xi h_2
		\\
		h_0
		\end{bmatrix}\, ,
		\end{align*}
		 and the auxiliary function $\phi$ by 
		\begin{align*}
		\phi (v,h):=-v_2|\partial_x h_2|^2-2 h_2 \partial_x h_2\cdot\partial_x v_2-h_2|\partial_x 
		h_2|^2-\partial_x (h_2\times\partial_x h_2)-h_1\times h_2\, .
		\end{align*}
		Note that, with the above definitions, we can say that $F$ is of class $C^{1}$ in any  open neighbourhood $U$ of $(0,u)$.
		
		Moreover, at $(0,u)$ its partial derivative in the direction $(0,h)$ for $h\in Y$ is given by 
		\[ 
		D_2F(0,u)[h]= \begin{bmatrix}
		\mathscr Lh
		\\
		h_0
		\end{bmatrix}\, ,
		\]
		where $D_2$ is the second derivative of $F$ with respect to the second component and $\mathscr L$ is the linear operator
		\[ \mathscr L_th:= \partial _th - \partial_x ^{2} h -\partial_x (u_t \times \partial _x h) 
		-  \partial_x (h\times \partial_x u)
		-2u\partial_x u\cdot \partial_x h - |\partial_x u|^2 h \,. \]
		We aim to show that $D_2F(0,u)\colon Y\to Z$ is a continuous isomorphism and for that purpose, let us 
		introduce the family of elliptic operators $(L_t)_{t\in[0,T]}$ on $L^2(\T ;\R^3)$ defined as
		\[ 
		L_th:= -a(t,x)\partial_x ^2h(x) \,,
		\quad
		a(t,x):= \mathbf 1_{3\times3}- (u(t,x)\times\cdot) 
		\]
		(with constant domain $D(L_t)\equiv H^2(\T ;\R^3)$).
		The matrix $a$ being uniformly elliptic, one sees from standard parabolic theory (see, e.g., \cite[Theorem 6.6]{amann1985global}) that  $L_t$ is for each $t\in[0,T]$ the generator of an analytic semigroup.
		Now, the same holds true for $\mathscr L$ because the difference $K:=\mathscr L-L$ is a differential operator of lower order (with smooth coefficients), and as such, is a Kato perturbation of $\mathscr L$. Hence our assertion by \cite[Corollary 6.9]{goldstein}.
		Hence, for every $(f,z)\in Z$, classical PDE results (\cite{grisvard1969equations}) imply that the linear equation
		\[ \left \{\begin{aligned}
		&\mathscr Lh =f
		\\
		& h_0=z\enskip,
		\end{aligned}\right .\]
		with unknown $h\in Y$, is well-posed.
		
		By the implicit mapping Theorem, there is an open neighbourhood $U$ of $0$ in $X$ and a map $\varphi \in C^1(U\to Y)$ so that 
		\begin{equation}\label{equation_F}
		F(\xi,y)=0\enskip\text{for}\enskip(\xi,y)\in U\times Y
		\quad \Leftrightarrow \quad 
		y=\varphi(\xi)\, .
		\end{equation}	
		To see why $U$ can be taken as the full space $X,$ let us consider $(U,\varphi)$ to be maximal among the class of all such pairs satisfying \eqref{equation_F} (that such a maximal element exists follows from Zorn's Lemma for the order relation $(U,\varphi)\leqslant (V,\psi)$ $\Leftrightarrow$ $U\subset V$ and $\psi|_U=\varphi$).
		In particular, $U$ is an open set.
		It is enough to show that $U$ is closed, since being closed and open for a set $S\subset X$ implies $S=X$.
		But if $\xi^n\to \xi \in X$, an argument similar to those used in Section \ref{sec:existence_uniqueness} shows that the limit $y=\lim y^{\xi^n}$ exists in $Y$ and solves \eqref{rLLGxi}. This means that $\varphi(\xi)=y$, consequently $U$ is closed and therefore $U=X.$

		Finally, if the initial condition belongs to $H^k$, the above argument can be carried out by simply shifting the indices of the corresponding Sobolev spaces, leading to existence and uniqueness for the perturbed equation in $L^\infty(H^k)\cap L^2(H^{k+1})$
		(in this case we assume also the noise to belong to $L^\infty(H^k)$). 
		Since the proof is so similar, we leave the details the reader.
	\end{proof}


	\bibliographystyle{plain}

\end{document}